\documentclass[10pt]{article}

\usepackage[letterpaper,margin=1in]{geometry}

\usepackage[T1]{fontenc}
\usepackage{lmodern} % font option #2
\usepackage{microtype}
\usepackage{mathtools,amssymb,amsthm,mathrsfs}
\usepackage[noadjust]{cite}
\usepackage{graphicx}
\usepackage{comment}
\usepackage[shortlabels]{enumitem}
\usepackage{caption}
\usepackage{xcolor}
\usepackage{tikz}
\usetikzlibrary{decorations.markings}
\usepackage[bookmarks=true,unicode,hidelinks]{hyperref}

\numberwithin{equation}{section}

\newtheorem{thm}{Theorem}[section]
\newtheorem{prop}[thm]{Proposition}
\newtheorem{lem}[thm]{Lemma}
\newtheorem{cor}[thm]{Corollary}

\theoremstyle{definition}
\newtheorem{definition}[thm]{Definition}

\theoremstyle{remark}

\newcommand{\beq}{\begin{equation}}
\newcommand{\eeq}{\end{equation}}
\newcommand{\beqq}{\begin{equation*}}
\newcommand{\eeqq}{\end{equation*}}

\DeclareMathOperator{\re}{Re}

\newcommand{\be}{\beta}
\newcommand{\ga}{\gamma}
\newcommand{\om}{\omega}
\newcommand{\tn}[1]{\textnormal{#1}}

\newcommand{\mr}{\mathrm}
\newcommand{\mc}{\mathcal}
\newcommand{\mb}{\mathbf}
\newcommand{\bs}{\boldsymbol} %\newcommand{\bs}{\mathbf}

\newcommand{\N}{{\mathbb N}}
\newcommand{\R}{{\mathbb R}}
\newcommand{\C}{{\mathbb C}}

\newcommand{\LPP}{\mathcal{L}}
\newcommand{\lv}{\ell}
\newcommand{\prob}{\mathbb{P}}

\DeclareMathOperator{\TW}{TW}
\def \ii {\mathrm{i}}
\newcommand{\dd}{\mathrm{d}}

\newcommand{\ab}{\mathsf{\sigma}}	%{\mathsf{r}}

\newcommand{\cd}{\mathsf{c}}	
\newcommand{\bn}{\mb n}
\newcommand{\bu}{\mb u}
\newcommand{\bv}{\mb v}
\newcommand{\bsigma}{{\bs \sigma}}
\newcommand{\btau}{{\bs \tau}}

\newcommand{\listset}{\mc S}
\newcommand{\listn}{\listset_{\bn}}

\newcommand{\mcH}{\mr H}
\DeclareMathOperator{\type}{\mathbf{type}} %{\textbf{type}}

\newcommand{\GG}{\mr G}
\newcommand{\KK}{\mr K}
\newcommand{\FF}{{\mathsf{F}}}
\newcommand{\bM}{\mathbf{M}}
\newcommand{\bN}{\mathbf{N}}
\newcommand{\bT}{\mathbf{T}}
\newcommand{\bz}{\mathbf{z}}
\newcommand{\bsxi}{\bs{\xi}}
\newcommand{\bseta}{\bs{\eta}}

\newcommand{\slope}{\mr m}

\newcommand{\bfr}{\mathbf{r}}
\newcommand{\bfs}{\mathbf{s}}

\newcommand{\bxo}{\bs{\xi}^1}
\newcommand{\beo}{\bs{\eta}^1}
\newcommand{\bxt}{\bs{\xi}^2}
\newcommand{\bet}{\bs{\eta}^2}
\newcommand{\bxot}{\bs{\xi}^{12}}
\newcommand{\beot}{\bs{\eta}^{12}}

\newcommand{\dc}{\mathsf{A}}

\newcommand{\ff}{{\mathsf{f}}}

\newcommand{\con}{\Sigma}
\newcommand{\HH}{\mr H}
\newcommand{\EE}{\mr E}

\newcommand{\cp}{z_c}
\newcommand{\J}{\mathsf{J}}

\newcommand{\mv}{\mathsf{h}} %{h}

\newcommand{\rr}{\mathrm{r}}

\newcommand{\sdev}{\mathsf{g}}
\newcommand{\pp}{\mathsf{z}_{\text{c}}} %{\mathrm{z}_{\text{c}}}

\newcommand{\rL}{\mathrm{L}}
\newcommand{\rR}{\mathrm{R}}

\DeclareSymbolFont{eulergreek}{U}{eur}{m}{n}
\DeclareMathSymbol{\nPi}{\mathord}{eulergreek}{"05}
\newcommand{\K}{\mathsf{K}} % {\mathrm{K}}

\newcommand{\QQ}{\mathrm{Q}} % Upper-tail quantity; BCT uses sans serif for equality conditioning
\newcommand{\DD}{\mathrm{D}}

\newcommand{\ac}{\mathsf{a}}
\newcommand{\bc}{\mathsf{b}}
\newcommand{\discr}{\mathsf{D}}

\newcommand{\ww}{\mathrm{w}}

\newcommand{\cont}{\Gamma}
\newcommand{\Thetao}{\Theta_1}
\newcommand{\Thetat}{\Theta_2}
\newcommand{\Thetar}{\Theta_3}
\newcommand{\ffz}{\mathtt{f}_L^c}%{\mathbf{f}_{L}} %% \ff with a, b, \lv

\newcommand{\tz}{\mathtt{w}} %% critical points of \Theta_i
\newcommand{\tzo}{\tz_1}
\newcommand{\tzt}{\tz_2}
\newcommand{\tzr}{\tz_3}
\newcommand{\tzra}{\mathtt{r}} %% one of two z_3 critical points 
\newcommand{\tzrb}{\mathtt{s}}  %% one of two z_3 critical points 
\newcommand{\tp}{\mathtt{p}} %% p temporary 
\newcommand{\tpo}{\tp_1}
\newcommand{\tpt}{\tp_2}
\newcommand{\tpr}{\tp_3}
\newcommand{\tQ}{\mathtt{Q}} %% p temporary 

\newcommand{\tTo}{\mathtt{T}_{L}}

\newcommand{\tP}{\QQ_{1,L}}
\newcommand{\tDel}{\mathtt{\Delta}_1}
\newcommand{\tDelt}{\mathtt{\Delta}_2}
\newcommand{\tDelr}{\mathtt{\Delta}_3}

\newcommand{\rz}{\mathrm{z}} %% critical points of the reordered G-phases 
\newcommand{\rzo}{\rz_1}
\newcommand{\rzt}{\rz_2}
\newcommand{\rzot}{\rz_{12}}

\newcommand{\Gone}{\mr G_1}
\newcommand{\Gtwo}{\mr G_2}
\newcommand{\Gonetwo}{\mr G_{12}}

\newcommand{\rS}{\mathrm{S}} 

\newcommand{\hide}[1]{}

\newcommand{\mcG}{\mr G}

\title{One-point fluctuations for exponential last passage percolation under upper-tail conditioning}

\author{Jinho Baik\footnote{Department of Mathematics, University of Michigan,
Ann Arbor, MI, 48109, USA, \texttt{baik@umich.edu}} 
\and Tejaswi Tripathi\footnote{Department of Mathematics, University of Kansas,
Lawrence, KS, 66046, USA, \texttt{tejaswit@ku.edu}}}

\date{\today}

\begin{document}

\maketitle

\begin{abstract}
We study exponential directed last passage percolation conditioned on the last passage time to a specified macroscopic point being atypically large. We determine the one-point fluctuations throughout two of the three spatial regions arising under this conditioning, as well as on the boundaries between these regions, extending the work of Baik-Cordaro-Tripathi. Depending on the location of the observation point, the limiting fluctuations are governed by the GUE Tracy-Widom distribution, a Gaussian distribution, or the one-spike BBP distribution on the boundaries between these regions. Through the correspondence with tandem queues, our results also describe how an atypically late departure at one customer-station pair affects departure epochs elsewhere in the network.
\end{abstract}

%\tableofcontents

%%%%%%%%%%%%%%%%%%%%%%%%
%%%%%%%%%%%%%%%%%%%%%%%%
\section{Introduction and main results} \label{sec:intro}

%%%%%%%%%%%%%%%%%%%%%%
\subsection{Background}

Under typical scaling, models in the Kardar-Parisi-Zhang (KPZ) universality class are expected to exhibit universal behavior described by the KPZ fixed point. Atypical events, particularly those in which the height is much larger or smaller than its typical value, are also of considerable interest. Large deviation principles for the KPZ fixed point and related models have been studied extensively; see, for example, \cite{DDV24} and the references therein. One-point upper- and lower-tail large deviation principles were established for the longest increasing subsequence in \cite{LoganShepp77,Sep98,DZ99} and for exponential directed last passage percolation (LPP) in \cite{Johansson00}.

At the logarithmic scale, multi-point, process-level, and metric-level upper-tail large deviation principles have been studied for several models in the KPZ universality class. Hydrodynamic large deviations for the totally asymmetric simple exclusion process were studied by Jensen and Varadhan and, more recently, by Quastel-Tsai \cite{Jensen,Varadhan04,QT25}. Multi-point upper-tail large deviations and the associated conditional limit shapes were obtained for the KPZ equation in \cite{GLLT23,LinTsai25}. At the metric level, an upper-tail large deviation principle was established for the directed landscape in \cite{DDV24}, with related marginal variational problems and conditional limit shapes studied in \cite{DasTsai24}. Upper-tail large deviation principles for last passage percolation models were obtained in \cite{Agarwal25}.

At a finer scale, sharp upper-tail asymptotics have been used to study conditional fluctuations. For the KPZ equation, such estimates and the associated conditional limit shapes were obtained in \cite{GH22}. Conditional fluctuation limits were established for the KPZ fixed point %at times before, after, and near the conditioned high point 
in \cite{LW24,NZ24,LiuZhang25}, for the periodic KPZ fixed point in \cite{BL24}, and for exponential LPP in \cite{Baik-Cordaro-Tripathi25}. Related conditional upper-tail results for geodesics and polymer paths include the diffusive transversal scale for exponential LPP in \cite{BasuGanguly19}, Gaussian fluctuations and rigidity for the directed-landscape geodesic in \cite{Liu22c}, the Brownian-bridge limit in \cite{GHZ23}, and the finer endpoint-scale fluctuations studied in \cite{LiuMaTripathi25}.

In this paper, we study exponential directed last passage percolation conditioned on the last passage time at one site being unusually large. Equivalently, we obtain sharp two-point upper-tail asymptotics relative to the one-point conditioning probability. Our work extends the analysis of exponential LPP in \cite{Baik-Cordaro-Tripathi25} to a broader class of observation points, including points on the transition boundaries between regions with different fluctuation behaviors.

%%%%%%%%%%%%%%%%%%%%%%
\subsection{Exponential LPP}

Directed last passage percolation is an energy-maximization model in a random environment that may be viewed as the zero-temperature limit of a directed polymer model. We consider the point-to-point model with independent exponential weights. Exponential LPP is equivalent to several fundamental models, including the corner growth model with wedge initial condition, the continuous-time totally asymmetric simple exclusion process with step initial condition, and tandem queues.

The model is defined as follows.\footnote{In this paper, $\N$ denotes the set of positive integers. Also, $\R_+=(0,\infty)$.}
For $\mb p=(p_1,p_2)$ and $\mb q=(q_1,q_2)$ in $\N^2$ with $p_1\le q_1$ and $p_2\le q_2$, an up/right path from $\mb p$ to $\mb q$ is a sequence $\pi=(\mb v_i)_{i=1}^r$, where $r=q_1+q_2-p_1-p_2+1$, such that $\mb v_1=\mb p$, $\mb v_r=\mb q$, and $\mb v_{i+1}-\mb v_i\in\{(1,0),(0,1)\}$ for all $i$.

\begin{definition}[Exponential LPP]\label{def:explpp}
Let $\{\om_{\mb v}:\mb v\in\N^2\}$ be a collection of i.i.d.\ exponential random variables with mean $1$. The last passage time from $\mb p$ to $\mb q$ is
\beqq
    \LPP_{\mb p}(\mb q)=\max_{\pi:\mb p\to\mb q}E(\pi),
    \qquad
    E(\pi)=\sum_{\mb v\in\pi}\om_{\mb v},
\eeqq
where the maximum is over all up/right paths from $\mb p$ to $\mb q$. When $\mb p=(1,1)$, we write $\LPP_{(1,1)}(\mb q)=\LPP(\mb q)$. The random field $\LPP=\{\LPP(\mb q):\mb q\in\N^2\}$ is called exponential directed last passage percolation, or simply exponential LPP. For $(\alpha,\beta)\in\R_+^2$, we also set\footnote{The notation $\lceil\alpha\rceil$ denotes the smallest integer greater than or equal to $\alpha$.}
\beq\label{eq:LPPforreal}
    \LPP(\alpha,\beta):=\LPP(\lceil\alpha\rceil,\lceil\beta\rceil).
\eeq
\end{definition}

Under typical scaling, Rost \cite{Ros81} established the law of large numbers, while Johansson \cite{Johansson00} obtained the one-point fluctuation limit. For $x,y>0$, 
\beq\label{eq:onepoint}
    \frac{\LPP(xN,yN)}{N}\xrightarrow{a.s.}\bar{\LPP}(x,y):=(\sqrt{x}+\sqrt{y})^2,\qquad \frac{\LPP(xN,yN)-\bar{\LPP}(x,y)N}{(xy)^{-1/6}(\sqrt{x}+\sqrt{y})^{4/3}N^{1/3}}\xrightarrow{d}\TW_2,
\eeq
as $N\to\infty$, where $\TW_2$ has the GUE Tracy-Widom distribution.

Johansson \cite{Johansson00} also established the one-point upper large deviation principle. For $\ac,\bc>0$ and $\lv>\bar{\LPP}(\ac,\bc)$,
\beq\label{eq:conditioning_tail_asymptotic}
\lim_{N\to\infty}\frac1N\log \prob\left(\LPP(\ac N,\bc N)>\lv N\right)=-\sqrt{\discr}-\ac\log\left(\frac{\lv+\ac-\bc-\sqrt{\discr}}{\lv+\ac-\bc+\sqrt{\discr}}\right)-\bc\log\left(\frac{\lv-\ac+\bc-\sqrt{\discr}}{\lv-\ac+\bc+\sqrt{\discr}}\right),
\eeq
where
\beq\label{eq:Ddefn}
\discr:=\lv^2-2(\ac+\bc)\lv+(\ac-\bc)^2.
\eeq

We now fix $\ac,\bc>0$ and $\lv>\bar{\LPP}(\ac,\bc)$ and consider the asymptotic behavior of $\LPP(\ac xN,\bc yN)$ for $(x,y)\in\R_+^2$, conditioned on the event $\LPP(\ac N,\bc N)>\lv N$. The conditional limit shape depends on the location of the observation point $(x,y)$. To describe this dependence, we introduce the following regions; see Figure~\ref{fig:U123}.

\begin{definition}\label{def:regions}
Set
\beq\label{eq:slopedef}
\slope:=\frac{\lv-\ac-\bc+\sqrt{\discr}}{\lv-\ac-\bc-\sqrt{\discr}},
\eeq
where $\discr$ is given in \eqref{eq:Ddefn}. Define the regions 
\beqq
\begin{aligned}
U_1&:=\left\{(x,y)\in(1,\infty)^2:(x-1)/\slope<y-1<\slope(x-1)\right\},\\
U_2&:=\left\{(x,y)\in\R_+^2:x/\slope<y<\slope x\right\}\setminus\left(\overline{U}_1\cup\{(t,t):t\in(0,1)\}\right),\\
U_3&:=\R_+^2\setminus\left(\overline{U}_1\cup\overline{U}_2\right).
\end{aligned}
\eeqq
For $i=2,3$, also set
\beqq
U_i^{<}:=U_i\cap\{(x,y)\in\R_+^2:y<x\},\qquad U_i^{>}:=U_i\cap\{(x,y)\in\R_+^2:y>x\}.
\eeqq
\end{definition}

\begin{figure}[ht]
\centering
\begin{tikzpicture}[scale=1.6]

\def\xmax{2}
\def\ymax{2}
\def\invm{0.41}
\def\xtop{0.82}
\def\yright{1.41}

% U_2^<
\fill[lightgray] (0,0) -- (\xmax,\invm*\xmax) -- (\xmax,\yright) -- (1,1) -- cycle;

% U_2^>
\fill[lightgray] (0,0) -- (1,1) -- (\yright,\ymax) -- (\xtop,\ymax) -- cycle;

% U_1
\fill[gray] (1,1) -- (\xmax,\yright) -- (\xmax,\ymax) -- (\yright,\ymax) -- cycle;

% boundary lines
\draw[thin] (0,0) -- (\xmax,\invm*\xmax);
\draw[thin] (0,0) -- (\xtop,\ymax);
\draw[thin] (0,0) -- (1,1);
\draw[thin] (1,1) -- (\xmax,\yright);
\draw[thin] (1,1) -- (\yright,\ymax);

% axes
\draw[->, thin] (0,0) -- (2.15,0) node[right] {$x$};
\draw[->, thin] (0,0) -- (0,2.15) node[left] {$y$};

% point (1,1)
%\filldraw (1,1) circle (1pt);
\node[below] at (1,0) {$1$};
\node[left] at (0,1) {$1$};

% labels
\node at (1.62,1.62) {$U_1$};
\node at (1.33,0.74) {$U_2^{<}$};
\node at (0.78,1.33) {$U_2^{>}$};
\node at (1.55,0.22) {$U_3^{<}$};
\node at (0.27,1.55) {$U_3^{>}$};

\end{tikzpicture}
\caption{Regions from Definition~\ref{def:regions}.}
\label{fig:U123}
\end{figure}

For $0\le x'\le x$ and $0\le y'\le y$, set
$\bar{\LPP}_{(x',y')}(x,y):=(\sqrt{x-x'}+\sqrt{y-y'})^2$. 
The metric-level upper-tail large deviation principle for LPP models \cite{Agarwal25} leads to the following constrained variational problem for the conditional limit shape:
\beqq
    \max_{0\le t\le \min\{1,x,y\}}\left\{t\lv+\bar{\LPP}_{(\ac t,\bc t)}(\ac x,\bc y)\right\}.
\eeqq
See \cite[Section 2.1]{Baik-Cordaro-Tripathi25} for the corresponding heuristic argument. Solving this variational problem gives the conditional limit shape: for every $(x,y)\in\R_+^2$ and every $\epsilon>0$,
\beq\label{eq:CLLN}
    \lim_{N\to\infty}
    \prob\left(
        \left|
            \frac{\LPP(\ac xN,\bc yN)}{N}-\mv(x,y)
        \right|>\epsilon
        \,\bigg|\,
        \LPP(\ac N,\bc N)>\lv N
    \right)
    =0, 
\eeq
where
\beq\label{eq:LLNconjv}
    \mv(x,y):=
    \begin{dcases}
        \lv+\bar{\LPP}_{(\ac,\bc)}(\ac x,\bc y),
        &\text{for }(x,y)\in\overline{U}_1,\\
        \frac12\left[(\lv+\ac-\bc)x+(\lv-\ac+\bc)y-|x-y|\sqrt{\discr}\right],
        &\text{for }(x,y)\in\overline{U}_2,\\
        \bar{\LPP}(\ac x,\bc y),
        &\text{for }(x,y)\in\overline{U}_3.
    \end{dcases}
\eeq
The three formulas agree on their common boundaries and hence $\mv$ is well defined. It is continuous on $\R_+^2$, and is continuously differentiable on $\R_+^2\setminus\{(t,t):0\le t\le1\}$.

%%%%%%%%%%%%%%%%%%%%%%
\subsection{Main results} \label{sec:results}

Conditional fluctuations of exponential LPP were studied in \cite{Baik-Cordaro-Tripathi25} at points in the open set
\beqq
    \left\{(x,y)\in(0,1)^2:x/\slope<y<\slope x\right\}.
\eeqq
That paper obtained limits of conditional multi-point distributions on and near the diagonal, as well as of conditional two-point distributions in the lower and upper triangular subregions.\footnote{The results in \cite{Baik-Cordaro-Tripathi25} were stated under the equality conditioning $\LPP(\ac N,\bc N)=\lv N$. The same analysis extends, with only minor modifications, to the upper-tail conditioning $\LPP(\ac N,\bc N)>\lv N$. We therefore also cite \cite{Baik-Cordaro-Tripathi25} for the corresponding results under the upper-tail conditioning. See Section~\ref{sec:equalcond} for a discussion of the relation between the two conditionings.} It also made heuristic predictions for all $(x,y)\in U_1\cup U_2\cup U_3$.

In the present paper, we prove the one-point predictions of \cite{Baik-Cordaro-Tripathi25} for points in $U_1$ and $U_2$ and obtain the one-point limits on the $U_1/U_2$ and $U_2/U_3$ boundaries, excluding $(1,1)$ and $(0,0)$.
 
Define the positive constants\footnote{In the notation of \cite{Baik-Cordaro-Tripathi25}, $\ab_\pm=\sqrt{2} \ab\cd_\pm$.} 
\beq\label{eq:abdf}
	\ab_\pm:=
	\frac{\sqrt{(\ac+\bc)\lv-(\ac-\bc)^2}\,\discr^{1/4}}{\sqrt{2\ac\bc}} 
	\left(
		1\pm
		\frac{(\ac-\bc)\sqrt{\discr}}{(\ac+\bc)\lv-(\ac-\bc)^2}
	\right)^{1/2}, 
\eeq
and the function
\beq \label{eq:CLTconstant}
	\sdev(x,y):=\begin{dcases}
	\bigl(\sqrt{\ac(x-1)}+\sqrt{\bc(y-1)}\bigr)^{4/3}/(\ac\bc(x-1)(y-1))^{1/6} &\text{for }(x,y)\in\overline{U}_1\cap (1,\infty)^2,\\
	\ab_+ \left( \frac{\slope y-x}{\slope -1}\right)^{1/2} \left( 1- \frac{\slope y-x}{\slope -1}\right)^{1/2} &\text{for }(x,y)\in U_2^{<},\\
	\ab_-\left( \frac{\slope x-y}{\slope-1}\right)^{1/2} \left(1- \frac{\slope x-y}{\slope-1}\right)^{1/2} &\text{for }(x,y)\in U_2^{>},\\
	\bigl(\sqrt{\ac x}+\sqrt{\bc y}\bigr)^{4/3}/(\ac\bc xy)^{1/6} &\text{for }(x,y)\in\overline{U}_3 \cap \R_+^2. 
	\end{dcases}
\eeq
The following is the first main result. Let $\mv(x,y)$ be the function defined in \eqref{eq:LLNconjv}. 

\begin{thm}[Conditional one-point fluctuations] \label{result:CCLT}
For every $(x,y)\in U_1$ and $\rr\in\R$,
\beq\label{eq:GUElimitU1}
	\lim_{N\to\infty}
	\prob\left[
		\frac{\LPP(\ac xN,\bc yN)-\mv(x,y)N}{\sdev(x,y)N^{1/3}}
		\le\rr
		\,\bigg|\,
		\LPP(\ac N,\bc N)>\lv N
	\right]
	=F_{\tn{GUE}}(\rr).
\eeq
For every $(x,y)\in U_2$ and $\rr\in\R$,
\beq\label{eq:GaussianlimitU2}
	\lim_{N\to\infty}
	\prob\left[
		\frac{\LPP(\ac xN,\bc yN)-\mv(x,y)N}{\sdev(x,y)N^{1/2}}
		\le\rr
		\,\bigg|\,
		\LPP(\ac N,\bc N)>\lv N
	\right]
	=\Phi(\rr).
\eeq
\end{thm}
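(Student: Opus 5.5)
We will reduce the statement to a sharp two-point upper-tail estimate. Writing $\mb p=(\ac N,\bc N)$ and $\mb q=(\ac xN,\bc yN)$, the conditional probability equals
\beqq
	\frac{\prob\bigl(\LPP(\mb q)\le \mv(x,y)N+\rr\,\sdev(x,y)N^{\theta},\ \LPP(\mb p)>\lv N\bigr)}{\prob\bigl(\LPP(\mb p)>\lv N\bigr)},\qquad \theta\in\{1/3,1/2\}.
\eeqq
For the denominator we will need the exact (not just logarithmic) asymptotics $\prob(\LPP(\mb p)>\lv N)\sim C N^{-1/2}e^{-NJ}$. We will obtain these by steepest descent from Johansson's Fredholm determinant/Laguerre-ensemble representation: at leading order only the one-eigenvalue term contributes, giving a single contour integral whose saddle point produces \eqref{eq:conditioning_tail_asymptotic}. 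For the numerator, when $\mb q$ and $\mb p$ are ordered (as in $U_1$, where $\mb q\ge\mb p$, and in the ordered part of $U_2$), we will use the known two-point distribution formula for exponential LPP, namely the Johansson/Liu-type contour-integral formula for $\prob(\LPP(\mb p)>s_1,\LPP(\mb q)\le s_2)$, expanded as a series of multiple contour integrals. For $\mb q$ not comparable to $\mb p$ we will use the same formula with the roles arranged via the general two-point kernel. In all cases we will perform the upper-tail expansion in which only the terms with a single ``large'' variable attached to $\mb p$ survive. This isolates an $e^{-NJ}N^{-1/2}$ factor that cancels the denominator.

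\textbf{Region $U_1$.} Heuristically, conditioned on the tail event, the geodesic passes through the vicinity of $\mb p$, so $\LPP(\mb q)\approx \LPP(\mb p)+\LPP_{\mb p}(\mb q)$, with the second term independent of the first and having GUE fluctuations of size $N^{1/3}$. In the formula, after extracting the one-eigenvalue saddle of $\mb p$, the remaining integrand is a Fredholm determinant for the LPP from $\mb p$ to $\mb q$ with a shifted parameter. This is where the slope $\slope$ enters: for $(x,y)\in U_1$ the relevant critical point lies on the correct side of the pole coming from the $\mb p$-factor, so no residue is picked up. Standard Airy steepest descent then gives $F_{\tn{GUE}}$ with the scaling $\sdev$. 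We also need $\LPP(\mb p)-\lv N=O(1)$ under the conditioning, which follows from the exponential density of the overshoot.

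\textbf{Region $U_2$.} Here the conditional picture is that the path to $\mb q$ follows the conditioned geodesic to $(\ac t,\bc t)N$ for $t\approx\min(x,y)$ and then leaves, so $\LPP(\mb q)$ is dominated by a sum along the conditioned geodesic. This sum has Gaussian fluctuations with variance linear in the fraction, which explains the Brownian-bridge-like factor in $\sdev$. In the contour analysis the critical point of the $\mb q$-phase now coincides with a pole or saddle of the $\mb p$-phase (the residue is picked up), producing an integral whose phase is quadratic at scale $N^{1/2}$. That quadratic phase yields $\Phi$, with variance $\ab_\pm^2\tau(1-\tau)$. We will do $U_2^{<}$ and get $U_2^{>}$ by the symmetry $(x,y,\ac,\bc)\mapsto(y,x,\bc,\ac)$, which swaps $\ab_+$ and $\ab_-$.

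\textbf{Main obstacle.} The hard step is the uniform control of the multiple-contour series in the two-point formula: deforming all contours through the relevant saddle points simultaneously, including the coalescing saddle/pole in $U_2$, and showing that the tail of the series is negligible relative to $N^{-1/2}e^{-NJ}$ rather than merely relative to $1$. Our first attempt will be Hadamard-type bounds on each term, using a choice of contours along which $\re$ of each phase is strictly maximized at its critical point. The margin needed for these bounds degenerates at the region boundaries, which is why the open regions are treated here and the boundary cases are deferred.
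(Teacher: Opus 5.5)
Your overall architecture matches the paper's. You write the conditional probability as the ratio of the two-point upper-tail series (Proposition~\ref{prop:tail} with $m=2$) to a sharp one-point tail. In $U_1$ the terms $(n_1,n_2)=(1,n)$ give the Airy series, and in $U_2$ only $(1,1)$ survives, through a residue term that becomes a Brownian-bridge integral. \textbf{The gap is in the step you yourself call the main obstacle, and the remedy you propose does not work as stated.}

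The contours in the two-point formula are nested. By Lemma~\ref{lem:simpleQ}, $\QQ_2^{(n_1,n_2)}$ is a combination of $\J^{2^{n_2-i}1^{n_1}2^i}_{2^{n_2-j}1^{n_1}2^j}$. In these integrals, $n_2-i$ of the $\xi^2$-circles lie inside all $\xi^1$-circles around $-1$, and $n_2-j$ of the $\eta^2$-circles lie inside all $\eta^1$-circles around $0$. In $U_1$ the critical points satisfy $\tzo^-<\tzt^-<\tzr<\tzt^+<\tzo^+$. Steepest descent therefore needs every $\xi^2$-contour (through $\tzr$) outside every $\xi^1$-contour (through $\tzo^-$), and likewise every $\eta^2$-contour outside every $\eta^1$-contour. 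So you cannot choose contours on which each phase is maximized at its own critical point without crossing the poles $\xi^1=\xi^2$ and $\eta^1=\eta^2$ of the middle Cauchy determinant $\K_{n_1+n_2}(\bxo,\bet\mid\beo,\bxt)$. Leaving an inner $\xi^2$-contour in place forces it through some $w<\tzo^-$ where $\Thetar(w)>\Thetar(\tzr)$, since $\Thetar$ is decreasing on $(-1,0)$ in $U_1$. Pushing the $\xi^1$-contours to the right instead costs in $\Thetao$. Either way, Hadamard bounds on the unrearranged integrals do not produce the products of critical values your estimate presupposes.

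The missing ingredient is the contour-rearrangement identity of Lemma~\ref{result:originalintexprss}. Each crossing generates residue integrals in which $k$ pairs $(\xi^1,\xi^2)$ or $(\eta^1,\eta^2)$ merge into a single variable carrying $\ff_{12}=\ff_1\ff_2$, with multiplicity $\dc_k^{m,n}=k!\binom mk\binom nk$. The phase $\Thetat$ of $\ff_{12}$ has its own critical points $\tzt^\pm$, which must enter the steepest-descent geometry.

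After rearrangement you must show that each $\J^{1^{n_1-a}(12)^a2^{n_2-a}}_{1^{n_1-b}(12)^b2^{n_2-b}}$ with $n_1\ge2$ is smaller than $e^{-\tDel L}$ by a factor $e^{-cn_1L}$. Its exponent is $L\Phi_{a,b}$, where $\Phi_{a,b}=-(n_1-a-b)\tDel-a(\delta_1^++\delta_2^-)-b(\delta_1^-+\delta_2^+)$ and $\delta_i^\pm>0$ are the increments of $\Theta_i$ between $\tzr$ and $\tz_i^\pm$. This beats $-\tDel$ only because the $z$-integral in \eqref{def:cQQn} kills every $(i,j)$ with $i+j\le 2n_2-n_1$. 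Together with $a\le n_2-i$ and $b\le n_2-j$, this forces $a+b\le n_1-1$. So negligibility comes from a combinatorial cancellation built into the formula, not from the phase geometry alone. The factors $a!\,b!$ from the residue count must then be absorbed by $(n_1!n_2!)^{-2}$ via \eqref{eq:triplfactorial}.

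In $U_2$ the same issue is sharper:
\begin{itemize}
\item $\tzo^+$, $\tzt^+$ and a critical point of $\Thetar$ all coincide at $\pp$, so the relevant contours are only $O(L^{-1/2})$ apart.
\item The leading term is specifically the residue integral $\J^{12}_{(12)}$, or $\J^{(12)}_{12}$ when $\mv<\lv$.
\item The non-residue pieces are disposed of by the third gap $\tDelr$, or by analyticity of $1/\ff_2$ at $0$ when $y\le1$, respectively of $\ff_2$ at $-1$ when $x\ge1$.
\item The paper imports the case $x,y<1$ from earlier work; neither analyticity shortcut is available there, so your plan would have to cover it too.
\end{itemize}

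Smaller points:
\begin{itemize}
\item The one-point tail is $\asymp N^{-1}e^{-\tDel N}$, not $N^{-1/2}e^{-\tDel N}$. The $n=1$ term is a double contour integral with Gaussian saddles at both $\tzo^\pm$ (Lemma~\ref{lem:conditioning_event}), not a single contour integral.
\item The two-point formula needs the thresholds ordered in time, $T_1\le T_2$, not the points ordered in space. The case split is by the sign of $\mv-\lv$ (and of $\rr$ when $\mv=\lv$), and spatially incomparable $\mb p,\mb q$ need no separate kernel.
\item In the formula approach no overshoot argument is needed.
\end{itemize}
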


Here, $F_{\tn{GUE}}$ denotes the GUE Tracy-Widom distribution function, and $\Phi$ denotes the cumulative distribution function of the standard normal distribution.

%\medskip

We next consider the boundaries between $U_1$ and $U_2$ and between $U_2$ and $U_3$. To describe the limiting distributions, we recall the one-spike BBP distribution introduced in \cite{BBP05}.

\begin{definition}[One-spike BBP distribution] \label{def:BBPdist}
For $\ww,\rr\in\R$, define the one-spike BBP distribution function by
\beq
\label{eq:BBPFredholmexpnas}
	F_{\tn{BBP},\ww}(\rr)=1+\sum_{n=1}^\infty \frac{(-1)^n}{(n!)^2(2\pi\ii)^{2n}}\int_{(\Sigma_{\rL})^n}\dd\bu\int_{(\Sigma_{\rR})^n}\dd\bv\,\K_n(\bu\mid\bv)^2\prod_{i=1}^n\frac{e^{-u_i^3/3+\rr u_i}(v_i-\ww)}{e^{-v_i^3/3+\rr v_i}(u_i-\ww)}.
\eeq
Here,
$\K_n(\bu\mid\bv)=\det\bigl[\frac{1}{u_i-v_j}\bigr]_{i,j=1}^n$
is the Cauchy determinant. The contours $\Sigma_{\rL}$ and $\Sigma_{\rR}$ are the usual left and right Airy contours, running from $\infty e^{-2\ii\pi/3}$ to $\infty e^{2\ii\pi/3}$ and from $\infty e^{-\ii\pi/3}$ to $\infty e^{\ii\pi/3}$, respectively.\footnote{Throughout the paper, all infinite contours are oriented from bottom to top, and all simple closed contours are oriented counterclockwise.} 
They are disjoint, $\Sigma_{\rR}$ lies to the right of $\Sigma_{\rL}$, and $\Sigma_{\rL}$ lies to the right of $\ww$.
\end{definition}

The one-spike BBP distribution interpolates between the GUE Tracy-Widom and Gaussian distributions: for every $\rr\in\R$,
\beqq
	F_{\tn{BBP},\ww}(\rr)
	\longrightarrow F_{\tn{GUE}}(\rr)
	\quad\text{as }\ww\to-\infty,
	\qquad
	F_{\tn{BBP},\ww}\bigl(\ww^2+\sqrt{2\ww}\,\rr\bigr)
	\longrightarrow\Phi(\rr)
	\quad\text{as }\ww\to\infty.
\eeqq

\begin{thm}[Conditional one-point fluctuations on the boundaries] \label{result:crossdist}
Set
\beq\label{eq:littlec}
	\mathbf v=(v_1,v_2):=(\sqrt{\slope \ac},-\sqrt{\bc}), \qquad 
	\mr c:=2(\slope \ac\bc)^{1/6}(\sqrt{\slope \ac}+\sqrt{\bc})^{-1/3}.
\eeq
\begin{enumerate}
\item[\textnormal{(a)}]
Fix $(x,y)\in(1,\infty)^2$ on the lower boundary between $U_1$ and $U_2$, so that $\frac{y-1}{x-1}=\frac{1}{\slope}$, 
and fix $\ww\in\R$. Set $\delta_N:=\mr c\ww(y-1)^{2/3}N^{2/3}$. Then, for every $\rr\in\R$,
\beqq
	\lim_{N\to\infty}
	\prob\left[
		\frac{\LPP\left(\ac xN+v_1\delta_N,\bc yN+v_2\delta_N\right)-\mv(x,y)N}
		{\sdev(x,y)N^{1/3}}
		\le\rr
		\,\bigg|\,
		\LPP(\ac N,\bc N)>\lv N
	\right]
	=F_{\tn{BBP},\ww}(\rr+\ww^2).
\eeqq

\item[\textnormal{(b)}]
Fix $(x,y)\in(0,\infty)^2$ on the lower boundary between $U_2$ and $U_3$, so that $\frac{y}{x}=\frac{1}{\slope}$, 
and fix $\ww\in\R$. Set $\delta_N:=\mr c\ww y^{2/3}N^{2/3}$. Then, for every $\rr\in\R$,
\beqq
	\lim_{N\to\infty}
	\prob\left[
		\frac{\LPP\left(\ac xN-v_1\delta_N,\bc yN-v_2\delta_N\right)-\mv(x,y)N}
		{\sdev(x,y)N^{1/3}}
		\le\rr
		\,\bigg|\,
		\LPP(\ac N,\bc N)>\lv N
	\right]
	=F_{\tn{BBP},\ww}(\rr+\ww^2).
\eeqq

\item[\textnormal{(c)}]
The corresponding statements on the upper boundaries follow by symmetry under the simultaneous exchanges $\ac\leftrightarrow \bc$ and $x\leftrightarrow y$, with $\mathbf v$ replaced by $(-\sqrt{\ac},\sqrt{\slope \bc})$. 
\end{enumerate}
\end{thm}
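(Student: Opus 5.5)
Since the conditioning event has probability of order $e^{-cN}$, I would write the conditional probability as a ratio
\[
\frac{\prob\bigl(\LPP(\ac N,\bc N)>\lv N,\ \LPP(\mb q_N)\le \mv N+\sdev\rr N^{a}\bigr)}{\prob\bigl(\LPP(\ac N,\bc N)>\lv N\bigr)}
=1-\frac{\prob\bigl(\LPP(\ac N,\bc N)>\lv N,\ \LPP(\mb q_N)> \mv N+\sdev\rr N^{a}\bigr)}{\prob\bigl(\LPP(\ac N,\bc N)>\lv N\bigr)},
\]
where $\mb q_N$ is the shifted observation point. I would then compute numerator and denominator by steepest descent on exact formulas. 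The denominator has a one-fold contour-integral (Fredholm) representation whose sharp asymptotics, including the polynomial prefactor, follow from a single simple pole or critical point beyond the spectral edge. For the joint upper-tail probability I would use the known two-point distribution formula for exponential LPP (Johansson's/Liu's type formula, as used in \cite{Baik-Cordaro-Tripathi25}). This formula is a sum over $n_1,n_2$ of contour integrals with Cauchy-determinant kernels. After writing it for the complementary event, only finitely many terms contribute at leading order: a one-particle ``spike'' term attached to the conditioned point $(\ac N,\bc N)$, multiplied by a Fredholm-type series from the observation point.

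The key steps, in order, are as follows.

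\textbf{Step 1: phase functions.} Identify the phase functions: $\Thetao$ for the conditioning point, $\Thetat$ for the segment from $(\ac N,\bc N)$ to $\mb q_N$ (region $U_1$), and $\Thetar$ for the observation point alone (regions $U_2$, $U_3$). Locate their critical points. The defining equation of $\slope$ should appear exactly as the condition that the critical point $\tzr$ of the observation-point phase collides with the large-deviation saddle $\tzra$ of $\Thetao$.

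\textbf{Step 2: region $U_1$.} The spike variable is pinned at the large-deviation saddle of $\Thetao$. The residual integrals, in the variables attached to $\mb q_N$, have a double critical point. Cubic rescaling by $N^{1/3}$ then yields the Airy-kernel Fredholm determinant, i.e.\ $F_{\tn{GUE}}$. This corresponds to the picture that the geodesic passes through $(\ac N,\bc N)$ and the fresh environment beyond it gives the GUE limit with constant $\sdev$ computed from $(x-1,y-1)$.

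\textbf{Step 3: region $U_2$.} Here the relevant critical point of the observation phase coincides with the simple saddle $\tzra$ coming from the conditioning. Expanding to second order with scale $N^{1/2}$ gives a Gaussian integral. Its variance, obtained from the second derivatives, should produce $\ab_\pm$ times the factor $\bigl(\tfrac{\slope y-x}{\slope-1}\bigr)^{1/2}\bigl(1-\tfrac{\slope y-x}{\slope-1}\bigr)^{1/2}$. That factor is a Brownian-bridge-type variance along the linear part of $\mv$.

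\textbf{Step 4: boundaries.} At the $U_1/U_2$ and $U_2/U_3$ boundaries, the simple pole (or saddle) from the conditioning lies at distance $O(N^{-1/3})$ from the double critical point. The perturbation $\delta_N\propto \ww N^{2/3}$ along $\mathbf v$ shifts it to $\ww$ in Airy coordinates. The pole then survives as the factor $(v_i-\ww)/(u_i-\ww)$ in \eqref{eq:BBPFredholmexpnas}, and the shift $\rr+\ww^2$ comes from the linear correction of $\mv$. Part (c) follows by symmetry.

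The main obstacle I expect is the global contour deformation: the contours must pass through the correct critical points of several phases simultaneously, with $\re$ of the phases decreasing away from them. Tail bounds are then needed to dominate the remaining $(n_1,n_2)$ terms uniformly in $N$, so that the series can be exchanged with the limit. I would first handle this on the explicit level curves of $\Thetao$ and $\Thetar$, and use Hadamard's inequality for the Cauchy determinants to get bounds of the form $C^n n^{n/2}e^{-cn N^{\cdots}}$.
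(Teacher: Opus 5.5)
\paragraph{Main gap: the nested contours and their residue bookkeeping.}
Your outline agrees with the paper at the level of slogans: the ratio with the one-point upper tail, the two-point formula as a double series, cubic rescaling at a double critical point that merges with the conditioning saddle $\pp=\tzo^+$, and Hadamard bounds for the rest. But it skips the step that actually carries the proof.

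The contours in the two-point formula are nested in an order fixed by the formula. By Lemma~\ref{lem:simpleQ}, some $\xi^2$- and $\eta^2$-circles sit inside the $\xi^1$- and $\eta^1$-circles and others sit outside. On the boundaries this order is incompatible with the ordering of the critical points. For instance, take $\mv>\lv$ on the $U_2/U_3$ boundary with $y>1$. The $\eta^1$-variables must pass through $\pp$ and the $\eta^2$-variables through $\tzrb\in(\pp,0)$, so the outer $\eta^2$-circles must be moved inside the $\eta^1$-circles.

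Any such move crosses the poles $1/(\eta^1-\eta^2)$ and $1/(\xi^1-\xi^2)$ of the Cauchy determinants. It produces residue integrals in which a pair of variables merges into a single variable carrying $\ff_{12}=\ff_1\ff_2$, with combinatorial weights $k!\binom mk\binom nk$. The paper organizes this through the rearrangement identities of Lemma~\ref{result:originalintexprss}. It then shows that every non-leading residue integral carries an extra strictly positive phase gap relative to $e^{-\tDel N}$, with the weights $a!\,b!$ absorbed via \eqref{eq:triplfactorial}. Your plan to bound the remaining terms ``on the explicit level curves'' with Hadamard is not available: the contours of a general $\QQ_2^{(n_1,n_2)}$ cannot be placed there without first doing this expansion. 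This is the central missing idea, not a technicality of ``global contour deformation.''

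\paragraph{Consequence: the leading terms are misidentified in one case.}
(In the paper's notation, $\Thetat$ is the phase of the observation point from the origin and $\Thetar$ that of the segment from $(\ac N,\bc N)$; you have them swapped.) Your ``one spike at the conditioned point times a Fredholm series at the observation point'' matches the $(n_1,n_2)$-indexing in two cases. On the $U_1/U_2$ boundary the leading terms are $(1,n)$, with Airy phase $\Thetar$ of $\ff_2$. On the $U_2/U_3$ boundary with $\mv<\lv$ they are $(n,1)$, although even there the spike at $\tzo^-$ is a residue variable $\xi^{12}$.

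On the $U_2/U_3$ boundary with $\mv>\lv$ it fails. The time ordering forces the conditioning point to be point $1$, while the Airy phase is $\Thetat$, which belongs to $\ff_{12}$. So the Airy variables exist only as residue variables. The leading terms are $\J^{1(12)^{n-1}2}_{(12)^n}$ from $\QQ_2^{(n,n)}$ and $\J^{1(12)^n}_{(12)^n1}$ from $\QQ_2^{(n+1,n)}$, for every $n$ (Lemma~\ref{eq:U23decompgreater}); the $(1,n)$ terms with $n\ge2$ are negligible.

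Moreover, these two families converge to $-I^{n-1}_n$ and $I^n_n$, whose $u$-contours lie to the \emph{left} of the pole at $0$. The BBP series, by contrast, needs $\Sigma_{\rL}$ to the right of $\ww$. It emerges only after moving the $u$-contours across $0$ (Lemma~\ref{result:U23greatersum}), whose residues exactly cancel the $nI^{n-1}_n$ contribution. So the factor $(v_i-\ww)/(u_i-\ww)$, with the correct contour placement, does not simply ``survive.''

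\paragraph{Smaller corrections.}
The direction $\mathbf v$ is chosen so that $\mcH'(\pp)=0$, i.e.\ $\mv$ has no first-order change along $\mathbf v$. The perturbation therefore enters as a term $\pm\ww u^2$ in the Airy exponent, and $\rr+\ww^2$ comes from completing the cube, not from a linear correction of $\mv$.

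Your remainder bounds must also absorb the resulting $e^{cN^{2/3}}$ factors from the modified Gaussian families. The degenerate cases $y=1$ and $x=1$ need separate phase-gap comparisons, such as $\Thetar(-1)-\Thetar(\pp)>\tDel$.
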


The limit \eqref{eq:GaussianlimitU2} for $(x,y)\in U_2\cap(0,1)^2$ was previously proved in \cite{Baik-Cordaro-Tripathi25}. 
On the diagonal $x=y\in(0,1)$, the fluctuations are of order $N^{1/2}$, and the corresponding multi-point distribution limit was obtained in the same paper. 
For points in $U_3$, that paper conjectured that the limit in \eqref{eq:GUElimitU1} holds with the corresponding choices of $\mv(x,y)$ and $\sdev(x,y)$.
The one-point fluctuations in $U_3$ are not analyzed in the present paper for technical reasons and will be considered elsewhere. 
Multi-point fluctuations near the conditioning point $(1,1)$ can also be computed, and their analysis will appear in a separate paper.

All theorems above remain valid with conditioning on $\LPP(\ac N,\bc N)=\lv N$, suitably interpreted, in place of conditioning on $\LPP(\ac N,\bc N)>\lv N$. See Section~\ref{sec:equalcond}.

%%%%%%%%%%%%%%%%%%%%%%
\subsection{Tandem queues}

The conditioning has a natural interpretation in a tandem queueing network. Consider infinitely many single-server stations arranged in series. Customers are served in first-come, first-served order and, after completing service at station $i$, immediately join the queue at station $i+1$. Service times are independent rate-one exponential random variables. Initially, station $1$ has an infinite backlog, while all downstream stations are empty.

Let $\mathcal E(i,j)$ denote the departure epoch of customer $j$ from station $i$, and let $\om_{i,j}$ be the corresponding service time. Then
\beqq
    \mathcal E(i,j)
    =\max\{\mathcal E(i-1,j),\mathcal E(i,j-1)\}+\om_{i,j}.
\eeqq
This is precisely the exponential LPP recursion, and under the natural coupling,
$\mathcal E(i,j)=\LPP(i,j)$. See, for example, \cite{Bar01}.

The conditioning event $\LPP(\ac N,\bc N)>\lv N$ therefore has the following queueing interpretation. Suppose that the departure of a particular customer from a particular station is atypically late. How does conditioning on this event affect the departure epochs of other customers at other stations, both before and after the atypically late departure? The conditional law of large numbers \eqref{eq:CLLN} determines which customer-station pairs undergo a macroscopic change in their departure epochs under this conditioning, while the results of \cite{Baik-Cordaro-Tripathi25} and the present paper describe the corresponding fluctuations.

%%%%%%%%%%%%%%%%%%%%%%
\subsection{Method of proof and outline of the paper}

Our approach builds on \cite{Baik-Cordaro-Tripathi25} and begins with the explicit multi-point distribution formula for exponential LPP obtained in \cite{Liu22a}. The formula involves a contour integral of a Fredholm determinant, and we work with its series expansion. Unlike many Fredholm determinant formulas arising in random matrix theory and in equal-time distributions of KPZ models, the resulting series is not directly amenable to upper large-deviation asymptotics because its integration contours are nested. 
In particular, the contour deformations through the critical points required for the steepest-descent analysis may cross numerous poles of the integrand.

On the diagonal segment $0<x=y<1$, the relevant contour deformations cross no poles. This case was studied for the KPZ fixed point in \cite{LW24,LiuZhang25} and for exponential LPP in \cite{Baik-Cordaro-Tripathi25}. Away from the diagonal, however, the ordering of the relevant critical points depends on the location of $(x,y)$, so the required contour deformations may cross many poles and produce residue contributions that are difficult to track. In \cite{Baik-Cordaro-Tripathi25}, we developed a systematic bookkeeping procedure for these residue integrals and applied it to a restricted subset of $(0,1)^2$. Related residue calculations appear in \cite{NZ24} for the conditional KPZ fixed point in a regime corresponding, in the setting of our model, roughly to $x,y>1$ and $x-y=O(N^{-1/3})$.

In the present paper, we extend this method to a broader class of observation points while restricting attention to one-point conditional distributions. 
The central task is to identify the leading terms in the double Fredholm series indexed by $(n_1,n_2)\in\N^2$. In the regimes treated in \cite{Baik-Cordaro-Tripathi25}, one or two terms typically suffice. In several regimes considered here, however, infinitely many terms contribute at leading order. The most delicate cases are the boundaries $y=x/\slope$ and $y=\slope x$. Depending on the location along these boundaries, the leading terms are indexed either by $(n,n)$ and $(n+1,n)$, or by $(n,1)$, with $n\in\N$. 
See Table~\ref{tab:leadingterms} in Section~\ref{sec:strategy}.

\medskip

This paper is organized as follows. Section~\ref{sec:exact} reviews the explicit multi-point distribution formula for exponential LPP, and Section~\ref{sec:integrals} specializes it to the two-point setting, introduces the contour-integral notation, and develops the contour-rearrangement identities used to track residue contributions. Section~\ref{sec:cp} analyzes the relevant phase functions and critical-point geometry, while Section~\ref{sec:asymgeneral} establishes the Gaussian- and Airy-type steepest-descent estimates and the uniform bounds needed later. Section~\ref{sec:conditioning} derives a sharp asymptotic formula for the conditioning probability, and Section~\ref{sec:strategy} summarizes the common asymptotic strategy and identifies the leading terms in each regime.

The main theorems are proved in Sections~\ref{sec:case1}-\ref{sec:U2U3}, which treat, respectively, the region $U_1$, the boundary between $U_1$ and $U_2$, the region $U_2$, and the boundary between $U_2$ and $U_3$. Finally, Section~\ref{sec:equalcond} discusses the corresponding results under the equality conditioning $\LPP(\ac N,\bc N)=\lv N$ and their relation to the upper-tail conditioning $\LPP(\ac N,\bc N)>\lv N$.

%%%%%%%%%%%%%%%%%%%%%%%%%%%%%%%%%%%%%%%%%%%%%%%%%
\subsubsection*{Acknowledgments}

The work of Baik was supported in part by NSF grant DMS-2246790.

%%%%%%%%%%%%%%%%%%%%%%%%%%%%%%%%
%%%%%%%%%%%%%%%%%%%%%%%%%%%%%%%%
\section{Multi-point distribution formula for exponential LPP} \label{sec:exact}

We review the explicit multi-point distribution formula for exponential LPP. Although we state the formula in the general multi-point setting, only the two-point case will be used in the subsequent sections.

Fix an integer $m\ge1$, and let $\bM=(M_1,\ldots,M_m)\in\N^m$, $\bN=(N_1,\ldots,N_m)\in\N^m$, and $\bT=(T_1,\ldots,T_m)\in\R_+^m$. Assume that $0<T_1\le\cdots\le T_m$ and that $(N_1,T_1),\ldots,(N_m,T_m)$ are all distinct. For each $1\le i\le m$, define
\beqq
	A_i^+:=\{\LPP(M_i,N_i)>T_i\},
	\qquad
	A_i^-:=\{\LPP(M_i,N_i)\le T_i\}.
\eeqq
The multi-point distribution formula for TASEP obtained in~\cite{Liu22a}, together with the standard correspondence between TASEP and exponential LPP, gives explicit formulas for probabilities of the form 
\beqq
	A_1^{\epsilon_1}\cap\cdots\cap A_{m-1}^{\epsilon_{m-1}}\cap A_m^-,
	\qquad
	\epsilon_1,\ldots,\epsilon_{m-1}\in\{+,-\}.
\eeqq
For our purposes, we need a formula for the all-upper-tail event
\beqq
	A_1^+\cap\cdots\cap A_m^+.
\eeqq
A corresponding all-upper-tail formula for the KPZ fixed point was obtained in~\cite[Proposition~3.1]{LiuZhang25}. The same induction argument adapts directly to exponential LPP, and we state the resulting formula below.

For $\bfr=(r_1,\ldots,r_n)$ and $\bfs=(s_1,\ldots,s_n)$ in $\C^n$, define the Cauchy determinant
\beqq
	\K_n(\bfr\mid\bfs):=\det\left[\frac{1}{r_i-s_j}\right]_{i,j=1}^n.
\eeqq
%When the sizes of the vectors are clear, we suppress the subscript $n$ and write $\K(\bfr\mid\bfs)$. 
For $\bn=(n_1,\ldots,n_m)\in\N^m$, define the functions\footnote{Throughout the paper, the empty product is $1$.}
\beq\label{eq:Pi_n}
	\nPi_{\bn}(\bsxi,\bseta):=\K_{n_1}(\bseta^1\mid\bsxi^1)\left[\prod_{i=1}^{m-1}\K_{n_i+n_{i+1}}(\bsxi^i,\bseta^{i+1}\mid\bseta^i,\bsxi^{i+1})\right]\K_{n_m}(\bsxi^m\mid\bseta^m), 
\eeq
and
\beq\label{eq:defFF}
	\FF^{(\bn)}_{\bM,\bN,\bT}(\bsxi,\bseta):=\prod_{i=1}^m\prod_{k_i=1}^{n_i}\frac{\ff_i(\xi_{k_i}^i)}{\ff_i(\eta_{k_i}^i)},
	\qquad
	\ff_i(z):=\frac{z^{N_i-N_{i-1}}e^{(T_i-T_{i-1})z}}{(z+1)^{M_i-M_{i-1}}},
\eeq
for $\bsxi=(\bsxi^1,\ldots,\bsxi^m)$ and $\bseta=(\bseta^1,\ldots,\bseta^m)$, with $\bsxi^i=(\xi_1^i,\ldots,\xi_{n_i}^i)\in \C^{n_i}$ and $\bseta^i=(\eta_1^i,\ldots,\eta_{n_i}^i)\in \C^{n_i}$. We set $M_0=N_0=T_0=0$.

Let
\beqq
	C^{\tn{in}}_{m,\tn{left}},\ldots,C^{\tn{in}}_{2,\tn{left}},C_{1,\tn{left}},C^{\tn{out}}_{2,\tn{left}},\ldots,C^{\tn{out}}_{m,\tn{left}}
\eeqq
be $2m-1$ small circles around $-1$, nested from inside to outside. Similarly, let
\beqq
	C^{\tn{in}}_{m,\tn{right}},\ldots,C^{\tn{in}}_{2,\tn{right}},C_{1,\tn{right}},C^{\tn{out}}_{2,\tn{right}},\ldots,C^{\tn{out}}_{m,\tn{right}}
\eeqq
be $2m-1$ small circles around $0$, also nested from inside to outside and disjoint from the left circles.\footnote{As noted in a footnote in Section~\ref{sec:results}, all closed contours in this paper are oriented counterclockwise, while infinite contours are oriented from bottom to top.} See Figure~\ref{fig:expLPP} for the case $m=3$.

\begin{figure}[h]
\centering
\begin{tikzpicture}[scale=1.2,
    ccw/.style={thick, postaction={decorate},
                decoration={markings,
                            mark=at position 0.03 with {\arrow{stealth}}}}]
	% Draw x-axis and y-axis
	\draw[->] (-4.2,0) -- (2.0,0);
	\draw[->] (0,-1.15) -- (0,1.15);
	% Left center at -1
	\node[below] at (-2,0) {$-1$};
	\filldraw[black] (-2,0) circle (1pt);
	\foreach \r in {0.30,0.45,0.60,0.75,0.90}{
		\draw[ccw] (-2,0) circle [radius=\r];
	}
	% Right center at 0
	\node[below] at (0,0) {$0$};
	\filldraw[black] (0,0) circle (1pt);
	\foreach \r in {0.30,0.45,0.60,0.75,0.90}{
		\draw[ccw] (0,0) circle [radius=\r];
	}
\end{tikzpicture}
\caption{Contours for $m=3$. The five circles on the left, from inside to outside, are $C^{\tn{in}}_{3,\tn{left}}$, $C^{\tn{in}}_{2,\tn{left}}$, $C_{1,\tn{left}}$, $C^{\tn{out}}_{2,\tn{left}}$, and $C^{\tn{out}}_{3,\tn{left}}$. The five circles on the right are, from inside to outside, the corresponding contours with $\tn{left}$ replaced by $\tn{right}$.}
\label{fig:expLPP}
\end{figure}

Define the polynomial $\DD^{(\bn)}_{\bM,\bN,\bT}(\bz)$ in $\bz=(z_1,\ldots,z_{m-1})$ by
\beq\label{eq:D_hat_n}
\begin{split}
	\DD^{(\bn)}_{\bM,\bN,\bT}(\bz)
	&:=\frac{1}{(2\pi\ii)^{2|\bn|}}\prod_{i=2}^m\prod_{k_i=1}^{n_i}
	\left[\int_{C_{i,\tn{left}}^{\tn{in}}}\dd\xi_{k_i}^i+z_{i-1}\int_{C_{i,\tn{left}}^{\tn{out}}}\dd\xi_{k_i}^i\right]
	\left[\int_{C_{i,\tn{right}}^{\tn{in}}}\dd\eta_{k_i}^i+z_{i-1}\int_{C_{i,\tn{right}}^{\tn{out}}}\dd\eta_{k_i}^i\right] \\
	&\quad\times\prod_{k_1=1}^{n_1}\int_{C_{1,\tn{left}}}\dd\xi_{k_1}^1\int_{C_{1,\tn{right}}}\dd\eta_{k_1}^1\,
	\nPi_{\bn}(\bsxi,\bseta)\FF^{(\bn)}_{\bM,\bN,\bT}(\bsxi,\bseta).
\end{split}
\eeq
Here $\bn=(n_1,\ldots,n_m)\in\N^m$ and $|\bn|:=n_1+\cdots+n_m$. For each $1\le i\le m-1$, the degree of $\DD^{(\bn)}_{\bM,\bN,\bT}$ in $z_i$ is at most $2n_{i+1}$. Its coefficients are linear combinations of $2|\bn|$-fold contour integrals. When $m=1$, $\DD^{(n)}_{M,N,T}$ is a constant.

\begin{prop} \label{prop:tail}
Fix an integer $m\ge1$. Let $\bM=(M_1,\ldots,M_m)\in\N^m$, $\bN=(N_1,\ldots,N_m)\in\N^m$, and $\bT=(T_1,\ldots,T_m)\in\R_+^m$. Assume that $0<T_1\le\cdots\le T_m$ and that $(N_1,T_1),\ldots,(N_m,T_m)$ are all distinct. Then
\beq
	\prob\left(\LPP(M_1,N_1)>T_1,\ldots,\LPP(M_m,N_m)>T_m\right)=\QQ_m(\bM,\bN,\bT),
\eeq
where
\beq \label{def:cQQ}
	\QQ_m(\bM,\bN,\bT):=\sum_{\bn\in\N^m}\frac{1}{(\bn!)^2}\QQ^{(\bn)}_m(\bM,\bN,\bT),
\eeq
with $\bn!:=n_1!\cdots n_m!$ for $\bn=(n_1,\ldots,n_m)$, and
\beq \label{def:cQQn}
	\QQ^{(\bn)}_m(\bM,\bN,\bT):=\frac{(-1)^{|\bn|+m}}{(2\pi\ii)^{m-1}}\oint_{>1}\cdots\oint_{>1}\DD^{(\bn)}_{\bM,\bN,\bT}(\bz)\prod_{i=1}^{m-1}\frac{(z_i+1)^{n_i-n_{i+1}-1}}{z_i^{n_{i+1}+1}}\dd z_i.
\eeq
The function $\DD^{(\bn)}_{\bM,\bN,\bT}(\bz)$ is defined in \eqref{eq:D_hat_n}, and the contours are circles centered at the origin with radii greater than $1$.
\end{prop}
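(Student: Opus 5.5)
The plan is to deduce Proposition~\ref{prop:tail} from Liu's formula \cite{Liu22a}, which (via the TASEP/LPP correspondence) expresses $\prob(A_1^{\epsilon_1}\cap\cdots\cap A_{m-1}^{\epsilon_{m-1}}\cap A_m^-)$ as a $z$-contour integral of a Fredholm-type series. We convert this to the all-upper-tail event by inclusion--exclusion, organized as an induction on $m$, following \cite[Proposition~3.1]{LiuZhang25}.

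\textbf{Base case $m=1$.} Here $\prob(\LPP(M_1,N_1)>T_1)=1-\prob(\LPP(M_1,N_1)\le T_1)$. Liu's one-point formula gives $\prob(\LPP\le T)$ as a Fredholm determinant $1+\sum_{n\ge1}\frac{(-1)^n}{(n!)^2}\,\DD^{(n)}$, with $\DD^{(n)}$ the $2n$-fold integral of $\K_n(\bseta\mid\bsxi)\K_n(\bsxi\mid\bseta)\FF^{(n)}$ over circles around $-1$ and $0$. Subtracting from $1$ leaves $\sum_{n\ge1}\frac{(-1)^{n+1}}{(n!)^2}\DD^{(n)}$. This matches \eqref{def:cQQn} with $m=1$, since the sign there is $(-1)^{n+1}$ and there are no $z$-integrals.

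\textbf{Inductive step.} Write
\[
\prob(A_1^+\cap\cdots\cap A_m^+)=\prob(A_1^+\cap\cdots\cap A_{m-1}^+)-\prob(A_1^+\cap\cdots\cap A_{m-1}^+\cap A_m^-).
\]
The first term is handled by the inductive hypothesis. The second is expanded by writing each $A_i^+=\Omega\setminus A_i^-$ for $i\le m-1$, so that it becomes an alternating sum of Liu's formulas over subsets $S\subset\{1,\dots,m-1\}$ of retained points. Liu's formula for a given pattern $(\epsilon_i)$ involves the $z_i$-integrands $\frac{(z_i+1)^{\cdots}}{z_i^{\cdots}}$ and the combination "inner contour $+\,z_{i-1}\cdot$outer contour". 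Dropping the $i$-th point corresponds to setting the appropriate $n_i=0$, and flipping $\epsilon_i$ changes the factor $1/(1-z_i)$, or equivalently the radius of the $z_i$ contour ($<1$ versus $>1$).

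The key algebraic identity is the following. The difference between the $z_i$-integral over $|z_i|>1$ and the one over $|z_i|<1$ is the residue at $z_i=-1$, which collapses the $i$-th level and reproduces the term with that point removed. Hence summing over $\epsilon_i$ amounts to moving the $z_i$ contour past the unit circle and absorbing the lower-order terms. Carrying this out for each $i$ turns the $(1-z_i)^{-1}$-type factors into the factors $(z_i+1)^{n_i-n_{i+1}-1}/z_i^{n_{i+1}+1}$ on $|z_i|>1$. The overall sign $(-1)^{|\bn|+m}$ accumulates as one factor $-1$ per complement taken, together with the Fredholm signs.

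Finally, the sum restricts to $\bn\in\N^m$ (all $n_i\ge1$). Terms with some $n_i=0$ cancel between the two pieces of the recursion, because the inductive formula for $m-1$ points is exactly the $n_m=0$ slice.

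\textbf{Main obstacle.} The main obstacle is the bookkeeping in the inductive step: checking that the residues at $z_i=-1$ (and the pole structure at $z_i=0$, where the degree bound $2n_{i+1}$ ensures no extra contribution at infinity) exactly reproduce the lower-dimensional formulas, with the correct nesting of the $\xi,\eta$ contours and the correct signs. I would first verify the case $m=2$ by hand and then mimic \cite{LiuZhang25} verbatim, replacing their Airy-type functions $\ff_i$ by the LPP functions $\ff_i$ of \eqref{eq:defFF}. The argument is purely algebraic in $\ff_i$, so no analytic changes are needed beyond absolute convergence of the series. That convergence follows from Hadamard's inequality on compact contours.
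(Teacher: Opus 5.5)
Your skeleton matches the paper's. That is the base case via the one-point Fredholm series, the recursion $\prob(B_m)=\prob(B_{m-1})-\prob(B_{m-1}\cap A_m^-)$ with $B_j:=\bigcap_{i\le j}\{\LPP(M_i,N_i)>T_i\}$, and the identification of the $n_m=0$ slice with $\QQ_{m-1}$. The middle of your inductive step, however, is misdirected and partly wrong.

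You already take Liu's formula for the mixed events $A_1^{\epsilon_1}\cap\cdots\cap A_{m-1}^{\epsilon_{m-1}}\cap A_m^-$ as input, so there is nothing to inclusion--exclude. Take $\epsilon_1=\cdots=\epsilon_{m-1}=+$, i.e.\ Liu's Proposition~2.3 with $I=\{1,\ldots,m-1\}$, using $\LPP(M,N)>T\iff x_N(T)<M-N$. This gives $\prob(B_{m-1}\cap A_m^-)$ directly as $\sum_{\bn\in\N_0^m}\frac{(-1)^{|\bn|+m-1}}{(\bn!)^2}$ times exactly the integrals in \eqref{def:cQQn}. These are already on $|z_i|>1$ and already carry the factors $(z_i+1)^{n_i-n_{i+1}-1}/z_i^{n_{i+1}+1}$; no contour move produces them.

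Your re-summation over subsets would amount to re-deriving Liu's proposition, and as described it rests on a false identification. Dropping the $i$-th point is \emph{not} the same as setting $n_i=0$ when $i<m$. For $m=2$, the $n_1=0$ slice involves $\ff_2(z)=z^{N_2-N_1}e^{(T_2-T_1)z}/(z+1)^{M_2-M_1}$, not the one-point function $z^{N_2}e^{T_2z}/(z+1)^{M_2}$. For interior $i$, setting $n_i=0$ splits the Cauchy determinants into two decoupled blocks instead of linking levels $i-1$ and $i+1$. Only for $i=m$ does $n_m=0$ reproduce the $(m-1)$-point structure.

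The step your plan actually misstates is why the terms with $n_i=0$, $i\le m-1$, disappear. They do not cancel between the two pieces of the recursion; each vanishes on its own. As a function of $z_i$, $\DD^{(\bn)}_{\bM,\bN,\bT}(\bz)$ is a polynomial of degree at most $2n_{i+1}$. So for $n_i=0$ the integrand $\DD^{(\bn)}_{\bM,\bN,\bT}(\bz)(z_i+1)^{-n_{i+1}-1}z_i^{-n_{i+1}-1}$ is $O(z_i^{-2})$ at infinity. Since the circle $|z_i|>1$ encloses both finite singularities $0$ and $-1$, the $z_i$-integral is zero.

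Only after restricting to $n_1,\ldots,n_{m-1}\ge1$ can you reduce the $n_m=0$ slice. Here $\DD^{(\widehat\bn,0)}_{\bM,\bN,\bT}$ is independent of $z_{m-1}$ and equals the $(m-1)$-point $\DD^{(\widehat\bn)}$. Also $\frac{1}{2\pi\ii}\oint_{>1}(z+1)^{n_{m-1}-1}z^{-1}\,\dd z=1$, precisely because $n_{m-1}\ge1$. The remaining terms, with $n_m\ge1$, are exactly $-\QQ_m$, and this single minus is where the sign $(-1)^{|\bn|+m}$ comes from. Hence $\prob(B_{m-1}\cap A_m^-)=\QQ_{m-1}-\QQ_m$, and the induction closes.
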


\begin{proof}
We adapt the induction argument of~\cite[Proposition~3.1]{LiuZhang25}. For $1\le j\le m$, set
\beqq
	B_j:=\bigcap_{i=1}^j\{\LPP(M_i,N_i)>T_i\}.
\eeqq
Let $\N_0:=\{0\}\cup\N$ and extend the definition of $\DD^{(\bn)}_{\bM,\bN,\bT}$ to $\bn\in\N_0^m$ by interpreting every empty product and every zero-dimensional Cauchy determinant as $1$. Using $\LPP(M,N)>T$ if and only if $x_N(T)<M-N$, where $x_i(t)$ denotes the location of the $i$th particle at time $t$ in TASEP,~\cite[Proposition~2.3]{Liu22a} with $I=\{1,\ldots,m-1\}$ implies
\beq\label{eq:mixed_tail_formula}
\begin{split}
	&\prob\left(B_{m-1}\cap\{\LPP(M_m,N_m)\le T_m\}\right)\\
	&\quad=\frac{1}{(2\pi\ii)^{m-1}}\sum_{\bn\in\N_0^m}\frac{(-1)^{|\bn|+m-1}}{(\bn!)^2}\oint_{>1}\cdots\oint_{>1}\DD^{(\bn)}_{\bM,\bN,\bT}(\bz)\prod_{i=1}^{m-1}\frac{(z_i+1)^{n_i-n_{i+1}-1}}{z_i^{n_{i+1}+1}}\dd z_i.
\end{split}
\eeq

We use two elementary observations. First, every term with $n_i=0$ for some $i\le m-1$ vanishes. Indeed, as a function of $z_i$, $\DD^{(\bn)}_{\bM,\bN,\bT}(\bz)$ is a polynomial of degree at most $2n_{i+1}$. 
Thus, if $n_i=0$, then, with the other variables fixed,
\beqq
	\DD^{(\bn)}_{\bM,\bN,\bT}(\bz)\frac{(z_i+1)^{-n_{i+1}-1}}{z_i^{n_{i+1}+1}}=O(z_i^{-2})
	\qquad\text{as }z_i\to\infty.
\eeqq
Hence, the $z_i$-integral vanishes by Cauchy's theorem. Thus, the sum in \eqref{eq:mixed_tail_formula} may be restricted to $n_1,\ldots,n_{m-1}\ge1$ and $n_m\ge0$.

Second, consider the terms with $n_m=0$. Write $\widehat\bn=(n_1,\ldots,n_{m-1})$ and $\widehat\bM=(M_1,\ldots,M_{m-1})$, and define $\widehat\bN$ and $\widehat\bT$ similarly. Since $\bsxi^m=\bseta^m=\varnothing$ and
\beqq
	\frac{1}{2\pi\ii}\oint_{>1}\frac{(z_{m-1}+1)^{n_{m-1}-1}}{z_{m-1}}\dd z_{m-1}=1,
\eeqq
we find that 
\beqq
	\DD^{(\widehat\bn,0)}_{\bM,\bN,\bT}(z_1,\ldots,z_{m-1})=\DD^{(\widehat\bn)}_{\widehat\bM,\widehat\bN,\widehat\bT}(z_1,\ldots,z_{m-2}),
\eeqq
which is independent of $z_{m-1}$. 
Thus, the contribution of the terms with $n_m=0$ in \eqref{eq:mixed_tail_formula} is precisely $\QQ_{m-1}(\widehat\bM,\widehat\bN,\widehat\bT)$. The terms with $n_m\ge1$ contribute $-\QQ_m(\bM,\bN,\bT)$, and hence
\beq\label{eq:mixed_tail_split}
	\prob\left(B_{m-1}\cap\{\LPP(M_m,N_m)\le T_m\}\right)=\QQ_{m-1}(\widehat\bM,\widehat\bN,\widehat\bT)-\QQ_m(\bM,\bN,\bT).
\eeq

For $m=1$, we use the usual one-point distribution formula:
\beqq
	\prob\left(\LPP(M_1,N_1)\le T_1\right)=1+\sum_{n_1\ge1}\frac{(-1)^{n_1}}{(n_1!)^2}\DD^{(n_1)}_{M_1,N_1,T_1}=1-\QQ_1(M_1,N_1,T_1).
\eeqq
Thus, $\prob(B_1)=\QQ_1$, and the result follows from an induction. 
\end{proof}

%%%%%%%%%%%%%%%%%%%%%%
%%%%%%%%%%%%%%%%%%%%%%
\section{Two-point formulas}
\label{sec:integrals}

We specialize Proposition~\ref{prop:tail} to the two-point case and rewrite the resulting integrals in forms suitable for steepest-descent analysis. We also derive contour-rearrangement identities for the residue contributions.
The analogous three-point formulas were studied in \cite[Section~7]{Baik-Cordaro-Tripathi25}.

%%%%%%%%%%%%%%%%%%%%%%%%%%
\subsection{Integral notation}
\label{subsec:integralnotation}

\begin{definition}\label{def:Bnset}
Define $\mc A_2:=\{1,2,12\}$. For $\bn=(n_1,n_2)\in\N^2$, let $\listn$ denote the set of lists
\beqq
    \bsigma=\sigma_1\sigma_2\cdots\sigma_k,
    \qquad
    \sigma_j\in\mc A_2,
\eeqq
such that, for each $i=1,2$, the total number of occurrences of the symbol $i$ among $\sigma_1,\ldots,\sigma_k$, counting an entry $12$ as one occurrence of both $1$ and $2$, is $n_i$. We write $|\bsigma|=k$.
The type of a list $\bsigma\in\listn$ is
\beqq
    \type(\bsigma)=(a_{12},a_1,a_2)\in\N_0^3,
\eeqq
where, for each $*\in\mc A_2$, $a_*$ denotes the number of entries of $\bsigma$ equal to $*$.
\end{definition}

We usually write a list $\bsigma$ in the form
\beqq
    \bsigma=\alpha_1^{m_1}\alpha_2^{m_2}\alpha_3^{m_3}\cdots,
\eeqq
where $\alpha_i,\alpha_{i+1}\in\mc A_2$ are distinct for every $i$, and $\alpha^m$ denotes a block of $m$ consecutive copies of $\alpha$. To avoid ambiguity, we write the symbol $12$ as $(12)$ when necessary and omit the superscript when $m_i=1$. For example,
\beqq
    11(12)12112
    =1^2(12)^1 1^1 2^1 1^2 2^1
    =1^2(12)1 2 1^2 2,
\eeqq
has type $(1,5,2)$ and belongs to $\mathcal{S}_{(6,3)}$.

Note that if $\type(\bsigma)=(a_{12},a_1,a_2)$, then
\beqq
a_{12}+a_1=n_1,
\qquad
a_{12}+a_2=n_2.
\eeqq

\begin{definition}\label{def:Pisigmaxi}
For $\bn\in\N^2$ and $\bsigma,\btau\in\listn$ with $\type(\bsigma)=(a_{12},a_1,a_2)$ and $\type(\btau)=(b_{12},b_1,b_2)$, 
define the product of Cauchy determinants
\beq\label{eq:Pisigmaxi}
\nPi^{\bsigma}_{\btau}(\bsxi,\bseta):=\K_{n_1}(\beot,\beo\mid\bxot,\bxo)\K_{a_1+b_2}(\bxo,\bet\mid\beo,\bxt)\K_{n_2}(\bxot,\bxt\mid\beot,\bet)
\eeq
and the function\footnote{The notation $\FF^{\bsigma\mid\btau}(\bsxi,\bseta)$ was used in \cite{Baik-Cordaro-Tripathi25}.}
\beq\label{eq:Fsigmatau}
\FF^{\bsigma}_{\btau}(\bsxi,\bseta):=\frac{\prod_{*\in\mc A_2}\prod_{i=1}^{a_*}\ff_*(\xi_i^*)}{\prod_{*\in\mc A_2}\prod_{i=1}^{b_*}\ff_*(\eta_i^*)}.
\eeq
Here $\bsxi=(\bsxi^{12},\bsxi^1,\bsxi^2)$ and $\bseta=(\bseta^{12},\bseta^1,\bseta^2)$, where
$\bsxi^*=(\xi_1^*,\ldots,\xi_{a_*}^*)\in\C^{a_*}$ and
$\bseta^*=(\eta_1^*,\ldots,\eta_{b_*}^*)\in\C^{b_*}$ for each $*\in\mc A_2$.
The functions $\ff_1$ and $\ff_2$ are given in \eqref{eq:defFF}, and we set $\ff_{12}:=\ff_1\ff_2$. Explicitly,
\beq \label{eq:ff1212def}
\ff_1(z)= \frac{z^{N_1}e^{T_1z}}{(z+1)^{M_1}},
\quad
\ff_2(z)= \frac{z^{N_2-N_{1}}e^{(T_2-T_{1})z}}{(z+1)^{M_2-M_{1}}},
\quad
\ff_{12}(z)= \frac{z^{N_2}e^{T_2z}}{(z+1)^{M_2}}.
\eeq
\end{definition}

The middle Cauchy determinant in \eqref{eq:Pisigmaxi} is square because $a_1+b_2=b_1+a_2$, which follows from
$a_{12}+a_1=b_{12}+b_1=n_1$ and $a_{12}+a_2=b_{12}+b_2=n_2$.
The functions $\nPi^{\bsigma}_{\btau}$ and $\FF^{\bsigma}_{\btau}$ depend only on $\type(\bsigma)$ and $\type(\btau)$ and are symmetric separately in the variables $\xi_1^*,\ldots,\xi_{a_*}^*$ and $\eta_1^*,\ldots,\eta_{b_*}^*$ for each $*\in\mc A_2$.

Let $\bsigma=\sigma_1\cdots\sigma_k\in\listn$. For $1\le r\le k$, let $\nu_r$ be the number of occurrences of $\sigma_r$ among $\sigma_1,\ldots,\sigma_r$, and define
\beqq
	\bsxi^{\bsigma}:=(\xi_{\nu_1}^{\sigma_1},\ldots,\xi_{\nu_k}^{\sigma_k}).
\eeqq
Define $\bseta^{\btau}$ analogously. Thus, $\bsxi^{\bsigma}$ and $\bseta^{\btau}$ list the integration variables in the order prescribed by $\bsigma$ and $\btau$.
For example, 
\beqq
	\bsxi^{12(12)1}=(\xi_1^1,\ \xi_1^2,\ \xi_1^{12},\ \xi_2^1), \qquad 
	\bseta^{1(12)^2}= (\eta_1^1,\ \eta_1^{12},\ \eta_2^{12}).
\eeqq

\begin{definition}\label{def:Jintegral}
For $\bsigma,\btau\in\listn$, define
\beq\label{eq:ingdef}
	\J^{\bsigma}_{\btau}:=\frac{1}{(2\pi\ii)^{|\bsigma|+|\btau|}}\int\dd\bsxi^{\bsigma}\int\dd\bseta^{\btau}\,\nPi^{\bsigma}_{\btau}(\bsxi,\bseta)
\FF^{\bsigma}_{\btau}(\bsxi,\bseta).
\eeq
The coordinates of $\bsxi^{\bsigma}$ are integrated over small circles centered at $-1$, nested from inside to outside in the displayed order, and the coordinates of $\bseta^{\btau}$ are integrated over small circles centered at $0$, also nested from inside to outside in the displayed order. All left and right circles are mutually disjoint.
\end{definition}

Although the contours in Definition~\ref{def:Jintegral} are initially taken to be distinct, the integrand has no pole when two variables with the same superscript coincide. Thus, for each $*\in\{1,2,12\}$, all consecutive contours carrying the variables $\xi_i^*$ may be collapsed to the same contour, and likewise for the variables $\eta_i^*$.
For example, 
\beqq
\begin{split}
	\J^{12(12)1}_{1(12)^2}
	&=\frac{1}{(2\pi\ii)^7}
	\int\dd\xi^1_1\int\dd\xi^2_1\int\dd\xi^{12}_1\int\dd\xi^1_2
	\int\dd\eta^1_1\int\dd\eta^{12}_1\int\dd\eta^{12}_2 \\
	&\quad\times\K_3(\eta^{12}_1,\eta^{12}_2,\eta^1_1\mid\xi^{12}_1,\xi^1_1,\xi^1_2)
	\K_2(\xi^1_1,\xi^1_2\mid\eta^1_1,\xi^2_1)
	\K_2(\xi^{12}_1,\xi^2_1\mid\eta^{12}_1,\eta^{12}_2)
	\frac{\ff_1(\xi^1_1)\ff_1(\xi^1_2)\ff_2(\xi^2_1)\ff_{12}(\xi^{12}_1)}
	{\ff_1(\eta^1_1)\ff_{12}(\eta^{12}_1)\ff_{12}(\eta^{12}_2)},
\end{split}
\eeqq
where the contours satisfy $0<|\xi^1_1+1|<|\xi^2_1+1|<|\xi^{12}_1+1|<|\xi^1_2+1|$ and $0<|\eta^1_1|<|\eta^{12}_1|=|\eta^{12}_2|$.

%%%%%%%%%%%%%%%%%%%%%%%%%%
\subsection{Expansion of the two-point probability}
\label{subsec:exp2ptprob}

When $m=2$, we express $\QQ_2^{(n_1,n_2)}(\bM,\bN,\bT)$ in \eqref{def:cQQ} in terms of the integrals above.

\begin{lem}\label{lem:simpleQ}
If $n_1\ge n_2+1$, then
\beq\label{eq:case3_simpleQ_greater}
	\QQ_2^{(n_1,n_2)}=(-1)^{n_1+n_2}
	\sum_{\substack{0\le i,j\le n_2\\2n_2-n_1+1\le i+j\le n_2}}
	\binom{n_2}{i}\binom{n_2}{j}\binom{n_1-n_2-1}{n_2-i-j}
	\J^{2^{n_2-i}1^{n_1}2^i}_{2^{n_2-j}1^{n_1}2^j}.
\eeq
If $n_1\le n_2$, then
\beq\label{eq:case3_simpleQ_less}
	\QQ_2^{(n_1,n_2)}=(-1)^{n_2-1}
	\sum_{\substack{0\le i,j\le n_2\\i+j\ge2n_2-n_1+1}}
	(-1)^{i+j}\binom{n_2}{i}\binom{n_2}{j}
	\binom{i+j-n_2-1}{n_2-n_1}
	\J^{2^{n_2-i}1^{n_1}2^i}_{2^{n_2-j}1^{n_1}2^j}.
\eeq
\end{lem}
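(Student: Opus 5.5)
The plan is to specialize Proposition~\ref{prop:tail} to $m=2$, expand the polynomial $\DD^{(\bn)}_{\bM,\bN,\bT}(z)$ in powers of $z$, and then evaluate the single $z$-integral in \eqref{def:cQQn} explicitly.

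\textbf{Identifying the integrand.} With $m=2$ and $\bn=(n_1,n_2)$, I first compare \eqref{eq:Pi_n} with Definition~\ref{def:Pisigmaxi} for lists of type $(0,n_1,n_2)$, writing $\bsxi^1=\xi^1$, $\bsxi^2=\xi^2$ and similarly for $\bseta$. Since $a_{12}=b_{12}=0$, \eqref{eq:Pisigmaxi} reduces to
\[
\K_{n_1}(\beo\mid\bxo)\,\K_{n_1+n_2}(\bxo,\bet\mid\beo,\bxt)\,\K_{n_2}(\bxt\mid\bet),
\]
which is exactly $\nPi_{\bn}$. Likewise \eqref{eq:defFF} coincides with $\FF^{\bsigma}_{\btau}$, with $\ff_1,\ff_2$ as in \eqref{eq:ff1212def}.

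\textbf{Expanding in $z$.} Next I expand the product $\prod_{k}\bigl[\int_{\tn{in}}+z\int_{\tn{out}}\bigr]$ for the $n_2$ variables $\xi^2_k$ and the $n_2$ variables $\eta^2_k$. Consider the terms in which exactly $i$ of the $\xi^2$'s lie on $C^{\tn{out}}_{2,\tn{left}}$ and exactly $j$ of the $\eta^2$'s lie on $C^{\tn{out}}_{2,\tn{right}}$. The integrand is symmetric in $\xi^2_1,\ldots,\xi^2_{n_2}$ and in $\eta^2_1,\ldots,\eta^2_{n_2}$. Hence, after relabeling, these terms contribute
\[
\binom{n_2}{i}\binom{n_2}{j}\,z^{i+j}\,\J^{2^{n_2-i}1^{n_1}2^i}_{2^{n_2-j}1^{n_1}2^j}.
\]
Here the $n_2-i$ inner circles sit inside $C_{1,\tn{left}}$ and the $i$ outer circles sit outside it, and likewise on the right. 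Same-type variables may share a contour, by the remark after Definition~\ref{def:Jintegral}, so this matches the nesting in Definition~\ref{def:Jintegral}. Therefore
\[
\DD^{(\bn)}(z)=\sum_{i,j=0}^{n_2}\binom{n_2}{i}\binom{n_2}{j}z^{i+j}\,\J^{2^{n_2-i}1^{n_1}2^i}_{2^{n_2-j}1^{n_1}2^j}.
\]

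\textbf{The $z$-integral.} It remains to compute
\[
I_{p}:=\frac1{2\pi\ii}\oint_{>1}z^{i+j-n_2-1}(z+1)^{n_1-n_2-1}\,\dd z,
\]
with overall sign $(-1)^{n_1+n_2+2}=(-1)^{n_1+n_2}$.

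If $n_1\ge n_2+1$, the integrand has a pole only at $0$. Then $I_p$ is the coefficient of $z^{n_2-i-j}$ in $(z+1)^{n_1-n_2-1}$, namely $\binom{n_1-n_2-1}{n_2-i-j}$. This is nonzero exactly when $0\le n_2-i-j\le n_1-n_2-1$, which gives \eqref{eq:case3_simpleQ_greater}.

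If $n_1\le n_2$, set $k=n_2-n_1+1\ge1$. The circle encloses both $0$ and $-1$, so I evaluate $I_p$ as minus the residue at infinity, i.e.\ the coefficient of $z^{-1}$ in the expansion at $\infty$. Expanding $(z+1)^{-k}=z^{-k}\sum_{l\ge0}\binom{-k}{l}z^{-l}$ picks out $l=i+j+n_1-2n_2-1$, which must be $\ge0$. This is exactly the condition $i+j\ge 2n_2-n_1+1$. Using $\binom{-k}{l}=(-1)^l\binom{k+l-1}{l}=(-1)^l\binom{i+j-n_2-1}{n_2-n_1}$, the total sign is $(-1)^{n_1+n_2+l}=(-1)^{n_2-1+i+j}$, which gives \eqref{eq:case3_simpleQ_less}.

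\textbf{Main obstacle.} I expect the only delicate point to be the bookkeeping in the symmetrization step. One must check that relabeling the $\xi^2$ (resp.\ $\eta^2$) variables leaves $\nPi_{\bn}\FF$ invariant; this holds because each Cauchy determinant changes by the same sign in a row and a column, or not at all. One must also check that the resulting contour configuration is precisely the one prescribed by $\J$ for the lists $2^{n_2-i}1^{n_1}2^i$ and $2^{n_2-j}1^{n_1}2^j$. The residue computations are routine.
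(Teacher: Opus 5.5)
Your proposal is correct and follows the paper's proof essentially step for step. You expand $\DD^{(\bn)}_{\bM,\bN,\bT}(z)$ using the symmetry in the $\xi^2$- and $\eta^2$-variables, obtain the coefficient $\binom{n_2}{i}\binom{n_2}{j}\J^{2^{n_2-i}1^{n_1}2^i}_{2^{n_2-j}1^{n_1}2^j}$ of $z^{i+j}$, and evaluate the single $z$-integral by the residue at $0$ when $n_1\ge n_2+1$ and by the expansion at infinity when $n_1\le n_2$; your index ranges, the identity $\binom{-k}{l}=(-1)^l\binom{i+j-n_2-1}{n_2-n_1}$, and the final sign $(-1)^{n_2-1+i+j}$ all check out.
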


\begin{proof}
Expanding \eqref{eq:D_hat_n} for $m=2$ as a polynomial in $z$ and using the symmetry in each set of variables $\xi_1^i,\ldots,\xi_{n_i}^i$ and $\eta_1^i,\ldots,\eta_{n_i}^i$ for $i=1,2$, we find that 
\beqq
	\DD^{(n_1,n_2)}_{\bM,\bN,\bT}(z)
	=\sum_{i,j=0}^{n_2}
	\binom{n_2}{i}\binom{n_2}{j}
	\J^{2^{n_2-i}1^{n_1}2^i}_{2^{n_2-j}1^{n_1}2^j} \, z^{i+j}.
\eeqq

Substituting this into \eqref{def:cQQn}, we obtain
\beqq
	\QQ_2^{(n_1,n_2)}
	=(-1)^{n_1+n_2}
	\sum_{i,j=0}^{n_2}
	\binom{n_2}{i}\binom{n_2}{j}
	\J^{2^{n_2-i}1^{n_1}2^i}_{2^{n_2-j}1^{n_1}2^j} \, C_{n_1,n_2}(i+j),
	\quad
	C_{n_1,n_2}(k)
	:=\frac{1}{2\pi\ii}\oint_{>1}
	\frac{(1+z)^{n_1-n_2-1}}{z^{n_2+1-k}}\,\dd z.
\eeqq
If $n_1\ge n_2+1$, then
\beqq
	C_{n_1,n_2}(k)
	=\binom{n_1-n_2-1}{n_2-k}
	\mathbf 1_{2n_2-n_1+1\le k\le n_2},
\eeqq
while, if $n_1\le n_2$, then
\beqq
	C_{n_1,n_2}(k)
	=(-1)^{k-n_1-1}
	\binom{k-n_2-1}{n_2-n_1}
	\mathbf 1_{k\ge2n_2-n_1+1}.
\eeqq
The result follows.
\end{proof}

\begin{cor}\label{result:Qn1n2bd}
For every $n_1,n_2\ge1$,
\beqq
	\left|\QQ_2^{(n_1,n_2)}\right|
	\le %(n_2+1)^2 8^{n_1+n_2}
	2^{5(n_1+n_2)}
	\max_{\substack{0\le i,j\le n_2\\i+j\ge2n_2-n_1+1}}
	\left|
	\J^{2^{n_2-i}1^{n_1}2^i}_{2^{n_2-j}1^{n_1}2^j}
	\right|.
\eeqq
\end{cor}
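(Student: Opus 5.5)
The corollary follows from Lemma~\ref{lem:simpleQ} by bounding the number of terms and the combinatorial coefficients, then pulling out the maximum of $|\J|$ over the index set. Both cases of the lemma sum over pairs $(i,j)$ in $\{0,\ldots,n_2\}^2$ with $i+j\ge 2n_2-n_1+1$. In the first case there is the extra constraint $i+j\le n_2$, which only shrinks the index set further. Either way, every $\J$ that appears is indexed by a pair in the set over which the maximum is taken. So
\beqq
|\QQ_2^{(n_1,n_2)}|\le (n_2+1)^2\, \max_{i,j}\Bigl(\tbinom{n_2}{i}\tbinom{n_2}{j}\,|c_{ij}|\Bigr)\,\max_{\substack{0\le i,j\le n_2\\ i+j\ge 2n_2-n_1+1}}|\J^{2^{n_2-i}1^{n_1}2^i}_{2^{n_2-j}1^{n_1}2^j}|,
\eeqq
where $c_{ij}$ is the third binomial coefficient in the relevant case.

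Next we bound each factor by a power of $2$.
\begin{enumerate}
\item $\binom{n_2}{i}\binom{n_2}{j}\le 2^{2n_2}$.
\item If $n_1\ge n_2+1$, then $\binom{n_1-n_2-1}{n_2-i-j}\le 2^{n_1}$.
\item If $n_1\le n_2$, then $\binom{i+j-n_2-1}{n_2-n_1}\le 2^{i+j-n_2-1}\le 2^{n_2}$, using $i+j\le 2n_2$. Here the upper entry is nonnegative because $i+j\ge 2n_2-n_1+1\ge n_2+1$.
\item $(n_2+1)^2\le 4^{n_2}$.
\end{enumerate}
Multiplying these gives at most $2^{2n_2}\cdot 2^{\max(n_1,n_2)}\cdot 2^{2n_2}\le 2^{5(n_1+n_2)}$, which is the stated bound.

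The one point needing care is the upper entry of the third binomial in the case $n_1\le n_2$. As checked in item 3, it is nonnegative on the summation range, so the coefficient is an ordinary binomial coefficient and the bound $2^{\text{upper entry}}$ applies. No convergence or analytic issues arise, since all sums are finite.
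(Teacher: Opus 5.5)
Your proposal is correct and follows the same approach as the paper's proof. You apply Lemma~\ref{lem:simpleQ}, bound each binomial coefficient by $2^{m}$ and the number of terms by $(n_2+1)^2\le 4^{n_2}$, and pull out the maximum of $|\J|$; your check that the upper entry $i+j-n_2-1$ is nonnegative when $n_1\le n_2$ is a detail the paper leaves implicit.
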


\begin{proof}
This follows from Lemma~\ref{lem:simpleQ} and the bounds $\binom{m}{k}\le2^m$ and $n_2+1\le 2^{n_2}$. 
\end{proof}

%%%%%%%%%%%%%%%%%%%%%%%%%
\subsection{Contour-rearrangement identities}
\label{subsec:contourrearrangeiden}

The $\J$-integrals appearing in $\QQ_2^{(n_1,n_2)}$ in Lemma~\ref{lem:simpleQ} inherit the nesting of the original contours. This nesting is not always compatible with the ordering of the critical points needed for steepest-descent analysis. To deform the contours through the relevant critical points, we therefore need to rearrange their nesting. In doing so, contours may cross poles of the integrand and produce residue contributions. The following lemma gives the corresponding contour-rearrangement identities. An analogous result for $\mc A_3=\{1,2,3,12,23,123\}$ was proved in \cite[Corollary 7.23]{Baik-Cordaro-Tripathi25}.

\begin{lem}\label{result:originalintexprss}
Define
\beqq
	\dc_k^{m,n}:=k!\binom{m}{k}\binom{n}{k}.
\eeqq
For a list containing the indicated consecutive blocks, define
\beqq
	\mathrm{R}_k^{12}(\cdots1^m2^n\cdots):=\cdots2^{n-k}(12)^k1^{m-k}\cdots,
	\qquad
	\mathrm{R}_k^{21}(\cdots2^n1^m\cdots):=\cdots1^{m-k}(12)^k2^{n-k}\cdots.
\eeqq

\begin{enumerate}[(a)]
\item
If $\bsigma'$ is obtained from $\bsigma$ by interchanging consecutive blocks $\alpha^m\beta^n$ with $\beta^n\alpha^m$, where $\alpha,\beta\in\mc A_2$ and $\{\alpha,\beta\}\ne\{1,2\}$, then $\J^{\bsigma}_{\btau}=\J^{\bsigma'}_{\btau}$. The analogous statement also holds when two such blocks occur in $\btau$.

\item
Suppose $\type(\btau)=(b_{12},b_1,b_2)$. Then,
\begin{eqnarray}
	&&\J^{\bsigma}_{\btau}
	=\sum_{k=0}^{m\wedge n}(-1)^{k(b_1+1)}\dc_k^{m,n}\J^{\mathrm{R}_k^{12}(\bsigma)}_{\btau},
	\qquad \text{if $\bsigma=\cdots1^m2^n\cdots$,}
	\label{eq:Jup12}\\
	&&\J^{\bsigma}_{\btau}
	=\sum_{k=0}^{m\wedge n}(-1)^{kb_1}\dc_k^{m,n}\J^{\mathrm{R}_k^{21}(\bsigma)}_{\btau},
	\qquad \text{if $\bsigma=\cdots2^n1^m\cdots$.}
	\label{eq:Jup21}
\end{eqnarray}

\item
Suppose $\type(\bsigma)=(a_{12},a_1,a_2)$. Then,
\begin{eqnarray}
	&&\J^{\bsigma}_{\btau}
	=\sum_{k=0}^{m\wedge n}(-1)^{ka_1}\dc_k^{m,n}\J^{\bsigma}_{\mathrm{R}_k^{12}(\btau)},
	\qquad \text{if $\btau=\cdots1^m2^n\cdots$,}
	\label{eq:Jdown12}\\
	&&\J^{\bsigma}_{\btau}
	=\sum_{k=0}^{m\wedge n}(-1)^{k(a_1+1)}\dc_k^{m,n}\J^{\bsigma}_{\mathrm{R}_k^{21}(\btau)},
	\qquad \text{if $\btau=\cdots2^n1^m\cdots$.}
	\label{eq:Jdown21}
\end{eqnarray}
\end{enumerate}
\end{lem}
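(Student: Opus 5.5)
The plan is to read off the pole structure of $\nPi^{\bsigma}_{\btau}\FF^{\bsigma}_{\btau}$ from \eqref{eq:Pisigmaxi}, and then to track what happens when one contour is moved across another.

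\textbf{Pole structure.} Among the left variables, the only singular factors are $1/(r_i-s_j)$ with $r_i$ a row variable and $s_j$ a column variable of the same Cauchy determinant.
\begin{itemize}
\item In $\K_{n_1}(\beot,\beo\mid\bxot,\bxo)$ the $\xi$'s are all column variables.
\item In $\K_{n_2}(\bxot,\bxt\mid\beot,\bet)$ the $\xi$'s are all row variables.
\item In the middle determinant $\K_{a_1+b_2}(\bxo,\bet\mid\beo,\bxt)$, $\bxo$ are rows and $\bxt$ are columns.
\end{itemize}
Hence the only $\xi$–$\xi$ singularity is at $\xi^1_i=\xi^2_j$, which is a simple pole. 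Likewise the only $\eta$–$\eta$ singularity is at $\eta^1_i=\eta^2_j$, again from the middle determinant. The factors $\ff_*$ are analytic on the small circles away from the centres. Moreover, a variable $\xi^{12}$ has no pole with any other $\xi$, and the same holds for $\eta^{12}$.

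\textbf{Part (a).} This is immediate from the pole structure. When $\{\alpha,\beta\}\neq\{1,2\}$, the contours of blocks $\alpha^m$ and $\beta^n$ can be interchanged without crossing a pole. The same argument applies in $\btau$.

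\textbf{Part (b), single-variable step.} First I would treat $m=n=1$, that is, $\bsigma=\cdots 12\cdots$ with $\xi^1$ on the inner circle and $\xi^2$ on the next one. Moving the $\xi^1$ contour outward past the $\xi^2$ contour yields $\J^{\cdots21\cdots}$ plus a residue integral at $\xi^1=\xi^2=:\zeta$.

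To compute the residue, expand the middle Cauchy determinant along the row of $\xi^1$. The residue of $1/(\xi^1-\xi^2)$ removes that row and the column of $\xi^2$, leaving a Cauchy determinant of size $a_1+b_2-1$ with sign $(-1)^{p+q}$. Here $p$ is the row index of $\xi^1$ and $q=b_1+(\text{index of }\xi^2\text{ among }\bxt)$ is its column index. On the diagonal, $\ff_1(\zeta)\ff_2(\zeta)=\ff_{12}(\zeta)$. In the outer determinants, $\zeta$ now occupies the column slot of a $\xi^1$ in $\K_{n_1}$ and the row slot of a $\xi^2$ in $\K_{n_2}$. Relabelling $\zeta$ as a new $\xi^{12}$ requires permuting that column and that row to the front of the $\bxot$ block. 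This contributes further signs, which should combine with $(-1)^{p+q}$ into $(-1)^{b_1+1}$ for the order $12$ and $(-1)^{b_1}$ for the order $21$. The orientation of the move (inner past outer versus outer past inner) gives the extra $-1$. The resulting integrand is exactly $\nPi\FF$ for the list with $12$ replaced by $(12)$. The new contour is one of the two circles, and by part (a) its position relative to the $\xi^1$, $\xi^{12}$ blocks does not matter.

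\textbf{Part (b), general blocks.} For $1^m2^n$ I would induct by moving the $\xi^1$ contours outward one at a time past the whole $2^n$ block, collecting residues. A residue variable $\xi^{12}$ has no further poles, so each term is determined by the set of pairs $(\xi^1_i,\xi^2_j)$ that were collapsed. These pairs form a partial matching of size $k$ between the $m$ variables $\xi^1$ and the $n$ variables $\xi^2$. The symmetry of the integrand within each family lets all such matchings be identified, and their number is $\binom mk\binom nk k!=\dc_k^{m,n}$. The signs multiply to $(-1)^{k(b_1+1)}$ or $(-1)^{kb_1}$ respectively. Part (a) then reorders the surviving blocks into $2^{n-k}(12)^k1^{m-k}$, respectively $1^{m-k}(12)^k2^{n-k}$. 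Part (c) is the same argument for the $\eta$ variables, where now $\beo$ are column variables and $\bet$ are row variables of the middle determinant; the sign count then involves $a_1$ instead of $b_1$.

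\textbf{Main obstacle.} I expect the main difficulty to be the sign bookkeeping in the single-variable step. One must check that the permutation signs from the outer determinants, the Laplace-expansion sign and the orientation sign combine into exactly $(-1)^{b_1+1}$, and that this sign is independent of which $\xi^1$ and $\xi^2$ are paired, since only then do the $\dc_k^{m,n}$ terms add coherently. I would first verify this on the smallest case $\J^{12}_{\btau}$ with $\btau\in\{12,21,(12)\}$ directly before doing the general count.
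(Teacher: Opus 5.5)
Your proposal is correct and follows the same route as the paper: move the contours one at a time, pick up the simple poles at $\xi^1=\xi^2$ (or $\eta^1=\eta^2$), which come only from the middle Cauchy determinant, and count the $\dc_k^{m,n}$ partial matchings. Your sign bookkeeping closes as you expect. The Laplace sign $(-1)^{p+q}$ times the outer-determinant permutation signs $(-1)^{p+q-b_1}$ gives $(-1)^{b_1}$ per residue, independent of the pairing, and the orientation of the move supplies the extra $-1$ in the $12$ case.

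One detail in part (c): $\eta^1$ is a column variable of the middle determinant, so the residue of $1/(\eta^2-\eta^1)$ in $\eta^1$ carries an additional $-1$. That is why the $+1$ in the exponent sits in the $21$ case there rather than in the $12$ case.
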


\begin{proof}
We move the relevant contours one at a time and apply Cauchy's residue theorem. The coefficient $\dc_k^{m,n}$ counts the possible choices and pairings of the $k$ residues.
\end{proof}

Note that since $\binom{m}{k}\le2^m$ and $\binom{n}{k}\le2^n$,
\beq\label{eq:dcest}
	\dc_k^{m,n}\le2^{m+n}k!.
\eeq
We use this estimate repeatedly in later sections. 

Applying the above identities to the $\J$-integrals appearing in Lemma~\ref{lem:simpleQ} gives the following bounds.

\begin{cor} \label{result:Jintrearboundgen}
Let $n_1,n_2\in\N$ and $0\le i,j\le n_2$. Define\footnote{The superscript and subscript indicate whether the $1$- and $2$-blocks in the left and right lists, respectively, are ordered as $12$ or $21$.} 
\beqq
\begin{aligned}
	\J^{12}_{12}(a,b,n_1,n_2)&:=\J^{1^{n_1-a}(12)^a2^{n_2-a}}_{1^{n_1-b}(12)^b2^{n_2-b}},
	&\qquad
	\J^{12}_{21}(a,b,n_1,n_2)&:=\J^{1^{n_1-a}(12)^a2^{n_2-a}}_{2^{n_2-b}(12)^b1^{n_1-b}},\\
	\J^{21}_{12}(a,b,n_1,n_2)&:=\J^{2^{n_2-a}(12)^a1^{n_1-a}}_{1^{n_1-b}(12)^b2^{n_2-b}},
	&\qquad 
	\J^{21}_{21}(a,b,n_1,n_2)&:=\J^{2^{n_2-a}(12)^a1^{n_1-a}}_{2^{n_2-b}(12)^b1^{n_1-b}}.
\end{aligned}
\eeqq
Then
\beqq
	\left| \J^{2^{n_2-i}1^{n_1}2^i}_{2^{n_2-j}1^{n_1}2^j} \right| \le 2^{2(n_1+n_2)} S^{\alpha}_{\beta}
	\qquad
	\text{for each } \alpha,\beta \in\{12,21\},
\eeqq
where
\beqq
\begin{alignedat}{2}
	S^{12}_{12}
	&:=
	\sum_{a=0}^{n_1\wedge(n_2-i)}
	\sum_{b=0}^{n_1\wedge(n_2-j)}
	a!b!\,|\J^{12}_{12}(a,b,n_1,n_2)|,
	\qquad&
	S^{12}_{21}
	&:=
	\sum_{a=0}^{n_1\wedge(n_2-i)}
	\sum_{b=0}^{n_1\wedge j}
	a!b!\,|\J^{12}_{21}(a,b,n_1,n_2)|,\\
	S^{21}_{12}
	&:=
	\sum_{a=0}^{n_1\wedge i}
	\sum_{b=0}^{n_1\wedge(n_2-j)}
	a!b!\,|\J^{21}_{12}(a,b,n_1,n_2)|,
	\qquad&
	S^{21}_{21}
	&:=
	\sum_{a=0}^{n_1\wedge i}
	\sum_{b=0}^{n_1\wedge j}
	a!b!\,|\J^{21}_{21}(a,b,n_1,n_2)|.
\end{alignedat}
\eeqq
\end{cor}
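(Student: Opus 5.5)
The plan is to apply the rearrangement identities of Lemma~\ref{result:originalintexprss} twice: once to the $\bsxi$-list and once to the $\bseta$-list. Each application uses the triangle inequality together with the estimate \eqref{eq:dcest}. Start from the list $\bsigma=2^{n_2-i}1^{n_1}2^i$. It contains two $2$-blocks, separated by the $1$-block, and the task is to reduce it to one of the two canonical forms $1^{n_1-a}(12)^a2^{n_2-a}$ or $2^{n_2-a}(12)^a1^{n_1-a}$.

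Consider first the target $\alpha=12$. Apply \eqref{eq:Jup21} to the consecutive blocks $2^{n_2-i}1^{n_1}$, which gives
\beqq
	\J^{\bsigma}_{\btau}=\sum_{a=0}^{n_1\wedge(n_2-i)}(-1)^{ab_1}\dc_a^{n_1,n_2-i}\,\J^{1^{n_1-a}(12)^a2^{n_2-i-a}2^i}_{\btau}.
\eeqq
The two adjacent $2$-blocks merge into $2^{n_2-a}$, since consecutive contours with the same superscript may be collapsed. The result is exactly $1^{n_1-a}(12)^a2^{n_2-a}$, and the range of $a$ matches $S^{12}_{\cdot}$. For the target $\alpha=21$, use \eqref{eq:Jup12} on the blocks $1^{n_1}2^i$ instead. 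This produces $2^{n_2-i}2^{i-a}(12)^a1^{n_1-a}=2^{n_2-a}(12)^a1^{n_1-a}$ with $a\le n_1\wedge i$, again matching the stated ranges.

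Next, for each resulting term, perform the same operation on the $\btau$-list $2^{n_2-j}1^{n_1}2^j$, using \eqref{eq:Jdown21} or \eqref{eq:Jdown12} according to whether $\beta=12$ or $21$. The signs depend on the type of the $\bsigma$-list, but they disappear once absolute values are taken. This yields the sums over $b$ with the stated ranges. The only point needing care is to confirm that the identities in parts (b) and (c) apply to a block pair sitting inside a longer list, with neighbouring blocks left untouched; this is how Lemma~\ref{result:originalintexprss} is stated, using the ``$\cdots$'' notation.

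Finally, bound the coefficients using \eqref{eq:dcest}. This gives $\dc_a^{n_1,n_2-i}\le 2^{n_1+n_2-i}a!\le 2^{n_1+n_2}a!$, and similarly $\dc_b^{n_1,n_2-j}\le 2^{n_1+n_2}b!$ (or the analogue with $j$ in place of $n_2-j$). Multiplying the two bounds gives the prefactor $2^{2(n_1+n_2)}$, and the triangle inequality then yields $|\J^{2^{n_2-i}1^{n_1}2^i}_{2^{n_2-j}1^{n_1}2^j}|\le 2^{2(n_1+n_2)}S^\alpha_\beta$ for each of the four choices of $(\alpha,\beta)$.

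There is no serious obstacle here. The main bookkeeping step is to check that, after each rearrangement, the merged $2$-blocks really do produce exactly the canonical list defining $\J^{\alpha}_{\beta}(a,b,n_1,n_2)$.
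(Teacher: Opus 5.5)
Your proposal is correct and follows the paper's argument. The paper applies \eqref{eq:Jup21} or \eqref{eq:Jup12} to the superscript list and \eqref{eq:Jdown21} or \eqref{eq:Jdown12} to the subscript list, then bounds the coefficients by \eqref{eq:dcest} and uses the triangle inequality; your identification of the block pairs ($2^{n_2-i}1^{n_1}$ versus $1^{n_1}2^i$, and likewise for $\btau$) and your check of the resulting lists and summation ranges fill in bookkeeping the paper leaves implicit.
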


\begin{proof}
Apply either \eqref{eq:Jup12} or \eqref{eq:Jup21} to the superscript and either \eqref{eq:Jdown12} or \eqref{eq:Jdown21} to the subscript, and use \eqref{eq:dcest}.
\end{proof}

We record the following bound that we use in several places: 
For $n_1,n_2\in\N$ and $0\le a,b\le n_1\wedge n_2$, 
\beq\label{eq:triplfactorial}
	a!\,b!(n_1+n_2-a-b)!
	\le2^{n_1+n_2}n_1!n_2!.
\eeq
Indeed, $a!b!(n_1+n_2-a-b)!\le(n_1+n_2)!$, while $\binom{n_1+n_2}{n_1}\le2^{n_1+n_2}$, which gives \eqref{eq:triplfactorial}.

%%%%%%%%%
\begin{comment}
\begin{lem}
Let $C>0$. Then
\begin{equation}
\label{eq:generaln1n2absumest}
\sum_{n_1,n_2\in\N}
\frac{C^{n_1+n_2}}
{(n_1!n_2!)^{3/2}}
\sum_{0\leq a,b\leq n_1\wedge n_2}
a!b!\sqrt{(n_1+n_2-a-b)!}
<\infty.
\end{equation}
\end{lem}

\begin{proof}
Since $k^k\leq e^k k!$ for every $k\in\N$, the sum is bounded by
\begin{equation*}
\sum_{n_1,n_2\in\N}
\frac{(eC)^{n_1+n_2}}{(n_1!n_2!)^{3/2}}
\sum_{0\leq a,b\leq n_1\wedge n_2}
a!b!\sqrt{(n_1+n_2-a-b)!}.
\end{equation*}
Moreover,
\begin{equation*}
a!b!(n_1+n_2-a-b)!\leq(n_1+n_2)!
\leq 2^{n_1+n_2}n_1!n_2!.
\end{equation*}
Since the number of admissible pairs $(a,b)$ is at most $2^{n_1+n_2}$, the sum is bounded by
\begin{equation*}
\sum_{n_1,n_2\in\N}
\frac{(4eC)^{n_1+n_2}}{\sqrt{n_1!n_2!}},
\end{equation*}
which converges. Hence, the result follows.
\end{proof}
\end{comment}
%%%%%%%%

%%%%%%%%%%%%%%%%%%%%%%%%%%%%%%%
%%%%%%%%%%%%%%%%%%%%%%%%%%%%%%%
\section{Critical points}
\label{sec:cp}

The contour-rearrangement identities of Section~\ref{sec:integrals} allow us to place the contours in an order suitable for steepest-descent analysis. To determine the required contour ordering, we analyze the phase functions associated with $\ff_1$, $\ff_2$, and $\ff_{12}$ in \eqref{eq:ff1212def}. In this section, we compute their critical points and determine their ordering. For the $U_2$ regime, it suffices by symmetry to analyze the case $y<x$; the case $y>x$ follows by simultaneously exchanging $\ac\leftrightarrow\bc$ and $x\leftrightarrow y$.

%%%%%%%%%%%%%%%%%%%%%%%%%%%%%%%
\subsection{Phase functions}
\label{sec:phasefunctions}

Let $\ac,\bc,\lv>0$ satisfy $\lv>(\sqrt \ac+\sqrt \bc)^2$. For $(x,y)\in\R_+^2$, let $\mv=\mv(x,y)$ be given by \eqref{eq:LLNconjv}. In particular,
\begin{eqnarray}
	&&\mv=\lv+\bigl(\sqrt{\ac(x-1)}+\sqrt{\bc(y-1)}\bigr)^2,
	\qquad (x,y)\in\overline{U}_1,
	\label{eq:mvu112}\\
	&&\mv=\frac12\bigl[(\lv+\ac-\bc-\sqrt{\discr})x+(\lv-\ac+\bc+\sqrt{\discr})y\bigr],
	\qquad (x,y)\in\overline{U}_2^<.
	\label{eq:mvu212}
\end{eqnarray}
The two expressions agree on the boundary between $U_1$ and $U_2^<$.

For $(x,y)\in \overline{U}_1 \cup \overline{U}_2^<$, define the functions\footnote{Throughout the paper, we take the branch cut of $\log(z+1)$ to be $(-\infty,-1]$ and the branch cut of $\log z$ to be $[0,\infty)$.}
\beq
\label{eq:allphasefunctions}
\begin{split}
	\Thetao(z)&:=-\ac\log(1+z)+\bc\log z+\lv z,\\
	\Thetat(z)&:=-\ac x\log(1+z)+\bc y\log z+\mv z,\\
	\Thetar(z)&:=-\ac(x-1)\log(1+z)+\bc(y-1)\log z+(\mv-\lv)z.
%	\tGf(z):=-(1-x)a\log(1+z)+(1-y)b\log z+(\lv-\mv)z=-\tGr(z).
\end{split}
\eeq
Define the quadratic polynomials
\beq
\begin{split}
	\tpo(z)&:=\lv z^2+(\lv-\ac+\bc)z+\bc,\\
	\tpt(z)&:=\mv z^2+(\mv-\ac x+\bc y)z+\bc y,\\
	\tpr(z)&:=(\mv-\lv)z^2+\bigl(\mv-\lv-\ac(x-1)+\bc(y-1)\bigr)z+\bc(y-1),
\end{split}
\eeq
so that
\beq\label{eq:tGdertp}
	\Theta_i'(z)=\frac{\tp_i(z)}{z(z+1)},
	\qquad i=1,2,3.
\eeq
Note the identities
\beq
\label{eq:tGide}
	\Thetao(z) + \Thetar(z)=\Thetat(z), \qquad \tpo(z)+ \tpr(z)=\tpt(z). 
\eeq

Set
\beq\label{eq:ppthree}
	\pp:=-\frac{\lv-\ac+\bc-\sqrt \discr}{2\lv}
	=-\frac{2\bc}{\lv-\ac+\bc+\sqrt \discr}
	=-\frac{\lv-\ac-\bc-\sqrt \discr}{\lv+\ac-\bc-\sqrt \discr},
\eeq
where the identities follow from $\discr=\lv^2-2(\ac+\bc)\lv+(\ac-\bc)^2$.

%%%%%%%%%%%%%%%%%%%%%%%%%%%%%%%
%\subsection{The conditioning phase}
%\subsection{Critical points of $\tGo$}

\begin{lem}\label{result:cpconditioning}
The roots of $\tpo$ are
\beq\label{eq:cpconditioning}
	(\tzo^-,\tzo^+)
	=\left(-\frac{\lv-\ac+\bc+\sqrt \discr}{2\lv},\pp\right)
	= \left( -\frac{\sqrt{\slope \bc}}{\sqrt \ac+\sqrt{\slope \bc}}, -\frac{\sqrt \bc}{\sqrt{\slope \ac}+\sqrt \bc} \right), 
\eeq
and they satisfy
\beqq
	-1<\tzo^-<\tzo^+<0.
\eeqq
\end{lem}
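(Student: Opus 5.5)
Solving $\tpo(z)=\lv z^2+(\lv-\ac+\bc)z+\bc=0$ by the quadratic formula gives the two roots
\[
z=\frac{-(\lv-\ac+\bc)\pm\sqrt{(\lv-\ac+\bc)^2-4\lv\bc}}{2\lv}.
\]
The discriminant here is $(\lv-\ac+\bc)^2-4\lv\bc=\lv^2-2(\ac+\bc)\lv+(\ac-\bc)^2=\discr$. We have $\discr>0$ because $\discr=(\lv-(\sqrt\ac+\sqrt\bc)^2)(\lv-(\sqrt\ac-\sqrt\bc)^2)$ and $\lv>(\sqrt\ac+\sqrt\bc)^2$. So the two roots are real and distinct. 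The root with the minus sign is $\tzo^-$, and the root with the plus sign is $\tzo^+=\pp$ by the first expression in \eqref{eq:ppthree}.

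For the inequalities, use the sign of $\tpo$ at the endpoints. We have $\tpo(0)=\bc>0$ and $\tpo(-1)=\lv-(\lv-\ac+\bc)+\bc=\ac>0$. Since the leading coefficient is $\lv>0$, the vertex lies at $-(\lv-\ac+\bc)/(2\lv)$. This point is in $(-1,0)$ because $|\ac-\bc|<\lv$, which follows from $\lv>(\sqrt\ac+\sqrt\bc)^2\ge|\ac-\bc|$. Both roots are real, the vertex is in $(-1,0)$, and $\tpo$ is positive at $-1$ and at $0$, so both roots lie in $(-1,0)$. Distinctness gives $-1<\tzo^-<\tzo^+<0$.

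It remains to prove the $\slope$-form expressions in \eqref{eq:cpconditioning}.

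\emph{The root $\tzo^+$.} The third expression in \eqref{eq:ppthree} gives $\pp=-(\lv-\ac-\bc-\sqrt\discr)/(\lv+\ac-\bc-\sqrt\discr)$. The target is $-\sqrt\bc/(\sqrt{\slope\ac}+\sqrt\bc)$. After cross-multiplying, it suffices to show
\[
\slope\ac\,(\lv-\ac-\bc-\sqrt\discr)^2=\bc\,(2\ac)^2 .
\]
To get this, write $L:=\lv-\ac-\bc$, so that $\discr=L^2-4\ac\bc$ and $(L-\sqrt\discr)(L+\sqrt\discr)=4\ac\bc$. The definition \eqref{eq:slopedef} then gives
\[
\slope=\frac{L+\sqrt\discr}{L-\sqrt\discr}=\frac{(L+\sqrt\discr)^2}{4\ac\bc}.
\]
Hence $\sqrt{\slope\ac}=(L+\sqrt\discr)/(2\sqrt\bc)$, where $L+\sqrt\discr>0$ because $L>0$. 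It follows that
\[
\sqrt{\slope\ac}+\sqrt\bc=\frac{L+\sqrt\discr+2\bc}{2\sqrt\bc}=\frac{\lv-\ac+\bc+\sqrt\discr}{2\sqrt\bc}.
\]
Therefore $-\sqrt\bc/(\sqrt{\slope\ac}+\sqrt\bc)=-2\bc/(\lv-\ac+\bc+\sqrt\discr)$, which is the second expression for $\pp$ in \eqref{eq:ppthree}.

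\emph{The root $\tzo^-$.} By the same computation with $\ac$ and $\bc$ exchanged, $\sqrt{\slope\bc}=(L+\sqrt\discr)/(2\sqrt\ac)$. Then
\[
\sqrt\ac+\sqrt{\slope\bc}=\frac{\lv+\ac-\bc+\sqrt\discr}{2\sqrt\ac},
\]
and so
\[
-\frac{\sqrt{\slope\bc}}{\sqrt\ac+\sqrt{\slope\bc}}=-\frac{L+\sqrt\discr}{\lv+\ac-\bc+\sqrt\discr}.
\]
To check that this equals $-(\lv-\ac+\bc+\sqrt\discr)/(2\lv)$, cross-multiply. It suffices to verify
\[
(\lv-\ac+\bc+\sqrt\discr)(\lv+\ac-\bc+\sqrt\discr)=2\lv(L+\sqrt\discr).
\]
The left side expands to $(\lv+\sqrt\discr)^2-(\ac-\bc)^2=\lv^2+2\lv\sqrt\discr+\discr-(\ac-\bc)^2$. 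Substituting the definition of $\discr$, this becomes $2\lv^2-2(\ac+\bc)\lv+2\lv\sqrt\discr=2\lv(L+\sqrt\discr)$, which is the right side.

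The only real care needed is in these last algebraic identities. In particular, taking square roots requires checking signs, which is guaranteed by $L+\sqrt\discr>0$.
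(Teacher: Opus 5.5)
Your proof is correct and follows the same route as the paper: the quadratic formula with discriminant $\discr$, combined with the identity $\sqrt{\slope}=(\lv-\ac-\bc+\sqrt\discr)/(2\sqrt{\ac\bc})$. That identity is exactly your formula for $\sqrt{\slope\ac}$ and $\sqrt{\slope\bc}$, and you also spell out the ordering check ($\tpo(-1)=\ac>0$, $\tpo(0)=\bc>0$, vertex in $(-1,0)$) that the paper leaves implicit; note only that your local abbreviation $L:=\lv-\ac-\bc$ clashes with the paper's large parameter $L$.
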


\begin{proof} 
It follows from the quadratic formula, and noting from the formula \eqref{eq:slopedef} that 
\beq \label{eq:sqslope}
	\sqrt{\slope}
	=\frac{\lv-\ac-\bc+\sqrt \discr}{2\sqrt{\ac\bc}}
	=\frac{2\sqrt{\ac\bc}}{\lv-\ac-\bc-\sqrt \discr}.
\eeq
\end{proof}

%%%%%%%%%%%%%%%%%%%%%%%%%%%%%%%
\subsection{Critical points in $\overline{U}_1 \cap (1,\infty)^2$}

\begin{lem}\label{result:cpU1}
Assume $(x,y)\in\overline{U}_1\cap (1,\infty)^2$. The roots of $\tpt$ are
\beq\label{eq:cpU1_Q}
	(\tzt^-,\tzt^+)
	=\left(
	-\frac{\mv-\ac x+\bc y+\sqrt{\tQ}}{2\mv},
	-\frac{\mv-\ac x+\bc y-\sqrt{\tQ}}{2\mv}
	\right),
	\qquad 
	\tQ:=\mv^2-2(\ac x+\bc y)\mv+(\ac x-\bc y)^2. 
\eeq
We have $\tQ>0$. The quadratic polynomial $\tpr$ has the double root 
\beq\label{eq:U1cpformula}
	\tzr:= \tzr^-=\tzr^+
	=-\frac{\sqrt{\bc(y-1)}}{\sqrt{\ac(x-1)}+\sqrt{\bc(y-1)}}.
\eeq
For $(x,y)\in U_1$,
\beq\label{eq:cpU1_order}
	-1<\tzo^-<\tzt^-<\tzr<\tzt^+<\tzo^+=\pp<0.
\eeq
For $(x,y)$ on the boundary between $U_1$ and $U_2^<$,
\beq\label{eq:cpU1U2_order}
	-1<\tzo^-<\tzt^-<\tzr=\tzt^+=\tzo^+=\pp<0.
\eeq
%At $(x,y)=(1,1)$, we have $\tpt=\tpo$ and $\tpr=0$.
\end{lem}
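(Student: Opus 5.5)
Since $\tpt$ is an explicit quadratic, the plan is to use the quadratic formula for the roots and then settle the ordering \eqref{eq:cpU1_order}--\eqref{eq:cpU1U2_order} through sign evaluations of $\tpo,\tpt,\tpr$ at the relevant points, using $\tpt=\tpo+\tpr$ from \eqref{eq:tGide}. Formula \eqref{eq:cpU1_Q} comes directly from the quadratic formula, and its discriminant is $(\mv-\ac x+\bc y)^2-4\mv\bc y=\tQ$.

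For $\tpr$, substitute $\mv-\lv=(\sqrt{\ac(x-1)}+\sqrt{\bc(y-1)})^2=:S^2$ from \eqref{eq:mvu112}, and set $A=\sqrt{\ac(x-1)}$, $B=\sqrt{\bc(y-1)}$. The linear coefficient becomes $S^2-A^2+B^2=2B(A+B)$, so $\tpr(z)=(A+B)^2z^2+2B(A+B)z+B^2=((A+B)z+B)^2$. This has the double root \eqref{eq:U1cpformula}, and $\tzr\in(-1,0)$. The key consequence is that $\tpr\ge0$ on $\R$, with equality only at $\tzr$.

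Positivity of $\tQ$ and the ordering then follow from sign considerations. Evaluate $\tpt(\tzr)=\tpo(\tzr)$. By Lemma~\ref{result:cpconditioning}, $\tpo<0$ strictly inside $(\tzo^-,\tzo^+)$. If $\tzo^-<\tzr<\tzo^+$, then $\tpt(\tzr)<0$, so $\tpt$ (with leading coefficient $\mv>0$) has two distinct real roots straddling $\tzr$; this gives $\tQ>0$. Moreover, $\tpt=\tpo+\tpr\ge\tpo$, with equality only at $\tzr$. Hence $\tpt(\tzo^\pm)=\tpr(\tzo^\pm)\ge0$, which places $\tzo^\pm$ outside the open interval $(\tzt^-,\tzt^+)$. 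Combined with $\tpt(\tzr)<0$, this yields $\tzo^-\le\tzt^-<\tzr<\tzt^+\le\tzo^+$. The inequalities are strict whenever $\tzr\neq\tzo^\pm$. The endpoints $-1<\cdots<0$ are handled by $\tpt(0)=\bc y>0$, $\tpt(-1)=\ac x>0$, and the fact that the vertex lies in $(-1,0)$.

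The main step is therefore locating $\tzr$ relative to $\tzo^\pm$ in terms of the region. By \eqref{eq:cpconditioning}, $\tzo^+=-\sqrt\bc/(\sqrt{\slope\ac}+\sqrt\bc)$. Comparing this with $\tzr=-B/(A+B)$, the map $t\mapsto -t/(1+t)$ is decreasing, so $\tzr<\tzo^+$ iff $B/A>\sqrt{\bc}/\sqrt{\slope\ac}$, i.e.\ iff $(y-1)/(x-1)>1/\slope$. Similarly, $\tzr>\tzo^-=-\sqrt{\slope\bc}/(\sqrt\ac+\sqrt{\slope\bc})$ iff $(y-1)/(x-1)<\slope$. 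Thus in $U_1$ both inequalities are strict, which gives \eqref{eq:cpU1_order}.

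On the lower boundary $(y-1)/(x-1)=1/\slope$ we have $\tzr=\tzo^+=\pp$. Then $\tpt(\pp)=\tpo(\pp)+\tpr(\pp)=0$, so $\pp$ is a root of $\tpt$. It must be $\tzt^+$: since $\tpt'(\pp)=\tpo'(\pp)>0$ (as $\tpr'(\tzr)=0$), it is the larger root. The inequality $\tzo^-<\tzt^-$ remains strict because $\tpt(\tzo^-)=\tpr(\tzo^-)>0$. This gives \eqref{eq:cpU1U2_order}.

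The only delicate point is the bookkeeping linking $\slope$, via \eqref{eq:sqslope}, to the second forms of $\tzo^\pm$ in \eqref{eq:cpconditioning}, and that is already supplied by Lemma~\ref{result:cpconditioning}.
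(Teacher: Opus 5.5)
Your argument is sound, and for the root ordering it is essentially the paper's argument. It uses the same comparison of $\tzr$ with the second forms of $\tzo^\pm$ via the decreasing map $t\mapsto -t/(1+t)$, and the same sign evaluations $\tpt(\tzo^\pm)=\tpr(\tzo^\pm)$ and $\tpt(\tzr)=\tpo(\tzr)$. On the lower boundary it uses the same facts $\tpt(\pp)=0$ and $\tpt'(\pp)=\tpo'(\pp)>0$.

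The one real difference is how you obtain $\tQ>0$. The paper proves it directly. It factors $\tQ=\bigl(\mv-(\sqrt{\ac x}+\sqrt{\bc y})^2\bigr)\bigl(\mv-(\sqrt{\ac x}-\sqrt{\bc y})^2\bigr)$ and shows the first factor is positive by an algebraic inequality that uses $1/\slope\le(y-1)/(x-1)\le\slope$. This handles all of $\overline{U}_1\cap(1,\infty)^2$ at once. You instead get $\tQ>0$ as a by-product of locating the roots. That is shorter and makes the paper's separate computation unnecessary, but it is case-by-case.

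As written, your case analysis never treats the upper boundary $y-1=\slope(x-1)$. That set lies in $\overline{U}_1\cap(1,\infty)^2$, and the lemma still asserts $\tQ>0$ there. On it $\tzr=\tzo^-$, so $\tpt(\tzr)=0$ and your interior argument gives nothing. The fix is the simple-root observation you already use at $\pp$. Since $\tpr'(\tzr)=0$, one has $\tpt'(\tzo^-)=\tpo'(\tzo^-)=\lv(\tzo^--\tzo^+)\neq0$, so $\tzo^-$ is a simple real root of $\tpt$ and therefore $\tQ>0$. More uniformly: for $\tzr\in[\tzo^-,\tzo^+]$, either $\tpt(\tzr)<0$, or $\tzr\in\{\tzo^-,\tzo^+\}$ is a simple root of $\tpt$.

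Finally, your remark about the vertex lying in $(-1,0)$ is unnecessary. The outer bounds $-1<\tzo^-$ and $\tzo^+=\pp<0$ come from Lemma~\ref{result:cpconditioning}, and your sign argument already confines $\tzt^\pm$ to $[\tzo^-,\tzo^+]$.
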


\begin{proof}
The quadratic formula gives \eqref{eq:cpU1_Q}, where   
\beqq
	\tQ
	=\bigl(\mv-(\sqrt{\ac x}+\sqrt{\bc y})^2\bigr)
	\bigl(\mv-(\sqrt{\ac x}-\sqrt{\bc y})^2\bigr).
\eeqq
To prove $\tQ>0$,  it is enough to show that $A:=\mv-(\sqrt{\ac x}+\sqrt{\bc y})^2>0$. Set $X:=\ac(x-1)$ and $Y:=\bc(y-1)$. By the formula \eqref{eq:mvu112} for $\mv$, 
\beqq
	A=\lv-\ac-\bc+2\sqrt{XY}-2\sqrt{(\ac+X)(\bc+Y)}.
\eeqq
Since $1/\slope \le(y-1)/(x-1)\le\slope$ in $\overline{U}_1$, using \eqref{eq:sqslope}, it follows that 
\beqq
	\ac Y-(\lv-\ac-\bc)\sqrt{XY}+\bc X\le0.
\eeqq
Thus, 
\beqq
	(\lv-\ac-\bc+2\sqrt{XY})^2-4(\ac+X)(\bc+Y)
	=\discr+4\bigl((\lv-\ac-\bc)\sqrt{XY}-\bc X-\ac Y\bigr)
	\ge \discr>0.
\eeqq
Since $\lv-\ac-\bc+2\sqrt{XY}>0$, we obtain $A>0$, and hence $\tQ>0$.

Since $\mv-\lv=(\sqrt X+\sqrt Y)^2$ by the formula \eqref{eq:mvu112} of $\mv$, 
\beqq
	\tpr(z)=\bigl((\sqrt X+\sqrt Y)z+\sqrt Y\bigr)^2.
\eeqq
Hence, \eqref{eq:U1cpformula} holds. 

We now consider the ordering of the roots. 
Suppose first that $(x,y)\in U_1$. 
%Set $q:=\sqrt{Y/X}$. 
Since
\beq \label{eq:qineq}
	\frac{\sqrt \bc}{\sqrt{\slope \ac}}<\sqrt{\frac{Y}{X}}<\frac{\sqrt{\slope \bc}}{\sqrt \ac},
\eeq
and $q\mapsto-q/(1+q)$ is strictly decreasing on $(0,\infty)$, \eqref{eq:cpconditioning} gives
\beqq
	\tzo^-<\tzr<\tzo^+.
\eeqq
We have
\beqq
	\tpt=\tpo+\tpr,
	\qquad
	\tpo(z)=\lv(z-\tzo^-)(z-\tzo^+),
	\qquad
	\tpr(z)=(\mv-\lv)(z-\tzr)^2.
\eeqq
Thus, $\tpt(\tzo^\pm)=\tpr(\tzo^\pm)>0$ and $\tpt(\tzr)=\tpo(\tzr)<0$. Hence, $\tpt$ has one root in $(\tzo^-,\tzr)$ and the other in $(\tzr,\tzo^+)$, proving \eqref{eq:cpU1_order}.

Suppose next that $(x,y)$ lies on the boundary between $U_1$ and $U_2^<$. Then, \eqref{eq:qineq} becomes 
$\frac{\sqrt{\bc}}{\sqrt{\slope \ac}}= \sqrt{\frac{Y}{X}} < \frac{\sqrt{\slope \bc}}{\sqrt{\ac}}$ and hence, $\tzo^-<\tzr=\tzo^+=\pp$. Moreover,
\beqq
	\tpt(\pp)=0,
	\qquad
	\tpt'(\pp)=\tpo'(\pp)=\lv(\pp-\tzo^-)>0.
\eeqq
Thus, $\pp$ is the larger root of $\tpt$, and the other root lies strictly between $\tzo^-$ and $\pp$. This proves \eqref{eq:cpU1U2_order}.
\end{proof}

%%%%%%%%%%%%%%%%%%%%%%%%%%%%%%%
\subsection{Critical points in $\overline{U}_2^<\cap\{ (x,y): y<x\}$}

\begin{lem}\label{result:cpU2less}
Let $(x,y)\in\overline{U}_2^<$ with $y< x$. The roots of $\tpt$ are
\beq\label{eq:cpU2less_tzt}
	(\tzt^-,\tzt^+)
	=\left(\frac{\bc y}{\mv\pp},\pp\right), 
\eeq
and $\tzt^-=\tzt^+$ if and only if $(x,y)$ lies on the boundary between $U_2^<$ and $U_3^<$.
When $\mv\ne\lv$, the roots of $\tpr$ are
\beqq
	(\tzra,\tzrb)
	=\left(\pp,\frac{\bc(y-1)}{(\mv-\lv)\pp}\right).
\eeqq
The roots are ordered as follows:
\begin{itemize}
\item If $\mv>\lv$, then
\beqq
	-1<\tzo^-<\tzt^-\le\tzt^+=\tzo^+=\tzra=\pp\le\tzrb,
\eeqq
where
\beqq
	\text{$\tzrb\in [\pp, 0)$ for $y>1$;}
	\qquad
	\text{$\tzrb=0$ for $y=1$;}
	\qquad
	\text{$\tzrb>0$ for $y<1$,}
\eeqq
and $\tzrb=\pp$ if and only if $(x,y)$ lies on the boundary between $U_1$ and $U_2^<$.

\item 
If $\mv<\lv$, then
\beqq
	\tzrb<\tzo^-<\tzt^-\le\tzt^+=\tzo^+=\tzra=\pp<0,
\eeqq
where
\beqq
	\text{$\tzrb<-1$ for $x>1$;}
	\qquad
	\text{$\tzrb=-1$ for $x=1$;}
	\qquad
	\text{$\tzrb\in (-1, \tzo^-)$ for $x<1$.}
\eeqq
\end{itemize}
\end{lem}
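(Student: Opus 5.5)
We verify directly that $\pp$ is a root of $\tpt$ and of $\tpr$, then read off the other roots from the product of roots (Vieta), and finally compare the positions of all roots by elementary sign and monotonicity arguments.

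\textbf{Roots.} First, $\tpo(\pp)=0$ by Lemma~\ref{result:cpconditioning}. Since $\tpt=\tpo+\tpr$, it suffices to show $\tpr(\pp)=0$. Write $\tpr$ as $\tpt-\tpo$. Its coefficients are linear in $(x,y,\mv)$, and on $\overline{U}_2^<$ the value $\mv$ is given by the linear formula \eqref{eq:mvu212}. Hence $\tpt(\pp)$ is a linear function of $(x,y)$, namely
\beqq
x\bigl[\tfrac12(\lv+\ac-\bc-\sqrt\discr)\pp(\pp+1)-\ac\pp\bigr]+y\bigl[\tfrac12(\lv-\ac+\bc+\sqrt\discr)\pp(\pp+1)+\bc(\pp+1)\bigr].
\eeqq
Each bracket vanishes by the expressions for $\pp$ in \eqref{eq:ppthree}. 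For instance, the $y$-bracket vanishes because $(\lv-\ac+\bc+\sqrt\discr)\pp=-2\bc$. The $x$-bracket reduces to $\pp+1=2\ac/(\lv+\ac-\bc-\sqrt\discr)$, which follows from the third expression for $\pp$.

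So $\pp$ is a root of both $\tpt$ and $\tpr$, and Vieta gives the other roots: $\bc y/(\mv\pp)$ and $\bc(y-1)/((\mv-\lv)\pp)$. The coincidence $\tzt^-=\tzt^+$ is equivalent to $\mv\pp^2=\bc y$. Substituting \eqref{eq:mvu212} turns this into the linear equation $y=x/\slope$, using \eqref{eq:sqslope}. Given $y<x$, this is exactly the $U_2^</U_3^<$ boundary.

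\textbf{Ordering.} Apply the same technique as in Lemma~\ref{result:cpU1}, namely evaluating signs of the quadratics at known points.

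\begin{itemize}
\item Since $\tpt=\tpo+\tpr$ and $\tpo(\tzo^-)=0$, we have $\tpt(\tzo^-)=\tpr(\tzo^-)$. Equivalently, locate $\bc y/(\mv\pp)$ directly. The inequality $\tzo^-<\tzt^-$ means $\bc y/(\mv\pp)>\tzo^-$. Since $\tzo^-\tzo^+=\bc/\lv$, this is the same as $\bc y/(\mv\pp)>\bc/(\lv\pp)$, i.e.\ $y/\mv<1/\lv$, i.e.\ $\lv y<\mv$.
\item Using \eqref{eq:mvu212} with $y<x$, the quantity $\mv-\lv y$ is a positive multiple of $x-y$. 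This needs the sign fact $\lv+\ac-\bc-\sqrt\discr>2\lv$? No: rather, $\mv-\lv y=\tfrac12(\lv+\ac-\bc-\sqrt\discr)(x-y)$, and $\lv+\ac-\bc-\sqrt\discr>0$.
\item The inequality $\tzt^-\le\pp$, i.e.\ $\bc y/(\mv\pp)\le\pp$, is equivalent to $\mv\pp^2\ge\bc y$. This is the statement $y\ge x/\slope$, which holds in $\overline{U}_2^<$.
\end{itemize}

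For $\tzrb=\bc(y-1)/((\mv-\lv)\pp)$, note that $\pp<0$.

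\begin{itemize}
\item \textbf{Case $\mv>\lv$.} The sign of $\tzrb$ is opposite to that of $y-1$, which gives the trichotomy in $y$. For $y>1$, the condition $\tzrb\ge\pp$ is equivalent to $(\mv-\lv)\pp^2\ge\bc(y-1)$. Substituting \eqref{eq:mvu212}, this becomes $y-1\ge(x-1)/\slope$, with equality exactly on the $U_1$ boundary. This holds because $\overline{U}_2^<$ excludes $U_1$ while $y<x$.
\item \textbf{Case $\mv<\lv$.} The expression $\mv-\lv$ is linear, and by the same computation it equals a combination of $(x-1)$ and $(y-1)$. Here one should show $\tzrb+1$ has the sign of $1-x$. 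The identity $\tpr(-1)=\bc(y-1)-(\mv-\lv)+\ac(x-1)+\dots$ evaluated at $-1$ gives $\tpr(-1)=\ac(x-1)$ directly from the coefficients. Since $\tpr$ opens downward, its value at $-1$ locates $-1$ relative to the roots $\tzrb<\pp$, giving the trichotomy in $x$. Finally, $\tzrb<\tzo^-$ follows from $\tpr(\tzo^-)=\tpt(\tzo^-)>0$? Careful: with leading coefficient negative, $\tpr>0$ between its roots. So it suffices to show $\tpt(\tzo^-)$ has the sign placing $\tzo^-$ between $\tzrb$ and $\pp$, and this sign comes from the location of $\tzt^-$ already established.
\end{itemize}

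\textbf{Main obstacle.} The main obstacle is the bookkeeping of signs: the direction of each inequality flips with the sign of $\pp$ and of $\mv-\lv$. In particular, the case $\mv<\lv$ requires the quadratic $\tpr$ to open downward, and I would handle it carefully via the values $\tpr(-1)=\ac(x-1)$ and $\tpr(0)=\bc(y-1)$.
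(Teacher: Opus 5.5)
Your strategy is the right one and is essentially the paper's. You show $\pp$ is a common root, get the other roots from Vieta, and reduce each ordering to a linear inequality in $(x,y)$. Your comparison of $\mv\pp^2$ with $\bc y$ is the paper's computation of $\tpt'(\pp)$ in disguise, since $\tpt'(\pp)=(\mv\pp^2-\bc y)/\pp$. Your proof of $\tzo^-<\tzt^-$ from $\tzo^-=\bc/(\lv\pp)$ and $\mv-\lv y=\frac12(\lv+\ac-\bc-\sqrt\discr)(x-y)$ is correct, and slightly more direct than the paper's evaluation of $\tpt(\tzo^-)$.

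The gap is that the two linear reductions, which carry all of the content, are asserted rather than computed, and their signs are wrong. Since $\mv\pp<0$, the inequality $\bc y/(\mv\pp)\le\pp$ is equivalent to $\mv\pp^2\le\bc y$, not $\ge$. Substituting \eqref{eq:mvu212} and using \eqref{eq:ppthree} together with $\bc(\pp+1)^2=\slope\ac\pp^2$ (from \eqref{eq:cpconditioning}) gives
\[
\mv\pp^2-\bc y=\frac{\ac\pp^2}{\pp+1}\,(x-\slope y),
\qquad
(\mv-\lv)\pp^2-\bc(y-1)=\frac{\ac\pp^2}{\pp+1}\,\bigl((x-1)-\slope(y-1)\bigr).
\]
The second identity holds because the coefficients of $x$ and $y$ in \eqref{eq:mvu212} sum to $\lv$. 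So $\mv\pp^2\ge\bc y$ means $y\le x/\slope$, not $y\ge x/\slope$. In your $\tzt^-$ bullet the two sign errors cancel, and you land on the correct condition.

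In the $\mv>\lv$, $y>1$ bullet they do not cancel. The equivalence $\tzrb\ge\pp\iff(\mv-\lv)\pp^2\ge\bc(y-1)$ is correct, but by the second identity this is $y-1\le(x-1)/\slope$. The inequality you wrote, $y-1\ge(x-1)/\slope$, fails at every point of $U_2^<$ with $y>1$. Your justification (that $\overline{U}_2^<$ excludes $U_1$ while $y<x$) in fact proves its negation. Writing out the two displayed identities fixes both bullets and also gives the equality cases.

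A smaller point concerns the $\mv<\lv$ case. There you read the $x$-trichotomy off $\tpr(-1)=\ac(x-1)$ using $\tzrb<\pp$, which you have not yet shown. Your final observation supplies it and should come first:
\begin{itemize}
\item $\tzo^-<\tzt^-\le\pp$ and $\mv>0$ give $\tpt(\tzo^-)>0$, hence $\tpr(\tzo^-)=\tpt(\tzo^-)>0$.
\item Since $\tpr$ opens downward, $\tzo^-$ lies strictly between its roots, so $\tzrb<\tzo^-<\pp$.
\end{itemize}
This is the paper's argument; the paper computes $\tpt(\tzo^-)$ explicitly instead of inferring its sign. The sign of $\tpr(-1)$ then locates $-1$ relative to $\tzrb$.
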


\begin{proof}
Inserting the formula \eqref{eq:mvu212} of $\mv$, 
\beqq
	\tpt(z)
	=\frac12z\bigl[(\lv+\ac-\bc-\sqrt \discr)z+\lv-\ac-\bc-\sqrt \discr\bigr]x
	+\frac12(z+1)\bigl[(\lv-\ac+\bc+\sqrt \discr)z+2\bc\bigr]y.
\eeqq
Using the second and third formulas of \eqref{eq:ppthree}, we find that each coefficient of $x$ and $y$ vanishes at $z=\pp$, and hence, $\pp$ is a root of $\tpt$. 
Using the first formula of \eqref{eq:ppthree}, we have
\beqq
	\tpt'(\pp)=\frac{\lv-\ac-\bc-\sqrt \discr}{2}(\slope y-x).
\eeqq
Since $y \ge x/\slope$ in $\overline{U}_2^<$, $\tpt'(\pp) \ge 0$, and we find that $\pp$ is the larger root of $\tpt$. 
The other root is found from the fact that the product of the two roots of $\tpt$ is $\bc y/\mv$. 
Hence, \eqref{eq:cpU2less_tzt} follows. Moreover, $\tzt^-=\tzt^+$ if and only if $y=x/\slope$. 
Furthermore, since $\tpt(-1)=\ac x>0$ and $\pp>-1$, we also have $\tzt^->-1$.

From a direct computation, 
\beq \label{eq:tprtoze}
	\tpt(\tzo^-)
	=\frac{\sqrt \discr(\lv-\ac+\bc+\sqrt \discr)(\lv+\ac-\bc-\sqrt \discr)}{4\lv^2}(x-y)
	=-(x-y)\sqrt \discr\,\tzo^-(\tzo^-+1). 
\eeq
Thus, $\tpt(\tzo^-)>0$ since $y<x$ and $\tzo^-\in (-1,0)$. 
As $\tzo^-<\pp=\tzt^+$, it follows that
\beqq
	-1<\tzo^-<\tzt^-\le\tzt^+=\tzo^+=\pp,
\eeqq
where $\tzt^-=\tzt^+$ if and only if $(x,y)$ lies on the boundary between $U_2^<$ and $U_3^<$.

Assume $\mv\ne\lv$. Since $\tpr=\tpt-\tpo$ and $\pp$ is a zero of both $\tpo$ and $\tpt$, it is also a zero of $\tpr$. Set $\tzra:=\pp$. 
Since the product of the roots of $\tpr$ is $\bc(y-1)/(\mv-\lv)$, the other root is 
\beq\label{eq:bfor}
	\tzrb=\frac{\bc(y-1)}{(\mv-\lv)\pp}.
\eeq
From a direct computation, 
\beqq
	\tpr'(\tzra) = \tpr'(\pp) 
	=\frac{\lv-\ac-\bc-\sqrt \discr}{2}\bigl(\slope(y-1)-(x-1)\bigr).
\eeqq
Thus, $\pp$ is the double root of $\tpr$ if and only if $y-1=(x-1)/\slope$, i.e. $(x,y)$ is on the boundary between $U_1$ and $U_2^<$. Otherwise, $\tpr'(\tzra)<0$. In this case, since 
\beqq
	\tpr(z)=(\mv-\lv)(z-\tzra)(z-\tzrb),
\eeqq
we have $\tzrb>\tzra$ when $\mv>\lv$ and $\tzrb<\tzra$ when $\mv<\lv$.

If $\mv>\lv$, from the formula \eqref{eq:bfor} and the fact that  $\pp<0$, we find $\tzrb<0$, $\tzrb=0$, or $\tzrb>0$ according to $y>1$, $y=1$, or $y<1$. 
When $y>1$, the above discussion implies that $\tzrb\in [\pp, 0)$. 

If $\mv<\lv$, the formula for $\tpr$ implies that $\tpr(-1)=\ac(x-1)$. Since one root of $\tpr$ is $\pp\in (-1,0)$, the other root satisfies 
$\tzrb<-1$, $\tzrb=-1$, or $\tzrb>-1$ according to $x>1$, $x=1$, or $x<1$.
Furthermore, in this case, 
since 
\beqq
	\tpr(\tzo^-)=\tpt(\tzo^-)>0
\eeqq
from \eqref{eq:tprtoze}, and 
$\tpr$ has negative leading coefficient and roots $\tzrb<\tzra=\pp$, we obtain $\tzrb<\tzo^-$. 
\end{proof}

When $\mv=\lv$, the polynomial $\tpr$ becomes linear. The corresponding equality cases are treated separately in Subsection~\ref{sec:U2equal} and at the end of Section~\ref{sec:U2U3}, for the $U_2$ region and the $U_2/U_3$ boundary, respectively.

%%%%%%%%%%%%%%%%%%%%%%%%%%%%%%%
\subsection{Phase gaps}\label{sec:phasegaps}

\begin{lem} \label{result:phasegaps}
For $(x,y)\neq(1,1)$ in $U_1$ or $U_2^<$, or on either the $U_1/U_2^<$ boundary or the $U_2^</U_3^<$ boundary, the following phase gaps are strictly positive whenever they are defined:
\begin{itemize}
\item $\tDel:=\Thetao(\tzo^+)-\Thetao(\tzo^-)>0$. 
\item $\tDelt:=\Thetat(\tzt^+)-\Thetat(\tzt^-)>0$ if $\tzt^-<\tzt^+$. 
\item $\tDelr:= \Thetar(\tzrb)-\Thetar(\tzra)>0$ if $\tzra, \tzrb\in (-1,0)$. 
\end{itemize}
\end{lem}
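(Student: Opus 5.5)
The three gaps can be handled by one real-variable argument. Each $\Theta_i$ is real-analytic on $(-1,0)$, where the branch conventions give $\log(1+z)$ real and $\log z=\log|z|+\ii\pi$. So $\Theta_i(b)-\Theta_i(a)$ is real for $a,b\in(-1,0)$. The plan is to write each gap as
\beqq
	\Theta_i(b)-\Theta_i(a)=\int_a^b\frac{\tp_i(t)}{t(t+1)}\,\dd t,
\eeqq
using \eqref{eq:tGdertp}, and read off its sign from the sign of $\tp_i$ between its roots.

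On $(-1,0)$ we have $t(t+1)<0$. Hence the integrand has sign opposite to $\tp_i(t)$.

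For $\tDel$: $\tpo$ has positive leading coefficient $\lv$ and roots $\tzo^-<\tzo^+$ in $(-1,0)$ by Lemma~\ref{result:cpconditioning}. So $\tpo<0$ on $(\tzo^-,\tzo^+)$, the integrand is positive there, and $\tDel>0$.

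For $\tDelt$: $\tpt$ has leading coefficient $\mv>0$, and Lemmas~\ref{result:cpU1} and~\ref{result:cpU2less} show $-1<\tzt^-<\tzt^+<0$ whenever $\tzt^-\ne\tzt^+$. The same argument then gives $\tDelt>0$.

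For $\tDelr$: $\tpr$ has leading coefficient $\mv-\lv$. In $U_1$ or on the $U_1/U_2^<$ boundary, the root of $\tpr$ is double, or $\tzra=\tzrb$, and the gap is not in question. So the case to treat is $U_2^<$ (and the $U_2^</U_3^<$ boundary) with $\tzra=\pp$, the other root $\tzrb$ given by \eqref{eq:bfor}, and both roots in $(-1,0)$.

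By Lemma~\ref{result:cpU2less}, both roots lie in $(-1,0)$ in exactly two situations:
\begin{itemize}
	\item $\mv>\lv$ and $y>1$, which gives $\pp=\tzra<\tzrb<0$ (strict off the $U_1$ boundary);
	\item $\mv<\lv$ and $x<1$, which gives $-1<\tzrb<\tzra$.
\end{itemize}

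In the first case, $\tpr<0$ between its roots, since its leading coefficient is positive. The integral from $\tzra$ to $\tzrb$ therefore has positive integrand and runs forward, so $\tDelr>0$.

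In the second case, $\mv-\lv<0$, so $\tpr>0$ between its roots and the integrand is negative on $(\tzrb,\tzra)$. Here $\Thetar(\tzrb)-\Thetar(\tzra)=-\int_{\tzrb}^{\tzra}$, which is again positive.

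The main thing to check carefully is this sign and orientation bookkeeping, together with the degenerate configurations:
\begin{itemize}
	\item $\mv=\lv$, where $\tpr$ is linear and the roots $\tzra,\tzrb$ are not defined, so the claim is vacuous;
	\item boundary points where two roots coincide, which the statement excludes through ``whenever defined'' and $\tzt^-<\tzt^+$.
\end{itemize}
Also, one should confirm that no branch issue arises: all evaluation points lie in $(-1,0)$, where the only nonreal term is $\bc(y-1)\,\ii\pi$ (or its analogue for $\Theta_1,\Theta_2$), and this constant cancels in each difference. The exclusion of $(1,1)$ ensures $\Thetar\not\equiv0$, so every gap is strictly positive.
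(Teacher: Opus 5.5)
Your proof is correct and follows the paper's argument. Both read off strict monotonicity of $\Theta_i$ between its two critical points from $\Theta_i'=\tp_i/(z(z+1))$, using the sign of the leading coefficient ($\lv$, $\mv$, or $\mv-\lv$) and $z(z+1)<0$ on $(-1,0)$; your $\mv>\lv$ and $\mv<\lv$ cases for $\tDelr$ match the paper's exactly. Your observation that $\tzra=\tzrb=\pp$ on the $U_1/U_2^<$ boundary, so that $\tDelr=0$ there and that case must be read as excluded, is a point the paper's proof glosses over when it asserts $\tzra<\tzrb$ whenever $\mv>\lv$.
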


\begin{proof}
For $i=1,2$, the function $\Theta_i$ is strictly increasing on
$[\tz_i^-,\tz_i^+]$ whenever $\tz_i^-<\tz_i^+$. Hence,
$\tDel>0$ and $\tDelt>0$. 
Consider $\tDelr$. The points $\tzra,\tzrb$ are defined for
$(x,y)\in\overline{U}_2^<$ with $y\neq x$ and $\mv\neq\lv$, and
$\Thetar'(z)=\frac{(\mv-\lv)(z-\tzra)(z-\tzrb)}{z(z+1)}$. 
If $\mv>\lv$, then $\tzra<\tzrb$, and $\Thetar$ is strictly increasing on
$[\tzra,\tzrb]$ when $\tzra,\tzrb\in(-1,0)$. If $\mv<\lv$, then
$\tzrb<\tzra$, and $\Thetar$ is strictly decreasing on
$[\tzrb,\tzra]$. Therefore, in either case, $\Thetar(\tzrb)-\Thetar(\tzra)>0$. 
\end{proof}

%%%%%%%%%%%%%%%%%%%%%%%%%%%%%%%
%%%%%%%%%%%%%%%%%%%%%%%%%%%%%%%
\section{Steepest-descent estimates}
\label{sec:asymgeneral}

The critical-point analysis in Section~\ref{sec:cp} determines the contour geometry needed for steepest descent. In this section, we establish local asymptotics and contour bounds for the functions $\ff_*(z)$, $*\in\mc A_2$, appearing in $\J^{\bsigma}_{\btau}$.

%%%%%%%%%%%%%%%%%%%%%%%%%%%%%%%
\subsection{Setup and contours}

The asymptotic analysis in the following sections involves functions of the form\footnote{In the statements of the main results, $N$ denotes the large parameter. Throughout the asymptotic analysis, we use $L$ instead.}
\beq\label{eq:fLgnal}
	\ff_L(z)=\frac{z^{N_L} e^{T_Lz}}{(z+1)^{M_L}}, 
\eeq
for certain families of parameters $(M_L,N_L,T_L)\in \mathbb{Z}\times \mathbb{Z}\times [0,\infty)$ as $L\to\infty$. We consider two basic families of functions. A modified Gaussian family, used in the boundary regimes, will be introduced at the end of Section~\ref{sec:asymgcase1}. 

\begin{itemize}
\item (Gaussian family): Let $\alpha_1,\alpha_2\in\R$, $\alpha_3\ge0$, and $\beta_1,\beta_2,\beta_3\in\R$. For $L>0$, set
\beqq
	M_L=\alpha_1L+\beta_1L^{1/2}+\delta_L^1,\qquad
	N_L=\alpha_2L+\beta_2L^{1/2}+\delta_L^2,\qquad
	T_L=\alpha_3L+\beta_3L^{1/2},
\eeqq
where $\delta_L^1,\delta_L^2\in(-1,1)$, and $M_L$ and $N_L$ are integers. We consider three cases:
\begin{enumerate}[(a)]
\item $\alpha_1,\alpha_2>0$ and $\sqrt{\alpha_3}>\sqrt{\alpha_1}+\sqrt{\alpha_2}$.

\item $\alpha_1<0$ and $\alpha_2>0$, or $\alpha_1=0$ and $0<\alpha_2<\alpha_3$.

\item $\alpha_1>0$ and $\alpha_2<0$, or $\alpha_2=0$ and $0<\alpha_1<\alpha_3$.
\end{enumerate}

\item (Airy family): Let $\alpha_1,\alpha_2,\alpha_3>0$, $\beta_1,\beta_2,\beta_3\in\R$, and $\ga_1,\ga_2,\ga_3\in\R$. Suppose that
\beq\label{eq:typeiicond}
	\sqrt{\alpha_3}=\sqrt{\alpha_1}+\sqrt{\alpha_2},
	\qquad
	\be_3\sqrt{\alpha_1\alpha_2}= \be_1\sqrt{\alpha_2\alpha_3} +\be_2\sqrt{\alpha_1\alpha_3}. 
\eeq
For $L>0$, set
\beqq
\begin{aligned}
	&M_L=\alpha_1L+\beta_1L^{2/3}+\ga_1L^{1/3}+\delta_L^1,\\
	&N_L=\alpha_2L+\beta_2L^{2/3}+\ga_2L^{1/3}+\delta_L^2,\\
	&T_L=\alpha_3L+\beta_3L^{2/3}+\ga_3L^{1/3},
\end{aligned}
\eeqq
where $\delta_L^1,\delta_L^2\in(-1,1)$ and $M_L$ and $N_L$ are integers.
\end{itemize}

We use the following contours.

\begin{definition}\label{def:contours}
For $z_0\in(-1,0)$ and $L>0$, define
\beqq
\begin{split}
    \cont_-^L(z_0)
    &:=\{z_0+re^{2\pi\ii/3}:r\in[0,L^{-1/12}]\}
    \cup\{z_0+re^{-2\pi\ii/3}:r\in[0,L^{-1/12}]\}
    \cup\gamma_-,
\end{split}
\eeqq
where $\gamma_-$ is the circular arc centered at $-1$ joining the points $z_0+L^{-1/12}e^{\pm2\pi\ii/3}$ and passing to the left of $-1$. Similarly, define
\beqq
\begin{split}
    \cont_+^L(z_0)
    &:=\{z_0+re^{\pi\ii/3}:r\in[0,L^{-1/12}]\}
    \cup\{z_0+re^{-\pi\ii/3}:r\in[0,L^{-1/12}]\}
    \cup\gamma_+,
\end{split}
\eeqq
where $\gamma_+$ is the circular arc centered at $0$ joining the points $z_0+L^{-1/12}e^{\pm\pi\ii/3}$ and passing to the right of $0$.
\end{definition}

As throughout the paper, simple closed contours are oriented counterclockwise. The particular cutoff $L^{-1/12}$ is not essential; this choice ensures that the leading-phase decay on the circular arcs dominates the $L^{2/3}$-order term in the Airy family.

%%%%%%%%%%%%%%%%%%%%%%%%%%%%%%%%%%%%%%%%%%%%%%%%
\subsection{Gaussian-family asymptotics}
\label{sec:asymgcase1}

For the Gaussian family, \eqref{eq:fLgnal} takes the form
\beqq
	\ff_L(z)=e^{L\GG(z)+L^{1/2}\HH(z)+\EE_L(z)},
\eeqq
where\footnote{Throughout the paper, $\EE_L$ and $\EE_{*,L}$ are the correction terms for which $\ff_L$ and $\ff_{*,L}$ extend analytically to $\C\setminus\{-1,0\}$; these correction terms are uniformly bounded on every compact subset of $\C\setminus\{-1,0\}$. We do not repeat this statement below.}
\beq\label{eq:Gaph}
\begin{split}
	\GG(z)&:=-\alpha_1\log(z+1)+\alpha_2\log z+\alpha_3z,\\
	\HH(z)&:=-\beta_1\log(z+1)+\beta_2\log z+\beta_3z,\\
	\EE_L(z)&:=-\delta_L^1\log(z+1)+\delta_L^2\log z.
\end{split}
\eeq
Only the critical points of $\GG$ in the interval $(-1,0)$ will be used. It is straightforward to check the following properties:
\begin{itemize}
\item For Case (a), there are two critical points $-1<\cp^-<\cp^+<0$, and $\GG''(\cp^-)>0$ and $\GG''(\cp^+)<0$.

\item For Case (b), there is one critical point $\cp^+$ in $(-1,0)$, and $\GG''(\cp^+)<0$.

\item For Case (c), there is one critical point $\cp^-$ in $(-1,0)$, and $\GG''(\cp^-)>0$.
\end{itemize}
For the strict mixed-sign cases with $\alpha_3>0$, these properties are given in~\cite[Lemma~5.2]{Baik-Cordaro-Tripathi25}. The cases $\alpha_3=0$ in~\textnormal{(b)} and~\textnormal{(c)}, as well as the endpoint cases $\alpha_1=0$ and $\alpha_2=0$, follow directly from the same calculation, after canceling the corresponding factor in $\GG'$ when necessary. 

Taylor's theorem gives the following local asymptotic.

\begin{lem}[Lemma~5.1 of~\cite{Baik-Cordaro-Tripathi25}]\label{lem:asymptotics_f1}
Let $(\ff_L)_{L>0}$ be a Gaussian family, and let $\cp$ be a critical point of $\GG$. Then, for every $\epsilon\in(0,1/2)$, there exists $L_0>0$ such that, for all $L>L_0$ and $|w|\le L^{\epsilon/3}$,
\beqq
	\ff_L(\cp+wL^{-1/2})
	=\ff_L(\cp)e^{\frac12\GG''(\cp)w^2+\HH'(\cp)w}
	\bigl(1+O(L^{-1/2+\epsilon})\bigr).
	%\qquad  \text{for $|w| \leq L^{\epsilon/3}$.}
\eeqq
\end{lem}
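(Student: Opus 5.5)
The plan is a second-order Taylor expansion of the exponent, done carefully so that the error is uniform on the disc $|w|\le L^{\epsilon/3}$. Write
\[
\ff_L(z)=e^{L\GG(z)+L^{1/2}\HH(z)+\EE_L(z)}.
\]
Since $\cp\in(-1,0)$ is fixed, there is a disc $B$ around $\cp$, bounded away from $-1$ and $0$, on which $\GG$, $\HH$ and $\EE_L$ are analytic. Moreover $\GG^{(k)}$ and $\HH^{(k)}$ are bounded on $B$ for $k\le 3$, and $\EE_L'$ is bounded on $B$ uniformly in $L$, because $\delta_L^1,\delta_L^2\in(-1,1)$. For $|w|\le L^{\epsilon/3}$ and $\epsilon<1/2$, the point $z=\cp+wL^{-1/2}$ satisfies $|z-\cp|\le L^{-1/2+\epsilon/3}\to0$, so it lies in $B$ once $L>L_0$.

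We expand each piece of the exponent in turn.
\begin{itemize}
\item For $L\GG$: since $\GG'(\cp)=0$, Taylor's theorem with remainder gives
\[
L\GG(z)-L\GG(\cp)=\tfrac12\GG''(\cp)w^2+O\bigl(L\,|wL^{-1/2}|^3\bigr)=\tfrac12\GG''(\cp)w^2+O\bigl(L^{-1/2}|w|^3\bigr).
\]
\item For $L^{1/2}\HH$: similarly,
\[
L^{1/2}\HH(z)-L^{1/2}\HH(\cp)=\HH'(\cp)w+O\bigl(L^{-1/2}|w|^2\bigr).
\]
\item For $\EE_L$: $\EE_L(z)-\EE_L(\cp)=O\bigl(L^{-1/2}|w|\bigr)$.
\end{itemize}

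With $|w|\le L^{\epsilon/3}$, each of these errors is at most $O(L^{-1/2+\epsilon})$; the cubic term is the worst. Exponentiating, and using $e^{u}=1+O(u)$ for $u\to0$, gives the stated formula
\[
\ff_L(\cp+wL^{-1/2})=\ff_L(\cp)\,e^{\frac12\GG''(\cp)w^2+\HH'(\cp)w}\bigl(1+O(L^{-1/2+\epsilon})\bigr).
\]

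There is no real obstacle. The only point to check is that all the constants are uniform: in $w$ on the disc, and in $L$. For $\EE_L$, uniformity in $L$ holds because the $\delta_L^i$ are bounded. The constants may also be taken uniform in the parameters if the $\alpha_i,\beta_i$ range over compact sets, which keeps $\cp$ in a compact subset of $(-1,0)$.
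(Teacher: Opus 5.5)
Your proposal is correct and follows the same approach as the paper, which justifies this lemma simply by Taylor's theorem. You expand the exponent to second order at $\cp$ and check that the cubic remainder, $O(L^{-1/2}|w|^3)\le O(L^{-1/2+\epsilon})$, dominates the other errors uniformly on $|w|\le L^{\epsilon/3}$; that is exactly the intended argument.
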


We also need bounds on suitable contours. In \cite[Lemma~5.3]{Baik-Cordaro-Tripathi25}, uniform bounds were obtained on appropriate circular contours. The same steepest-descent argument, with the local change of variables $z=\cp^\pm+uL^{-1/2}$, yields the corresponding $L^1$-bounds. The argument also applies to the contours $\cont^L_{\pm}(z_0)$: the line-segment portions lie in the Gaussian decay sectors, while the circular portions are controlled by the monotonicity of $\re(\GG(z))$ established in \cite[Lemma~5.4]{Baik-Cordaro-Tripathi25}. The cases $\alpha_3=0$, as well as the endpoint cases in~\textnormal{(b)} and~\textnormal{(c)}, follow from the same calculation. We therefore obtain the following estimates.

\begin{lem}[Lemma~5.3 of~\cite{Baik-Cordaro-Tripathi25}]
\label{lem:asymptotics_f2}
Let $(\ff_L)_{L>0}$ be a Gaussian family.
\begin{enumerate}[(a)]
\item\label{eq:propertya}
In Cases~\textnormal{(a)} and~\textnormal{(c)}, for every $b\in\R$, set 
\beqq
	\Gamma_-:=\cont^L_-(\cp^-+bL^{-1/2}).
\eeqq
There exist $c,C,L_0>0$ such that, for all $L\ge L_0$ and every $\epsilon\in(0,1/2)$,
\beqq
	|\ff_L(z)|
	\le Ce^{-cL^{2\epsilon/3}}|\ff_L(\cp^-)|
	\qquad
	\text{if $z\in\Gamma_-$ and $|z-\cp^-|\ge L^{-1/2+\epsilon/3}$,}
\eeqq
and
\beqq
	\|\ff_L\|_{L^1(\Gamma_-)}
	\le \frac{C}{L^{1/2}}|\ff_L(\cp^-)|. 
\eeqq

\item\label{eq:propertyb}
In Cases~\textnormal{(a)} and~\textnormal{(b)}, for every $b\in\R$, set
\beqq
	\Gamma_+:=\cont^L_+(\cp^++bL^{-1/2}).
\eeqq
There exist $c,C,L_0>0$ such that, for all $L\ge L_0$ and every $\epsilon\in(0,1/2)$,
\beqq
	\frac{1}{|\ff_L(z)|}
	\le Ce^{-cL^{2\epsilon/3}}\frac{1}{|\ff_L(\cp^+)|}
	\qquad
	\text{if $z\in\Gamma_+$ and $|z-\cp^+|\ge L^{-1/2+\epsilon/3}$,}
\eeqq
and
\beqq
	\left\|\frac{1}{\ff_L}\right\|_{L^1(\Gamma_+)}
	\le \frac{C}{L^{1/2}|\ff_L(\cp^+)|}.
\eeqq
\end{enumerate}
The constants may be chosen uniformly when $b$ ranges over a fixed bounded set.
\end{lem}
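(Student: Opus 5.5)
The statement to prove is Lemma~\ref{lem:asymptotics_f2}. I would argue by steepest descent in a local variable at the critical point, and reduce the contour estimates to a local Gaussian bound near the critical point plus a monotonicity bound on the rest of the contour. I treat part (a) for $\cont^L_-(\cp^-+bL^{-1/2})$; part (b) is identical after replacing $\ff_L$ by $1/\ff_L$, which turns $\GG$ into $-\GG$ and $\GG''(\cp^+)<0$ into a positive second derivative. The ray directions $e^{\pm2\pi\ii/3}$ at a point where $\re\GG$ has a local minimum along the real axis are directions of descent. Writing $z=\cp^-+bL^{-1/2}+re^{\pm2\pi\ii/3}$, we have $\re\bigl(\tfrac12\GG''(\cp^-)(z-\cp^-)^2\bigr)=\tfrac12\GG''(\cp^-)\bigl(r^2\cos(4\pi/3)+O(|b|rL^{-1/2})+O(b^2L^{-1})\bigr)$. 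Since $\cos(4\pi/3)=-\tfrac12$, this gives $-c r^2+O(L^{-1})$ up to the cross term.

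The first step is the local region $|z-\cp^-|\le \eta$ for a small fixed $\eta$. Taylor expand $L\GG+L^{1/2}\HH+\EE_L$ around $\cp^-$, as in Lemma~\ref{lem:asymptotics_f1}, with a uniform cubic remainder $O(L|z-\cp^-|^3)$. The linear term $L^{1/2}\HH'(\cp^-)(z-\cp^-)$ is real-part bounded by $CL^{1/2}r+C$. On the ray portions, $\re\GG(z)-\GG(\cp^-)\le -c r^2$ for $r\le\eta$ after absorbing the cubic term, and the cross term with $b$ contributes $O(1)$ uniformly for $b$ bounded. Hence
\[
|\ff_L(z)|\le C|\ff_L(\cp^-)|\,e^{-cLr^2+CL^{1/2}r}.
\]
When $r\ge L^{-1/2+\epsilon/3}$, we have $Lr^2\ge L^{2\epsilon/3}$, and $L^{1/2}r\le \tfrac{c}{2}Lr^2+C'$. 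This yields the pointwise bound $Ce^{-cL^{2\epsilon/3}}|\ff_L(\cp^-)|$. Integrating $e^{-cLr^2/2}$ over $r\in[0,\eta]$ gives the $O(L^{-1/2})$ contribution to the $L^1$ norm.

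The second step handles the rest of the contour: the ray portions with $\eta\le r\le L^{-1/12}$, which is vacuous for large $L$, and the circular arc $\gamma_-$ centered at $-1$ of radius $\rho_L=|\cp^-+1|+O(L^{-1/12})$. On the circle $|z+1|=\rho$, one has $\re\GG(z)=-\alpha_1\log\rho+\alpha_2\log|z|+\alpha_3\re z$. As one moves away from the positive-real-side point $-1+\rho$, $\re z$ decreases. The key input is the monotonicity from \cite[Lemma~5.4]{Baik-Cordaro-Tripathi25}: $\re\GG$ is strictly decreasing in the angle, with the obvious adaptations when $\alpha_3=0$ or an $\alpha_i$ vanishes. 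It follows that $\re\GG(z)\le\GG(\cp^-)-c\,L^{-1/6}$ on $\gamma_-$, since the arc endpoints are at distance $L^{-1/12}$ from $\cp^-$ in a descent direction. The terms $L^{1/2}\HH$ and $\EE_L$ are bounded by $CL^{1/2}$ and $C$ on this compact arc away from $\{-1,0\}$. Therefore $|\ff_L|\le C|\ff_L(\cp^-)|e^{-cL^{5/6}+CL^{1/2}}$, which is far smaller than needed for both the pointwise and the $L^1$ bound. The arc length is bounded, so these pieces are negligible.

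The main obstacle is this global step. One must make sure the monotonicity of $\re\GG$ along the circle holds in all three cases, including the degenerate ones ($\alpha_3=0$, $\alpha_1=0$, or $\alpha_2=0$), and that the radius shift $bL^{-1/2}$ does not spoil it. I would first verify directly that
\[
\frac{d}{d\theta}\re\GG(-1+\rho e^{\ii\theta})=-\sin\theta\Bigl(\alpha_3\rho+\frac{\alpha_2\rho}{|z|^2}\Bigr)
\]
has the sign of $-\sin\theta$ for $\alpha_2,\alpha_3\ge0$. In Case~(b) with $\alpha_1<0$ the same formula applies, since $\alpha_1$ does not enter. When $\alpha_2<0$ (Case~(c)), the derivative needs the inequality $\alpha_3|z|^2>|\alpha_2|$ near $\cp^-$. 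This should follow from the critical point equation, and I would check it there. Uniformity in bounded $b$ follows because all constants depend continuously on the radius.
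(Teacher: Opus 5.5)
Your overall route is the paper's: Gaussian decay on the two line segments of $\cont^L_-(\cp^-+bL^{-1/2})$, and monotonicity of $\re\GG$ on the circular arc, which leaves only a factor $e^{-cL^{5/6}}$ there. However, the direct check you propose for that monotonicity, which you rightly flag as the crux, has a sign error. On $z=-1+\rho e^{\ii\theta}$ one has $|z|^2=1-2\rho\cos\theta+\rho^2$, and this \emph{increases} with $|\theta|$. So the correct derivative is
\[
\frac{d}{d\theta}\re\GG(-1+\rho e^{\ii\theta})=\rho\sin\theta\Bigl(\frac{\alpha_2}{|z|^2}-\alpha_3\Bigr),
\]
not $-\sin\theta\bigl(\alpha_3\rho+\alpha_2\rho/|z|^2\bigr)$. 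The term $\alpha_2\log|z|$ opposes the decay when $\alpha_2>0$, so you have the cases reversed. Case~(c), where $\alpha_2\le0$, is the automatic one. In Case~(a), your claim that the sign is that of $-\sin\theta$ whenever $\alpha_2,\alpha_3\ge0$ is false: for $\rho$ close to $1$, $|z|$ is tiny near $\theta=0$, and there $\re\GG$ increases with $|\theta|$. Monotonicity therefore depends on the radius, and your argument as written does not establish it.

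The missing ingredient is a radius restriction, and it comes from Vieta. On $|z+1|=\rho<1$ we have $|z|\ge1-\rho$, so it suffices that $\alpha_3(1-\rho)^2>\alpha_2$. In Case~(a), $\alpha_3>0$ and $\cp^\pm$ are the roots of $\alpha_3z^2+(\alpha_3+\alpha_2-\alpha_1)z+\alpha_2$. Hence $\cp^-\cp^+=\alpha_2/\alpha_3$ with $|\cp^-|>|\cp^+|$, which gives $|\cp^-|^2>\alpha_2/\alpha_3$. The arc radius is $\rho_L=\cp^-+1-\tfrac12L^{-1/12}+O(L^{-1/6})$, so $1-\rho_L>|\cp^-|$, and the needed inequality holds uniformly for bounded $b$.

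For part~(b) the arc is centered at $0$, and the relevant coefficient is $\alpha_1$, so your remark that ``$\alpha_1$ does not enter'' in Case~(b) is misplaced. There one needs $\alpha_3(1-s)^2>\alpha_1$ on $|z|=s\approx|\cp^+|$. This follows from $(\cp^-+1)(\cp^++1)=\alpha_1/\alpha_3$, or directly from part~(a) via $z\mapsto-1-z$, which swaps $\alpha_1\leftrightarrow\alpha_2$, Case~(b)$\leftrightarrow$Case~(c), and the two contours.

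With this correction, or simply by invoking Lemma~5.4 of Baik--Cordaro--Tripathi as the paper does, your argument goes through. One minor fix: the $b$-cross term is $O(|b|L^{1/2}r)$ rather than $O(1)$, but it is absorbed into your $CL^{1/2}r$ term.
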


We also need a variant of the Gaussian family in the boundary regimes.

\begin{lem}\label{lem:modifiedGaussian}
We call $(\ff_L)_{L>0}$ a \emph{modified Gaussian family} if
\beqq
        \ff_L(z)=e^{L\GG(z)+L^{2/3}\HH(z)+L^{1/3}\KK(z)+\EE_L(z)},
\eeqq
where $\GG$ is the leading phase of a Gaussian family in one of Cases~\textnormal{(a)}-\textnormal{(c)}, $\HH$ and $\KK$ are fixed functions of the form $c_1\log(z+1)+c_2\log z+c_3z$, and $\EE_L$ is as in \eqref{eq:Gaph}. Let $\cp$ be a regular critical point of $\GG$. Then, for every $\epsilon\in(0,1/2)$, there exists $L_0>0$ such that, for all $L>L_0$ and $|w|\le L^{\epsilon/3}$,
\beq\label{eq:modifiedGaussianlocal}
        \ff_L(\cp+wL^{-1/2})=\ff_L(\cp)e^{\frac12\GG''(\cp)w^2+\HH'(\cp)wL^{1/6}}\bigl(1+O(L^{-1/6+\epsilon/3})\bigr).
\eeq
If $\HH'(\cp)=0$ for $\cp=\cp^-$ or $\cp=\cp^+$, the corresponding decay and $L^1$ estimates in Lemma~\ref{lem:asymptotics_f2} remain valid without change. If $\HH'(\cp)\ne0$, then, for every $b\in\R$, the corresponding $L^1$ estimate is replaced by the appropriate bound below:
\beqq
        \|\ff_L\|_{L^1(\cont_-^L(\cp^-+bL^{-1/2}))}\le\frac{Ce^{CL^{1/3}}}{L^{1/2}}|\ff_L(\cp^-)|,\qquad \left\|\frac1{\ff_L}\right\|_{L^1(\cont_+^L(\cp^++bL^{-1/2}))}\le\frac{Ce^{CL^{1/3}}}{L^{1/2}|\ff_L(\cp^+)|}.
\eeqq
The constants may be chosen uniformly when $b$ ranges over a fixed bounded set.
\end{lem}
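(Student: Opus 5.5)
We prove the local expansion \eqref{eq:modifiedGaussianlocal} by Taylor expansion, as in Lemma~\ref{lem:asymptotics_f1}, and then re-run the contour argument of Lemma~\ref{lem:asymptotics_f2}, treating the two lower-order phases as perturbations of the leading phase.

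\textbf{Local expansion.} Write $z=\cp+wL^{-1/2}$ with $|w|\le L^{\epsilon/3}$. Since $\GG'(\cp)=0$,
\[
L\bigl(\GG(z)-\GG(\cp)\bigr)=\tfrac12\GG''(\cp)w^2+O(|w|^3L^{-1/2}),
\]
and the error is $O(L^{-1/2+\epsilon})$. For the middle term,
\[
L^{2/3}\bigl(\HH(z)-\HH(\cp)\bigr)=\HH'(\cp)wL^{1/6}+O(|w|^2L^{-1/3}),
\]
with error $O(L^{-1/3+2\epsilon/3})$. For the next term,
\[
L^{1/3}\bigl(\KK(z)-\KK(\cp)\bigr)=O(|w|L^{-1/6})=O(L^{-1/6+\epsilon/3}).
\]
Finally, $\EE_L(z)-\EE_L(\cp)=O(L^{-1/2+\epsilon/3})$, because the $\delta_L^i$ are bounded and $\log$ is smooth near $\cp\in(-1,0)$. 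Among these errors the $\KK$-error is the largest, since $-1/6+\epsilon/3$ exceeds $-1/3+2\epsilon/3$ and the other exponents for $\epsilon<1/2$. Exponentiating gives \eqref{eq:modifiedGaussianlocal}.

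\textbf{Contour bounds when $\HH'(\cp)=0$.} Treat $\cp^-$ and $\ff_L$; the case $\cp^+$ with $1/\ff_L$ is symmetric. Write
\[
|\ff_L(z)/\ff_L(\cp)|=\exp\bigl(L\re(\GG(z)-\GG(\cp))+L^{2/3}\re(\HH(z)-\HH(\cp))+L^{1/3}\re(\KK(z)-\KK(\cp))+O(1)\bigr).
\]
Split the contour into three pieces.
\begin{itemize}
\item[(i)] On the rays with $L^{-1/2+\epsilon/3}\le|z-\cp|\le\eta$ ($\eta$ small and fixed), the ray lies in the decay sector, so $L\re(\GG(z)-\GG(\cp))\le -cL|z-\cp|^2$. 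Since $\HH'(\cp)=0$, the $\HH$-term is $O(L^{2/3}|z-\cp|^2)$, which is absorbed. The $\KK$-term is $O(L^{1/3}|z-\cp|)$, which is also absorbed because $L^{1/3}|z-\cp|\ll L|z-\cp|^2$ once $|z-\cp|\gg L^{-2/3}$. The shift $b L^{-1/2}$ of the contour's vertex only adds a bounded quadratic correction.
\item[(ii)] On the remainder of the rays, up to $|z-\cp|=L^{-1/12}$, the bound $\re(\GG(z)-\GG(\cp))\le -c|z-\cp|^2\le -cL^{-1/6}$ holds if $\eta$ is small. This yields decay $e^{-cL^{5/6}}$, which dominates $L^{2/3}$.
\item[(iii)] On the circular arc, the monotonicity from \cite[Lemma~5.4]{Baik-Cordaro-Tripathi25} bounds $\re\GG$ below its value at the arc's endpoints, giving decay $e^{-cL^{5/6}}$ again. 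The bounded terms $L^{2/3}\HH$ and $L^{1/3}\KK$ on this compact arc are dominated. This is exactly why the cutoff $L^{-1/12}$ was chosen.
\end{itemize}
Together these give the pointwise bound $Ce^{-cL^{2\epsilon/3}}|\ff_L(\cp^-)|$. Integrating the local Gaussian in the window $|w|\le L^{\epsilon/3}$ then gives the $L^1$ bound $CL^{-1/2}|\ff_L(\cp^-)|$.

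\textbf{Contour bounds when $\HH'(\cp)\ne0$.} In the window, the factor $e^{\HH'(\cp)wL^{1/6}}$ is at most $e^{CL^{1/6+\epsilon/3}}\le e^{CL^{1/3}}$. Outside the window, the linear term $L^{2/3}|\HH'(\cp)||z-\cp|$ competes with $-cL|z-\cp|^2$. Completing the square shows that the exponent is at most $\sup_r\bigl(CL^{2/3}r-cLr^2\bigr)=O(L^{1/3})$. The $\KK$-term is at most $L^{1/3}$ up to a constant, and the far parts of the contour are handled as in (ii) and (iii). Integrating the remaining Gaussian factor $e^{-cLr^2}$ gives the stated bound $Ce^{CL^{1/3}}L^{-1/2}|\ff_L(\cp^-)|$.

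\textbf{Main obstacle.} The delicate step is showing that the $L^{2/3}\HH$ and $L^{1/3}\KK$ perturbations do not destroy the decay on the intermediate rays. When $\HH'(\cp)=0$, this requires the quadratic vanishing of $\HH-\HH(\cp)$ at $\cp$. Uniformity in $b$ over bounded sets follows because all the constants above depend continuously on $b$.
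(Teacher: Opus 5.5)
Your proposal is correct and follows essentially the same route as the paper: Taylor expansion for \eqref{eq:modifiedGaussianlocal}; on the two line segments, a bound of the form $-cLr^2$ plus linear and constant perturbations absorbed by completing the square (the paper's weighted AM--GM step, which costs a factor $e^{CL^{1/3}}$ only when $\HH'(\cp)\ne0$); and on the circular arc, the monotonicity of $\re\GG$ from Baik--Cordaro--Tripathi, giving $e^{-cL^{5/6}}$ decay that beats the $O(L^{2/3})$ lower-order terms. Two small corrections: step (ii) is vacuous for large $L$, since the rays have length $L^{-1/12}<\eta$ and all you need is the endpoint value $\re(\GG(z)-\GG(\cp))\le -cL^{-1/6}$ feeding into (iii); and the vertex shift $bL^{-1/2}$ produces not only a bounded term but also a cross term of size $O(|b|L^{1/2}r)$, which the same completing-the-square argument absorbs.
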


\begin{proof}
Taylor's theorem gives \eqref{eq:modifiedGaussianlocal}. For the $L^1$ estimates, consider first the two line-segment portions of $\cont_-^L(\cp^-+bL^{-1/2})$. Writing
\beqq
	z=\cp^-+bL^{-1/2}+re^{\pm2\pi\ii/3},
    \qquad 0\le r\le L^{-1/12},
\eeqq
we obtain, if $\HH'(\cp^-)\ne0$,
\beq \label{eq:agi1}
 	\log\left|\frac{\ff_L(z)}{\ff_L(\cp^-)}\right|
 	\le -c_1Lr^2
      +c_2L^{2/3}\bigl(r+L^{-1/2}\bigr)
      +c_3L^{1/3}\bigl(r+L^{-1/2}\bigr)+c_4
\eeq
for some positive constants $c_1,\ldots,c_4$. By the weighted arithmetic-geometric inequality, for every $\delta>0$, 
\beqq
	L^{2/3}r\le \delta Lr^2 + \frac1{4\delta} L^{1/3},
	\qquad
	L^{1/3}r\le \delta Lr^2 + \frac1{4\delta} L^{-1/3}.
\eeqq
Choosing $\delta>0$ sufficiently small, \eqref{eq:agi1} yields
\beq \label{eq:agi2}
 	\log\left|\frac{\ff_L(z)}{\ff_L(\cp^-)}\right|
      \le -c_5Lr^2+c_6L^{1/3}+c_7
\eeq
for some positive constants $c_5,c_6,c_7$.

If $\HH'(\cp^-)=0$, the $c_2L^{2/3}(r+L^{-1/2})$ term in \eqref{eq:agi1} is replaced by $c_2L^{2/3}(r^2+L^{-1})$, and \eqref{eq:agi2} changes to
\beqq
 	\log\left|\frac{\ff_L(z)}{\ff_L(\cp^-)}\right|
      \le -c_5Lr^2+c_6.
\eeqq
On the circular part, by \cite[Lemma~5.4]{Baik-Cordaro-Tripathi25}, $\re(\mcG(z))$ decreases as $z$ travels away from $z_c^-+bL^{-1/2}$, and thus it follows that $\bigl| \frac{\ff_L(z)}{\ff_L(\cp^-)} \bigr| \le e^{-cL^{5/6}}$ 
on the circular part for some $c>0$.
Integrating over the contour, we obtain the first $L^1$ estimate. The second $L^1$-estimate is similar.
\end{proof}

%%%%%%%%%%%%%%%%%%%%%%%%%%%%%%%%%%%%%%%%%%%%%%%%
\subsection{Airy-family asymptotics}
\label{sec:asymgcase2}

For an Airy family, 
\beqq
	\ff_L(z)=e^{L\GG(z)+L^{2/3}\HH(z)+L^{1/3}\KK(z)+\EE_L(z)},
\eeqq
where
\beqq 
\begin{split}
	\GG(z)&:=-\alpha_1\log(z+1)+\alpha_2\log z+\alpha_3 z,\\
	\HH(z)&:=-\be_1\log(z+1)+\be_2\log z+\be_3z,\\
	\KK(z)&:=-\ga_1\log(z+1)+\ga_2\log z+\ga_3z,\\
	\EE_L(z)&:=-\delta_L^1\log(z+1)+\delta_L^2\log z.
\end{split}
\eeqq
The assumptions \eqref{eq:typeiicond} imply that $\GG$ has a double critical point $z_c\in(-1,0)$, and
\beqq %\label{eq:dbcritpt2}
	z_c=-\frac{\sqrt{\alpha_2}}{\sqrt{\alpha_3}}, 
	\qquad
	\HH'(z_c)=0, 
	\qquad
	\GG'''(z_c)
	=-\frac{2(\sqrt{\alpha_1}+\sqrt{\alpha_2})^4}
	{\sqrt{\alpha_1\alpha_2}}<0.
\eeqq

\begin{lem}\label{result:dbcplocal}
Let $(\ff_L)_{L>0}$ be an Airy family. Then, for every $\epsilon\in(0,1/3)$, there exists $L_0>0$ such that, for all $L>L_0$ and $|w|\le L^{\epsilon/4}$,
\beqq
	\ff_L(z_c+wL^{-1/3})
	=\ff_L(z_c)e^{\frac16\GG'''(z_c)w^3+\frac12\HH''(z_c)w^2+\KK'(z_c)w}
	\bigl(1+O(L^{-1/3+\epsilon})\bigr).
\eeqq
\end{lem}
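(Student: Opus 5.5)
We will prove the statement by a third-order Taylor expansion of $\log\ff_L$ about the double critical point $z_c$, and then bound the remainder uniformly for $|w|\le L^{\epsilon/4}$. Write $\log\ff_L(z)=L\GG(z)+L^{2/3}\HH(z)+L^{1/3}\KK(z)+\EE_L(z)$ and substitute $z=z_c+wL^{-1/3}$. Since $z_c\in(-1,0)$ is fixed and $|wL^{-1/3}|\le L^{-1/3+\epsilon/4}\to0$, all four functions are analytic on a fixed disc around $z_c$ avoiding $-1$ and $0$. Their derivatives there are bounded uniformly in $L$; for $\EE_L$ this holds because $\delta_L^1,\delta_L^2\in(-1,1)$.

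First we use the conditions \eqref{eq:typeiicond}. These give $\GG'(z_c)=\GG''(z_c)=0$ and $\HH'(z_c)=0$, which have already been recorded in the text. The expansions are then as follows.
\begin{itemize}
\item[] $L\bigl(\GG(z)-\GG(z_c)\bigr)=\tfrac16\GG'''(z_c)w^3+O(L\cdot|w|^4L^{-4/3})=\tfrac16\GG'''(z_c)w^3+O(|w|^4L^{-1/3})$.
\item[] $L^{2/3}\bigl(\HH(z)-\HH(z_c)\bigr)=\tfrac12\HH''(z_c)w^2+O(|w|^3L^{-1/3})$, since the linear term vanishes.
\item[] $L^{1/3}\bigl(\KK(z)-\KK(z_c)\bigr)=\KK'(z_c)w+O(|w|^2L^{-1/3})$.
\item[] $\EE_L(z)-\EE_L(z_c)=O(|w|L^{-1/3})$.
\end{itemize}
In each case the error constant comes from Taylor's theorem with Cauchy estimates on the fixed disc, so it is uniform in $L$.

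Next we bound the total error. For $|w|\le L^{\epsilon/4}$ it is dominated by $|w|^4L^{-1/3}\le L^{-1/3+\epsilon}$, and the other error terms are smaller. Since $\epsilon<1/3$, the total error tends to $0$. We exponentiate and use $e^{O(\eta)}=1+O(\eta)$ for $\eta\to0$, which gives the claimed formula with error factor $1+O(L^{-1/3+\epsilon})$, valid for all $L>L_0$.

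The only point needing care is the vanishing of $\HH'(z_c)$, which is exactly the second condition in \eqref{eq:typeiicond}. We verify it directly. Since $z_c=-\sqrt{\alpha_2}/\sqrt{\alpha_3}$, we have $1+z_c=\sqrt{\alpha_1}/\sqrt{\alpha_3}$. Hence
\[
\HH'(z_c)=-\frac{\be_1\sqrt{\alpha_3}}{\sqrt{\alpha_1}}-\frac{\be_2\sqrt{\alpha_3}}{\sqrt{\alpha_2}}+\be_3,
\]
and this vanishes by \eqref{eq:typeiicond}. Without this cancellation, the $L^{2/3}$ term would contribute $\HH'(z_c)wL^{1/3}$, which diverges. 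The rest of the argument is routine.
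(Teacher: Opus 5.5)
Your proposal is correct and follows the same route as the paper, which simply states that the lemma follows from Taylor's theorem; you expand $L\GG$, $L^{2/3}\HH$, $L^{1/3}\KK$ and $\EE_L$ about $z_c$, using $\GG'(z_c)=\GG''(z_c)=\HH'(z_c)=0$ from \eqref{eq:typeiicond}. The only nit is that $|w|^4L^{-1/3}$ dominates the other remainders only when $|w|\ge1$, but each $|w|^kL^{-1/3}$ with $k\le4$ is at most $L^{-1/3+\epsilon}$ for $|w|\le L^{\epsilon/4}$, so the conclusion is unaffected.
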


The above follows from Taylor's theorem.
We also have the following bounds.

\begin{lem}\label{lem:asymptotics_f2_case2}
Let $(\ff_L)_{L>0}$ be an Airy family. For every $b\in\R$, set
\beqq
	\cont_\pm:=\cont_\pm^L(z_c+bL^{-1/3}).
\eeqq
There exist $c,C,L_0>0$ such that, for all $L\ge L_0$ and every $\epsilon\in(0,1/3)$,
\beq\label{eq:f2_case2_left_decay}
	|\ff_L(z)|
	\le Ce^{-cL^{3\epsilon/4}}|\ff_L(z_c)|
	\qquad
	\text{if $z\in\cont_-$ and $|z-z_c|\ge L^{-1/3+\epsilon/4}$,}
\eeq
and
\beq\label{eq:f2_case2_right_decay}
	\frac{1}{|\ff_L(z)|}
	\le Ce^{-cL^{3\epsilon/4}}\frac{1}{|\ff_L(z_c)|}
	\qquad
	\text{if $z\in\cont_+$ and $|z-z_c|\ge L^{-1/3+\epsilon/4}$.}
\eeq
Moreover,
\beq\label{eq:f2_case2_L1bound} %\label{eq:f2_case2_bounds}
	\|\ff_L\|_{L^1(\cont_-)}
	\le\frac{C}{L^{1/3}}|\ff_L(z_c)|, 
	\qquad 
	\left\|\frac{1}{\ff_L}\right\|_{L^1(\cont_+)}
	\le\frac{C}{L^{1/3}|\ff_L(z_c)|}.
\eeq
The constants may be chosen uniformly when $b$ ranges over a fixed bounded set.
\end{lem}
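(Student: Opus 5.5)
The plan is to follow the same steepest-descent scheme used for the Gaussian family (Lemma~\ref{lem:asymptotics_f2} and Lemma~\ref{lem:modifiedGaussian}), now at cube-root scale. I treat the left bound and $\cont_-$ first; the statements for $1/\ff_L$ on $\cont_+$ follow from the same argument with the signs reversed, since $\GG'''(z_c)<0$ and the directions $e^{\pm\pi\ii/3}$ play for $-\GG$ the role that $e^{\pm2\pi\ii/3}$ play for $\GG$. Write
\beqq
	\log\Bigl|\frac{\ff_L(z)}{\ff_L(z_c)}\Bigr|
	=L\,\re\bigl(\GG(z)-\GG(z_c)\bigr)+L^{2/3}\re\bigl(\HH(z)-\HH(z_c)\bigr)+L^{1/3}\re\bigl(\KK(z)-\KK(z_c)\bigr)+O(1),
\eeqq
where the $O(1)$ term comes from $\EE_L$, which is bounded on compact subsets of $(\C\setminus\{-1,0\})$. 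The contour $\cont_-$ splits into two pieces, and I handle them separately.

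\textbf{Line segments.} Here $z=z_c+bL^{-1/3}+re^{\pm2\pi\ii/3}$ with $0\le r\le L^{-1/12}$. Set $s:=bL^{-1/3}+re^{\pm2\pi\ii/3}$. I expand each phase in Taylor series about $z_c$, using $\GG'(z_c)=\GG''(z_c)=0$ and $\HH'(z_c)=0$:
\beqq
	\re\bigl(\GG(z)-\GG(z_c)\bigr)=\tfrac16\GG'''(z_c)\re(s^3)+O(|s|^4).
\eeqq
Since $e^{\pm 2\pi\ii}=1$ and $\GG'''(z_c)<0$, the main term satisfies $\tfrac16\GG'''(z_c)r^3\le -c r^3$. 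The cross terms coming from the shift $bL^{-1/3}$ are at most $C(r^2L^{-1/3}+rL^{-2/3}+L^{-1})$. The $\HH$ contribution is $L^{2/3}\,O(|s|^2)=O(L^{2/3}r^2+1)$, and the $\KK$ contribution is $O(L^{1/3}r+1)$. The remainder $L\,O(r^4)$ is at most $L r^3\cdot L^{-1/12}$, so it is absorbed into $-cLr^3$ once $L$ is large.

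To absorb the lower-order terms, I apply the weighted AM-GM inequality exactly as in the proof of Lemma~\ref{lem:modifiedGaussian}, for instance $L^{2/3}r^2\le \delta Lr^3+C_\delta$ and $L^{1/3}r\le \delta Lr^3+C_\delta$. This yields
\beqq
	\log\Bigl|\frac{\ff_L(z)}{\ff_L(z_c)}\Bigr|\le -c'Lr^3+C'.
\eeqq
Both consequences follow directly from this bound. When $|z-z_c|\ge L^{-1/3+\epsilon/4}$, we have $r\gtrsim L^{-1/3+\epsilon/4}$, so $Lr^3\gtrsim L^{3\epsilon/4}$, which gives \eqref{eq:f2_case2_left_decay}. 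Integrating $e^{-c'Lr^3}\,\dd r$ gives $O(L^{-1/3})$, which is \eqref{eq:f2_case2_L1bound} on the segments.

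\textbf{Circular arc.} The arc $\gamma_-$ is centered at $-1$ and passes through the endpoints $z_c+bL^{-1/3}+L^{-1/12}e^{\pm2\pi\ii/3}$. I need a monotonicity statement for the leading phase on it: $\re\GG$ should be non-increasing as $z$ moves along the arc away from the real axis near $z_c$, toward the far side of $-1$. This is the analogue of \cite[Lemma~5.4]{Baik-Cordaro-Tripathi25}, and I would verify it directly. Parametrize $z=-1+\rho e^{\ii\theta}$; then
\beqq
	\re\GG=-\alpha_1\log\rho+\alpha_2\log|z|+\alpha_3(\rho\cos\theta-1).
\eeqq
Both $|z|^2=1-2\rho\cos\theta+\rho^2$ and $\cos\theta$ are monotone in $\theta\in[0,\pi]$. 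Differentiating gives
\beqq
	\partial_\theta \re\GG=\rho\sin\theta\Bigl(\frac{\alpha_2}{|z|^2}-\alpha_3\Bigr),
\eeqq
which is negative because $|z|^2\ge z_c^2=\alpha_2/\alpha_3$ for $\rho\le 1+z_c$. (This is the crux, and it uses the double-critical-point condition \eqref{eq:typeiicond}, which gives $z_c^2=\alpha_2/\alpha_3$.) Hence on the arc $\re\GG$ is at most its value at the junction points, which is $\re\GG(z_c)-cL^{-1/4}$. The $L^{2/3}\HH$ and $L^{1/3}\KK$ terms are uniformly $O(L^{2/3}L^{-1/6})$ after subtracting their values at $z_c$; more crudely, they contribute $O(L^{2/3}\cdot L^{-1/12})$ on the arc near $z_c$ and are bounded elsewhere. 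Either way this is $o(L^{3/4})$, so $|\ff_L/\ff_L(z_c)|\le e^{-cL^{3/4}}$ on the arc. This bound gives both estimates on the arc.

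\textbf{Main obstacle.} I expect the delicate step to be exactly this comparison on the arc between the $L\cdot L^{-1/4}$ gain from the cubic phase and the $L^{2/3}$-order $\HH$ term. It is what forces the cutoff exponent to be $1/12$, as the remark after Definition~\ref{def:contours} says. Uniformity in $b$ over bounded sets is automatic, because every constant above depends on $b$ only through $|b|$.
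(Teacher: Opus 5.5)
Your proposal is correct and follows essentially the same route as the paper: split $\cont_\pm$ into the two segments and the arc, let the cubic term $-cLr^3$ dominate the $L^{2/3}\HH$, $L^{1/3}\KK$ and $\EE_L$ contributions on the segments, and use monotonicity of $\re\GG$ along the arc to get $-cL^{3/4}+O(L^{2/3})$ there. The differences are minor. You check the arc monotonicity directly via $\partial_\theta\re\GG=\rho\sin\theta\,(\alpha_2/|z|^2-\alpha_3)$ and $|z|\ge1-\rho>|z_c|$, whereas the paper cites \cite[Lemma~5.4(a)]{Baik-Cordaro-Tripathi25} in the equality case. You also track the $bL^{-1/3}$ cross terms explicitly, which the paper's bound $\mp\re(\GG(z)-\GG(z_c))\le-k|z-z_c|^3$ glosses over. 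One caution: on the far part of the arc the $\HH,\KK$ differences are $O(1)$, so only your cruder $O(L^{2/3})$ bound is valid there, but that bound is all you need.
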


\begin{proof}
Fix $\epsilon\in(0,1/3)$ and $b\in\R$, and set $z_0:=z_c+bL^{-1/3}$. Set 
\beqq
\begin{split}
	\Gamma^*_+
	&:=\{z_0+re^{\pi\ii/3}:r\in[0,L^{-1/12}]\}
	\cup\{z_0+re^{-\pi\ii/3}:r\in[0,L^{-1/12}]\},\\
	\Gamma^*_-
	&:=\{z_0+re^{2\pi\ii/3}:r\in[0,L^{-1/12}]\}
	\cup\{z_0+re^{-2\pi\ii/3}:r\in[0,L^{-1/12}]\},
\end{split}
\eeqq
so that $\cont_\pm=\Gamma^*_\pm\cup\ga_\pm$, where $\ga_\pm$ are defined as in Definition~\ref{def:contours}. Let $z^\pm_1$ denote the endpoint of $\ga_\pm$ in the upper half-plane.

Although \cite[Lemma~5.4(a)]{Baik-Cordaro-Tripathi25} assumes that $\alpha_3>(\sqrt{\alpha_1}+\sqrt{\alpha_2})^2$, its proof remains valid when $\alpha_3=(\sqrt{\alpha_1}+\sqrt{\alpha_2})^2$, with the strict inequalities in \cite[(5.11)]{Baik-Cordaro-Tripathi25} replaced by $|z_c+1|=s_-$ and $|z_c|=s_+$. Consequently, for every $s\in(0,|z_c+1|)$, the function $\re\GG(-1+se^{\ii\theta})$ decreases as $|\theta|$ increases from $0$. Similarly, for every $s\in(0,|z_c|)$, the function $\re\GG(se^{\ii\theta})$ is minimized at $\theta=\pi$ and increases as $\theta$ moves away from $\pi$. Since $\re\GG(z)=\re\GG(\overline z)$, it follows that
\beq
\label{eq:reGGincdecga+-}
	\mp\re\GG(z)\le\mp\re\GG(z^\pm_1),
	\qquad \text{for every $z\in\ga_\pm$,}
\eeq
for all sufficiently large $L$.

Since $z_c$ is a double critical point of $\GG$ and $\GG'''(z_c)<0$, write $\GG(z)-\GG(z_c)=\frac16\GG'''(z_c)(z-z_c)^3+O(|z-z_c|^4)$. On $\Gamma^*_- $ we have $z-z_c=bL^{-1/3}+re^{\pm2\pi\ii/3}$, while on $\Gamma^*_+$ we have $z-z_c=bL^{-1/3}+re^{\pm\pi\ii/3}$. 
Hence, there exist constants $k>0$ such that
\beq
\label{eq:reGtayonGammapm*}
	\mp\re\bigl(\GG(z)-\GG(z_c)\bigr)
	\le-k|z-z_c|^3
	\qquad \text{for $z\in\Gamma^*_\pm$,}
\eeq
for all sufficiently large $L$. 
Since $\HH'(z_c)=0$, there exists $C_0>0$ such that 
\beq
\label{eq:HKEbound}
	|\HH(z)-\HH(z_c)|\le C_0|z-z_c|^2,\qquad
	|\KK(z)-\KK(z_c)|\le C_0|z-z_c|,\qquad
	|\EE_L(z)-\EE_L(z_c)|\le C_0,
\eeq
for $z\in\Gamma^*_\pm$ and all sufficiently large $L$. Therefore, there exists $L_0>1$ such that
\beq
\label{eq:flfzccommonl1}
	\mp\log\left|\frac{\ff_L(z)}{\ff_L(z_c)}\right|
	\le-k|z-z_c|^3L
	+C_0|z-z_c|^2L^{2/3}
	+C_0|z-z_c|L^{1/3}
	+C_0,
	\qquad \text{for $z\in\Gamma^*_\pm$,}
\eeq
for every $L\ge L_0$. Hence, there exists $L_1>1$ such that
\beq
\label{eq:flzflzcexpboundinGamma*pm}
	\mp \log \left| \frac{\ff_L(z)}{\ff_L(z_c)} \right| \le -\frac{k}{2}|z-z_c|^3L \le -cL^{3\epsilon/4}, \quad \text{for $ z\in \Gamma^*_\pm \cap \{z \in \C : |z-z_c| \ge L^{-1/3+\epsilon/4}\}$ } 
\eeq
for every $L \ge L_1$ and some $c>0$. From \eqref{eq:reGGincdecga+-} and \eqref{eq:reGtayonGammapm*}, applied at $z=z^\pm_1$, and using $|z^\pm_1-z_c|=L^{-1/12}(1+o(1))$, we obtain, after adjusting the constants,
\beqq
	\mp\re\bigl(\GG(z)-\GG(z_c)\bigr)
	\le-kL^{-1/4},
	\qquad \text{for $z\in\ga_\pm$.}
\eeqq
Moreover, since $\ga_\pm$ remain in a compact subset of $\C\setminus\{-1,0\}$ for all sufficiently large $L$, there exist $L_2>1$ and $K>0$ such that
\beq
\label{eq:flzflzcexpboundingammapm}
	\mp\log\left|\frac{\ff_L(z)}{\ff_L(z_c)}\right|
	\le-kL^{3/4}+KL^{2/3},
	\qquad \text{for $z\in\ga_\pm$,}
\eeq
for every $L\ge L_2$. Thus, \eqref{eq:f2_case2_left_decay} and \eqref{eq:f2_case2_right_decay} follow from \eqref{eq:flzflzcexpboundinGamma*pm} and \eqref{eq:flzflzcexpboundingammapm}.

We now prove the $L^1$-bounds. Using \eqref{eq:flfzccommonl1} and the change of variables $z=z_c+wL^{-1/3}$, we find that there exists a constant $C_1>0$ such that
\beqq
\begin{split}
	\left|\frac{\ff_L(z_c)}{\ff_L(z)}\right|
	&\le C_1e^{-k|w|^3+C_0|w|^2+C_0|w|},
	\qquad
	\text{for $z\in\Gamma^*_+\cap\{z\in\C:|z-z_c|\le L^{-1/3+\epsilon/4}\}$},\\
	\left|\frac{\ff_L(z)}{\ff_L(z_c)}\right|
	&\le C_1e^{-k|w|^3+C_0|w|^2+C_0|w|},
	\qquad
	\text{for $z\in\Gamma^*_-\cap\{z\in\C:|z-z_c|\le L^{-1/3+\epsilon/4}\}$}.
\end{split}
\eeqq
The right-hand side is integrable along the scaled contours. The contributions from the portions of the contours satisfying $|z-z_c|\ge L^{-1/3+\epsilon/4}$ are exponentially small by the estimates above. This proves \eqref{eq:f2_case2_L1bound}. All constants above may be chosen uniformly when $b$ ranges over a fixed bounded set.
\end{proof}

%%%%%%%%%%%%%%%%%%%%%%

\subsection{Uniform bounds for the $\J$-integrals}\label{sec:uniform}

We use the following standard Cauchy-determinant estimate, which follows from Hadamard's inequality.

\begin{lem}\label{result:Cdetest}
If $|a_i-b_j|\ge d$ for all $1\le i,j\le n$, then
\beqq
    \bigl|\K_n \bigl( (a_1,\ldots,a_n ) \mid (b_1,\ldots,b_n)\bigr)\bigr|\le\frac{n^{n/2}}{d^n}.
\eeqq
\end{lem}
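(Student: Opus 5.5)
This is Hadamard's inequality applied to the Cauchy matrix $\bigl[1/(a_i-b_j)\bigr]_{i,j=1}^n$.

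\emph{Step 1 (entry bound).} By hypothesis $|a_i-b_j|\ge d$ for every $i,j$, so each entry satisfies
\beqq
	\left|\frac{1}{a_i-b_j}\right|\le \frac1d .
\eeqq

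\emph{Step 2 (row norms).} Let $r_i$ denote the $i$th row of the matrix. Since $r_i$ has $n$ entries, each of modulus at most $1/d$, its Euclidean norm satisfies
\beqq
	\|r_i\|_2=\Bigl(\sum_{j=1}^n |a_i-b_j|^{-2}\Bigr)^{1/2}\le \frac{\sqrt n}{d}.
\eeqq

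\emph{Step 3 (Hadamard).} Hadamard's inequality holds for complex matrices and gives $|\det A|\le\prod_{i=1}^n\|r_i\|_2$. Combining this with Step 2,
\beqq
	\bigl|\K_n \bigl( (a_1,\ldots,a_n ) \mid (b_1,\ldots,b_n)\bigr)\bigr|\le \prod_{i=1}^n\frac{\sqrt n}{d}=\frac{n^{n/2}}{d^n}.
\eeqq

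There is no real obstacle here. The only points to note are that Hadamard's inequality applies without change to complex entries, and that the Cauchy structure of the matrix is not used beyond the uniform entry bound of Step 1.
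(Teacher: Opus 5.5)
Your proposal is correct and follows the paper's own approach: the paper cites the bound as a direct consequence of Hadamard's inequality, and you supply exactly that argument (entrywise bound $1/d$, row norms at most $\sqrt n/d$, then Hadamard for complex matrices).
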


For every $L>0$, let $\ff_{L,1}$ and $\ff_{L,2}$ be functions of the form \eqref{eq:fLgnal}, and $\ff_{L,12}:=\ff_{L,1}\ff_{L,2}$. 
For every $\bn=(n_1,n_2)\in\N^2$ and $\bsigma,\btau\in\listn$, define, as in Definitions~\ref{def:Pisigmaxi} and~\ref{def:Jintegral},
\beqq
	\J^{\bsigma}_{\btau}(L)
	:=
	\frac{1}{(2\pi\ii)^{|\bsigma|+|\btau|}}
	\int\dd\bsxi^{\bsigma}\int\dd\bseta^{\btau}\,
	\nPi^{\bsigma}_{\btau}(\bsxi,\bseta)
	\FF^{\bsigma}_{\btau,L},
\eeqq
where
\beqq
	\FF^{\bsigma}_{\btau,L}
	:=
	\frac{\prod_{*\in\mc A_2}\prod_{i=1}^{a_*}\ff_{L,*}(\xi_i^*)}
	{\prod_{*\in\mc A_2}\prod_{i=1}^{b_*}\ff_{L,*}(\eta_i^*)},
\eeqq
and
\beq\label{eq:Caucdt}
	\nPi^{\bsigma}_{\btau}(\bsxi,\bseta)
	:=
	\K_{n_1}(\beot,\beo\mid\bxot,\bxo)
	\K_{a_1+b_2}(\bxo,\bet\mid\beo,\bxt)
	\K_{n_2}(\bxot,\bxt\mid\beot,\bet),
\eeq
with $\type(\bsigma)=(a_{12},a_1,a_2)$ and
$\type(\btau)=(b_{12},b_1,b_2)$.

\begin{definition}[Admissible $d_L$-separated contour system]
\label{def:admissiblecontours}

For a Gaussian or modified Gaussian family, we call
\beqq
    \cont_-^L(\cp^-+bL^{-1/2})
    \qquad\text{and}\qquad
    \cont_+^L(\cp^++bL^{-1/2}),
    \qquad b\in\R,
\eeqq
the left and right asymptotic contours, respectively, whenever the corresponding critical point exists.
For an Airy family with double critical point $\cp$, we call
\beqq
    \cont_-^L(\cp+bL^{-1/3})
    \qquad\text{and}\qquad
    \cont_+^L(\cp+bL^{-1/3}),
    \qquad b\in\R,
\eeqq
the left and right asymptotic contours, respectively. 
A contour system for $\J^{\bsigma}_{\btau}(L)$ is called \emph{admissible} if every $\xi_i^*$ is integrated over a left asymptotic contour for $\ff_{L,*}$, every $\eta_i^*$ is integrated over a right asymptotic contour for $\ff_{L,*}$, and the nesting prescribed by $\bsigma$ and $\btau$ is preserved.
For $d_L>0$, an admissible contour system is called \emph{$d_L$-separated} if any two contours carrying variables that appear on opposite sides of the same Cauchy determinant in \eqref{eq:Caucdt} are separated by a distance of at least $d_L$.
\end{definition}

The next proposition gives a uniform bound for $\J^{\bsigma}_{\btau}(L)$. For $\bn=(n_1,n_2)\in\N^2$ and $\bsigma,\btau\in\listn$, write
\beqq
    \type(\bsigma)=(a_{12},a_1,a_2),
    \qquad
    \type(\btau)=(b_{12},b_1,b_2).
\eeqq
For each $*\in\mc A_2$, suppose that $(\ff_{L,*})_{L>0}$ is an Airy family, a Gaussian family of Case~\textnormal{(a)}, or a modified Gaussian family whose leading phase is of Case~\textnormal{(a)}. For a Gaussian or modified Gaussian family, let $z_*^-<z_*^+$ denote its critical points, and for an Airy family set $z_*^-=z_*^+:=z_*^{\mathrm c}$. Define
\beqq
\begin{split}
    \mc B_{\mathrm G}
    &:=\{*\in\mc A_2:\ff_{L,*}\text{ is Gaussian}\},\\
    \mc B_{\mathrm{MG}}
    &:=\{*\in\mc A_2:\ff_{L,*}\text{ is modified Gaussian}\},\\
    \mc B_{\mathrm A}
    &:=\{*\in\mc A_2:\ff_{L,*}\text{ is Airy}\}.
\end{split}
\eeqq
Set
\beq \label{eq:abmu}
    \alpha:=\sum_{*\in\mc B_{\mathrm G}\cup\mc B_{\mathrm{MG}}}(a_*+b_*),
    \qquad
    \beta:=\sum_{*\in\mc B_{\mathrm A}}(a_*+b_*),
    \qquad
    \mu:=\sum_{*\in\mc B_{\mathrm{MG}}}
    \left(
        a_*\mathbf 1_{\HH_*'(z_*^-)\ne0}
        +b_*\mathbf 1_{\HH_*'(z_*^+)\ne0}
    \right),
\eeq 
where $\HH_*$ denotes the $L^{2/3}$-order phase of $\ff_{L,*}$.

\begin{prop}\label{result:generalasy}
There exist $C,L_0>0$ such that, 
for every $\bn=(n_1,n_2)\in\N^2$ and $\bsigma,\btau\in\listn$, 
if the contours in $\J^{\bsigma}_{\btau}(L)$ can be deformed, without crossing poles, to a $d_L$-separated admissible contour system, then 
\beqq
    |\J^{\bsigma}_{\btau}(L)|
    \le
    \frac{
        C^{n_1+n_2}e^{C\mu L^{1/3}}
        \sqrt{n_1!n_2!}\,(a_1+b_2)!
    }{
        d_L^{\alpha+\beta}
        L^{\alpha/2+\beta/3}
    }
    \prod_{*\in\mc A_2}
    \frac{|\ff_{L,*}(z_*^-)|^{a_*}}
    {|\ff_{L,*}(z_*^+)|^{b_*}} 
\eeqq
for all $L\ge L_0$.
The same bound holds if some Gaussian or modified Gaussian families have leading phase of Case~\textnormal{(b)} or~\textnormal{(c)}, provided that $a_*=0$ for every Case~\textnormal{(b)} family and $b_*=0$ for every Case~\textnormal{(c)} family. For such families, $z_*^+$ denotes the critical point in Case~\textnormal{(b)} and $z_*^-$ the critical point in Case~\textnormal{(c)}, and the factors involving nonexistent critical points are omitted.
\end{prop}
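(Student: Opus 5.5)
After deforming the contours (no poles are crossed) to a $d_L$-separated admissible system, I would bound the integrand pointwise and integrate coordinate by coordinate. The bound splits into two parts: the product $\nPi^{\bsigma}_{\btau}$ of three Cauchy determinants, which I estimate by Hadamard's inequality (Lemma~\ref{result:Cdetest}), and the product $\FF^{\bsigma}_{\btau,L}$, which factorizes over the individual integration variables. Once the determinant part is bounded by a constant independent of the variables, the integral becomes a product of one-dimensional $L^1$-norms. Each of these is controlled by Lemma~\ref{lem:asymptotics_f2}, Lemma~\ref{lem:modifiedGaussian}, or Lemma~\ref{lem:asymptotics_f2_case2}.

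For the Cauchy determinants, the $d_L$-separation gives $|r_i-s_j|\ge d_L$ for every pair of variables on opposite sides of the same determinant. Lemma~\ref{result:Cdetest} then bounds the three determinants by
\beqq
	\frac{n_1^{n_1/2}}{d_L^{n_1}},\qquad \frac{(a_1+b_2)^{(a_1+b_2)/2}}{d_L^{a_1+b_2}},\qquad \frac{n_2^{n_2/2}}{d_L^{n_2}}.
\eeqq
Using $k^{k/2}\le e^{k/2}\sqrt{k!}$, the first and third give $C^{n_1+n_2}\sqrt{n_1!n_2!}$. The middle one gives at most $C^{n_1+n_2}\sqrt{(a_1+b_2)!}$, which is bounded by the $(a_1+b_2)!$ in the statement.

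The powers of $d_L$ also need to be checked. The total exponent is $n_1+n_2+a_1+b_2$. On the other hand, $\alpha+\beta=\sum_*(a_*+b_*)=(n_1-a_{12})+(n_2-b_{12})+a_{12}+b_{12}$ after using $a_{12}+a_1=n_1$ and the analogous identities. Checking this, $\sum_*(a_*+b_*)=|\bsigma|+|\btau|=(n_1+n_2-a_{12})+(n_1+n_2-b_{12})$, and also $n_1+n_2+a_1+b_2=n_1+n_2+(n_1-a_{12})+(n_2-b_{12})$. So the exponents agree, and the determinant bound contributes exactly $d_L^{-(\alpha+\beta)}$, assuming $d_L\le1$.

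For the $\FF$ part, each $\xi_i^*$ is integrated over a left asymptotic contour for $\ff_{L,*}$, which gives a factor $\|\ff_{L,*}\|_{L^1}$. Each $\eta_i^*$ is on a right asymptotic contour, which gives a factor $\|1/\ff_{L,*}\|_{L^1}$. These norms are bounded as follows:

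1. For a Gaussian family (Case (a)), Lemma~\ref{lem:asymptotics_f2} gives $C L^{-1/2}|\ff_{L,*}(z_*^\mp)|^{\pm1}$.
2. For a modified Gaussian family, Lemma~\ref{lem:modifiedGaussian} gives the same bound, multiplied by $e^{CL^{1/3}}$ exactly when $\HH_*'\ne0$ at the corresponding critical point. This produces the factor $e^{C\mu L^{1/3}}$.
3. For an Airy family, Lemma~\ref{lem:asymptotics_f2_case2} gives $CL^{-1/3}|\ff_{L,*}(z_*^c)|^{\pm1}$.

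The shifts $b$ lie in a fixed bounded set, so all these constants are uniform. Multiplying everything gives $C^{|\bsigma|+|\btau|}\le C^{2(n_1+n_2)}$ together with $L^{-\alpha/2-\beta/3}$ and the claimed product of critical values.

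For the variant with Case (b) and Case (c) families, only right contours are used for Case (b) families and only left contours for Case (c) families. The corresponding one-sided estimates exist in Lemma~\ref{lem:asymptotics_f2}, so the same argument applies unchanged.

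The main subtle point is the uniformity: $C$ and $L_0$ must not depend on $\bn$, $\bsigma$, or $\btau$. This holds because the bound is a product of finitely many types of one-variable estimates, each uniform in the contour shift, and the only $n$-dependence enters through the explicit combinatorial factors. Since pole-free deformation does not change the integral, the bound obtained on the admissible system is a bound for $\J^{\bsigma}_{\btau}(L)$.
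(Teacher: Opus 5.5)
Your proposal is correct and follows the paper's proof essentially step by step. You bound $\|\nPi^{\bsigma}_{\btau}\|_\infty$ via Lemma~\ref{result:Cdetest} and the $d_L$-separation, using $n_1+n_2+a_1+b_2=\sum_*(a_*+b_*)=\alpha+\beta$, and you factor $\|\FF^{\bsigma}_{\btau,L}\|_1$ into one-variable $L^1$-norms controlled by Lemmas~\ref{lem:asymptotics_f2}, \ref{lem:modifiedGaussian}, and~\ref{lem:asymptotics_f2_case2}; the only blemishes are cosmetic: your first expression for $\alpha+\beta$ is wrong before you correct it, and the assumption $d_L\le1$ is unnecessary because the exponents agree exactly.
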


\begin{proof}
We deform the contours to an admissible contour system. Then
\beqq
    |\J^{\bsigma}_{\btau}(L)|
    \le
    \|\nPi^{\bsigma}_{\btau}\|_{\infty}
    \|\FF^{\bsigma}_{\btau,L}\|_{1}.
\eeqq
By Lemma~\ref{result:Cdetest},
\beqq
    \|\nPi^{\bsigma}_{\btau}\|_{\infty}
    \le
    d_L^{-(n_1+a_1+b_2+n_2)}
    n_1^{n_1/2}(a_1+b_2)^{(a_1+b_2)/2}n_2^{n_2/2}.
\eeqq
Note that $n_1+a_1+b_2+n_2 = \sum_{*\in \mc A_2} (a_*+b_*)
= \alpha + \beta$.
Since $(a_1+b_2)^{(a_1+b_2)/2}\le(a_1+b_2)^{a_1+b_2}$ and $k^k\le e^k k!$, there exists $C_0>0$ such that
\beqq
    \|\nPi^{\bsigma}_{\btau}\|_{\infty}
    \le
    C_0^{n_1+n_2}
    d_L^{-(\alpha+\beta)}
    \sqrt{n_1!n_2!}\,(a_1+b_2)!.
\eeqq
On the other hand, Lemmas~\ref{lem:asymptotics_f2}, \ref{lem:modifiedGaussian}, and~\ref{lem:asymptotics_f2_case2} imply that there exists $C_1>0$, independent of $L$, $\bsigma$, and $\btau$, such that
\beqq
    \|\FF^{\bsigma}_{\btau,L}\|_{1}
    \le
    \frac{
        C_1^{\sum_{*\in\mc A_2}(a_*+b_*)}
        e^{C_1\mu L^{1/3}}
    }{
        L^{\alpha/2+\beta/3}
    }
    \prod_{*\in\mc A_2}
    \frac{|\ff_{L,*}(z_*^-)|^{a_*}}
    {|\ff_{L,*}(z_*^+)|^{b_*}}.
\eeqq
Since $ \sum_{*\in\mc A_2}(a_*+b_*)\le2(n_1+n_2)$, 
the result follows by setting $C=\max\{ C_0C_1^2, C_1\}$.
\end{proof}

%%%%%%%%%%%%%%%%%%%%%%%%%%%%%%%
%%%%%%%%%%%%%%%%%%%%%%%%%%%%%%%
\section{The conditioning event}
\label{sec:conditioning}

Before turning to the conditional two-point asymptotics, we first state a sharp asymptotic formula for the one-point upper-tail probability that appears as the conditioning denominator. A closely related sharp asymptotic formula for the density appears in \cite[Lemma~6.3]{Baik-Cordaro-Tripathi25}.

\begin{lem}\label{lem:conditioning_event}
For $\ac,\bc>0$ and $\lv>(\sqrt \ac+\sqrt \bc)^2$, let 
\beq
	\tP:=\prob\left(\LPP(\lceil \ac L\rceil,\lceil \bc L\rceil)>\lv L\right). 
\eeq 
Then, as $L\to\infty$,
\beq\label{eq:conditioning_rank_one_asymptotic}
	\tP
	=
	\frac{\ffz(\tzo^-)}{\ffz(\tzo^+)}
	\frac{1}{2\pi L(\tzo^+-\tzo^-)^2
	\sqrt{-\Thetao''(\tzo^-)\Thetao''(\tzo^+)}}
	\bigl(1+o(1)\bigr),
\eeq
where $\Thetao$ and $\tzo^\pm$ are defined in Section~\ref{sec:phasefunctions}, and 
\beqq
	\ffz(z):=e^{-\lceil \ac L\rceil\log(1+z)+\lceil \bc L\rceil\log z+\lv Lz}. 
\eeqq
\end{lem}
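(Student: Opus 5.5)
Our plan is to use the one-point case $m=1$ of Proposition~\ref{prop:tail}, with $M_1=\lceil \ac L\rceil$, $N_1=\lceil \bc L\rceil$, $T_1=\lv L$. This gives
\beqq
	\tP=\QQ_1=\sum_{n\ge1}\frac{(-1)^{n+1}}{(n!)^2}\DD^{(n)},\qquad
	\DD^{(n)}=\frac{1}{(2\pi\ii)^{2n}}\int\dd\bsxi\int\dd\bseta\,\K_n(\bseta\mid\bsxi)^2\prod_{k=1}^n\frac{\ffz(\xi_k)}{\ffz(\eta_k)},
\eeqq
where we use $\K_n(\bseta\mid\bsxi)\K_n(\bsxi\mid\bseta)=(-1)^n\K_n(\bseta\mid\bsxi)^2$. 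The family $\ffz$ is a Gaussian family of Case~(a), with leading phase $\GG=\Thetao$, $\HH=0$ and bounded $\EE_L$. By Lemma~\ref{result:cpconditioning}, its critical points satisfy $-1<\tzo^-<\tzo^+<0$. Since the $\xi$-circles around $-1$ and the $\eta$-circles around $0$ only need to stay disjoint, we can deform them without crossing any pole. The $\xi$-contours go to $\cont_-^L(\tzo^-)$ and the $\eta$-contours go to $\cont_+^L(\tzo^+)$. These stay at distance at least $c>0$ from each other, because $\tzo^-<\tzo^+$ and the arcs sit on opposite sides.

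For the leading term $n=1$, we have $\DD^{(1)}=\frac{1}{(2\pi\ii)^2}\int\int\frac{\ffz(\xi)}{\ffz(\eta)}\frac{\dd\xi\,\dd\eta}{(\eta-\xi)^2}$. We localize $\xi=\tzo^-+uL^{-1/2}$ and $\eta=\tzo^++vL^{-1/2}$. The tails are exponentially small by Lemma~\ref{lem:asymptotics_f2}, and the local asymptotics come from Lemma~\ref{lem:asymptotics_f1}. Since $\Thetao''(\tzo^-)>0$, the $\xi$-integral along the steepest-descent direction is vertical, so we get
\beqq
	\frac{1}{2\pi\ii}\int \ffz(\xi)\,\dd\xi\approx\frac{\ffz(\tzo^-)}{2\pi\ii}L^{-1/2}\int_{\ii\R}e^{\Thetao''(\tzo^-)u^2/2}\,\dd u=\frac{\ffz(\tzo^-)}{\sqrt{2\pi L\,\Thetao''(\tzo^-)}}.
\eeqq
Similarly, the $\eta$-integral gives $1/\bigl(\ffz(\tzo^+)\sqrt{2\pi L\,(-\Thetao''(\tzo^+))}\bigr)$. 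The Cauchy factor contributes $(\tzo^+-\tzo^-)^{-2}(1+o(1))$. Multiplying these three factors gives exactly the right-hand side of \eqref{eq:conditioning_rank_one_asymptotic}.

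The only care needed is the orientation and sign bookkeeping. The $\xi$-contour around $-1$ crosses the real axis at $\tzo^-$ going upward, and the $\eta$-contour around $0$ crosses at $\tzo^+$ going upward. Both orientations are counterclockwise, so the two factors of $\ii$ cancel against $(2\pi\ii)^2$ and the product is positive. We will check this against the sign of $\QQ_1$: the $n=1$ term carries $(-1)^{2}=+1$, consistent with $\tP>0$.

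The remaining step, and the main one, is to show that the terms with $n\ge2$ are $o(L^{-1}\ffz(\tzo^-)/\ffz(\tzo^+))$. On the same contours, Hadamard's bound (Lemma~\ref{result:Cdetest}) with $d$ fixed gives $|\K_n|^2\le n^n/d^{2n}$. Lemma~\ref{lem:asymptotics_f2} gives $L^1$ norms at most $CL^{-1/2}|\ffz(\tzo^-)|$ and $C/(L^{1/2}|\ffz(\tzo^+)|)$ for each variable. Together these give
\beqq
	\frac{|\DD^{(n)}|}{(n!)^2}\le\frac{(C')^n n^n}{(n!)^2}L^{-n}\left(\frac{|\ffz(\tzo^-)|}{|\ffz(\tzo^+)|}\right)^n.
\eeqq
The ratio $|\ffz(\tzo^-)/\ffz(\tzo^+)|$ equals $e^{-L\tDel+O(1)}$, which is exponentially small by Lemma~\ref{result:phasegaps}. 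Hence the sum over $n\ge2$ is bounded by $e^{-L\tDel}L^{-2}$ times the $n=1$ scale, which is negligible. This completes the proof of \eqref{eq:conditioning_rank_one_asymptotic}.
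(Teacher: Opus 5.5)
Your proposal follows the paper's proof essentially step for step: the $m=1$ case of Proposition~\ref{prop:tail}, deformation to $\cont_-^L(\tzo^-)$ and $\cont_+^L(\tzo^+)$, Gaussian localization of the $n=1$ term, and Hadamard plus the $L^1$ bounds of Lemma~\ref{lem:asymptotics_f2} for $n\ge2$. All the magnitudes are right.

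However, the sign bookkeeping you single out as ``the only care needed'' contains two errors that happen to cancel.

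First, because $\K_n(\bseta\mid\bsxi)\K_n(\bsxi\mid\bseta)=(-1)^n\K_n(\bseta\mid\bsxi)^2$, writing the integrand with $\K_n(\bseta\mid\bsxi)^2$ changes the prefactor to $(-1)^{n+1}(-1)^n=-1$ for every $n$, not $(-1)^{n+1}$. So the $n=1$ term of $\tP$ is $-\frac{1}{(2\pi\ii)^2}\iint\frac{\ffz(\xi)}{\ffz(\eta)}\frac{\dd\xi\,\dd\eta}{(\eta-\xi)^2}$, not $+$.

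Second, $\tzo^+<0$ is the leftmost point of the counterclockwise contour $\cont_+^L(\tzo^+)$ around $0$. The $\eta$-contour therefore crosses the real axis there going \emph{downward}, not upward. The localized $\eta$-integral is then $\frac{1}{2\pi\ii}\int\frac{\dd\eta}{\ffz(\eta)}\approx-\bigl(\ffz(\tzo^+)\sqrt{2\pi L(-\Thetao''(\tzo^+))}\bigr)^{-1}$. Only the $\xi$-contour, whose rightmost point is $\tzo^-$, crosses upward.

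The two minus signs cancel, so your final formula and its positivity are correct. But the consistency check you describe rests on two false statements and should be redone as above.

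There is also a harmless slip at the end. The $n\ge2$ terms sum to $O(L^{-2}e^{-2\tDel L})$, i.e.\ $O(L^{-1}e^{-\tDel L})$ times the $n=1$ scale, not $L^{-2}e^{-\tDel L}$ times it. This is still negligible, so the conclusion stands.
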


\begin{proof}
Setting $M_L:=\lceil \ac L\rceil$ and $N_L:=\lceil \bc L\rceil$, by Proposition~\ref{prop:tail} with $m=1$,
\beq\label{eq:conditioning_onepoint_series}
	\tP=\sum_{n=1}^\infty\frac{\QQ_{1,L}^{(n)}}{(n!)^2},
	\quad
	\QQ_{1,L}^{(n)}
	=\frac{(-1)^{n+1}}{(2\pi\ii)^{2n}}
	\int_{\gamma^n}\dd\xi_1\cdots\dd\xi_n
	\int_{\Gamma^n}\dd\eta_1\cdots\dd\eta_n\,
	\K_n(\bseta\mid\bsxi)\K_n(\bsxi\mid\bseta)
	\prod_{j=1}^n\frac{\ffz(\xi_j)}{\ffz(\eta_j)}.
\eeq
Here $\gamma$ and $\Gamma$ are disjoint counterclockwise circles around $-1$ and $0$, respectively.

The leading contribution to \eqref{eq:conditioning_onepoint_series} comes from the term indexed by $n=1$. 
We deform $\gamma$ and $\Gamma$ to the curves $\cont^L_-(\tzo^-)$ and $\cont^L_+(\tzo^+)$, respectively.
By Lemma~\ref{lem:asymptotics_f2}, only neighborhoods of size $L^{-1/2+\epsilon/3}$ around $\tzo^\pm$ contribute. In these neighborhoods, using the local variables
\beqq
	\xi=\tzo^-+\frac{u}{L^{1/2}},
	\qquad
	\eta=\tzo^++\frac{v}{L^{1/2}},
\eeqq
applying Lemmas~\ref{lem:asymptotics_f1} and~\ref{lem:asymptotics_f2}, and then evaluating the resulting Gaussian integrals, we obtain
\beq\label{eq:conditioning_rank_one}
	\QQ_{1,L}^{(1)}
	=
	\frac{\ffz(\tzo^-)}{\ffz(\tzo^+)}
	\frac{1}{2\pi L(\tzo^+-\tzo^-)^2
	\sqrt{-\Thetao''(\tzo^-)\Thetao''(\tzo^+)}}
	\bigl(1+o(1)\bigr).
\eeq

Lemma~\ref{result:Cdetest} and the $L^1$-bounds in Lemma~\ref{lem:asymptotics_f2} imply that there exist $C_1, C_2,L_0>0$ such that, for all  $n\ge2$ and $L\ge L_0$,
\beqq
	\left|\QQ_{1,L}^{(n)}\right|
	\le \frac{C_1^n n^n}{L^n}
	\left|\frac{\ffz(\tzo^-)}{\ffz(\tzo^+)}\right|^n
	\le \frac{C_2^n n^n}{L^n} e^{-n \tDel L},
\eeqq
where $\tDel=\Thetao(\tzo^+)-\Thetao(\tzo^-)>0$ by Lemma~\ref{result:phasegaps}.
Combining this with \eqref{eq:conditioning_onepoint_series} and the bound
$n^n\le e^n n!$, we find that the sum over $n\ge 2$ is of order
$O\left(L^{-2}e^{-2\tDel L}\right)$. Hence, the result follows.
\end{proof}

For positive sequences $(a_L)_{L>0}$ and $(b_L)_{L>0}$, we write
$a_L \asymp b_L$ if there exist constants $c,C>0$ such that
$c \leq \frac{a_L}{b_L} \leq C$ 
for all sufficiently large $L$. The above lemma implies the following. 

\begin{cor}\label{result:tPldp}
As $L\to\infty$,  
\beq\label{eq:tPldp}
	\tP\asymp L^{-1}e^{-\tDel L},
	\qquad
	\tDel =\Thetao(\tzo^+)-\Thetao(\tzo^-)>0.
\eeq
\end{cor}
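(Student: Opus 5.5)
This corollary is a direct consequence of Lemma~\ref{lem:conditioning_event}. The plan is to take the asymptotic formula \eqref{eq:conditioning_rank_one_asymptotic} and show two things: the prefactor is of exact order $L^{-1}$, and the ratio $\ffz(\tzo^-)/\ffz(\tzo^+)$ is comparable to $e^{-\tDel L}$.

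\emph{The prefactor.} By Lemma~\ref{result:cpconditioning}, $\tzo^-<\tzo^+$, so $(\tzo^+-\tzo^-)^2$ is a fixed positive constant independent of $L$. The second derivative factor is also a nonzero constant. Indeed, the Gaussian family with $(\alpha_1,\alpha_2,\alpha_3)=(\ac,\bc,\lv)$ falls in Case~(a), which gives $\Thetao''(\tzo^-)>0>\Thetao''(\tzo^+)$. Alternatively, one can read this off from \eqref{eq:tGdertp}: $\Thetao''(\tzo^\pm)=\tpo'(\tzo^\pm)/(\tzo^\pm(\tzo^\pm+1))$ with $\tpo'(\tzo^\pm)=\pm\lv(\tzo^+-\tzo^-)$, and the denominator $\tzo^\pm(\tzo^\pm+1)$ is negative. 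Hence $-\Thetao''(\tzo^-)\Thetao''(\tzo^+)$ is a fixed positive number, and the whole prefactor equals a positive constant times $L^{-1}$.

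\emph{The exponential ratio.} The key step is to write
\[
\log\ffz(z)=L\Thetao(z)-\bigl(\lceil\ac L\rceil-\ac L\bigr)\log(1+z)+\bigl(\lceil\bc L\rceil-\bc L\bigr)\log z .
\]
Both ceiling corrections lie in $[0,1)$. At the real points $\tzo^\pm\in(-1,0)$, the logarithms are evaluated on the principal branch conventions of the paper. The ratio itself equals $|\tzo^-|^{\lceil \bc L\rceil}(1+\tzo^+)^{\lceil\ac L\rceil}/\bigl(|\tzo^+|^{\lceil \bc L\rceil}(1+\tzo^-)^{\lceil\ac L\rceil}\bigr)\cdot e^{\lv L(\tzo^--\tzo^+)}$, since the signs $(-1)^{\lceil\bc L\rceil}$ cancel. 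It is therefore a positive real number. Moreover, the correction terms change its logarithm by at most $|\log\frac{1+\tzo^-}{1+\tzo^+}|+|\log\frac{\tzo^-}{\tzo^+}|$, which is a bounded constant. Hence
\[
\frac{\ffz(\tzo^-)}{\ffz(\tzo^+)}\asymp e^{L(\Thetao(\tzo^-)-\Thetao(\tzo^+))}=e^{-\tDel L}.
\]

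\emph{Conclusion.} Combining the two estimates with the $(1+o(1))$ factor in \eqref{eq:conditioning_rank_one_asymptotic}, which lies in $[1/2,2]$ for large $L$, gives $\tP\asymp L^{-1}e^{-\tDel L}$. The positivity $\tDel>0$ is Lemma~\ref{result:phasegaps}.

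The only mildly delicate point is the branch bookkeeping for $\log z$ at negative real $z$. It disappears because the same branch is used in the numerator and denominator, so only the moduli matter in the ratio.
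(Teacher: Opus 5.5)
Your proposal is correct and follows the paper's route: the paper also reads the corollary directly off the sharp asymptotic in Lemma~\ref{lem:conditioning_event}, with $\tDel>0$ taken from Lemma~\ref{result:phasegaps}. The details you supply are exactly the routine steps the paper leaves implicit: the prefactor is a fixed positive constant times $L^{-1}$, the ratio $\ffz(\tzo^-)/\ffz(\tzo^+)$ is positive, and the ceiling corrections change its logarithm by only a bounded amount.
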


%%%%%%%%%%%%%%%%%%%%%%%%%%%%%%%
%%%%%%%%%%%%%%%%%%%%%%%%%%%%%%%
\section{Overview of the asymptotic analysis}
\label{sec:strategy}

We describe the common strategy used in the next four sections. 
Fix a point $(x,y)$. 
Let 
\beq \label{eq:xygen}
	(x_L, y_L)
	=
	\begin{cases}
		(x, y)
		&\text{in $U_1$ and $U_2$,}\\
		(x, y) + O(L^{-1/3}) 
		&\text{on the $U_1/U_2$ and $U_2/U_3$ boundaries,}
	\end{cases}
\eeq
and 
\beq\label{eq:Tgen}
	\tTo
	=
	\begin{cases}
		\mv(x,y) L+\sdev(x,y) \rr L^{1/3}
		&\text{in $U_1$ and on the $U_1/U_2$ and $U_2/U_3$ boundaries,}\\
		\mv(x,y) L+\sdev(x,y) \rr L^{1/2}
		&\text{in $U_2$,}
	\end{cases}
\eeq
where the precise perturbations $(x_L,y_L)$ will be specified in the corresponding sections.

The two-point formula in Proposition~\ref{prop:tail} requires the time parameters to be ordered. Thus, when $\mv(x,y)>\lv$, we take
\beq\label{eq:strategy_order_greater}
\begin{split}
    (M_{1,L},N_{1,L},T_{1,L})
    &=(\lceil \ac L\rceil,\lceil \bc L\rceil,\lv L),\\
    (M_{2,L},N_{2,L},T_{2,L})
    &=(\lceil \ac x_L L\rceil,\lceil \bc y_L L\rceil,\tTo),
\end{split}
\eeq
whereas, when $\mv(x,y)<\lv$, we reverse the order:
\beq\label{eq:strategy_order_less}
\begin{split}
    (M_{1,L},N_{1,L},T_{1,L})
    &=(\lceil \ac x_L L\rceil,\lceil \bc y_L L\rceil,\tTo),\\
    (M_{2,L},N_{2,L},T_{2,L})
    &=(\lceil \ac L\rceil,\lceil \bc L\rceil,\lv L).
\end{split}
\eeq
In either case, $T_{1,L}<T_{2,L}$ for all sufficiently large $L$. 
Proposition~\ref{prop:tail} implies that 
\beq\label{eq:strategy_conditional_probability}
	\prob\left[ \LPP(\lceil \ac x_L L\rceil, \lceil \bc y_L L\rceil)> \tTo \,\big|\,\LPP(\lceil \ac L\rceil,\lceil \bc L\rceil)>\lv L \right]
	=\frac{\QQ_{2,L}}{\tP}, 
\eeq
where $\tP$ is the conditioning probability defined in Lemma~\ref{lem:conditioning_event}, and
\beq \label{eq:Q2Lses}
\QQ_{2,L}
	:=
	\sum_{n_1,n_2\ge1}
	\frac{1}{(n_1!n_2!)^2} \QQ_{2,L}^{(n_1,n_2)}.
\eeq
The subscript $L$ records the dependence on the large parameter. 
After evaluating this upper-tail probability, we take the complement to obtain the distribution functions stated in Section~\ref{sec:results}. 

We make the dependence on $L$ explicit and write the functions in \eqref{eq:ff1212def} as
\beq \label{eq:fGen}
	\ff_{1,L}(z)=\frac{z^{N_{1,L}}e^{T_{1,L} z}}{(z+1)^{M_{1,L}}},\quad
	\ff_{2,L}(z)=\frac{z^{N_{2,L}-N_{1,L}}e^{(T_{2,L}-T_{1,L})z}}{(z+1)^{M_{2,L}-M_{1,L}}} ,\quad
	\ff_{12,L}(z)=\frac{z^{N_{2,L}}e^{T_{2,L}z}}{(z+1)^{M_{2,L}}}.
\eeq
Note that $\ff_{12,L}(z):=\ff_{1,L}(z)\ff_{2,L}(z)$ by definition. 
The leading phases of these functions are
\beq\label{eq:strategy_phase_functions}
    (\mcG_1,\mcG_2,\mcG_{12})
    :=
    \begin{cases}
        (\Thetao,\Thetar,\Thetat),&\text{if $\mv(x,y)>\lv$,}\\
        (\Thetat,-\Thetar,\Thetao),&\text{if $\mv(x,y)<\lv$,}
    \end{cases}
\eeq
where the functions $\Thetao$, $\Thetat$, and $\Thetar$ are defined in \eqref{eq:allphasefunctions}.
The lower-order terms in the exponents occur at scale $L^{1/3}$ in $U_1$, at scale $L^{1/2}$ in $U_2$, and at scales $L^{2/3}$ and $L^{1/3}$ on the $U_1/U_2$ and $U_2/U_3$ boundaries.
For each $*\in \{1, 2, 12\}$, the critical points of $\mcG_*$ are denoted by $\rz_*^\pm$. 
By \eqref{eq:strategy_phase_functions}, these critical points are related to those of $\Theta_i$ from Section~\ref{sec:cp} by
\beq\label{eq:strategy_cp}
\begin{cases}
    \rzo^\pm=\tzo^\pm,\quad \rzot^\pm=\tzt^\pm,
    &\text{if $\mv(x,y)>\lv$,}\\
    \rzo^\pm=\tzt^\pm,\quad \rzot^\pm=\tzo^\pm,
    &\text{if $\mv(x,y)<\lv$.}
\end{cases}
\eeq
The remaining critical points $\rzt^\pm$ are the critical points of $\Thetar$.

For the regimes involving $U_2$, by symmetry it suffices to consider $y<x$. The case $y>x$ follows by simultaneously exchanging $\ac\leftrightarrow\bc$ and $x\leftrightarrow y$. Accordingly, in the discussion and tables below, we restrict attention to $U_2^<$ and its boundaries. The case $\mv=\lv$ is treated separately in Subsection~\ref{sec:U2equal} and at the end of Section~\ref{sec:U2U3}, for the $U_2$ region and the $U_2/U_3$ boundary, respectively.

By Lemma~\ref{lem:simpleQ}, each $\QQ_{2,L}^{(n_1,n_2)}$ is a linear combination of the integrals $\J^{\bsigma}_{\btau}$. The original contour nesting is not always compatible with the contour ordering required for steepest-descent analysis, so we apply the contour-rearrangement identities in Lemma~\ref{result:originalintexprss}. The appropriate rearrangement depends on the location of $(x,y)$ through the ordering of the critical points of $\Thetao$, $\Thetat$, and $\Thetar$.
Table~\ref{tab:rearrangement} records the contour-rearrangement identities used in each regime, together with the corresponding version of Corollary~\ref{result:Jintrearboundgen} used to estimate the subleading terms. In some regimes, alternative rearrangements are possible; throughout the paper, we use the choices listed in the table.

\begin{table}[h]
\centering
\begin{tabular}{|l|l|l|l|}
\hline
\textbf{Regime} & \textbf{$\bsigma$-rearrangement} & \textbf{$\btau$-rearrangement} & \textbf{Estimate}\\
\hline
$U_1$
& \eqref{eq:Jup21}
& \eqref{eq:Jdown21}
& $S^{12}_{12}$\\
\hline
Boundary $U_1/U_2^<$
& \eqref{eq:Jup21}
& \eqref{eq:Jdown21}
& $S^{12}_{12}$\\
\hline
$U_2^<$ with $\mv>\lv$
& \eqref{eq:Jup21}
& \eqref{eq:Jdown12}
& $S^{12}_{21}$\\
\hline
$U_2^<$ with $\mv<\lv$
& \eqref{eq:Jup12}
& \eqref{eq:Jdown12}
& $S^{21}_{21}$\\
\hline
Boundary $U_2^</U_3^<$ with $\mv>\lv$
& \eqref{eq:Jup21}
& \eqref{eq:Jdown12}
& $S^{12}_{21}$\\
\hline
Boundary $U_2^</U_3^<$ with $\mv<\lv$
& \eqref{eq:Jup12}
& \eqref{eq:Jdown21}
& $S^{21}_{12}$\\
\hline
\end{tabular}
\caption{Contour-rearrangement identities and estimates used in each regime.}
\label{tab:rearrangement}
\end{table}

After these rearrangements, $\QQ_{2,L}$ is expressed as a sum of integrals. Table~\ref{tab:leadingterms} lists the indices $(n_1,n_2)$ that contribute at leading order in each regime.
The leading contributions are evaluated by steepest descent. The remaining terms are estimated using Proposition~\ref{result:generalasy} by comparing their exponential weights with the conditioning cost $\tDel$ in \eqref{eq:tPldp}. Each such term carries an additional strictly positive phase gap. In several cases, we must be careful with the $L^{2/3}$- or $L^{1/2}$-order lower-order terms in the exponents of $\ff_{*,L}$ and the separation between the contours.

Combining these phase-gap estimates with Corollary~\ref{result:Qn1n2bd}, \eqref{eq:dcest}, and \eqref{eq:triplfactorial}, we show in each regime that the total contribution of all nonleading terms is exponentially smaller than the conditioning probability. Thus, the asymptotic limit is determined entirely by the leading terms listed in Table~\ref{tab:leadingterms}.

\begin{table}[h]
\centering
\begin{tabular}{|l|l|l|}
\hline
\textbf{Regime} & \textbf{Leading terms} & \textbf{Limiting distribution}\\
\hline
$U_1$
& $(1,n)$, $n\in\N$
& GUE Tracy-Widom\\
\hline
Boundary $U_1/U_2^<$
& $(1,n)$, $n\in\N$
& one-spike BBP\\
\hline
$U_2^<$
& $(1,1)$
& Gaussian\\
\hline
Boundary $U_2^</U_3^<$ with $\mv>\lv$
& $(n,n)$ and $(n+1,n)$, $n\in\N$
& one-spike BBP\\
\hline
Boundary $U_2^</U_3^<$ with $\mv<\lv$
& $(n,1)$, $n\in\N$
& one-spike BBP\\
\hline
\end{tabular}
\caption{Leading terms and limiting distributions in each regime.}
\label{tab:leadingterms}
\end{table}

%%%%%%%%%%%%%%%%%%%%%%%%%%%%%%%%%%%%%%%%%
%%%%%%%%%%%%%%%%%%%%%%%%%%%%%%%%%%%%%%%%%
\section{Region $U_1$}\label{sec:case1}

We prove Theorem~\ref{result:CCLT} for $(x,y)\in U_1$. Fix $(x,y)\in U_1$ and $\rr\in\R$, and set $\mv:=\mv(x,y)$ and $\sdev:=\sdev(x,y)$. Since $\mv>\lv$, we use the ordering in \eqref{eq:strategy_order_greater}, with $(x_L,y_L)=(x,y)$ and $\tTo=\mv L+\sdev\rr L^{1/3}$ in \eqref{eq:xygen} and \eqref{eq:Tgen}. With this choice, the functions in \eqref{eq:fGen} are
\beq\label{eq:case1_f_expansions}
\begin{split}
    \ff_{1,L}(z)=e^{L\Gone(z)+E_{1,L}(z)},\quad
    \ff_{2,L}(z)=e^{L\Gtwo(z)+\sdev\rr L^{1/3}z+E_{2,L}(z)},\quad
    \ff_{12,L}(z)=e^{L\Gonetwo(z)+\sdev\rr L^{1/3}z+E_{12,L}(z)}.
\end{split}
\eeq
Their leading phases are
\beqq
    (\mcG_1,\mcG_2,\mcG_{12})=(\Thetao,\Thetar,\Thetat).
\eeqq
Note that $\ff_{1,L}=\ffz$ in Lemma~\ref{lem:conditioning_event}.

By \eqref{eq:strategy_cp} and Lemma~\ref{result:cpU1},
\beq\label{eq:U1cprelation}
    \rzo^\pm=\tzo^\pm,\qquad \rzt^-=\rzt^+=\tzr^-=\tzr^+=\tzr,\qquad \rzot^\pm=\tzt^\pm
\eeq
and
\beq\label{eq:case1_critical_order}
    -1<\rzo^-<\rzot^-<\rzt<\rzot^+<\rzo^+<0 , \qquad \rzt:= \rzt^-=\rzt^+. 
\eeq
Thus, $\ff_{1,L}$ and $\ff_{12,L}$ are Gaussian families, while $\ff_{2,L}$ is an Airy family. A direct computation using \eqref{eq:U1cpformula} gives
\beqq
    \mcG_2'''(\rzt)=\Thetar'''(\tzr)=-2\sdev^3.
\eeqq

%%%%%%%%%%%%%%%%%%%%%%%%%%%%%%%%%%%%%%%%%%%%%%%%%
\subsection{The leading contribution in the $U_1$ region}

The leading contribution to the series \eqref{eq:Q2Lses} comes from 
the terms indexed by $(1,n)$ with $n\in \N$.

\begin{lem}\label{result:U1leading}
We have
\beqq
	\lim_{L\to\infty}
	\frac{1}{\tP}
	\sum_{n=1}^\infty
	\frac{1}{(n!)^2} \QQ_{2,L}^{(1,n)}
	=\prob(\TW_2>\rr).
\eeqq
\end{lem}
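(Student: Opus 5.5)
We work with the terms $(n_1,n_2)=(1,n)$ of \eqref{eq:Q2Lses}. By Lemma~\ref{lem:simpleQ}, since $n_1=1\le n_2=n$, $\QQ_{2,L}^{(1,n)}$ is a linear combination of integrals $\J^{2^{n-i}12^i}_{2^{n-j}12^j}$ with $i+j\ge 2n$, i.e.\ $i=j=n$ only. The sum collapses to a single term: $\QQ_{2,L}^{(1,n)}=(-1)^{n-1}\binom{0}{n-1}\J^{12^n}_{12^n}$. This expression vanishes unless $n=1$, which is wrong for a GUE limit. So I will instead trust the structure $\binom{i+j-n-1}{n-1}$ with $i=j=n$, which gives $\binom{n-1}{n-1}=1$. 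Hence $\QQ_{2,L}^{(1,n)}=(-1)^{n-1}\J^{12^n}_{12^n}$.

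Next, in $U_1$ the critical point ordering \eqref{eq:case1_critical_order} requires the $\xi^1$ contour through $\rzo^-$, which is outermost-left, and the $\eta^1$ contour through $\rzo^+$. The $2$-variables need contours through the double point $\rzt$, and $(12)$-variables through $\rzot^\pm$. Following Table~\ref{tab:rearrangement}, I would apply \eqref{eq:Jup21} and \eqref{eq:Jdown21} after moving the block: write $\J^{12^n}_{12^n}$ via Lemma~\ref{result:originalintexprss} in terms of $\J^{12}_{12}(a,b,1,n)$ with $a,b\in\{0,1\}$. The term $a=b=0$ has $\xi^1,\eta^1$ as single Gaussian variables at $\rzo^\mp$. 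These decouple from the $2^n$ block to leading order, since the middle Cauchy determinant $\K_{1+n}(\xi^1,\bet\mid\eta^1,\bxt)$ factorizes asymptotically when $\xi^1,\eta^1$ are macroscopically far from $\rzt$. The $\xi^1,\eta^1$ Gaussian integrals then reproduce exactly the leading term of $\tP$ from Lemma~\ref{lem:conditioning_event}, up to a factor involving $(\eta^1-\xi^1)^{-2}$ and the rank-one factors $\frac{\prod(\xi^1-\eta^2_k)(\eta^1-\xi^2_k)}{\prod(\xi^1-\xi^2_k)(\eta^1-\eta^2_k)}$, which tend to $1$. The remaining $2n$-fold integral over $\bxt,\bet$ uses the Airy family $\ff_{2,L}$ with $\Thetar'''(\tzr)=-2\sdev^3$. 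Scaling $z=\rzt+\sdev^{-1}wL^{-1/3}$ via Lemma~\ref{result:dbcplocal} gives $\frac{(-1)^{n}}{(n!)^2}$-weighted terms of the Fredholm expansion for $\det(1-K_{\rm Airy})_{L^2(\rr,\infty)}$, because $\K_n(\bxt\mid\bet)\K_n(\bet\mid\bxt)$ appears as in the one-point GUE formula. Summing over $n$ yields $1-F_{\rm GUE}(\rr)=\prob(\TW_2>\rr)$. The sign bookkeeping $(-1)^{n-1}$ against $(-1)^n$ from the Cauchy determinant orientation must be checked carefully.

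The terms with $a=1$ or $b=1$ involve a $(12)$-variable at $\rzot^\mp$ instead of a $1$-variable at $\rzo^\mp$. By Proposition~\ref{result:generalasy} these are bounded by $e^{-L(\tDel+\text{gap})}$, because the ratio $|\ff_{12}(\rzot^-)/\ff_1(\rzo^-)\ff_2(\rzt)|$ is exponentially smaller. This follows from the monotonicity relations $\Thetat=\Thetao+\Thetar$ together with the strict ordering of critical points. Dominated convergence in $n$ uses the uniform bound $C^n\sqrt{n!}\,n!/(n!)^2$ from Proposition~\ref{result:generalasy}, which is summable.

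I expect the main obstacle to be the justification of the contour deformation. Moving the $\xi^1$ circle outward past all $\xi^2$ circles to $\rzo^-$, and the $\eta^1$ contour to $\rzo^+$, crosses poles. This produces precisely the residue terms tracked by Lemma~\ref{result:originalintexprss}, and separating them with $d_L$ bounded below requires care near $\rzt$, since the $2$-contours must pass through $\rzt$ while staying inside the nested order. My first step would be to verify that $\rzo^-<\rzt<\rzo^+$ permits fixed-distance separation.
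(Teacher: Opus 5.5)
Your evaluation of the leading term matches the paper's proof. Only $i=j=n$ survives in Lemma~\ref{lem:simpleQ}, and $\binom{i+j-n_2-1}{n_2-n_1}=\binom{n-1}{n-1}=1$ directly; your first evaluation was an arithmetic slip, not a reason to doubt the formula. So $\QQ_{2,L}^{(1,n)}=(-1)^{n+1}\J^{12^n}_{12^n}$. Moreover, $\nPi^{12^n}_{12^n}$ factors exactly as $(-1)^{n+1}\K_n(\bxt\mid\bet)^2(\xi^1-\eta^1)^{-2}$ times a product of ratios tending to $1$, the $\xi^1,\eta^1$ Gaussian integrals cancel $\tP$, and the Airy scaling gives the Fredholm series.

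The gap is in the contour-deformation step, which you call the main obstacle; it rests on an inverted reading of the nesting. By Definition~\ref{def:Jintegral}, in $\J^{12^n}_{12^n}$ the variable $\xi^1$ lies on the \emph{innermost} circle around $-1$, and $\eta^1$ on the innermost circle around $0$. Meanwhile $\rzo^-$ is the critical point closest to $-1$ and $\rzo^+$ the one closest to $0$. So, by \eqref{eq:case1_critical_order}, the contours $\cont_-^L(\rzo^-)$, $\cont_-^L(\rzt-L^{-1/3})$, $\cont_+^L(\rzt+L^{-1/3})$, $\cont_+^L(\rzo^+)$ for $\xi^1$, $\xi^2_j$, $\eta^2_j$, $\eta^1$ already respect the prescribed nesting. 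The deformation crosses no poles, and no rearrangement identity is needed.

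Your proposed rearrangement is not just superfluous but wrong as stated. The identities \eqref{eq:Jup21} and \eqref{eq:Jdown21} act on a block $2^m1^k$, which $12^n$ does not contain: here $\J^{12^n}_{12^n}=\J^{12}_{12}(0,0,1,n)$ and nothing else. So the terms with $a=1$ or $b=1$ that you plan to estimate never arise. Worse, moving $\xi^1$ outward past the $\xi^2$-circles, which is what \eqref{eq:Jup12} does, yields integrals whose $\xi^1$-contour must enclose the $\xi^2$-contours through $\rzt$. Such a contour cannot pass through $\rzo^-<\rzt$, so you would end up with exactly the ordering that is incompatible with steepest descent.

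Second, the dominating bound you attribute to Proposition~\ref{result:generalasy} is not uniform in $L$. That proposition uses a single separation $d_L$ for all Cauchy pairs, and fixed-distance separation is impossible here. Both $\xi^2_j$ and $\eta^2_j$ must pass within $O(L^{-1/3})$ of $\rzt$ and sit on opposite sides of $\K_n(\bxt\mid\bet)$, so $d_L\asymp L^{-1/3}$; the resulting factor $L^{2n/3}$ from $\K_n^2$ is exactly what the Airy scaling absorbs. With this $d_L$, the two Gaussian variables cost $d_L^{-2}L^{-1}\asymp L^{-1/3}$ instead of $L^{-1}$. The proposition therefore only gives $|\J^{12^n}_{12^n}|/\tP\lesssim C^n\sqrt{n!}\,(n+1)!\,L^{2/3}$.

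Instead, bound the factorized integrand directly, as the paper does:
\begin{enumerate}
\item On the contours above, the $1$-contours are at fixed distance from each other and from all $2$-contours, so $(\xi^1-\eta^1)^{-2}$ and the product of ratios are $O(C^n)$.
\item Lemma~\ref{result:Cdetest} gives $|\K_n(\bxt\mid\bet)|^2\le n^n(cL^{-1/3})^{-2n}$.
\item The $L^1$ bounds of Lemmas~\ref{lem:asymptotics_f2} and~\ref{lem:asymptotics_f2_case2} supply $L^{-1}L^{-2n/3}|\ff_{1,L}(\rzo^-)/\ff_{1,L}(\rzo^+)|$.
\end{enumerate}
Comparing with Lemma~\ref{lem:conditioning_event} yields $|\J^{12^n}_{12^n}|/\tP\le C^nn^n$ uniformly in large $L$. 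This is summable against $(n!)^{-2}$ and justifies dominated convergence.
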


\begin{proof}
By Lemma~\ref{lem:simpleQ},
\beqq
	\QQ_{2,L}^{(1,n)}
	=(-1)^{n+1}\J^{12^n}_{12^n}
	=\frac{(-1)^{n+1}}{(2\pi\ii)^{2n+2}}
	\int\dd\xi^1 \dd\bsxi^2
	\int\dd\eta^1 \dd\bseta^2\,
	\nPi^{12^n}_{12^n}(\bsxi,\bseta)
	\frac{\ff_{1,L}(\xi^1)}{\ff_{1,L}(\eta^1)}
	\prod_{j=1}^n\frac{\ff_{2,L}(\xi_j^2)}{\ff_{2,L}(\eta_j^2)},
\eeqq
where $\bsxi^2=(\xi_1^2,\ldots,\xi_n^2)$ and $\bseta^2=(\eta_1^2,\ldots,\eta_n^2)$.
We deform the $\xi^1$-, $\xi_j^2$-, $\eta_j^2$-, and $\eta^1$-contours to
\beqq
	\cont^L_-(\rzo^-),
	\qquad
	\cont_-^L(\rzt-L^{-1/3}),
	\qquad
	\cont_+^L(\rzt +L^{-1/3}),
	\qquad
	\cont^L_+(\rzo^+),
\eeqq
respectively. By \eqref{eq:case1_critical_order}, these contours satisfy the required nesting for all sufficiently large $L$, and the deformation crosses no poles.

By Lemmas~\ref{lem:asymptotics_f2} and~\ref{lem:asymptotics_f2_case2}, only neighborhoods of size $L^{-1/2+\epsilon/3}$ around $\rzo^\pm$ and $L^{-1/3+\epsilon/4}$ around $\rzt$ contribute. In these neighborhoods, use the local variables
\beq\label{eq:case1_local_variables}
	\xi^1=\rzo^-+\frac{\widehat u}{L^{1/2}},
	\qquad
	\xi_j^2=\rzt +\frac{u_j}{\sdev L^{1/3}},
	\qquad
	\eta_j^2=\rzt +\frac{v_j}{\sdev L^{1/3}},
	\qquad
	\eta^1=\rzo^++\frac{\widehat v}{L^{1/2}}.
\eeq
Lemmas~\ref{lem:asymptotics_f1} and~\ref{result:dbcplocal} give 
\beq\label{eq:U1nFlimt}
	\frac{\ff_{1,L}(\xi^1)}{\ff_{1,L}(\eta^1)}
	\prod_{j=1}^n\frac{\ff_{2,L}(\xi_j^2)}{\ff_{2,L}(\eta_j^2)}
	=
	\frac{\ff_{1,L}(\rzo^-)}{\ff_{1,L}(\rzo^+)}
	\frac{e^{\mcG_1''(\rzo^-)\hat u^2/2}}{e^{\mcG_1''(\rzo^+)\hat v^2/2}}
	\prod_{j=1}^n
	\frac{e^{-u_j^3/3+\rr u_j}}{e^{-v_j^3/3+\rr v_j}}
	\bigl(1+o(1)\bigr),
\eeq
and
\beq\label{eq:U1nPilimi}
	\nPi^{12^n}_{12^n}(\bsxi,\bseta)
	=(-1)^{n+1}
	\frac{\K_n(\bsxi^2\mid\bseta^2)^2}{(\xi^1-\eta^1)^2}
	\prod_{j=1}^n
	\frac{(\xi^1-\eta_j^2)(\eta^1-\xi_j^2)}
	{(\xi^1-\xi_j^2)(\eta^1-\eta_j^2)}
	=(-1)^{n+1}(\sdev L^{1/3})^{2n}
	\frac{\K_n(\bu\mid\bv)^2}{(\rzo^--\rzo^+)^2}
	\bigl(1+o(1)\bigr).
\eeq

Thus, for each fixed $n\ge1$, the steepest-descent analysis and the bounds in Lemmas~\ref{lem:asymptotics_f2} and~\ref{lem:asymptotics_f2_case2} justify taking the limit. Evaluating the Gaussian integrals in the $\hat u$- and $\hat v$-variables and applying Lemma~\ref{lem:conditioning_event}, we obtain
\beq\label{eq:case1_leading_limit}
\begin{split}
    \lim_{L\to\infty}\frac{\J^{12^n}_{12^n}}{\tP}
    &=
    \frac{1}{(2\pi\ii)^{2n}}
    \int_{(\Sigma_{\rL})^n}\dd\bu
    \int_{(\Sigma_{\rR})^n}\dd\bv\,
    \K_n(\bu\mid\bv)^2
    \prod_{j=1}^n
    \frac{e^{-u_j^3/3+\rr u_j}}{e^{-v_j^3/3+\rr v_j}}.
\end{split}
\eeq

Lemma~\ref{result:Cdetest} and the $L^1$-bounds of Lemmas~\ref{lem:asymptotics_f2} and~\ref{lem:asymptotics_f2_case2} imply that there exists $C>0$ such that, uniformly in $n\ge1$ and all sufficiently large $L$,
\beq\label{eq:U1Hadam}
	\frac{|\J^{12^n}_{12^n}|}{\tP}\le C^n n^n.
\eeq
Thus, by dominated convergence,
\beqq
	\lim_{L\to\infty}
	\frac{1}{\tP}
	\sum_{n=1}^\infty
	\frac{\QQ_{2,L}^{(1,n)}}{(n!)^2}
	=
	\sum_{n=1}^\infty
	\frac{(-1)^{n+1}}{(n!)^2}
	\frac{1}{(2\pi\ii)^{2n}}
	\int_{(\Sigma_{\rL})^n}\dd\bu
	\int_{(\Sigma_{\rR})^n}\dd\bv\,
	\K_n(\bu\mid\bv)^2
	\prod_{j=1}^n
	\frac{e^{-u_j^3/3+\rr u_j}}{e^{-v_j^3/3+\rr v_j}}.
\eeqq
The series on the right is $\prob(\TW_2>\rr)$ by the Fredholm series for the Airy kernel.
\end{proof}

%%%%%%%%%%%%%%%%%%%%%%%%%%%%%%%%%%%%%%%%%%%%%%%%
\subsection{Remainder in the $U_1$ region}
\label{sec:U1remainder}

By Corollaries~\ref{result:Qn1n2bd} and~\ref{result:Jintrearboundgen}, using the $S^{12}_{12}$ version, we have
\beq\label{eq:Q1maxbound}
\begin{split}
	|\QQ_{2,L}^{(n_1,n_2)}|
	&\le
	2^{7(n_1+n_2)}
	\max_{\substack{0\le i,j\le n_2\\i+j\ge2n_2-n_1+1}}
	\sum_{a=0}^{n_1\wedge(n_2-i)}
	\sum_{b=0}^{n_1\wedge(n_2-j)}
	a!b!\,|\J^{12}_{12}(a,b,n_1,n_2)|,
\end{split}
\eeq
where 
\beq \label{eq:U1Jabnn}
	\J^{12}_{12}(a,b,n_1,n_2) =\J^{1^{n_1-a}(12)^a2^{n_2-a}}_{1^{n_1-b}(12)^b2^{n_2-b}}. 
\eeq

\begin{lem}\label{result:U1remaineach}
There exist $C,c,L_0>0$ such that, for all $n_1\ge2$, $n_2\ge1$, $0\le a,b\le n_1\wedge n_2$ satisfying $a+b\le n_1-1$, and $L\ge L_0$, 
\beq\label{eq:U1remaineach}
	|\J^{12}_{12}(a,b,n_1,n_2)|
	\le
	e^{-\tDel L}C^{n_1+n_2}\sqrt{n_1!n_2!}\,(n_1+n_2-a-b)!e^{-cn_1L}. 
\eeq
\end{lem}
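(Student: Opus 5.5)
We apply Proposition~\ref{result:generalasy} directly to $\J^{12}_{12}(a,b,n_1,n_2)$, after deforming to an admissible contour system compatible with the nesting $1^{n_1-a}(12)^a2^{n_2-a}$ on the left and $1^{n_1-b}(12)^b2^{n_2-b}$ on the right. By \eqref{eq:case1_critical_order}, $-1<\rzo^-<\rzot^-<\rzt$. So on the left side, taking the $\xi^1$-contours through $\rzo^-$, the $\xi^{12}$-contours through $\rzot^-$, and the $\xi^2$-contours through $\rzt-L^{-1/3}$ (shifted slightly outward as needed) respects the inside-to-outside order $1,12,2$. On the right side the order is $\eta^1$ inside, then $\eta^{12}$, then $\eta^2$. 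Since these are circles around $0$, inside means closer to $0$, and the critical points satisfy $\rzt<\rzot^+<\rzo^+<0$. Hence $\eta^1$ passes through $\rzo^+$, $\eta^{12}$ through $\rzot^+$, and $\eta^2$ through $\rzt+L^{-1/3}$, which is again the correct nesting. Thus no poles are crossed.

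All contours carrying variables on opposite sides of a Cauchy determinant are at distance at least of order $L^{-1/3}$. For most pairs the separation is $O(1)$; the $\xi^2$ and $\eta^2$ contours near $\rzt$ are $\asymp L^{-1/3}$ apart. We take $d_L=cL^{-1/3}$. The families $\ff_{1,L},\ff_{12,L}$ are Gaussian of Case~(a), so $\mu=0$, and $\ff_{2,L}$ is Airy. Then $\alpha=n_1-a+n_1-b+a+b=2n_1$ and $\beta=2(n_2-a)+\ldots$; in general $\beta=(n_2-a)+(n_2-b)$. The powers $d_L^{-(\alpha+\beta)}L^{-\alpha/2-\beta/3}$ give $L^{\alpha/3-\alpha/2}=L^{-\alpha/6}$ for the Gaussian variables and $L^0$ for the Airy ones. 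This is at most $1$ up to $C^{n_1+n_2}$ factors, and $(a_1+b_2)!=(n_1-a+n_2-b)!\le(n_1+n_2-a-b)!$.

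The key step is the exponential factor
\[
\frac{|\ff_{1,L}(\rzo^-)|^{n_1-a}|\ff_{12,L}(\rzot^-)|^a|\ff_{2,L}(\rzt)|^{n_2-a}}{|\ff_{1,L}(\rzo^+)|^{n_1-b}|\ff_{12,L}(\rzot^+)|^b|\ff_{2,L}(\rzt)|^{n_2-b}}.
\]
Using $\mcG_{12}=\mcG_1+\mcG_2$ and the fact that $\rzot^\pm$ are critical for $\mcG_{12}$, we bound the leading exponent by
\[
-(n_1-a)\tDel L -a\bigl(\mcG_{12}(\rzot^+)-\mcG_{12}(\rzot^-)\bigr)L+\ldots
\]
More precisely, we want the exponent to be at most $-L[(n_1-a-b)\tDel+a\Delta_-+b\Delta_+]$. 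Here $\Delta_-=\mcG_1(\rzo^+)-\mcG_1(\rzot^-)-\mcG_2(\rzt)+\ldots$, which we would rewrite as
\[
\Delta_-=\bigl[\mcG_1(\rzo^+)-\ldots\bigr]
\]
and show to be strictly positive using $\mcG_1$ increasing on $[\rzo^-,\rzo^+]$ and $\mcG_2$ nonincreasing through its double critical point $\rzt$. Concretely, $\mcG_{12}(\rzot^-)\le \mcG_{12}(\rzt)$ and $\mcG_{12}(\rzt)=\mcG_1(\rzt)+\mcG_2(\rzt)<\mcG_1(\rzo^+)+\mcG_2(\rzt)$, with the analogous bound for $\rzot^+$. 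Each $(12)$-pair then contributes a positive gap. Together with $n_1-a-b\ge1$ unmatched $1$-pairs, each costing $\tDel$, this gives at least $e^{-\tDel L}\,e^{-cL\,((n_1-a-b-1)+a+b)}$. The main obstacle is converting this into $e^{-cn_1L}$ when $a+b$ is small relative to $n_1$. Since $a+b\le n_1-1$ and $n_1\ge2$, the quantity $(n_1-a-b-1)+a+b=n_1-1\ge n_1/2$, so absorbing constants gives the factor $e^{-cn_1 L}$. The $L^{1/3}$ lower-order terms $\sdev\rr L^{1/3}z$ are bounded by $C^{n_1+n_2}$ times $e^{O(n_1L^{1/3})}$, which is absorbed by shrinking $c$. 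This yields \eqref{eq:U1remaineach}.
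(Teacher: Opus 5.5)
Your proposal is correct and follows the paper's proof essentially step for step. It uses the same contour system (through $\rzo^-$, $\rzot^-$, $\rzt-L^{-1/3}$ on the left and $\rzt+L^{-1/3}$, $\rzot^+$, $\rzo^+$ on the right) and applies Proposition~\ref{result:generalasy} with $d_L\asymp L^{-1/3}$, $\alpha=2n_1$, $\beta=2n_2-a-b$, which gives the factor $L^{-n_1/3}$. It then reaches the phase bound $\Phi_{a,b}\le-\tDel-c_0(n_1-1)$, with the $L^{1/3}$ terms contributing only $e^{O((a+b)L^{1/3})}$.

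The one blemish is that your displayed formula for $\Delta_-$ is garbled. Splitting $n_1-a=(n_1-a-b)+b$ and $n_1-b=(n_1-a-b)+a$ gives $\Phi_{a,b}=-(n_1-a-b)\tDel-a\Delta_--b\Delta_+$, where $\Delta_-=\mcG_1(\rzo^+)+\mcG_2(\rzt)-\mcG_{12}(\rzot^-)$ and $\Delta_+=\mcG_{12}(\rzot^+)-\mcG_1(\rzo^-)-\mcG_2(\rzt)$ (the paper's $\delta_1^++\delta_2^-$ and $\delta_1^-+\delta_2^+$). These are exactly the quantities your ``concretely'' chain of inequalities shows to be positive, so the argument goes through once that line is corrected.
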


\begin{proof}
We have
\beqq
	\type(1^{n_1-a}(12)^a2^{n_2-a})=(a,n_1-a,n_2-a),
	\qquad
	\type(1^{n_1-b}(12)^b2^{n_2-b})=(b,n_1-b,n_2-b).
\eeqq
Recall Definition~\ref{def:Jintegral} for the integrals $\J^{\bsigma}_{\btau}$. 
We deform the $\xi_i^1$-, $\xi_i^{12}$-, $\xi_i^2$-, $\eta_i^2$-, $\eta_i^{12}$-, and $\eta_i^1$-contours 
for $\J^{12}_{12}(a,b,n_1,n_2)$ to
\beqq
	\cont^L_-(\rzo^-),
	\qquad
	\cont^L_-(\rzot^-),
	\qquad
	\cont_-^L(\rzt -L^{-1/3}),
	\qquad
	\cont_+^L(\rzt +L^{-1/3}),
	\qquad
	\cont^L_+(\rzot^+),
	\qquad
	\cont^L_+(\rzo^+),
\eeqq
respectively. By \eqref{eq:case1_critical_order}, they form an admissible contour system for the integral. The relevant pairs of contours are separated by at least $c_*L^{-1/3}$ for some constant $c_*>0$. Proposition~\ref{result:generalasy}, with
\beqq
	d_L=c_*L^{-1/3},\qquad
	\alpha=a_1+b_1+a_{12}+b_{12}=2n_1,\qquad
	\beta=a_2+b_2=2n_2-a-b, 
\eeqq
implies, after absorbing powers of $c_*^{-1}$ into the constant, that there are $C_1,L_0>0$ such that, for all $L\ge L_0$,
\beq\label{eq:U1remaineach_general_bound}
	|\J^{12}_{12}(a,b, n_1,n_2)|
	\le
	C_1^{n_1+n_2}\sqrt{n_1!n_2!}\,(n_1+n_2-a-b)!L^{-n_1/3}E_L(a,b),
\eeq
where, using \eqref{eq:U1cprelation}, 
\beq \label{eq:U1EL}
	E_L(a,b)
	:=
	\frac{|\ff_{1,L}(\rzo^-)|^{n_1-a}
	|\ff_{12,L}(\rzot^-)|^a
	|\ff_{2,L}(\rzt)|^{n_2-a}}
	{|\ff_{1,L}(\rzo^+)|^{n_1-b}
	|\ff_{12,L}(\rzot^+)|^b
	|\ff_{2,L}(\rzt)|^{n_2-b}}
	=
	\frac{|\ff_{1,L}(\tzo^-)|^{n_1-a}
	|\ff_{12,L}(\tzt^-)|^a}
	{|\ff_{1,L}(\tzo^+)|^{n_1-b}
	|\ff_{12,L}(\tzt^+)|^b}
	|\ff_{2,L}(\tzr)|^{b-a}.
\eeq

By \eqref{eq:case1_f_expansions}, noting that $\ff_{1,L}$ has no $L^{1/3}$-order term in the exponent, there exist $C_2,c_1>0$ such that 
\beqq
	E_L(a,b)
    \le
   	C_2^{n_1+n_2}e^{c_1 (a+b) L^{1/3}}e^{L\Phi_{a,b}}
    \le
	C_2^{n_1+n_2}e^{2c_1n_1L^{1/3}}e^{L\Phi_{a,b}},
\eeqq
where 
\beqq
	\Phi_{a,b}
	:=(n_1-a)\Thetao(\tzo^-)+a\Thetat(\tzt^-)
	-(n_1-b)\Thetao(\tzo^+)-b\Thetat(\tzt^+)
	+(b-a)\Thetar(\tzr).
\eeqq
Using $\Thetar=\Thetat-\Thetao$ from \eqref{eq:tGide}, 
\beqq
	\Phi_{a,b}
	=-(n_1-a-b)\tDel
	-a(\delta_1^++\delta_{2}^-)
	-b(\delta_1^-+\delta_{2}^+),
\eeqq
where 
\beqq
\begin{aligned}
	\delta_1^-&:=\Thetao(\tzr)-\Thetao(\tzo^-),
	&\qquad
	\delta_1^+&:=\Thetao(\tzo^+)-\Thetao(\tzr),\\
	\delta_{2}^-&:=\Thetat(\tzr)-\Thetat(\tzt^-),
	&
	\delta_{2}^+&:=\Thetat(\tzt^+)-\Thetat(\tzr).
\end{aligned}
\eeqq
All four quantities are strictly positive because, for each $i=1,2$, $\Theta_i$ is strictly increasing on $[\tz_i^-,\tz_i^+]$, and \eqref{eq:case1_critical_order} implies that $\tzr\in(\tz_i^-,\tz_i^+)$.
Hence, since $a+b\le n_1-1$,
\beqq
	\Phi_{a,b}
	\le
	-\tDel-(n_1-1)c_0,
	\qquad
	c_0:=\min\{\tDel,\delta_1^++\delta_{2}^-,\delta_1^-+\delta_{2}^+\}>0.
\eeqq
Thus,
\beqq
	E_L(a,b)
	\le
	e^{-\tDel L}C_2^{n_1+n_2}
	e^{2c_1n_1L^{1/3}-(n_1-1)c_0L}
	\le
	e^{-\tDel L}C_2^{n_1+n_2}e^{-cn_1L}
\eeqq
for some $c>0$, since $n_1\ge2$.
Combining this estimate with \eqref{eq:U1remaineach_general_bound} proves the result.
\end{proof}

\begin{cor}\label{prop:case1_remainder}
There exists $c>0$ such that
\beqq
	\frac{1}{\tP}
	\sum_{n_1=2}^\infty\sum_{n_2=1}^\infty
	\frac{|\QQ_{2,L}^{(n_1,n_2)}|}{(n_1!n_2!)^2}
	=O(e^{-cL}).
\eeqq
\end{cor}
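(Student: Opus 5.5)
We combine the bound \eqref{eq:Q1maxbound} with Lemma~\ref{result:U1remaineach} and Corollary~\ref{result:tPldp}. The only point needing care is the constraint $a+b\le n_1-1$ in Lemma~\ref{result:U1remaineach}, which must be checked for every pair $(a,b)$ appearing in \eqref{eq:Q1maxbound}.

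First fix $n_1\ge2$, $n_2\ge1$, and $0\le i,j\le n_2$ with $i+j\ge 2n_2-n_1+1$. In the double sum of \eqref{eq:Q1maxbound}, the indices satisfy $a\le n_2-i$ and $b\le n_2-j$. Hence
\beqq
	a+b\le 2n_2-i-j\le n_1-1.
\eeqq
Also $a,b\le n_1\wedge n_2$. So every $\J^{12}_{12}(a,b,n_1,n_2)$ in the sum satisfies the hypotheses of Lemma~\ref{result:U1remaineach}.

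Next insert \eqref{eq:U1remaineach} and use \eqref{eq:triplfactorial}, namely $a!b!(n_1+n_2-a-b)!\le 2^{n_1+n_2}n_1!n_2!$. The double sum has at most $(n_1+1)^2\le 4^{n_1}$ terms. This gives
\beqq
	|\QQ_{2,L}^{(n_1,n_2)}|\le e^{-\tDel L}C^{n_1+n_2}(n_1!n_2!)^{3/2}e^{-cn_1L}
\eeqq
for a new constant $C$, uniformly in $n_1\ge2$, $n_2\ge1$ and $L\ge L_0$.

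Dividing by $(n_1!n_2!)^2$ then leaves
\beqq
	\frac{|\QQ_{2,L}^{(n_1,n_2)}|}{(n_1!n_2!)^2}\le e^{-\tDel L}\,\frac{C^{n_1+n_2}}{\sqrt{n_1!n_2!}}\,e^{-cn_1L}.
\eeqq
Summing over $n_2\ge1$ gives a finite constant, since $\sum_{n} C^{n}/\sqrt{n!}<\infty$. Summing over $n_1\ge2$ gives
\beqq
	\sum_{n_1\ge 2}\frac{C^{n_1}}{\sqrt{n_1!}}\,e^{-cn_1L}\le C'e^{-2cL}
\eeqq
for $L$ large. The total is therefore $O(e^{-\tDel L}e^{-2cL})$.

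Finally, Corollary~\ref{result:tPldp} gives $\tP\asymp L^{-1}e^{-\tDel L}$. Dividing, the ratio is $O(Le^{-2cL})=O(e^{-cL})$, which is the claim.

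There is no real obstacle. The only steps needing attention are the index check $a+b\le n_1-1$ and the factorial bookkeeping, which leaves a net $1/\sqrt{n_1!n_2!}$ and so ensures convergence uniformly in $L$.
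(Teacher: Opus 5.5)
Your proposal is correct and follows the paper's own proof essentially line by line. Both arguments use the same three steps: the index check $a+b\le 2n_2-i-j\le n_1-1$, then Lemma~\ref{result:U1remaineach} with \eqref{eq:triplfactorial} to leave a net factor $1/\sqrt{n_1!n_2!}$, then division by $\tP\asymp L^{-1}e^{-\tDel L}$ from Corollary~\ref{result:tPldp}.
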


\begin{proof}
For the indices appearing in \eqref{eq:Q1maxbound}, 
since $a\le n_2-i$, $b\le n_2-j$, and $i+j\ge2n_2-n_1+1$, we have $a+b\le n_1-1$. Thus, by Lemma~\ref{result:U1remaineach} and \eqref{eq:triplfactorial}, 
there are $C, c>0$ such that 
\beqq
	\sum_{n_1=2}^\infty\sum_{n_2=1}^\infty
	\frac{|\QQ_{2,L}^{(n_1,n_2)}|}{(n_1!n_2!)^2}
	\le
	e^{-\tDel L}
	\sum_{n_1=2}^\infty\sum_{n_2=1}^\infty
	\frac{C^{n_1+n_2}e^{-cn_1L}}{\sqrt{n_1!n_2!}}
	=O(e^{-(\tDel+c)L}).
\eeqq
Since $\tP\asymp L^{-1}e^{-\tDel L}$ by Corollary~\ref{result:tPldp}, the ratio is $O(Le^{-cL})=O(e^{-c'L})$ for some $c'>0$, and the result follows.
\end{proof}

From \eqref{eq:strategy_conditional_probability}, Lemma~\ref{result:U1leading}, and Corollary~\ref{prop:case1_remainder}, we find
\beqq
	\lim_{L\to\infty}
	\prob\left[
		\frac{\LPP(\ac x  L, \bc y L)-\mv L}{\sdev L^{1/3}}>\rr
		\,\bigg|\,
		\LPP(\ac L,\bc L)>\lv L
	\right]
	=\prob(\TW_2>\rr).
\eeqq
Thus, Theorem~\ref{result:CCLT} for $(x,y)\in U_1$ is proved.

%%%%%%%%%%%%%%
%%%%%%%%%%%%%%
\section{Boundary between $U_1$ and $U_2$}
\label{sec:U1U2}

Let $(x,y)\in (1,\infty)^2$ lie on the boundary between $U_1$ and $U_2$. 
We consider the case $y<x$ and prove Theorem~\ref{result:crossdist}\textnormal{(a)}. 
The case $y>x$ follows by symmetry. 
Fix $\ww,\rr\in\R$, and set $\mv:=\mv(x,y)$ and $\sdev:=\sdev(x,y)$. 
Since $\mv>\lv$, we use the ordering in \eqref{eq:strategy_order_greater}, where, with $\mr c$ in \eqref{eq:littlec}, 
\beqq
	(x_L, y_L)
	= (x,y) +  \mr c\ww (y-1)^{2/3}L^{-1/3} 
	\left( \frac{\sqrt\slope}{\sqrt \ac},
	- \frac{1}{\sqrt \bc} \right)
\eeqq
in \eqref{eq:xygen} and $\tTo=\mv L+\sdev \rr L^{1/3}$ in \eqref{eq:Tgen}. 
The functions in \eqref{eq:fGen} are
\beq\label{eq:U1U2_f_expansions}
\begin{split}
	\ff_{1,L}(z)&=e^{L\Gone(z)+E_{1,L}(z)},\\
	\ff_{2,L}(z)&=e^{L\Gtwo(z)+\mr c\ww(y-1)^{2/3}L^{2/3}\mcH(z)+\sdev\rr L^{1/3}z+E_{2,L}(z)},\\
	\ff_{12,L}(z)&=e^{L\Gonetwo(z)+\mr c\ww(y-1)^{2/3}L^{2/3}\mcH(z)+\sdev\rr L^{1/3}z+E_{12,L}(z)},
\end{split}
\eeq
where
\beqq
	(\mcG_1,\mcG_2,\mcG_{12}) = (\Thetao,\Thetar,\Thetat), 
\eeqq
and 
\beq \label{eq:HdefinU1U2}
	\mcH(z):=-\sqrt{\slope \ac}\log(z+1)-\sqrt \bc\log z.
\eeq

By \eqref{eq:strategy_cp} and Lemma~\ref{result:cpU1},
\beqq 
	\rzo^\pm= \tzo^\pm, \qquad \rzt^-=\rzt^+=\tzr^-=\tzr^+= \tzr , \qquad \rzot^\pm=\tzt^\pm
\eeqq
and
\beq\label{eq:U1U2_critical_order}
	-1<\rzo^-<\rzot^-<\rzt = \rzot^+ = \rzo^+=\pp <0, \qquad \rzt:= \rzt^-=\rzt^+. 
\eeq
Direct computations show that 
\beqq
	\mcG_2'''(\pp) =\Thetar'''(\pp)=-2\sdev^3,
	\qquad
	\mcH'(\pp)=0,
	\qquad
	\mcH''(\pp)=\frac{2\sdev^2}{\mr c\,(y-1)^{2/3}}.
\eeqq
The functions $\ff_{1,L}$, $\ff_{2,L}$, and $\ff_{12,L}$ are a Gaussian family, an Airy family, and a modified Gaussian family, respectively.

%%%%%%%%%%%%%%%%%%%%%%%%%%%%%%%%%%%%%%%%%%%%%%%%%
\subsection{The leading contribution on the boundary between $U_1$ and $U_2^<$}

The proof of the next result is similar to that of Lemma~\ref{result:U1leading} for the region $U_1$. 
We indicate only the changes from that proof.

\begin{lem}\label{result:U12Flimit}
We have
\beqq
	\lim_{L\to\infty}
	\frac{1}{\tP}
	\sum_{n=1}^\infty\frac{\QQ_{2,L}^{(1,n)}}{(n!)^2}
	=1-F_{\tn{BBP},\ww}(\rr+\ww^2).
\eeqq
\end{lem}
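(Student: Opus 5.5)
We follow the proof of Lemma~\ref{result:U1leading}. By Lemma~\ref{lem:simpleQ}, $\QQ_{2,L}^{(1,n)}=(-1)^{n+1}\J^{12^n}_{12^n}$. In this integral only $\ff_{1,L}$ and $\ff_{2,L}$ appear, and not $\ff_{12,L}$. The key difference from the $U_1$ case is that, by \eqref{eq:U1U2_critical_order}, the double critical point $\rzt=\pp$ of $\mcG_2$ now coincides with $\rzo^+$. Consequently the $\eta^1$-contour and the $\eta_j^2$-contours both want to pass through $\pp$.

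We deform the contours as follows, keeping the required nesting of right contours ($\eta_j^2$ inside $\eta^1$):
\begin{itemize}
\item the $\xi^1$-contour to $\cont^L_-(\rzo^-)$;
\item the $\xi_j^2$-contours to $\cont_-^L(\pp-L^{-1/3})$;
\item the $\eta_j^2$-contours to $\cont_+^L(\pp+L^{-1/3})$;
\item the $\eta^1$-contour to $\cont_+^L(\pp+b L^{-1/3})$ for some large fixed $b$.
\end{itemize}
The $\eta^1$-contour is on the $L^{-1/3}$ scale, which is coarser than its natural Gaussian scale $L^{-1/2}$. Since $\mcG_1=\Thetao$ has a regular maximum at $\pp$ along the vertical direction, Lemma~\ref{lem:asymptotics_f2}(b), uniform in $b$, still applies after shifting by $bL^{-1/2}\cdot L^{1/6}$. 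More carefully, we can keep the $\eta^1$-contour at $\cont_+^L(\pp+b'L^{-1/2})$ and use it only outside the $\eta^2$-contours. If a pole at $\eta^1=\eta_j^2$ is crossed, it is removed by keeping the nesting.

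For $\ff_{2,L}$ we use the Airy-family expansion of Lemma~\ref{result:dbcplocal}. The $L^{2/3}$ term $\mr c\ww(y-1)^{2/3}\mcH$ has $\mcH'(\pp)=0$ and $\mcH''(\pp)=2\sdev^2/(\mr c(y-1)^{2/3})$. With $\xi_j^2=\pp+u_j/(\sdev L^{1/3})$ this produces an extra factor $e^{\ww u_j^2}$, so
\[
\frac{\ff_{2,L}(\xi_j^2)}{\ff_{2,L}(\eta_j^2)}\to\frac{e^{-u_j^3/3+\ww u_j^2+\rr u_j}}{e^{-v_j^3/3+\ww v_j^2+\rr v_j}}.
\]
Completing the cube by $u\mapsto u-\ww$ gives $-u^3/3+(\rr+\ww^2)u$ up to constants that cancel in the ratio. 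This shifts the contours, and the singular point $0$ that will appear below becomes $\ww$.

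The new feature is the Cauchy factor in \eqref{eq:U1nPilimi}. Since $\eta^1$ and $\eta_j^2$ both sit near $\pp$, the factor $(\eta^1-\xi_j^2)/(\eta^1-\eta_j^2)$ no longer tends to $1$. We rescale $\eta^1=\pp+\hat v/L^{1/2}$. The $\hat v$-scale is finer than the $u,v$-scale $L^{-1/3}$, so
\[
\frac{\eta^1-\xi_j^2}{\eta^1-\eta_j^2}\to\frac{u_j}{v_j},
\]
which is independent of $\hat v$. The $\hat v$-integral then factorizes as a Gaussian, but the contour relation between $\eta^1$ and $\eta_j^2$ places the $v_j$-contours to the right of $0$ and the $u_j$-contours to the right of $0$ as well. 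After the shift by $\ww$, this yields exactly the factor $\prod_j (v_j-\ww)/(u_j-\ww)$ of \eqref{eq:BBPFredholmexpnas}, with $\Sigma_{\rL}$ to the right of $\ww$. The remaining factors $(\xi^1-\eta_j^2)/(\xi^1-\xi_j^2)$ tend to $1$. The factor $(\xi^1-\eta^1)^{-2}$ and the Gaussian integrals cancel against $\tP$ from Lemma~\ref{lem:conditioning_event}, exactly as in \eqref{eq:case1_leading_limit}. Note that $\ff_{1,L}=\ffz$ and it is evaluated at the same points $\tzo^\pm$.

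The main obstacle is justifying this limit, in particular the uniform domination giving the analogue of \eqref{eq:U1Hadam}. The factors $1/(\eta^1-\eta_j^2)$ are not bounded by a separation $d_L$ of fixed order. We plan to bound $|u_j/v_j|$ and related factors using that the $v_j$-contours stay at distance $\gtrsim L^{-1/3}$ from $\pp$, while $\eta^1$ remains within $O(L^{-1/2+\epsilon})$ of $\pp$ on the dominant part. This gives a bound $C^n n^n$ as before via Lemma~\ref{result:Cdetest}, and summation by dominated convergence then gives the series \eqref{eq:BBPFredholmexpnas}. The overall sign $(-1)^{n+1}$ turns it into $1-F_{\tn{BBP},\ww}(\rr+\ww^2)$.
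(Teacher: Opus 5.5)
Your local computations are right: the factor $e^{\ww u_j^2}$ from $\mcH''(\pp)$, the limit $(\eta^1-\xi_j^2)/(\eta^1-\eta_j^2)\to u_j/v_j$, and the cancellation against $\tP$. However, the contour geometry has a genuine gap, and it changes the limit.

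For $\btau=12^n$ the $\eta^1$-contour is the \emph{innermost} right contour. It must therefore cross the real axis to the right of every $\eta_j^2$-contour; you state the nesting the other way round. With the $\eta_j^2$-contours through $\pp+L^{-1/3}$, the $\eta^1$-contour is forced through $\pp+bL^{-1/3}$ with $b>1$. There $|1/\ff_{1,L}|$ exceeds $|1/\ff_{1,L}(\pp)|$ by a factor of order $e^{cb^2L^{1/3}}$, because $\Thetao''(\pp)<0$. Lemma~\ref{lem:asymptotics_f2}(b) is uniform only for shifts $bL^{-1/2}$ with $b$ bounded, so $\eta^1$ cannot be localized as $\pp+\hat vL^{-1/2}$ on that contour.

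Your fallback places $\eta^1$ on $\cont_+^L(\pp+b'L^{-1/2})$, outside the $\eta_j^2$-contours. This crosses the simple poles at $\eta^1=\eta_j^2$ of the middle Cauchy determinant $\K_{1+n}(\xi^1,\bseta^2\mid\eta^1,\bsxi^2)$, and their residues are dropped. In the scaling limit these residues correspond to the residues at $v_j=0$ of $\prod_j u_j/v_j$. You end up with the $v_j$-contours to the right of $0$, whereas the correct limit has $0$ to the right of $\Sigma_{\rR}$. The two integrals differ by nonvanishing residue terms; already for $n=1$ the difference is $\pm\frac{1}{2\pi\ii}\int_{\Sigma_{\rL}}u^{-1}e^{-u^3/3+\ww u^2+\rr u}\,\dd u$.

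Your final identification is also off. The shift $u\mapsto u-\ww$ turns $u_j/v_j$ into $(u_j+\ww)/(v_j+\ww)$, not $(v_j-\ww)/(u_j-\ww)$. Reaching \eqref{eq:BBPFredholmexpnas} requires the reflection $u_i\mapsto\ww-v_i$, $v_i\mapsto\ww-u_i$, with reversal of orientations. This places the new left contour to the right of $\ww$ precisely when $0$ lay to the right of the original $v$-contour.

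The missing idea is to move the whole $2$-block to the \emph{left} of $\pp$ rather than straddling it:
\begin{itemize}
\item $\xi^1$ on $\cont^L_-(\rzo^-)$ and $\xi_j^2$ on $\cont_-^L(\pp-2L^{-1/3})$;
\item $\eta_j^2$ on $\cont_+^L(\pp-L^{-1/3})$ and $\eta^1$ on $\cont_+^L(\pp)$.
\end{itemize}
This respects the nesting and crosses no poles. It lets $\eta^1$ sit at its own saddle, so the $\hat v$-integral is a genuine Gaussian. It keeps every pair of contours on opposite sides of a Cauchy determinant at distance $\gtrsim L^{-1/3}$, so the $C^nn^n$ domination follows exactly as in \eqref{eq:U1Hadam} and your concern about $1/(\eta^1-\eta_j^2)$ disappears. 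Finally, it puts both the $u_j$- and $v_j$-contours to the left of $0$, which after the reflection above yields $1-F_{\tn{BBP},\ww}(\rr+\ww^2)$.
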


\begin{proof} 
This time we deform the $\xi^1$-, $\xi_i^2$-, $\eta_i^2$-, and $\eta^1$-contours for the integral $\J^{12^n}_{12^n}$ to
\beqq
	\cont^L_-(\rzo^-),
	\qquad
	\cont_-^L(\pp-2L^{-1/3}),
	\qquad
	\cont_+^L(\pp-L^{-1/3}),
	\qquad
	\cont^L_+(\pp),
\eeqq
respectively. By \eqref{eq:U1U2_critical_order}, these contours form an admissible contour system, and the deformation crosses no poles.
Under the local variables \eqref{eq:case1_local_variables} with $\rzt=\pp$, equation \eqref{eq:U1nFlimt} changes to 
\beqq
	\frac{\ff_{1,L}(\xi^1)}{\ff_{1,L}(\eta^1)}
	\prod_{i=1}^n\frac{\ff_{2,L}(\xi_i^2)}{\ff_{2,L}(\eta_i^2)}
	=
	\frac{\ff_{1,L}(\rzo^-)}{\ff_{1,L}(\rzo^+)}
	\frac{e^{\mcG_1''(\rzo^-) \hat u^2/2}}{e^{\mcG_1''(\rzo^+) \hat v^2/2}}
	\prod_{i=1}^n
	\frac{e^{-u_i^3/3+\ww u_i^2+\rr u_i}}
	{e^{-v_i^3/3+\ww v_i^2+\rr v_i}}
	\bigl(1+o(1)\bigr).
\eeqq
Equation~\eqref{eq:U1nPilimi} similarly becomes 
\beqq
	\nPi^{12^n}_{12^n}(\bsxi,\bseta)
	=(-1)^{n+1}(\sdev L^{1/3})^{2n}
	\frac{\K_n(\bu\mid\bv)^2}{(\rzo^--\rzo^+)^2}
	\left(\prod_{i=1}^n\frac{u_i}{v_i}\right)
	\bigl(1+o(1)\bigr).
\eeqq
Thus, \eqref{eq:case1_leading_limit} changes to 
\beqq
\begin{split}
	\lim_{L\to\infty}\frac{\J^{12^n}_{12^n}}{\tP}
	&=\frac{1}{(2\pi\ii)^{2n}}
	\int_{(\Sigma_{\rL})^n}\dd\bu
	\int_{(\Sigma_{\rR})^n}\dd\bv\,
	\K_n(\bu\mid\bv)^2
	\prod_{i=1}^n
	\frac{e^{-u_i^3/3+\ww u_i^2+\rr u_i}u_i}
	{e^{-v_i^3/3+\ww v_i^2+\rr v_i}v_i},
\end{split}
\eeqq
where $0$ lies to the right of $\Sigma_{\rR}$. The same Cauchy-determinant and $L^1$ estimates used to prove \eqref{eq:U1Hadam} give a bound $C^n n^n$ uniformly in $n$ and all sufficiently large $L$, so dominated convergence applies. After the changes of variables $u_i\mapsto\ww-v_i$ and $v_i\mapsto\ww-u_i$ and reversal of the resulting contour orientations, the new left contour lies to the right of $\ww$. The result then follows from \eqref{eq:BBPFredholmexpnas}.
\end{proof}

%%%%%%%%%%%%%%%%%%%%%%%%%%%%%%%%%%%%%%%%%%%%%%%%%%%%%
\subsection{Remainder on the boundary between $U_1$ and $U_2^<$}

We use the same bound \eqref{eq:Q1maxbound}. Recall \eqref{eq:U1Jabnn}. 

\begin{lem}\label{result:U12remaineach}
There exist $C,c,L_0>0$ such that, for all $n_1\ge 2$, $n_2\ge1$, 
$0\le a,b\le n_1\wedge n_2$ satisfying $a\le n_1-1$, and $L\ge L_0$, 
\beqq
	|\J^{12}_{12}(a,b,n_1,n_2)|
	\le
	e^{-\tDel L} C^{n_1+n_2}\sqrt{n_1!n_2!}\,(n_1+n_2-a-b)!e^{-c n_1 L}. 
\eeqq
\end{lem}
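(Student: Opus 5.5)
The plan is to rerun the proof of Lemma~\ref{result:U1remaineach}, with two changes. On this boundary, \eqref{eq:U1U2_critical_order} says that the three right critical points $\rzt$, $\rzot^+$ and $\rzo^+$ all coincide at $\pp$. Hence the right contours can only be separated at scale $L^{-1/3}$, and two of the four phase gaps used before vanish. I will show that the surviving gaps suffice as soon as $a\le n_1-1$, with no restriction on $b$.

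\textbf{Contours.} I deform the $\xi_i^1$-, $\xi_i^{12}$-, $\xi_i^2$-, $\eta_i^2$-, $\eta_i^{12}$- and $\eta_i^1$-contours of $\J^{12}_{12}(a,b,n_1,n_2)$ to
\beqq
\cont^L_-(\rzo^-),\quad \cont^L_-(\rzot^-),\quad \cont^L_-(\pp-3L^{-1/3}),\quad \cont^L_+(\pp-2L^{-1/3}),\quad \cont^L_+(\pp-L^{-1/3}),\quad \cont^L_+(\pp),
\eeqq
respectively. This is the choice of Lemma~\ref{result:U12Flimit}, with one extra right contour inserted for $\eta^{12}$. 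The nesting is the one prescribed by the lists, no poles are crossed, and the system is $d_L$-separated with $d_L=c_*L^{-1/3}$.

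The $\eta^{12}$-contour is centered at distance $L^{-1/3}$ from $\rzot^+$, not $O(L^{-1/2})$. This is outside the scope of Lemmas~\ref{lem:asymptotics_f2} and~\ref{lem:modifiedGaussian}. I expect the same estimates to hold up to a factor $e^{CL^{1/3}}$ per variable, because a shift of the base point by $L^{-1/3}$ changes $L\,\re\,\mcG_{12}$ by $O(L\cdot L^{-2/3})=O(L^{1/3})$. For the modified Gaussian $\ff_{12,L}$, the term $\mcH'(\pp)=0$ means there is no additional $L^{2/3}$ cost at $\rzot^+$. At $\rzot^-$ we have $\mcH'\ne0$, which costs $e^{CL^{1/3}}$ per $\xi^{12}$ variable, so $\mu\le a\le n_1$. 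Proposition~\ref{result:generalasy} then gives the analogue of \eqref{eq:U1remaineach_general_bound}:
\beqq
|\J^{12}_{12}(a,b,n_1,n_2)|\le C^{n_1+n_2}\sqrt{n_1!n_2!}\,(n_1+n_2-a-b)!\,L^{Cn_1}e^{Cn_1L^{1/3}}E_L(a,b).
\eeqq
Here $E_L(a,b)$ is given by \eqref{eq:U1EL}, and the $L^{2/3}$ and $L^{1/3}$ terms of $\ff_{2,L},\ff_{12,L}$ contribute at most a further $e^{C(a+b)L^{1/3}}$, with $a+b\le 2n_1$.

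\textbf{Exponent.} I expand $\Phi_{a,b}$ exactly as before, now with $\tzr=\tzt^+=\tzo^+=\pp$. Then $\delta_1^+=\delta_2^+=0$ and $\delta_1^-=\Thetao(\pp)-\Thetao(\tzo^-)=\tDel$, so
\beqq
\Phi_{a,b}=-(n_1-a-b)\tDel-a\,\delta_2^- -b\,\tDel=-(n_1-a)\tDel-a\,\delta_2^-,
\qquad \delta_2^-=\Thetat(\pp)-\Thetat(\tzt^-)>0 .
\eeqq
The strict positivity of $\delta_2^-$ comes from $\tzt^-<\pp$ in \eqref{eq:cpU1U2_order} and the monotonicity of $\Thetat$ on $[\tzt^-,\tzt^+]$.

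With $a\le n_1-1$ this gives $\Phi_{a,b}\le-\tDel-(n_1-1)\min\{\tDel,\delta_2^-\}$. Since $n_1\ge2$, the gain $-(n_1-1)c_0L$ dominates both $e^{Cn_1L^{1/3}}$ and $L^{Cn_1}$. This leaves $e^{-\tDel L}e^{-cn_1L}$, which is the claimed bound.

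\textbf{Main obstacle.} The delicate step is the contour bounds for families evaluated at base points shifted by $L^{-1/3}$ from their critical points, together with the modified Gaussian $\ff_{12,L}$ near the coalescing point $\pp$. My plan is to redo the proof of Lemma~\ref{lem:modifiedGaussian} directly on the shifted contours. The arithmetic--geometric inequality there absorbs all $L^{2/3}r$ and $L^{1/3}r$ terms into $-cLr^2+CL^{1/3}$, which yields an $L^1$ bound with loss at most $e^{CL^{1/3}}$ per variable. This loss is harmless against $e^{-cn_1L}$.
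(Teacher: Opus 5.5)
Your argument is essentially the paper's proof. You deform to an admissible system with $d_L\asymp L^{-1/3}$ and apply Proposition~\ref{result:generalasy}. You then note that on this boundary $E_L(a,b)$ collapses to $|\ff_{1,L}(\tzo^-)/\ff_{1,L}(\pp)|^{n_1-a}\,|\ff_{12,L}(\tzt^-)/\ff_{12,L}(\pp)|^{a}$, with phase $-(n_1-a)\tDel-a\tDelt$, and finish with $a\le n_1-1$ and $n_1\ge2$. Two remarks follow.

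\textbf{The shifted $\eta^{12}$-contour is unnecessary.} The paper places the $\eta^{12}$- and $\eta^1$-contours at $\cont_+^L(\pp+L^{-1/2})$ and $\cont_+^L(\pp+2L^{-1/2})$. This is allowed because $\eta^{12}$ and $\eta^1$ never lie on opposite sides of a Cauchy determinant in \eqref{eq:Caucdt}. Only $\eta^2$ (paired against $\eta^1$) and $\xi^2$ (paired against $\eta^{12}$ and $\eta^2$) must be pushed $O(L^{-1/3})$ away. Every contour then stays within the scope of Lemmas~\ref{lem:asymptotics_f2} and~\ref{lem:modifiedGaussian}, and Proposition~\ref{result:generalasy} applies verbatim with $\mu=a$. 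Your extension to a base point shifted by $L^{-1/3}$ is still correct: the shift costs $e^{CL\cdot L^{-2/3}}$, and $\mcH'(\pp)=0$ prevents an $L^{2/3}$-order loss. So your route works, but it takes extra effort that can be avoided.

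\textbf{The lower-order bookkeeping is wrong.} You claim that, in passing from $E_L(a,b)$ to $e^{L\Phi_{a,b}}$, the $L^{2/3}$- and $L^{1/3}$-order terms contribute only $e^{C(a+b)L^{1/3}}$. That is false here; it was true in $U_1$, where there is no $L^{2/3}$ term. The factor $|\ff_{12,L}(\tzt^-)/\ff_{12,L}(\pp)|^a$ contains
\[
\exp\bigl(\mr c\ww(y-1)^{2/3}aL^{2/3}\,\re(\mcH(\tzt^-)-\mcH(\pp))\bigr).
\]
Since $\re\mcH$ is strictly convex on $(-1,0)$ with its minimum at $\pp$, and $\tzt^-<\pp$, this is a genuine growth $e^{caL^{2/3}}$ when $\ww>0$. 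The correct factor is $e^{c_1aL^{2/3}}$, as in the paper. Because $a\le n_1$ and $n_1\ge2$, it is still dominated by $e^{-(n_1-1)c_0L}$, so your conclusion holds once this is corrected.
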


\begin{proof}
The proof is similar to that of Lemma~\ref{result:U1remaineach}. We deform the $\xi_i^1$-, $\xi_i^{12}$-, $\xi_i^2$-, $\eta_i^2$-, $\eta_i^{12}$-, and $\eta_i^1$-contours to
\beqq
    \cont_-^L(\rzo^-),\qquad \cont_-^L(\rzot^-),\qquad \cont_-^L(\pp-2L^{-1/3}),\qquad \cont_+^L(\pp-L^{-1/3}),\qquad \cont_+^L(\pp+L^{-1/2}),\qquad \cont_+^L(\pp+2L^{-1/2}),
\eeqq
respectively. By \eqref{eq:U1U2_critical_order}, these contours form an admissible contour system. Although the $\eta^{12}$- and $\eta^1$-contours are only of order $L^{-1/2}$ apart, these variables never appear on opposite sides of a Cauchy determinant in \eqref{eq:Caucdt}. Every relevant pair of contours is separated by at least $c_*L^{-1/3}$ for some constant $c_*>0$. Hence, Proposition~\ref{result:generalasy} applies with
\beqq
    d_L=c_*L^{-1/3},\qquad \alpha=a_1+b_1+a_{12}+b_{12}=2n_1,\qquad \beta=a_2+b_2=2n_2-a-b.
\eeqq
In this case, $\ff_{12,L}$ is a modified Gaussian family. Since $\mcH'(\pp)=0$, while in general $\mcH'(\rzot^-)\ne0$, the definition in \eqref{eq:abmu} gives $\mu=a$. Thus, \eqref{eq:U1remaineach_general_bound} holds with an additional multiplicative factor $e^{CaL^{1/3}}$.

Setting $\rzt=\rzo^+=\rzot^+=\pp$ and using $\ff_{2,L}=\ff_{12,L}/\ff_{1,L}$, the quantity in \eqref{eq:U1EL} simplifies to
\beqq
    E_L(a,b):=\frac{|\ff_{1,L}(\rzo^-)|^{n_1-a}|\ff_{12,L}(\rzot^-)|^a|\ff_{2,L}(\pp)|^{n_2-a}}{|\ff_{1,L}(\pp)|^{n_1-b}|\ff_{12,L}(\pp)|^b|\ff_{2,L}(\pp)|^{n_2-b}}=\frac{|\ff_{1,L}(\tzo^-)|^{n_1-a}|\ff_{12,L}(\tzt^-)|^a}{|\ff_{1,L}(\tzo^+)|^{n_1-a}|\ff_{12,L}(\tzt^+)|^a}=:E_L(a).
\eeqq
By \eqref{eq:U1U2_f_expansions}, and noting that $\ff_{1,L}$ has no $L^{2/3}$- or $L^{1/3}$-order terms in the exponent, there exist $C_2,c_1>0$ such that
\beqq
    E_L(a)\le C_2^{n_1+n_2}e^{c_1aL^{2/3}}e^{L\Phi_a},\qquad \Phi_a:=-(n_1-a)\tDel-a\tDelt,
\eeqq
where $\tDel,\tDelt>0$ by Lemma~\ref{result:phasegaps}. Since $n_1-a-1\ge0$ by assumption, 
\beqq
    \Phi_a\le-\tDel-(n_1-1)c_0,\qquad c_0:=\min\{\tDel,\tDelt\}.
\eeqq
Therefore,
\beqq
    E_L(a)\le e^{-\tDel L}C_2^{n_1+n_2}e^{c_1 a L^{2/3}-(n_1-1)c_0L}.
\eeqq

Thus, the estimate 
\eqref{eq:U1remaineach_general_bound} with an additional multiplicative factor $e^{CaL^{1/3}}$ implies that 
\beqq
	|\J^{12}_{12}(a,b, n_1,n_2)|
	\le
	e^{-\tDel L} 
    (C_1C_2)^{n_1+n_2}\sqrt{n_1!n_2!}\,(n_1+n_2-a-b)!L^{-n_1/3} e^{CaL^{1/3}} e^{c_1 a L^{2/3}-(n_1-1)c_0L}. 
\eeqq
Since $a\le n_1$ and $n_1\ge2$, there exists $c>0$ such that  $e^{CaL^{1/3}}e^{c_1 a L^{2/3}-(n_1-1)c_0L}\le e^{-c n_1L}$ for all sufficiently large $L$. Hence, the result follows.
\end{proof}

\begin{cor}\label{result:U1U2_remainder}
There exists $c>0$ such that
\beqq
	\frac{1}{\tP}
	\sum_{n_1\ge2}\sum_{n_2\ge1}
	\frac{1}{(n_1!n_2!)^2} |\QQ_{2,L}^{(n_1,n_2)}(\rr,\ww)|
	=O(e^{-cL}).
\eeqq
\end{cor}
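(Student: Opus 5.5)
The plan is to follow the proof of Corollary~\ref{prop:case1_remainder} almost verbatim, with Lemma~\ref{result:U12remaineach} in place of Lemma~\ref{result:U1remaineach}. First I bound each $\QQ_{2,L}^{(n_1,n_2)}$ for $n_1\ge2$, $n_2\ge1$ by \eqref{eq:Q1maxbound}. This is valid here because the rearrangement in Table~\ref{tab:rearrangement} for the $U_1/U_2^<$ boundary is the same as in $U_1$, namely the $S^{12}_{12}$ version of Corollary~\ref{result:Jintrearboundgen} combined with Corollary~\ref{result:Qn1n2bd}. The resulting sum runs over $i,j$ with $i+j\ge 2n_2-n_1+1$ and over $0\le a\le n_1\wedge(n_2-i)$, $0\le b\le n_1\wedge(n_2-j)$.

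Next I check the hypotheses of Lemma~\ref{result:U12remaineach}. From $a\le n_2-i$, $b\le n_2-j$ and $i+j\ge 2n_2-n_1+1$ we get $a+b\le n_1-1$. Since $b\ge0$, this gives $a\le n_1-1$, which is all that Lemma~\ref{result:U12remaineach} needs. That lemma then gives
\[
a!\,b!\,|\J^{12}_{12}(a,b,n_1,n_2)|\le e^{-\tDel L}C^{n_1+n_2}\sqrt{n_1!n_2!}\,a!\,b!\,(n_1+n_2-a-b)!\,e^{-cn_1L}.
\]
I apply \eqref{eq:triplfactorial} to replace $a!\,b!\,(n_1+n_2-a-b)!$ by $2^{n_1+n_2}n_1!n_2!$. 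The number of index tuples $(i,j,a,b)$ is at most $(n_1+n_2+1)^4\le 2^{4(n_1+n_2)}$, so after enlarging $C$,
\[
\frac{|\QQ_{2,L}^{(n_1,n_2)}|}{(n_1!n_2!)^2}\le e^{-\tDel L}\frac{C^{n_1+n_2}e^{-cn_1L}}{\sqrt{n_1!n_2!}}.
\]

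Summing over $n_2\ge1$ gives a convergent series in $C^{n_2}/\sqrt{n_2!}$. Summing over $n_1\ge2$ gives at most a constant times $e^{-2cL}$ for large $L$. The total is therefore $O(e^{-(\tDel+c)L})$. Dividing by $\tP\asymp L^{-1}e^{-\tDel L}$ from Corollary~\ref{result:tPldp} gives $O(Le^{-cL})=O(e^{-c'L})$.

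The only place where this could fail is the bookkeeping of the constants in Lemma~\ref{result:U12remaineach}. The lemma must hold uniformly in $b$, including $b$ up to $n_1\wedge n_2$ with the $\ww$-dependent $L^{2/3}$ terms. It must also hold uniformly in $(\rr,\ww)$, since the claim concerns the function $\QQ_{2,L}^{(n_1,n_2)}(\rr,\ww)$. The lemma is stated uniformly in $a,b$, and $\rr,\ww$ are fixed, so nothing beyond the summation above is needed.
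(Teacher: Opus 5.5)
Your proposal is correct and follows essentially the same route as the paper. The paper also reduces to the proof of Corollary~\ref{prop:case1_remainder}: it bounds the terms via \eqref{eq:Q1maxbound}, notes that $a+b\le n_1-1$ (hence $a\le n_1-1$) so Lemma~\ref{result:U12remaineach} applies, and concludes with \eqref{eq:triplfactorial} and Corollary~\ref{result:tPldp}. Your additional bookkeeping (the index-tuple count, uniformity in $b$, and fixed $(\rr,\ww)$) makes explicit what the paper leaves implicit.
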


\begin{proof}
The proof is the same as that of Corollary~\ref{prop:case1_remainder}, using Lemma~\ref{result:U12remaineach}, which is applicable since $a+b\le n_1-1$, and hence $a\le n_1-1$.
\end{proof}

By \eqref{eq:strategy_conditional_probability}, Lemma~\ref{result:U12Flimit} and Corollary~\ref{result:U1U2_remainder},
\beqq
	\lim_{L\to\infty}
	\prob\left[
		\frac{\LPP\left((\ac x L,\bc y L)+\mr c\ww\mathbf v\,(y-1)^{2/3}L^{2/3}\right)-\mv L}
		{\sdev L^{1/3}}>\rr
		\,\bigg|\,
		\LPP(\ac L,\bc L)>\lv L
	\right]
	=1-F_{\tn{BBP},\ww}(\rr+\ww^2).
\eeqq
This completes the proof of Theorem~\ref{result:crossdist}\textnormal{(a)}.

%%%%%%%%%%%%%%%%%%%%%%%%%%%%
%%%%%%%%%%%%%%%%%%%%%%%%%%%%
\section{Region $U_2$}
\label{sec:case2}

We prove Theorem~\ref{result:CCLT} for region $U_2$. 
It is enough to consider $(x,y)\in U_2^<$ since the result in $U_2^>$ follows by exchanging $\ac,\bc,x,y$ with $\bc,\ac,y,x$. Fix $(x,y)\in U_2^<$ and $\rr\in\R$, and set $\mv:=\mv(x,y)$ and $\sdev:=\sdev(x,y)$. 
The two-point formula depends on the ordering of the time parameters. 
We consider separately the three cases $\mv>\lv$, $\mv<\lv$, and $\mv=\lv$. 

From the formula \eqref{eq:LLNconjv} of $\mv$, 
\beq\label{eq:lvmmveq}
	2(\mv-\lv)
	=(x-1)(\lv+\ac-\bc-\sqrt \discr)+(y-1)(\lv-\ac+\bc+\sqrt \discr).
\eeq
Since $y<x$, this implies $\lv(y-1)<\mv-\lv<\lv(x-1)$. Hence, 
\beqq 
	\text{$x>1$ when $\mv>\lv$;} \qquad \text{$y<1$ when $\mv<\lv$.}
\eeqq
Recall $\Thetao$, $\Thetat$, and $\Thetar$ from \eqref{eq:allphasefunctions}, and $\pp$ from \eqref{eq:ppthree}. A direct computation gives
\beq\label{eq:U2_second_derivatives}
	\Thetao''(\pp)=-\ab_+^2,
	\quad
	\frac{\Thetat''(\pp)}{\Thetao''(\pp)}= \frac{\slope y-x}{\slope-1},
	\quad
	\frac{\Thetar''(\pp)}{\Thetao''(\pp)} =- \left( 1 - \frac{\slope y-x}{\slope-1} \right) , 
	\quad
	\sdev= \sqrt{ \frac{\Thetat''(\pp)\Thetar''(\pp)} {\Thetao''(\pp)}}. 
\eeq
We also note the following standard identity.
     
\begin{lem}\label{lem:Brownian_bridge_integral}
Let $A>0$, $t\in(0,1)$, and $s\in\R$. Let $\Gamma_1$ and $\Gamma_2$ be upward-oriented vertical lines, with $\Gamma_1$ lying to the left of $\Gamma_2$. Then
\beqq
	\frac{\sqrt{2\pi A}}{(2\pi\ii)^2}
	\int_{\Gamma_1}\dd u\int_{\Gamma_2}\dd v\,
	\frac{e^{\frac12Atu^2+su+\frac12A(1-t)v^2-sv}}{v-u}
%	=\prob (\sqrt A\,\B(t)>s ) 
    = 1 - \Phi \bigl( (At(1-t))^{-1/2} s\bigr), 
\eeqq
where $\Phi$ denotes the distribution function of a standard Gaussian random variable.
\end{lem}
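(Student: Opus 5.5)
The plan is to linearize the Cauchy factor $1/(v-u)$ with a Laplace-type integral, which makes the $u$- and $v$-integrals factorize into two one-dimensional Gaussian integrals.

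Write $\Gamma_1=\{c_1+\ii y:y\in\R\}$ and $\Gamma_2=\{c_2+\ii y:y\in\R\}$ with $c_1<c_2$. Then $\re(v-u)=c_2-c_1=:d>0$ for $u\in\Gamma_1$, $v\in\Gamma_2$, so
\beqq
	\frac{1}{v-u}=\int_0^\infty e^{-\lambda(v-u)}\,\dd\lambda .
\eeqq
Insert this and exchange the order of integration. Fubini is justified by absolute integrability:
\beqq
	|e^{-\lambda(v-u)}|=e^{-\lambda d},\qquad
	|e^{\frac12Atu^2}|=e^{\frac12At(c_1^2-y^2)},\qquad
	|e^{\frac12A(1-t)v^2}|=e^{\frac12A(1-t)(c_2^2-y^2)},
\eeqq
and the linear terms have bounded modulus on each line. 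The triple integrand is therefore dominated by $e^{-\lambda d}$ times Gaussians in both imaginary parts.

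For fixed $\lambda\ge0$ the inner integrals are computed separately. In the $u$-integral the integrand is entire with Gaussian decay along vertical directions, so $\Gamma_1$ may be shifted to the imaginary axis. Putting $u=\ii y$ gives
\beqq
	\frac{1}{2\pi\ii}\int_{\Gamma_1}e^{\frac12Atu^2+(s+\lambda)u}\,\dd u
	=\frac{1}{2\pi}\int_\R e^{-\frac12Aty^2+\ii(s+\lambda)y}\,\dd y
	=\frac{e^{-(s+\lambda)^2/(2At)}}{\sqrt{2\pi At}} .
\eeqq
The $v$-integral is handled in the same way:
\beqq
	\frac{1}{2\pi\ii}\int_{\Gamma_2}e^{\frac12A(1-t)v^2-(s+\lambda)v}\,\dd v
	=\frac{e^{-(s+\lambda)^2/(2A(1-t))}}{\sqrt{2\pi A(1-t)}} .
\eeqq

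Multiply the two results by $\sqrt{2\pi A}$. Using $\frac1t+\frac1{1-t}=\frac1{t(1-t)}$, the left side of the lemma becomes
\beqq
	\int_0^\infty\frac{1}{\sqrt{2\pi At(1-t)}}\,e^{-(s+\lambda)^2/(2At(1-t))}\,\dd\lambda .
\eeqq
This is the probability that a centered Gaussian with variance $At(1-t)$ exceeds $s$, which equals $1-\Phi\bigl((At(1-t))^{-1/2}s\bigr)$.

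The only point needing care is the justification of the interchange and of the contour shifts. Both follow from $d>0$ and the Gaussian decay in the imaginary direction. I do not expect a genuine obstacle.
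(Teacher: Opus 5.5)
Your proposal is correct and follows essentially the same route as the paper. You write $1/(v-u)=\int_0^\infty e^{-\lambda(v-u)}\,\dd\lambda$ using $\re(v-u)>0$, interchange the integrals, and evaluate the two Gaussian integrals, and you add the Fubini and contour-shift justifications that the paper leaves implicit.
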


\begin{proof}
Since $\re(v-u)>0$, we write
$\frac{1}{v-u}=\int_0^\infty e^{-\lambda(v-u)}\,\dd\lambda $
and evaluate the two Gaussian integrals.
\end{proof}

%%%%%%%%%%%%%%%%%%%%%%%%%%%%%%%%%%%%%%%%%%%%%%%%%
\subsection{The case $\mv>\lv$}
\label{sec:U2greater}

Since $\mv>\lv$, we use the ordering in \eqref{eq:strategy_order_greater}, where $(x_L, y_L)=(x,y)$ and 
$\tTo=\mv L+\sdev \rr L^{1/2}$ in \eqref{eq:xygen} and \eqref{eq:Tgen}.
Using \eqref{eq:strategy_phase_functions}, the functions in \eqref{eq:fGen} are
\beq\label{eq:case2greater_f_expansions}
\begin{split}
	\ff_{1,L}(z)=e^{L\Gone(z)+E_{1,L}(z)},\quad
	\ff_{2,L}(z)=e^{L\Gtwo(z)+\sdev\rr L^{1/2}z+E_{2,L}(z)},\quad
	\ff_{12,L}(z)=e^{L\Gonetwo(z)+\sdev\rr L^{1/2}z+E_{12,L}(z)}, 
\end{split}
\eeq
where
\beqq
	(\mcG_1,\mcG_2,\mcG_{12}) = (\Thetao,\Thetar,\Thetat). 
\eeqq
From \eqref{eq:strategy_cp} and Lemma~\ref{result:cpU2less}, since $(x,y)\in U_2^<$, the critical points satisfy 
\beq \label{eq:U2greatercprelation}
	\rzo^\pm= \tzo^\pm, \qquad (\rzt^-, \rzt^+)=(\tzra, \tzrb) , \qquad \rzot^\pm=\tzt^\pm
\eeq
and
\beq\label{eq:case2greater_critical_order}
	-1<\rzo^-<\rzot^-<\rzot^+=\rzo^+=\rzt^-=\pp<\rzt^+,
\eeq
where $\rzt^+=\tzrb <0$ if $y>1$. %Furthermore, 
%\beqq
%	\text{$\tzr^+\in(\pp,0)$ if $y>1$.} 
	%\qquad \text{and} \qquad \text{$\tzr^+\ge 0$ if $y\le 1$.} 
%\eeqq
Note that, since $N_{2,L}-N_{1,L}=\lceil \bc y L\rceil-\lceil \bc L\rceil$, we find  
\beq \label{eq:U21f2ana0}
	\text{$\frac1{\ff_{2,L}(z)}= \frac{(z+1)^{M_{2,L}-M_{1,L}}}{z^{N_{2,L}-N_{1,L}}e^{(T_{2,L}-T_{1,L})z}}$ is analytic at $0$ if $y\le 1$.} 
\eeq 
All functions $\ff_{1,L}$, $\ff_{2,L}$, and $\ff_{12, L}$ are Gaussian families. 

%%%%%%%%%%%%%%%%%%%%%%%%%%%%%%%%%%%%%%%%%%%%%%%%%
\subsubsection{The leading contribution in $U_2$ when $\mv>\lv$}

The leading contribution to \eqref{eq:Q2Lses} comes from the single term indexed by $(1,1)$.

\begin{lem}\label{lem:leadingtermU2>}
Assume $\mv>\lv$. Then
\beqq
	\lim_{L\to\infty}\frac{\QQ_{2,L}^{(1,1)}}{\tP}
	=1-\Phi(\rr).
\eeqq
\end{lem}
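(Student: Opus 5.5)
By Lemma~\ref{lem:simpleQ} with $n_1=n_2=1$, only $i=j=1$ survives, and all the signs and binomials equal $1$. Hence
\beqq
	\QQ_{2,L}^{(1,1)}=\J^{12}_{12},
	\qquad
	\nPi^{12}_{12}=\frac{1}{(\eta^1-\xi^1)(\xi^1-\eta^1)}\cdot\frac{1}{(\xi^2-\eta^2)(\eta^2-\xi^2)}\cdot(\text{middle }2\times 2\text{ determinant}).
\eeqq
The plan is to rearrange contours so that each variable sits at its own critical point, using the ordering \eqref{eq:case2greater_critical_order}.

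\textbf{Contour rearrangement.} On the left side the nesting $\xi^1$ inside $\xi^2$ around $-1$ is already compatible with $\rzo^-<\rzt^-=\pp$. On the right side it is not. The nesting $12$ puts $\eta^1$ closer to $0$ than $\eta^2$, whereas $\eta^1$ should pass through $\rzo^+=\pp$ and $\eta^2$ through $\rzt^+>\pp$. I therefore apply \eqref{eq:Jdown12} with $m=n=1$ and $a_1=1$:
\beqq
	\J^{12}_{12}=\J^{12}_{21}-\J^{12}_{(12)} .
\eeqq

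\textbf{The term $\J^{12}_{21}$.} I deform the contours to $\xi^1\in\cont^L_-(\rzo^-)$, $\xi^2\in\cont^L_-(\pp-L^{-1/2})$, $\eta^1\in\cont^L_+(\pp)$, and $\eta^2\in\cont^L_+(\rzt^+)$. This is admissible when $y>1$, since then $\rzt^+<0$. Proposition~\ref{result:generalasy} then bounds it by
\beqq
	\frac{\ff_{1,L}(\rzo^-)\,\ff_{2,L}(\pp)}{\ff_{1,L}(\pp)\,\ff_{2,L}(\rzt^+)}\,L^{O(1)},
\eeqq
which equals $\tP$ times $e^{-L\tDelr+O(L^{1/2})}$. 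Here $\tDelr>0$ by Lemma~\ref{result:phasegaps}, so this term is $o(\tP)$. The $e^{O(L^{1/2})}$ comes from the $\sdev\rr L^{1/2}z$ term and is harmless; the separation $d_L\sim L^{-1/2}$ only costs powers of $L$.

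When $y\le1$ we have $\rzt^+\ge0$, so this choice is unavailable. Instead I use \eqref{eq:U21f2ana0}: $1/\ff_{2,L}$ is analytic at $0$. In $\J^{12}_{21}$ the $\eta^2$ contour is the inner one around $0$, and its integrand then has no singularity inside it. Indeed, its only other poles are at $\xi^1$ and $\xi^2$ from the Cauchy determinants, and these lie near $-1$. So $\J^{12}_{21}=0$ by Cauchy's theorem.

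I expect this step to be the main obstacle: getting the nesting and signs right in \eqref{eq:Jdown12}, and treating $y\le1$ uniformly (including $y=1$).

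\textbf{The leading term $-\J^{12}_{(12)}$.} Here $\type(\btau)=(1,0,0)$, and
\beqq
	\nPi=\frac{1}{(\eta^{12}-\xi^1)(\xi^1-\xi^2)(\xi^2-\eta^{12})}.
\eeqq
I deform to $\xi^1\in\cont^L_-(\rzo^-)$, $\xi^2\in\cont^L_-(\pp-L^{-1/2})$, and $\eta^{12}\in\cont^L_+(\pp)$, crossing no poles. The exponential weight is
\beqq
	\frac{\ff_{1,L}(\rzo^-)\,\ff_{2,L}(\pp)}{\ff_{12,L}(\pp)}=\frac{\ff_{1,L}(\rzo^-)}{\ff_{1,L}(\pp)},
\eeqq
which is exactly the weight in Lemma~\ref{lem:conditioning_event}.

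I then substitute the local variables
\beqq
	\xi^1=\rzo^-+\hat u L^{-1/2},\qquad \xi^2=\pp+uL^{-1/2},\qquad \eta^{12}=\pp+vL^{-1/2},
\eeqq
and apply Lemma~\ref{lem:asymptotics_f1}. The factors $\ff_{2,L}(\xi^2)$ and $1/\ff_{12,L}(\eta^{12})$ contribute
\beqq
	e^{\frac12\Thetar''(\pp)u^2+\sdev\rr u}
	\quad\text{and}\quad
	e^{-\frac12\Thetat''(\pp)v^2-\sdev\rr v}.
\eeqq
The Cauchy factor becomes $L^{1/2}/\bigl((\pp-\rzo^-)^2(u-v)\bigr)$ to leading order. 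The $\hat u$ Gaussian integral, the measure $L^{-1}\,\dd u\,\dd v$, and the factor $L^{1/2}$ combine to the order $L^{-1}$ of $\tP$. Dividing by \eqref{eq:conditioning_rank_one_asymptotic} cancels the $\hat u$ Gaussian constant and $(\tzo^+-\tzo^-)^2$. What remains is $\sqrt{-\Thetao''(\pp)}/(2\pi\ii)^2$ times a double integral of the form in Lemma~\ref{lem:Brownian_bridge_integral}, after possibly rotating $u,v\mapsto \ii u,\ii v$ and using the overall minus sign to turn $1/(u-v)$ into $1/(v-u)$.

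The parameters are read off from \eqref{eq:U2_second_derivatives}:
\beqq
	A=\ab_+^2=-\Thetao''(\pp),\qquad At=-\Thetar''(\pp),\qquad A(1-t)=-\Thetat''(\pp),\qquad s=\sdev\rr .
\eeqq
These are consistent because $\Thetao''=\Thetat''-\Thetar''$. They give $At(1-t)=\sdev^2$, so the limit is $1-\Phi(\rr)$. Dominated convergence is justified by the tail bounds of Lemma~\ref{lem:asymptotics_f2}. The last thing to check is the sign and orientation bookkeeping in this Gaussian reduction.
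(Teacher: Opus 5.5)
Your proposal follows the paper's proof essentially step for step: the same identity $\J^{12}_{12}=\J^{12}_{21}-\J^{12}_{(12)}$ from \eqref{eq:Jdown12}, the same vanishing of $\J^{12}_{21}$ for $y\le1$ and the same $\tDelr$-gap bound for $y>1$, and the same steepest descent plus Lemma~\ref{lem:Brownian_bridge_integral} for $\J^{12}_{(12)}$; your contours differ from the paper's only by harmless $L^{-1/2}$ shifts. The sign bookkeeping you flagged does need fixing. Since $\pp=\rzt^-$ is the lower critical point of $\Thetar$, \eqref{eq:U2_second_derivatives} gives $\Thetar''(\pp)=(1-\tau)\ab_+^2>0$ with $\tau=(\slope y-x)/(\slope-1)\in(0,1)$, so the correct identification is $At=\Thetar''(\pp)$ and $A(1-t)=-\Thetat''(\pp)=\tau\ab_+^2$, and no rotation is needed (with your $At=-\Thetar''(\pp)$, the checks $At+A(1-t)=A$ and $At(1-t)=\sdev^2$ both fail). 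Also, the Cauchy factor is $L^{1/2}/\bigl((\pp-\rzo^-)^2(v-u)\bigr)$, and the minus sign in front of $\J^{12}_{(12)}$ is instead absorbed by reversing the $\eta^{12}$-contour, which runs downward through $\pp$.
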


\begin{proof}
By Lemma~\ref{lem:simpleQ} and \eqref{eq:Jdown12},
\beqq
	\QQ_{2,L}^{(1,1)}(\rr)
	=\J^{12}_{12}
	=\J^{12}_{21}-\J^{12}_{(12)}.
\eeqq
The second integral is given by
\beqq
	\J^{12}_{(12)}
	=
	\frac{1}{(2\pi\ii)^3}
	\int\dd\xi^1\int\dd\xi^2\int\dd\eta^{12}\,
	\frac{1}{(\xi^1-\eta^{12})(\eta^{12}-\xi^2)(\xi^1-\xi^2)}
	\frac{\ff_{1,L}(\xi^1)\ff_{2,L}(\xi^2)}{\ff_{12,L}(\eta^{12})}.
\eeqq
We deform the $\xi^1$-, $\xi^2$-, and $\eta^{12}$-contours to
\beqq
	\cont_-^L(\rzo^-),
	\qquad
	\cont_-^L(\pp),
	\qquad
	\cont_+^L(\pp+L^{-1/2}),
\eeqq
respectively. By \eqref{eq:case2greater_critical_order}, these contours have the required nesting and cross no poles.
By the method of steepest descent, using the local variables
\beqq
	\xi^1=\rzo^-+u_1L^{-1/2},
	\qquad
	\xi^2=\pp+u_2L^{-1/2},
	\qquad
	\eta^{12}=\pp+v_{12}L^{-1/2},
\eeqq
and Lemmas~\ref{lem:asymptotics_f1} and~\ref{lem:asymptotics_f2}, we find 
\beqq
	\J^{12}_{(12)}
	=-\frac{1+o(1)}{(2\pi\ii)^3L}
	\frac{\ff_{1,L}(\rzo^-)\ff_{2,L}(\pp)}
	{\ff_{12,L}(\pp)(\pp-\rzo^-)^2}
	\int\dd u_1\,e^{\frac12\mcG_1''(\rzo^-)u_1^2}
	\iint\frac{\dd u_2\dd v_{12}}{v_{12}-u_2}
	e^{\frac12\mcG_2''(\pp)u_2^2+\sdev\rr u_2
	-\frac12\mcG_{12}''(\pp)v_{12}^2-\sdev\rr v_{12}}.
\eeqq
We evaluate the Gaussian $u_1$-integral, use
$\ff_{12,L}(\pp)=\ff_{1,L}(\pp)\ff_{2,L}(\pp)$, and use \eqref{eq:U2_second_derivatives}.
Then, by Lemmas~\ref{lem:conditioning_event} and~\ref{lem:Brownian_bridge_integral}, we obtain
\beqq
	\lim_{L\to\infty}\frac{\J^{12}_{(12)}}{\tP}
	=-(1-\Phi(\rr)).
\eeqq

We show that
\beqq
	\J^{12}_{21}
	=
	\frac{1}{(2\pi\ii)^4}
	\int\dd\xi^1\int\dd\xi^2\int\dd\eta^2\int\dd\eta^1\,
	\frac{(\eta^2-\xi^1)(\eta^1-\xi^2)}
	{(\eta^1-\xi^1)^2(\eta^2-\xi^2)^2
	(\xi^1-\xi^2)(\eta^2-\eta^1)}
	\frac{\ff_{1,L}(\xi^1)\ff_{2,L}(\xi^2)}
	{\ff_{2,L}(\eta^2)\ff_{1,L}(\eta^1)}
\eeqq
is negligible. If $y\le1$, then by \eqref{eq:U21f2ana0}, the integrand is analytic at $\eta^2=0$, and hence $\J^{12}_{21}=0$. Suppose $y>1$. In this case $\rzt^+<0$, and we can deform the $\xi^1$-, $\xi^2$-, $\eta^1$-, and $\eta^2$-contours to
\beqq
	\cont^L_-(\rzo^-),
	\qquad
	\cont^L_-(\pp),
	\qquad
	\cont^L_+(\pp+L^{-1/2}),
	\qquad
	\cont^L_+(\rzt^+),
\eeqq
respectively. Proposition~\ref{result:generalasy}, with $d_L=c_*L^{-1/2}$ for some $c_*>0$, $\alpha=4$, and $\beta=0$, gives
\beqq
	|\J^{12}_{21}|
	\le
	C\frac{|\ff_{1,L}(\rzo^-)|\,|\ff_{2,L}(\pp)|}
	{|\ff_{1,L}(\pp)|\,|\ff_{2,L}(\rzt^+)|}
	= \frac{|\ff_{1,L}(\tzo^-)|\,|\ff_{2,L}(\tzra)|}
	{|\ff_{1,L}(\tzo^+)|\,|\ff_{2,L}(\tzrb)|}
	\le Ce^{cL^{1/2}-(\tDel+\tDelr)L}
\eeqq
for some $C,c>0$. 
Since $\tDelr>0$ by Lemma \ref{result:phasegaps},   $\J^{12}_{21}$ is exponentially smaller than $\tP$. Hence, we obtain the result.
\end{proof}

%%%%%%%%%%%%%%%%%%%%%%%%%%%%%%%%%%%%%%%%%%%%%%%%%
\subsubsection{Remainder in $U_2$ when $\mv>\lv$}

By Corollary~\ref{result:Qn1n2bd} and~\ref{result:Jintrearboundgen}, using the $S^{12}_{21}$ bound,
\beq\label{eq:U2greater_reduction}
	|\QQ_{2,L}^{(n_1,n_2)}|
	\le
	2^{7(n_1+n_2)}
	\max_{\substack{0\le i,j\le n_2\\ i+j\ge2n_2-n_1+1}}
	\sum_{a=0}^{n_1\wedge(n_2-i)}
	\sum_{b=0}^{n_1\wedge j}
	a!b!\,
	|\J^{12}_{21}(a,b,n_1,n_2)|,
\eeq
where
\beqq
	\J^{12}_{21}(a,b,n_1,n_2)
	= 
	\J^{1^{n_1-a}(12)^a2^{n_2-a}}_{2^{n_2-b}(12)^b1^{n_1-b}}.
\eeqq

\begin{lem}\label{result:JresultingU2<h>l}
There exist $C,c,L_0>0$ such that, for all $n_1,n_2\ge1$, $0\le a,b\le n_1\wedge n_2$ satisfying $n_1\vee n_2\ge2$ and $a\le n_1-1$, and $L\ge L_0$,
\beqq
	|\J^{12}_{21}(a,b,n_1,n_2)|
	\le
	e^{-\tDel L}
	C^{n_1+n_2}\sqrt{n_1!n_2!}\,
	(n_1+n_2-a-b)!e^{-c(n_1+n_2)L}.
\eeqq
\end{lem}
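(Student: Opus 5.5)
The plan is to estimate $\J^{12}_{21}(a,b,n_1,n_2)$ directly with Proposition~\ref{result:generalasy}, after moving each contour through the critical point of its own phase. The list types are
\[
\type(1^{n_1-a}(12)^a2^{n_2-a})=(a,\,n_1-a,\,n_2-a),\qquad
\type(2^{n_2-b}(12)^b1^{n_1-b})=(b,\,n_1-b,\,n_2-b),
\]
so $a_1+b_2=n_1+n_2-a-b$. That is exactly the factorial appearing in the claim.

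\emph{Contours.} Around $-1$ the nesting is $1,12,2$ from inside to outside. I would deform the $\xi^1$-, $\xi^{12}$- and $\xi^2$-contours to $\cont_-^L(\rzo^-)$, $\cont_-^L(\rzot^-)$ and $\cont_-^L(\pp)$. These are compatible with the nesting because $\rzo^-<\rzot^-<\pp=\rzt^-$ by \eqref{eq:case2greater_critical_order}. Around $0$ the nesting is $2,12,1$ from inside to outside, so the real crossings must decrease in that order. I would take $\eta^2$ on $\cont_+^L(\rzt^+)$ with $\rzt^+=\tzrb>\pp$, $\eta^{12}$ on $\cont_+^L(\pp+2L^{-1/2})$, and $\eta^1$ on $\cont_+^L(\pp+L^{-1/2})$.

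This requires $y>1$, so that $\tzrb<0$. If $y\le1$ and $b<n_2$, then by \eqref{eq:U21f2ana0} the innermost $\eta^2$-contours enclose no singularity and $\J^{12}_{21}=0$. If $y\le 1$ and $b=n_2$, there are no $\eta^2$ variables at all and nothing changes.

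\emph{Separation.} The pairs that meet across a Cauchy determinant in \eqref{eq:Caucdt} are separated at least by order $L^{-1/2}$; the tightest pairs are $\xi^2$ against $\eta^{12}$ and $\xi^{12}$ against $\eta^{12},\eta^1$. So I take $d_L=c_*L^{-1/2}$. All three families are Gaussian, which gives $\beta=0$ and $\mu=0$. The prefactor $d_L^{-\alpha}L^{-\alpha/2}$ is then only $C^{\alpha}\le C^{2(n_1+n_2)}$. Proposition~\ref{result:generalasy} therefore yields the bound $C^{n_1+n_2}\sqrt{n_1!n_2!}\,(n_1+n_2-a-b)!\,E_L$, where
\[
E_L=\frac{|\ff_{1,L}(\tzo^-)|^{n_1-a}|\ff_{12,L}(\tzt^-)|^a|\ff_{2,L}(\pp)|^{n_2-a}}{|\ff_{2,L}(\tzrb)|^{n_2-b}|\ff_{12,L}(\pp)|^b|\ff_{1,L}(\pp)|^{n_1-b}} .
\]

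\emph{Phase bookkeeping.} The $L^{1/2}$-order terms in \eqref{eq:case2greater_f_expansions} contribute at most $e^{C(n_1+n_2)L^{1/2}}$. For the leading order, I rewrite $\Thetat(\pp)=\Thetao(\pp)+\Thetar(\pp)$ using \eqref{eq:tGide}. The mixed $\Theta(\pp)$ terms then cancel: their total coefficient is $(b-a)[\Thetao-\Thetat+\Thetar](\pp)=0$. What remains is
\[
\Phi=-(n_1-a)\tDel-a\tDelt-(n_2-b)\tDelr ,
\]
where the last term is absent when $b=n_2$ (in particular whenever $y\le1$ with nonzero integral). All three gaps are positive by Lemma~\ref{result:phasegaps}; $\tDelr>0$ because $\tzra,\tzrb\in(-1,0)$ when $y>1$.

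\emph{Main obstacle.} The delicate point is extracting a gain linear in $n_1+n_2$ from the constraints $a\le n_1-1$ and $n_1\vee n_2\ge2$. Using $n_1-a\ge1$ and setting $c_0=\min$ of the gaps, I get $\Phi\le-\tDel-c_0\bigl((n_1-1)+(n_2-b)\bigr)$. I would then split into two cases.
\begin{itemize}
\item If $n_1\ge n_2$, then $n_1\ge2$, so $n_1-1\ge n_1/2\ge(n_1+n_2)/4$.
\item If $n_1<n_2$, then $b\le n_1$ gives $n_2-b\ge n_2-n_1$. Hence $(n_1-1)+(n_2-b)\ge n_2-1\ge n_2/2\ge(n_1+n_2)/4$.
\end{itemize}
In either case $\Phi\le-\tDel-\tfrac{c_0}{4}(n_1+n_2)$. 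The linear gain $e^{-\frac{c_0}{4}(n_1+n_2)L}$ swallows both $e^{C(n_1+n_2)L^{1/2}}$ and $C^{n_1+n_2}$ for large $L$. This produces the factor $e^{-\tDel L}e^{-c(n_1+n_2)L}$ claimed in the lemma.
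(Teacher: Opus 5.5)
Your proposal is correct and follows the paper's proof essentially step for step. You use the same admissible contour system (yours is the paper's shifted by $L^{-1/2}$), Proposition~\ref{result:generalasy} with $d_L=c_*L^{-1/2}$, $\beta=\mu=0$, and the same cancellation giving $\Phi=-(n_1-a)\tDel-a\tDelt-(n_2-b)\tDelr$. Your count $n_1+n_2-b-1\ge(n_1+n_2)/4$ and the vanishing argument via \eqref{eq:U21f2ana0} when $y\le1$ also match the paper. The only slips are cosmetic: $\xi^{12}$ is at distance of order $1$ from $\eta^{12},\eta^1$ (since $\tzt^-<\pp$ strictly in $U_2^<$), and for $y\le1$, $b=n_2$ your ``nothing changes'' is really the Case~(c) extension of Proposition~\ref{result:generalasy} applied to $\ff_{2,L}$, which is what the paper invokes.
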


\begin{proof}
We have
\beqq
	\type(1^{n_1-a}(12)^a2^{n_2-a})=(a,n_1-a,n_2-a),
	\qquad
	\type(2^{n_2-b}(12)^b1^{n_1-b})=(b,n_1-b,n_2-b).
\eeqq

Suppose first that $y>1$. We deform the $\xi_i^1$-, $\xi_i^{12}$-, $\xi_i^2$-, $\eta_i^1$-, $\eta_i^{12}$-, and $\eta_i^2$-contours to
\beqq
	\cont^L_-(\rzo^-),
	\qquad
	\cont^L_-(\rzot^-),
	\qquad
	\cont_-^L(\pp-L^{-1/2}),
	\qquad
	\cont_+^L(\pp),
	\qquad
	\cont^L_+(\pp+L^{-1/2}),
	\qquad
	\cont^L_+(\rzt^+),
\eeqq
respectively. By \eqref{eq:case2greater_critical_order}, these contours 
form an admissible contour system. Thus, Proposition~\ref{result:generalasy}, with
\beqq
	d_L=c_*L^{-1/2}\quad\text{for some }c_*>0,
	\qquad
	\alpha=2n_1+2n_2-a-b,
	\qquad
	\beta=0,
\eeqq
implies that 
\beq\label{eq:U2greater_general_bound}
	|\J^{12}_{21}(a,b,n_1,n_2)|
	\le
	\frac{
	C_1^{n_1+n_2}\sqrt{n_1!n_2!}\,(n_1+n_2-a-b)!}
	{L^{-(2n_1+2n_2-a-b)/2}L^{(2n_1+2n_2-a-b)/2}}
	E_L(a,b),
\eeq
where, using $\ff_{1,L}\ff_{2,L}=\ff_{12,L}$ and \eqref{eq:U2greatercprelation}, 
\beqq
	E_L(a,b)
	:=
	\frac{|\ff_{1,L}(\rzo^-)|^{n_1-a}
	|\ff_{12,L}(\rzot^-)|^a
	|\ff_{2,L}(\pp)|^{n_2-a}}
	{|\ff_{1,L}(\pp)|^{n_1-b}
	|\ff_{12,L}(\pp)|^b
	|\ff_{2,L}(\rzt^+)|^{n_2-b}}
	=
	\frac{|\ff_{1,L}(\tzo^-)|^{n_1-a}
	|\ff_{12,L}(\tzt^-)|^a |\ff_{2,L}(\tzr^-)|^{n_2-b}}
	{|\ff_{1,L}(\tzo^+)|^{n_1-a}
	|\ff_{12,L}(\tzt^+)|^a |\ff_{2,L}(\tzr^+)|^{n_2-b}}. 
\eeqq
From \eqref{eq:case2greater_f_expansions}, 
\beqq
	E_L(a,b)
	\le
	C_2^{n_1+n_2}
	e^{c_1(a+n_2-b)L^{1/2}}e^{\Phi L}
    \le
	C_2^{n_1+n_2}
	e^{c_1(n_1+n_2)L^{1/2}}e^{\Phi_{a,b} L},
	\quad
	\Phi_{a,b}:=-(n_1-a)\tDel-a\tDelt-(n_2-b)\tDelr, 
\eeqq
where, by Lemma~\ref{result:phasegaps}, $\tDel, \tDelt, \tDelr>0$. Since $a\le n_1-1$ by assumption, 
\beqq
	\Phi_{a,b}
	=-\tDel-(n_1-a-1)\tDel-a\tDelt-(n_2-b)\tDelr
	\le-\tDel-c_0(n_1+n_2-b-1),
\eeqq
where $c_0:=\min\{\tDel,\tDelt,\tDelr\}>0$. Since $b\le n_1\wedge n_2$ and $n_1\vee n_2\ge2$, we have 
$n_1+n_2-b-1\ge\frac14(n_1+n_2)$.
Combining these estimates with \eqref{eq:U2greater_general_bound} proves the result for $y>1$.

Suppose now that $y\le1$. Since $1/\ff_{2,L}$ is analytic at $0$,
$\J^{12}_{21}(a,b,n_1,n_2)=0$ if $n_2-b>0$.
Assume $n_2=b$. There are no $\eta_i^2$-integrals, and the case~\textnormal{(c)} extension of Proposition~\ref{result:generalasy} applies. The preceding argument remains valid without the $\tDelr$-term. 
Since $a\le n_1-1$, $n_1\ge n_2\ge 1$ and $n_1\vee n_2\ge2$, we find that 
\beqq
	\Phi_{a,b}%=-(n_1-a)\tDel-a\tDelt
	=-\tDel -(n_1-a-1)\tDel-a\tDelt
	\le-\tDel-(n_1-1) c_0
	\le-\tDel- (n_1+n_2) c_0/4, 
\eeqq
where $c_0:= \min\{\tDel,\tDelt\}>0$. 
Thus, the result holds for $y\le1$ as well.
\end{proof}

\begin{cor}\label{result:U2greater_remainder}
Assume $\mv>\lv$. There exists $c>0$ such that
\beqq
	\frac{1}{\tP}
	\sum_{\substack{n_1,n_2\ge1\\(n_1,n_2)\ne(1,1)}}
	\frac{|\QQ_{2,L}^{(n_1,n_2)}|}{(n_1!n_2!)^2}
	=O(e^{-cL}).
\eeqq
\end{cor}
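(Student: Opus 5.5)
Starting from \eqref{eq:U2greater_reduction}, I will bound every summand using Lemma~\ref{result:JresultingU2<h>l}. The first task is to check that this lemma applies to all indices that occur. We have $(n_1,n_2)\ne(1,1)$, so $n_1\vee n_2\ge2$. The indices satisfy $a\le n_2-i$, $b\le j$, $i+j\ge 2n_2-n_1+1$ and $i\le n_2$. From $a\le n_2-i$ and $i\ge 2n_2-n_1+1-j\ge n_2-n_1+1$ (using $j\le n_2$) we get $a\le n_1-1$. The hypotheses of Lemma~\ref{result:JresultingU2<h>l} therefore hold for every $(a,b)$ in the sum.

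Plugging in the lemma gives
\beqq
	|\QQ_{2,L}^{(n_1,n_2)}|
	\le 2^{7(n_1+n_2)} e^{-\tDel L}C^{n_1+n_2}\sqrt{n_1!n_2!}\,e^{-c(n_1+n_2)L}
	\sum_{a,b} a!\,b!\,(n_1+n_2-a-b)!.
\eeqq
The double sum has at most $(n_1+1)^2\le 2^{2n_1}$ terms. By \eqref{eq:triplfactorial}, each term is at most $2^{n_1+n_2}n_1!n_2!$. After dividing by $(n_1!n_2!)^2$, the $(n_1,n_2)$ term is bounded by $e^{-\tDel L}C'^{\,n_1+n_2}(n_1!n_2!)^{-1/2}e^{-c(n_1+n_2)L}$.

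Now sum over $(n_1,n_2)\ne(1,1)$. Since $n_1+n_2\ge3$, we have $e^{-c(n_1+n_2)L}\le e^{-2cL}e^{-c(n_1+n_2-2)L}$, and the remaining series $\sum C'^{\,n_1+n_2}/\sqrt{n_1!n_2!}$ converges uniformly in $L\ge L_0$. This gives a total bound of $O(e^{-(\tDel+2c)L})$. Corollary~\ref{result:tPldp} gives $\tP\asymp L^{-1}e^{-\tDel L}$, so dividing yields $O(Le^{-2cL})=O(e^{-cL})$.

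The main point to be careful about is the index check $a\le n_1-1$, which comes from the constraint $i+j\ge 2n_2-n_1+1$. This matters when $n_1\le n_2$, because the range of $a$ is then governed by $n_2-i$ rather than $n_1$. Apart from that, the argument follows the proof of Corollary~\ref{prop:case1_remainder}.
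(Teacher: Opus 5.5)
Your proposal is correct and follows the paper's proof. The paper likewise derives $a\le n_1-1$ from $a\le n_2-i$ and $i+j\ge 2n_2-n_1+1$ (with $j\le n_2$), applies Lemma~\ref{result:JresultingU2<h>l}, and then sums exactly as in Corollary~\ref{prop:case1_remainder} using \eqref{eq:triplfactorial} and Corollary~\ref{result:tPldp}; you have simply written out the bookkeeping that the paper leaves implicit.
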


\begin{proof}
For the indices appearing in \eqref{eq:U2greater_reduction}, since $a\le n_2-i$ and $i+j \ge 2n_2-n_1+1$, we have $ a \le n_1-1$. 
Hence, Lemma~\ref{result:JresultingU2<h>l} applies, and we obtain the result, as in the proof of Corollary~\ref{prop:case1_remainder}. 
\end{proof}

%%%%%%%%%%%%%%%%%%%%%%%%%%%%%%%%%%%%%%%%%%%%%%%%%
\subsection{The case $\mv<\lv$}
\label{sec:U2less}

Since $\mv<\lv$, we use the ordering in \eqref{eq:strategy_order_less}, where $(x_L, y_L)=(x,y)$ and 
$\tTo=\mv L+\sdev \rr L^{1/2}$ in \eqref{eq:xygen} and \eqref{eq:Tgen}.
Using \eqref{eq:strategy_phase_functions}, the functions in \eqref{eq:fGen} are 
\beq\label{eq:case2less_f_expansions}
\begin{split}
	\ff_{1,L}(z)=e^{L\Gone(z)+\sdev\rr L^{1/2}z+E_{1,L}(z)},\quad
	\ff_{2,L}(z)=e^{L\Gtwo(z)-\sdev\rr L^{1/2}z+E_{2,L}(z)},\quad 
	\ff_{12,L}(z)=e^{L\Gonetwo(z)+E_{12,L}(z)}, 
\end{split}
\eeq
where
\beqq
	(\mcG_1,\mcG_2,\mcG_{12}) = (\Thetat,-\Thetar,\Thetao). 
\eeqq
By \eqref{eq:strategy_cp} and Lemma~\ref{result:cpU2less}, since $(x,y)\in U_2^<$, the critical points satisfy 
\beq \label{eq:U2lesscprelation}
	\rzo^\pm= \tzt^\pm, \qquad (\rzt^-, \rzt^+)=(\tzrb, \tzra), \qquad \rzot^\pm=\tzo^\pm
\eeq
and
\beq\label{eq:case2less_critical_order}
	\rzt^-<\rzot^-<\rzo^-<\rzo^+=\rzot^+=\rzt^+=\pp<0, 
\eeq
where $\rzt^- = \tzrb>-1$ if $x<1$.
Furthermore, since $M_{2,L}-M_{1,L}=\lceil \ac L\rceil - \lceil \ac x_L L\rceil$, we find that 
\beq \label{eq:U2f2anamo}
	\text{$\ff_{2,L}(z)=\frac{z^{N_{2,L}-N_{1,L}}e^{(T_{2,L}-T_{1,L})z}}{(z+1)^{M_{2,L}-M_{1,L}}}$ is analytic at $-1$ if $x\ge 1$.} 
\eeq 
All three functions $\ff_{1,L}$, $\ff_{2,L}$, and $\ff_{12,L}$ are Gaussian families.

Theorem~\ref{result:CCLT} for points $(x,y)\in U_2$ with $x,y<1$ was proved in \cite[Theorem~1.5]{Baik-Cordaro-Tripathi25}. Therefore, in the case $\mv<\lv$, it remains only to consider $x\ge1$. Throughout this subsection, we assume
\beqq
    x\ge1.
\eeqq

%%%%%%%%%%%%%%%%%%%%%%%%%%%%%%%%%%%%%%%%%%%%%%%%%
\subsubsection{The leading contribution in $U_2$ when $\mv<\lv$}

The leading contribution to \eqref{eq:Q2Lses} also comes from the single term indexed by $(1,1)$. 

\begin{lem}\label{lem:leadingtermU2<}
Assume $\mv<\lv$ and $x\ge 1$. Then
\beqq
	\lim_{L\to\infty}\frac{\QQ_{2,L}^{(1,1)}}{\tP}
	=1-\Phi(\rr).
\eeqq
\end{lem}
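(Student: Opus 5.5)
The plan is to mirror the proof of Lemma~\ref{lem:leadingtermU2>}, now with the reversed ordering \eqref{eq:strategy_order_less}. The leading phases are $(\mcG_1,\mcG_2,\mcG_{12})=(\Thetat,-\Thetar,\Thetao)$, and the critical points are ordered as in \eqref{eq:case2less_critical_order}.

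\textbf{Step 1: reduce to a single integral.} By Lemma~\ref{lem:simpleQ}, $\QQ_{2,L}^{(1,1)}=\J^{12}_{12}$. Following Table~\ref{tab:rearrangement}, I apply \eqref{eq:Jup12} to the superscript. Since $b_1=1$, this gives
\beqq
	\J^{12}_{12}=\J^{21}_{12}+\J^{(12)}_{12}.
\eeqq
In $\J^{21}_{12}$ the variable $\xi^2$ is integrated over the innermost circle around $-1$. The only singularities in $\xi^2$ enclosed by that circle come from $\ff_{2,L}$ at $-1$, because the Cauchy factor $1/(\xi^1-\xi^2)$ has $\xi^1$ on an outer circle. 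By \eqref{eq:U2f2anamo}, $\ff_{2,L}$ is analytic at $-1$ when $x\ge1$. Hence $\J^{21}_{12}=0$, and no subscript rearrangement is needed. This is exactly where the hypothesis $x\ge1$ enters; I must check this analyticity carefully in the case $x=1$, where $M_{2,L}-M_{1,L}$ may be $0$ or $\pm1$ because of the ceilings.

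\textbf{Step 2: write out and deform the remaining integral.} For $\bsigma=(12)$ and $\btau=12$ we have types $(1,0,0)$ and $(0,1,1)$. Formula \eqref{eq:Pisigmaxi} then reduces to
\beqq
	\nPi=\frac{1}{(\eta^1-\xi^{12})(\eta^2-\eta^1)(\xi^{12}-\eta^2)},
\eeqq
and the integrand also carries $\ff_{12,L}(\xi^{12})/(\ff_{1,L}(\eta^1)\ff_{2,L}(\eta^2))$. I deform the $\xi^{12}$-contour to $\cont_-^L(\tzo^-)$, since $\rzot^-=\tzo^-$ by \eqref{eq:U2lesscprelation}. I put $\eta^1$ on $\cont_+^L(\pp)$ and $\eta^2$ on $\cont_+^L(\pp-L^{-1/2})$, so that the $\eta^2$-contour encloses the $\eta^1$-contour as the nesting requires. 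Both right contours are legitimate because $\pp$ is the right critical point of both $\Thetat$ and $-\Thetar$, with negative second derivative (Case (a), respectively Case (b) or (a), of the Gaussian family). No poles are crossed.

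\textbf{Step 3: steepest descent.} Substitute $\xi^{12}=\tzo^-+\hat uL^{-1/2}$, $\eta^1=\pp+v_1L^{-1/2}$ and $\eta^2=\pp+v_2L^{-1/2}$, and apply Lemmas~\ref{lem:asymptotics_f1} and~\ref{lem:asymptotics_f2}.
\begin{itemize}
\item The prefactor becomes $\ff_{12,L}(\tzo^-)/\ff_{12,L}(\pp)$, using $\ff_{1,L}(\pp)\ff_{2,L}(\pp)=\ff_{12,L}(\pp)$. Since $\ff_{12,L}=\ffz$, this is the same exponential ratio that appears in $\tP$.
\item The terms $\pm\sdev\rr L^{1/2}z$ in $\ff_{1,L}$ and $\ff_{2,L}$ become linear terms $\mp\sdev\rr v_i$.
\item The Cauchy factor contributes $(\pp-\tzo^-)^{-2}L^{1/2}/(v_2-v_1)$ to leading order.
\end{itemize}
Evaluating the Gaussian $\hat u$-integral and dividing by \eqref{eq:conditioning_rank_one_asymptotic} removes the factors $(\tzo^+-\tzo^-)^2$, $\Thetao''(\tzo^-)$ and the power of $L$. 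What remains is the two-variable integral of Lemma~\ref{lem:Brownian_bridge_integral}, with $A=-\Thetao''(\pp)=\ab_+^2$ and $At=-\Thetat''(\pp)$ or $A(1-t)=\Thetar''(\pp)$ as appropriate. By \eqref{eq:U2_second_derivatives}, $At(1-t)=\sdev^2$, so the limit is $1-\Phi(\rr)$.

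\textbf{Main obstacle.} The delicate part is bookkeeping rather than estimation. I need to track the orientation of the $\eta$-variables (the right contours, after rescaling, are traversed so that $v$ runs upward), the overall sign $(-1)^{n_1+n_2}$ together with the sign of $\nPi$, and which of $v_1,v_2$ plays the role of $u$ in Lemma~\ref{lem:Brownian_bridge_integral}. If needed, I will first reflect $v\mapsto -v$ to match the lemma's convention. I must also check that the remaining factor $\sqrt{2\pi A}$ matches $\sqrt{-\Thetao''(\tzo^+)}$ evaluated at $\tzo^+=\pp$.

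Justifying the limit uses the same tail and $L^1$ bounds as before (Lemma~\ref{lem:asymptotics_f2}). There is one subtlety: the $\eta^1$- and $\eta^2$-contours are only $L^{-1/2}$ apart, and the factor $1/(\eta^2-\eta^1)$ is of order $L^{1/2}$ there. In the rescaled variables this is the fixed integrable factor $1/(v_2-v_1)$ on separated lines, so dominated convergence applies.
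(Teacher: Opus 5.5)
Your proposal is correct and is essentially the paper's proof. It uses the same split $\J^{12}_{12}=\J^{21}_{12}+\J^{(12)}_{12}$ via \eqref{eq:Jup12}, and the same vanishing of $\J^{21}_{12}$ because $\ff_{2,L}$ is analytic at $-1$ when $x\ge1$; your ceiling worry does not arise, since $M_{2,L}=M_{1,L}$ exactly when $x=1$ and $M_{2,L}\le M_{1,L}$ when $x>1$. It then reduces $\J^{(12)}_{12}/\tP$ to Lemma~\ref{lem:Brownian_bridge_integral} by steepest descent, and your $\eta$-contours through $\pp$ and $\pp-L^{-1/2}$ are merely a shift of the paper's contours through $\pp+L^{-1/2}$ and $\pp$.

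For the sign bookkeeping you flagged, note that the Cauchy factor is actually $-(\pp-\tzo^-)^{-2}L^{1/2}/(v_2-v_1)$. That minus sign is exactly what produces the kernel $1/(v-u)$ of Lemma~\ref{lem:Brownian_bridge_integral}, with $u=v_2$ (the variable on the left contour) and $t$ replaced by $1-t$.
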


\begin{proof}
By Lemma~\ref{lem:simpleQ} and \eqref{eq:Jup12},
\beqq
	\QQ_{2,L}^{(1,1)}=\J^{12}_{12} =\J^{21}_{12}+\J^{(12)}_{12}.
\eeqq
From \eqref{eq:U2f2anamo}, the integrand of $\J^{21}_{12}$ is analytic at $\xi^2=-1$, and thus $\J^{21}_{12}=0$. 
For the second integral 
\beqq
	\J^{(12)}_{12}
	=
	\frac{1}{(2\pi\ii)^3}
	\int\dd\xi^{12}\int\dd\eta^1\int\dd\eta^2\,
	\frac{1}{(\xi^{12}-\eta^1)(\xi^{12}-\eta^2)(\eta^1-\eta^2)}
	\frac{\ff_{12,L}(\xi^{12})}{\ff_{1,L}(\eta^1)\ff_{2,L}(\eta^2)}, 
\eeqq
we deform the $\xi^{12}$-, $\eta^2$-, and $\eta^1$-contours to
\beqq
	\cont^L_-(\rzot^-),
	\qquad
	\cont^L_+(\pp),
	\qquad
	\cont^L_+(\pp + L^{-1/2}),
\eeqq
respectively, and evaluate the integral using the method of steepest-descent as in Lemma~\ref{lem:leadingtermU2>}, and obtain the result. 
\end{proof}

%%%%%%%%%%%%%%%%%%%%%%%%%%%%%%%%%%%%%%%%%%%%%%%%%
\subsubsection{Remainder in $U_2$ when $\mv<\lv$}

We use Corollary~\ref{result:Qn1n2bd} and~\ref{result:Jintrearboundgen}. 
This time, we use the $S^{21}_{21}$ bound and find 
\beqq
\begin{split}
	|\QQ_{2,L}^{(n_1,n_2)}|
	&\le
	2^{7(n_1+n_2)}
	\max_{\substack{0\le i,j\le n_2\\i+j\ge2n_2-n_1+1}}
	\sum_{a=0}^{n_1\wedge i}
	\sum_{b=0}^{n_1\wedge j}
	a!b!\,|\J^{21}_{21}(a,b,n_1,n_2)|.
\end{split}
\eeqq
where 
\beqq 
	\J^{21}_{21}(a,b,n_1,n_2) =\J^{2^{n_2-a}(12)^a1^{n_1-a}}_{2^{n_2-b}(12)^b1^{n_1-b}}. 
\eeqq

\begin{lem}\label{result:JresultingU2<h<l}
Suppose $\mv<\lv$ and $x\ge 1$. 
There exist $C,c,L_0>0$ such that, for all $n_1,n_2\ge1$ with $n_1\vee n_2\ge2$, $0\le a, b\le n_1\wedge n_2$, and $L\ge L_0$,
\beqq
	|\J^{21}_{21}(a,b,n_1,n_2)|
	\le
	e^{-\tDel L} C^{n_1+n_2}\sqrt{n_1!n_2!}\,(n_1+n_2-a-b)!e^{-c(n_1+n_2)L}.
\eeqq
\end{lem}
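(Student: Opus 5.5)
I follow the pattern of Lemma~\ref{result:JresultingU2<h>l}. We have $\type(2^{n_2-a}(12)^a1^{n_1-a})=(a,n_1-a,n_2-a)$ and $\type(2^{n_2-b}(12)^b1^{n_1-b})=(b,n_1-b,n_2-b)$. By \eqref{eq:case2less_critical_order}, the left critical points are ordered as $\rzt^-<\rzot^-<\rzo^-$. On the right they all coincide at $\pp$. The superscript order $2,(12),1$ therefore matches the left ordering, and the subscript order $2,(12),1$ is realized by shifting by multiples of $L^{-1/2}$.

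\textbf{Case $x>1$.} By \eqref{eq:U2f2anamo}, $\ff_{2,L}$ is analytic at $-1$. Hence $\J^{21}_{21}(a,b,n_1,n_2)=0$ whenever $n_2-a>0$, since the innermost $\xi^2$-contour encloses no pole. So I may assume $a=n_2$, which forces $n_2\le n_1$. In this case there are no $\xi^2$-variables, and $\ff_{2,L}$ enters only through $\eta^2$, where it is of Case~(b)-type. I will use the case~(b)/(c) extension of Proposition~\ref{result:generalasy}.

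\textbf{Case $x=1$.} Here $\rzt^-=\tzrb=-1$, and again $\ff_{2,L}$ has no pole at $-1$ (the exponent $M_{2,L}-M_{1,L}$ is $0$ or $\pm1$ from ceilings). I will argue as in the case $x>1$, with care for the $O(1)$ exponent.

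\textbf{Contour choice and bound.} I deform the $\xi^{12}$-, $\xi^1$-, $\eta^2$-, $\eta^{12}$-, and $\eta^1$-contours to
\beqq
\cont_-^L(\rzot^-),\quad \cont_-^L(\rzo^-),\quad \cont_+^L(\pp),\quad \cont_+^L(\pp+L^{-1/2}),\quad \cont_+^L(\pp+2L^{-1/2}),
\eeqq
respectively. This gives $d_L=c_*L^{-1/2}$, $\beta=0$, and the powers of $L$ cancel as in \eqref{eq:U2greater_general_bound}. The exponential factor is
\beqq
E_L=\frac{|\ff_{12,L}(\tzo^-)|^{n_2}|\ff_{1,L}(\tzt^-)|^{n_1-n_2}}{|\ff_{2,L}(\pp)|^{n_2-b}|\ff_{12,L}(\pp)|^{b}|\ff_{1,L}(\pp)|^{n_1-b}}.
\eeqq
I then use $\ff_{12,L}=\ff_{1,L}\ff_{2,L}$ to regroup. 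Together with $\Thetao=\Thetat-\Thetar$ this should give
\beqq
\Phi=-n_2\tDel-(n_1-n_2)\tDelt .
\eeqq
With the leading phases $(\Thetat,-\Thetar,\Thetao)$, the $\ff_{2,L}(\pp)$ factors cancel after the regrouping. The $\sdev\rr L^{1/2}$ terms contribute at most $e^{c_1(n_1+n_2)L^{1/2}}$.

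\textbf{Conclusion and main obstacle.} Since $n_1\vee n_2\ge2$ and $n_1\ge n_2\ge1$, I get
\beqq
\Phi\le-\tDel-c_0(n_1-1)\le-\tDel-c_0(n_1+n_2)/4 .
\eeqq
This yields the claim after absorbing $e^{c_1(n_1+n_2)L^{1/2}}$. The main obstacle is the bookkeeping in this step: confirming that after regrouping every extra copy carries a strictly positive gap, namely $\tDel$ or $\tDelt$. If $\tDelt$ appears with a wrong sign, I would try the alternative rearrangement $S^{21}_{12}$.
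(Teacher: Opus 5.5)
Your argument is the paper's. It has the same vanishing of $\J^{21}_{21}(a,b,n_1,n_2)$ for $a<n_2$ (analyticity of $\ff_{2,L}$ at $-1$), the same $E_L$ for $a=n_2$, the same regrouping to $\Phi=-n_2\tDel-(n_1-n_2)\tDelt$, and the same final inequality. There is, however, one concrete error in the contour step.

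\textbf{Reversed nesting of the right contours.} The list $2^{n_2-b}(12)^b1^{n_1-b}$ requires the $\eta^2$-contours to be innermost around $0$ and the $\eta^1$-contours outermost. Also, $\cont_+^L(z_0)$ lies further inside the closer $z_0<0$ is to $0$. So the correct assignment, which is the one the paper uses, is
\[
\eta^1\to\cont_+^L(\pp),\qquad \eta^{12}\to\cont_+^L(\pp+L^{-1/2}),\qquad \eta^2\to\cont_+^L(\pp+2L^{-1/2}).
\]
With your choice, the deformation drags the $\eta^2$-contours across the $\eta^1$-contours. It therefore crosses the poles from the entries $1/(\eta^2_i-\eta^1_j)$ of the middle determinant $\K_{a_1+b_2}(\bxo,\bet\mid\beo,\bxt)$, which are present whenever $b<n_2$ and $b<n_1$. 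You would then be bounding a different integral, one that differs by residue terms as in Lemma~\ref{result:originalintexprss}. Swapping the two assignments changes nothing else: the relevant separations are still of order $L^{-1/2}$, and $E_L$ is unchanged because all three right critical points equal $\pp$.

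\textbf{The case $x=1$.} This case needs no separate treatment. Since $x_L=x\ge1$, monotonicity of the ceiling gives $M_{2,L}-M_{1,L}=\lceil\ac L\rceil-\lceil\ac xL\rceil\le0$, which is exactly \eqref{eq:U2f2anamo}. An exponent of $+1$, which your parenthetical allows, cannot occur. It would also have broken the vanishing argument, since it would produce a pole of $\ff_{2,L}$ at $-1$.
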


\begin{proof}
Since $x\ge 1$, $\ff_{2,L}$ is analytic at $-1$ from \eqref{eq:U2f2anamo}, and hence 
\beqq
	\J^{21}_{21}(a,b,n_1,n_2)=0
	\qquad\text{if } a<n_2.
\eeqq

Suppose $a=n_2$. Then, necessarily $n_1\ge n_2$, and 
$\J^{21}_{21}(a,b,n_1,n_2) =\J^{(12)^{n_2}1^{n_1-n_2}}_{2^{n_2-b}(12)^b1^{n_1-b}}$. 
We have
\beqq
	\type((12)^{n_2}1^{n_1-n_2})=(n_2,n_1-n_2,0),
	\qquad
	\type(2^{n_2-b}(12)^b1^{n_1-b})=(b, n_1-b, n_2-b).
\eeqq
There are no $\xi_i^2$-integrals. 
We deform the $\xi_i^{12}$-, $\xi_i^1$-, $\eta_i^1$-, $\eta_i^{12}$, and $\eta_i^2$-contours for the integral $\J^{21}_{21}(a,b,n_1,n_2)$ to
\beqq
	\cont^L_-(\rzot^-),
	\qquad
	\cont^L_-(\rzo^-),
	\qquad
	\cont_+^L(\pp),
	\qquad
	\cont_+^L(\pp+L^{-1/2}),
	\qquad
	\cont_+^L(\pp+2L^{-1/2}),
\eeqq
respectively, which, by \eqref{eq:case2less_critical_order}, form an admissible contour system. We can deform to this contour system without crossing any poles.
Proceeding as in the proof of Lemma~\ref{result:JresultingU2<h>l}, Proposition~\ref{result:generalasy}, with
\beqq
	d_L=c_*L^{-1/2}\quad\text{for some }c_*>0,
	\qquad
	\alpha=2n_1+n_2-b,
	\qquad
	\beta=0,
\eeqq
implies 
\beqq
	|\J^{21}_{21}(a,b,n_1,n_2)|
	\le
	\frac{
	C_1^{n_1+n_2}\sqrt{n_1!n_2!}\,(n_1-b)!}
	{L^{-(2n_1+n_2-b)/2} L^{(2n_1+n_2-b)/2}}
	E_L(b),
\eeqq
where, using $\ff_{2,L}= \ff_{12,L}/\ff_{1,L}$ and \eqref{eq:U2lesscprelation}, 
\beqq
	E_L(b)
	:=
	\frac{|\ff_{1,L}(\rzo^-)|^{n_1-n_2}
	|\ff_{12,L}(\rzot^-)|^{n_2}}
	{|\ff_{1,L}(\pp)|^{n_1-b} |\ff_{12,L}(\pp)|^{b} |\ff_{2,L}(\pp)|^{n_2-b}}
	=
	\frac{|\ff_{1,L}(\tzt^-)|^{n_1-n_2}
	|\ff_{12,L}(\tzo^-)|^{n_2}}
	{|\ff_{1,L}(\tzt^+)|^{n_1-n_2} |\ff_{12,L}(\tzo^+)|^{n_2}}.
\eeqq
Thus, from \eqref{eq:case2less_f_expansions}, 
\beqq
	E_L(b)
	\le
	C_2^{n_1+n_2}e^{c_1n_1L^{1/2}} e^{\Phi L}, 
	\qquad \Phi= - (n_1-n_2) \tDelt - n_2 \tDel
\eeqq
where $\tDel, \tDelt>0$ by Lemma~\ref{result:phasegaps}. 
Since $n_1\ge n_2\ge 1$ and $n_1\vee n_2\ge 2$, we find that
\beqq
	\Phi= -\tDel -(n_1-n_2) \tDelt - (n_2-1) \tDel
	\le - \tDel - (n_1-1) c_0
	\le -\tDel - (n_1+n_2)c_0/4, 
\eeqq
where $c_0:= \min\{\tDel,\tDelt\}>0$. 
Thus, the result follows. 
\end{proof}

\begin{cor}\label{result:U2less_remainder}
Assume $\mv<\lv$ and $x\ge 1$. There exists $c>0$ such that
\beqq
	\frac{1}{\tP}
	\sum_{\substack{n_1,n_2\ge1\\(n_1,n_2)\ne(1,1)}}
	\frac{|\QQ_{2,L}^{(n_1,n_2)}|}{(n_1!n_2!)^2}
	=O(e^{-cL}).
\eeqq
\end{cor}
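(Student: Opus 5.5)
The plan mirrors the proof of Corollary~\ref{result:U2greater_remainder}. We start from the $S^{21}_{21}$ reduction displayed just before Lemma~\ref{result:JresultingU2<h<l}:
\beqq
	|\QQ_{2,L}^{(n_1,n_2)}|\le 2^{7(n_1+n_2)}\max_{i,j}\sum_{a=0}^{n_1\wedge i}\sum_{b=0}^{n_1\wedge j}a!\,b!\,|\J^{21}_{21}(a,b,n_1,n_2)|.
\eeqq
For $(n_1,n_2)\ne(1,1)$ with $n_1,n_2\ge1$ we have $n_1\vee n_2\ge2$. Lemma~\ref{result:JresultingU2<h<l} then applies to every pair $(a,b)$ in the sum, since it requires only $0\le a,b\le n_1\wedge n_2$ and imposes no constraint linking $a$ to $n_1$. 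It gives
\beqq
	|\J^{21}_{21}(a,b,n_1,n_2)|\le e^{-\tDel L}C^{n_1+n_2}\sqrt{n_1!n_2!}\,(n_1+n_2-a-b)!\,e^{-c(n_1+n_2)L}.
\eeqq

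Next we control the factorials. By \eqref{eq:triplfactorial}, $a!\,b!\,(n_1+n_2-a-b)!\le 2^{n_1+n_2}n_1!\,n_2!$. The number of pairs $(a,b)$ is at most $(n_1+1)^2\le 2^{2n_1}$. Together with the binomial factors already absorbed into $2^{7(n_1+n_2)}$, this yields
\beqq
	\frac{|\QQ_{2,L}^{(n_1,n_2)}|}{(n_1!n_2!)^2}\le e^{-\tDel L}\,\frac{C'^{\,n_1+n_2}}{\sqrt{n_1!n_2!}}\,e^{-c(n_1+n_2)L}.
\eeqq
Summing over $(n_1,n_2)\ne(1,1)$ and using $n_1+n_2\ge3$, the series converges, since $C'^{\,k}/\sqrt{\,\cdot\,!}$ is summable. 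The sum is therefore $O(e^{-(\tDel+3c)L})$ for large $L$, because $e^{-c(n_1+n_2)L}\le e^{-3cL}e^{-c(n_1+n_2-3)}$ once $L\ge1$.

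Finally, Corollary~\ref{result:tPldp} gives $\tP\asymp L^{-1}e^{-\tDel L}$. Dividing, the ratio is $O(Le^{-3cL})=O(e^{-c'L})$.

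The only point needing care is confirming that the hypotheses of Lemma~\ref{result:JresultingU2<h<l} hold for all indices produced by the reduction. The summation ranges $a\le n_1\wedge i\le n_1\wedge n_2$ and $b\le n_1\wedge j\le n_1\wedge n_2$ guarantee this, and the standing assumptions $\mv<\lv$ and $x\ge1$ are exactly those of the lemma. No further obstacle arises, so the proof is essentially bookkeeping identical to that of Corollary~\ref{prop:case1_remainder}.
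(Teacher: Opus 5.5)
Your proposal is correct and follows essentially the same route as the paper: the paper also proves this by the $S^{21}_{21}$ reduction together with Lemma~\ref{result:JresultingU2<h<l}. That lemma applies to every index produced, since, unlike the $\mv>\lv$ case, no constraint $a\le n_1-1$ is needed. The paper then concludes with \eqref{eq:triplfactorial} and Corollary~\ref{result:tPldp}, exactly as in Corollary~\ref{prop:case1_remainder}, and your factorial bookkeeping and summation over $n_1+n_2\ge3$ reproduce that argument.
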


\begin{proof} 
The proof is similar to that of Corollary~\ref{result:U2greater_remainder}. 
\end{proof}

By \eqref{eq:strategy_conditional_probability}, Lemmas~\ref{lem:leadingtermU2>} and \ref{lem:leadingtermU2<}, and Corollaries~\ref{result:U2greater_remainder} and~\ref{result:U2less_remainder}, for $\mv\ne\lv$,
\beqq 
	\lim_{L\to\infty}
	\prob\left[
		\frac{\LPP(\ac x L,\bc y L)-\mv L}{\sdev L^{1/2}}>\rr
		\,\bigg|\,
		\LPP(\ac L,\bc L)>\lv L
	\right]
	=1-\Phi(\rr).
\eeqq
Thus, Theorem~\ref{result:CCLT} in $U_2^<$ for $\mv\ne\lv$ is proved. 

%%%%%%%%%%%%%%%%%%%%%%%%%%%%%%%%%%%%%%%%%%%%%%%%%
\subsection{The case $\mv=\lv$}
\label{sec:U2equal}

In this case, \eqref{eq:lvmmveq} and $y<x$ imply $\lv(y-1)<0<\lv(x-1)$. 
Hence, $x>1$ and $y<1$.
Since $\mv=\lv$, we have $\Thetar(z)=-\ac(x-1)\log(1+z)+\bc(y-1)\log z$ from \eqref{eq:allphasefunctions}. 
Hence,
\beqq
	\Thetar'(z)=\frac{p(z)}{z(z+1)},
\eeqq
for a linear function $p$, and thus, $\Thetar$ has a unique regular critical point, which is $\tzr=\pp$ since $\Thetar=\Thetat-\Thetao$ and $\pp$ is a critical point of both $\Thetat$ and $\Thetao$. Thus, the critical points satisfy
\beqq
	-1<\tzo^-<\tzt^-<\tzt^+=\tzo^+=\tzr=\pp<0.
\eeqq

Since the two threshold times are $\lv L$ and $\lv L+\sdev\rr L^{1/2}$, their ordering is determined by the sign of $\rr$.
When $\rr\ge0$, we use the ordering in \eqref{eq:strategy_order_greater} and proceed as in Section~\ref{sec:U2greater}, using the analyticity property \eqref{eq:U21f2ana0}. When $\rr<0$, we use the ordering in \eqref{eq:strategy_order_less} and proceed as in Section~\ref{sec:U2less}, using the analyticity property \eqref{eq:U2f2anamo}. The proofs of Lemmas~\ref{lem:leadingtermU2>} and~\ref{lem:leadingtermU2<}, together with the corresponding remainder arguments, carry over with these simplifications. This proves Theorem~\ref{result:CCLT} in $U_2^<$ when $\mv=\lv$. We omit the details.

%%%%%%%%%%%%%%%%%%%%%%%%%%%%%%%%%%%%%%%%%%%%%%%%
%%%%%%%%%%%%%%%%%%%%%%%%%%%%%%%%%%%%%%%%%%%%%%%%
\section{Boundary between $U_2$ and $U_3$}
\label{sec:U2U3}

We prove Theorem~\ref{result:crossdist}\textnormal{(b)}. It is enough to consider the boundary between $U_2^<$ and $U_3^<$ since the result for the boundary between $U_2^>$ and $U_3^>$ follows by exchanging $\ac,\bc,x,y$ with $\bc,\ac,y,x$. 
Fix $(x,y)\in \R_+^2$ on this boundary and $\ww,\rr\in\R$, and set $\mv:=\mv(x,y)$ and $\sdev:=\sdev(x,y)$. 
As in Section~\ref{sec:case2}, we consider separately the three cases $\mv>\lv$, $\mv<\lv$, and $\mv=\lv$. 
By the same argument as there, $x>1$ when $\mv>\lv$, while $y<1$ when $\mv<\lv$. 

Let
\beq \label{eq:HDefU2U3}
	\mcH(z):=- \sqrt{\slope \ac}\log(z+1)-\sqrt \bc\log z 
\eeq
as in \eqref{eq:HdefinU1U2}. 
Direct computations using \eqref{eq:allphasefunctions},  \eqref{eq:CLTconstant} and \eqref{eq:littlec} imply that  
\beqq
	\Thetat'''(\pp)=-2\sdev^3,
	\qquad
	\mcH'(\pp)=0,
	\qquad
	\mcH''(\pp)=\frac{2\sdev^2}{\mr c\,y^{2/3}}. 
\eeqq

For $L>0$, set
\beq\label{eq:U2U3_shifted_parameters}
	(x_L, y_L)
	= (x,y) + \mr c\ww y^{2/3}L^{-1/3} \left( -\frac{\sqrt{\slope}}{\sqrt \ac}, 
	\frac{1}{\sqrt \bc} \right). 
\eeq

%%%%%%%%%%%%%%%%%%%%%%%%%%%%%%%%%%%%%%%%%%%%%%%%%
\subsection{The case $\mv>\lv$}
\label{sec:U2U3greater}

Since $\mv>\lv$, we use the ordering in \eqref{eq:strategy_order_greater}, where $(x_L, y_L)$ in \eqref{eq:xygen} is given by \eqref{eq:U2U3_shifted_parameters}, and 
$\tTo=\mv L+\sdev \rr L^{1/3}$ in \eqref{eq:Tgen}.
From \eqref{eq:strategy_phase_functions}, the functions in \eqref{eq:fGen} are
\beq\label{eq:U2U3greater_f_expansions}
\begin{split}
	\ff_{1,L}(z)&=e^{L\Gone(z)+E_{1,L}(z)},\\
	\ff_{2,L}(z)&=e^{L\Gtwo(z) -\mr c\ww y^{2/3}L^{2/3}\mcH(z)+\sdev\rr L^{1/3}z+E_{2,L}(z)},\\
	\ff_{12,L}(z)&=e^{L\Gonetwo(z)- \mr c\ww y^{2/3}L^{2/3}\mcH(z)+\sdev\rr L^{1/3}z+E_{12,L}(z)},
\end{split}
\eeq
where
\beqq
	(\mcG_1,\mcG_2,\mcG_{12}) = (\Thetao,\Thetar,\Thetat). 
\eeqq
%and $E_{1,L}, E_{2,L}, E_{12,L}$ are uniformly bounded on compact subsets of $\C\setminus\{-1,0\}$.
By \eqref{eq:strategy_cp} and Lemma~\ref{result:cpU2less}, since $y=x/\slope$,
\beq \label{eq:U2U3greatercprelation}
	\rzo^\pm= \tzo^\pm, \qquad (\rzt^-, \rzt^+)=(\tzra, \tzrb) , \qquad \rzot^\pm=\tzt^+=\tzt^-=:\tzt
\eeq
and
\beq\label{eq:U2U3greater_critical_order}
	-1<\rzo^-<\rzot=\rzo^+=\rzt^-=\pp<\rzt^+, \qquad \rzot:=\rzot^-=\rzot^+, 
\eeq
where $\rzt^+<0$ if $y>1$, and $\rzt^+>0$ if $y<1$. 
When $y=1$, the root $\tzrb=0$ of $\tpr$ cancels the factor $z$ in \eqref{eq:tGdertp}, and $\rzt^+$ does not exist, and 
$\Gtwo$ has only one regular critical point $\rzt^-=\tzra=\pp$.

If $y<1$, then $N_{2,L}-N_{1,L}<0$ for all sufficiently large $L$. Thus, $1/\ff_{2,L}$ is analytic at $0$ as in \eqref{eq:U21f2ana0}. 
If $y=1$, the sign of $N_{2,L}-N_{1,L}$ depends on the $O(L^{-1/3})$ perturbation of $y_L$. 

The functions $\ff_{1,L}$, $\ff_{2,L}$, and $\ff_{12,L}$ are a Gaussian family, a modified Gaussian family, and an Airy family, respectively.

%%%%%%%%%%%%%%%%%%%%%%%%%%%%%%%%%%%%%%%%%%%%%%%%%
\subsubsection{Decomposition when $\mv>\lv$}

We first isolate the terms that contribute at leading order to the series $\QQ_{2,L}$ in \eqref{eq:Q2Lses}. Since the decomposition below is purely algebraic, we suppress the dependence on $L$. The leading terms $\mathcal M_n$ arise from specific residue contributions in the contour-rearrangement formulas for $\QQ_2^{(n,n)}$ and $\QQ_2^{(n+1,n)}$. All remaining contributions are collected into $\mathcal R_{n_1,n_2}$ and will be shown to be subleading.

\begin{lem} \label{eq:U23decompgreater} 
For $n_1, n_2\ge 1$ and $0\le a, b\le n_1\wedge n_2$, let
\beqq 
	\J^{12}_{21}(a,b,n_1,n_2)=\J^{1^{n_1-a}(12)^a2^{n_2-a}}_{2^{n_2-b}(12)^b1^{n_1-b}}, 
\eeqq
as in Corollary~\ref{result:Jintrearboundgen}. 
Then,  
\beq
\label{eq:QQ2expinremainMnU2U3h>l}
	\left| \QQ_{2}  
	- \sum_{n=1}^\infty\frac{\mathcal M_n}{(n!)^2} \right|
	\le \sum_{n_1,n_2\ge1}\frac{\mathcal R_{n_1,n_2}}{(n_1!n_2!)^2}, 
	\qquad
	\mathcal M_n:= n(-1)^n\J^{1(12)^{n-1}2}_{(12)^n}+(-1)^{n+1}\J^{1(12)^n}_{(12)^n1},
\eeq
where
\beq
\label{eq:U2U3Rn1n2h>l}
	\mathcal R_{n_1,n_2}
	:= 
	2^{7(n_1+n_2)}
	\max_{\substack{0\le i,j\le n_2\\i+j\ge2n_2-n_1+1}}
	\sum_{a=0}^{n_1\wedge(n_2-i)}
	\sum_{b=0}^{n_1\wedge j}
	a!b!\, | \J^{12}_{21}(a,b,n_1,n_2) |  
	\mathbf{1}_{(a,b)\ne(n_1-1,n_2)}. 
\eeq
\end{lem}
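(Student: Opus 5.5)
The plan is to expand each $\QQ_2^{(n_1,n_2)}$ exactly and single out one term, rather than estimate anything. First use Lemma~\ref{lem:simpleQ} to write $\QQ_2^{(n_1,n_2)}$ as a combination of $\J^{2^{n_2-i}1^{n_1}2^i}_{2^{n_2-j}1^{n_1}2^j}$. Then rewrite each such integral in the $S^{12}_{21}$ form prescribed for this regime in Table~\ref{tab:rearrangement}:
\begin{itemize}
\item apply \eqref{eq:Jup21} to the block $2^{n_2-i}1^{n_1}$ in the superscript, which gives lists $1^{n_1-a}(12)^a2^{n_2-i-a}2^i=1^{n_1-a}(12)^a2^{n_2-a}$;
\item apply \eqref{eq:Jdown12} to the block $1^{n_1}2^j$ in the subscript, which gives $2^{n_2-j}2^{j-b}(12)^b1^{n_1-b}=2^{n_2-b}(12)^b1^{n_1-b}$.
\end{itemize}
Here $a\le n_1\wedge(n_2-i)$ and $b\le n_1\wedge j$. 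This expresses $\QQ_2^{(n_1,n_2)}$ as an explicit finite linear combination of the $\J^{12}_{21}(a,b,n_1,n_2)$. The coefficients are products of binomials, signs, and the $\dc_k$.

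Next, determine for which indices the special pair $(a,b)=(n_1-1,n_2)$ can occur. The condition $b=n_2\le j$ forces $j=n_2$. Then $i+j\ge 2n_2-n_1+1$ together with $a=n_1-1\le n_2-i$ forces $i=n_2-n_1+1$, which requires $n_1\le n_2+1$. Finally $b=n_2\le n_1$ gives $n_1\ge n_2$. So only two families arise.
\begin{itemize}
\item $(n_1,n_2)=(n,n)$ with $i=1$, $j=n$. This produces $\J^{1(12)^{n-1}2}_{(12)^n}$.
\item $(n_1,n_2)=(n+1,n)$ with $i=0$, $j=n$. This produces $\J^{1(12)^n}_{(12)^n1}$.
\end{itemize}
For each family I compute the exact coefficient and check that, after dividing by $(n_1!n_2!)^2$, it equals the coefficient of the corresponding term of $\mathcal M_n/(n!)^2$.
\begin{itemize}
\item For $(n,n)$, the $\QQ$-coefficient from \eqref{eq:case3_simpleQ_less} is $(-1)^{n-1}(-1)^{n+1}\binom n1\binom nn\binom{0}{0}=n$. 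The rearrangement coefficients are $\dc^{n,n-1}_{n-1}=n!$ (superscript, $k=n-1$) and $\dc^{n,n}_n=n!$ (subscript, $k=n$), with signs $(-1)^{kb_1}$ and $(-1)^{ka_1}$ and the appropriate types. The total is $\pm n(n!)^2$, and dividing by $(n!)^2$ should give $n(-1)^n$.
\item For $(n+1,n)$, use \eqref{eq:case3_simpleQ_greater} with $\binom{0}{0}$. The factors are $\dc^{n+1,n}_n=(n+1)!$ and $\dc^{n+1,n}_n=(n+1)!$. Dividing by $((n+1)!n!)^2$ leaves $1/(n!)^2$, with sign $(-1)^{n+1}$.
\end{itemize}

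The remaining step is to collect everything else into the remainder. Every other term has $(a,b)\ne(n_1-1,n_2)$. By the triangle inequality, Corollary~\ref{result:Qn1n2bd}, and \eqref{eq:dcest}, exactly as in Corollary~\ref{result:Jintrearboundgen}, these terms are bounded by $2^{7(n_1+n_2)}\max_{i,j}\sum_{a,b}a!b!\,|\J^{12}_{21}(a,b,n_1,n_2)|\mathbf 1_{(a,b)\ne(n_1-1,n_2)}$. Summing over $(n_1,n_2)$ then gives \eqref{eq:QQ2expinremainMnU2U3h>l}.

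One subtlety needs care. Because of the maximum over $(i,j)$, the excluded pair must be removed only from the term with $i,j$ as specified, and other $(i,j)$ cannot reproduce it, which the analysis above guarantees. A second point is that the bound must be applied to the sum of the non-special terms before taking the maximum. The step I expect to be the main obstacle is the sign bookkeeping: tracking $(-1)^{k b_1}$ and $(-1)^{k a_1}$ with the correct types $b_1=n_1$ and $a_1=n_1-a$ at the stage where each rearrangement is applied. I would verify these signs by checking the case $n=1$ directly against the computation in Lemma~\ref{lem:leadingtermU2>}, where $\QQ^{(1,1)}=\J^{12}_{21}-\J^{12}_{(12)}$ matches $\mathcal M_1=-\J^{12}_{(12)}$ plus the remainder.
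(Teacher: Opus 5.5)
Your proposal is correct and follows the paper's proof essentially step for step: the same \eqref{eq:Jup21}/\eqref{eq:Jdown12} rearrangement into $\J^{12}_{21}(a,b,n_1,n_2)$, the same isolation of $(i,j,a,b)=(1,n,n-1,n)$ and $(0,n,n,n)$, and the same argument that $(a,b)=(n_1-1,n_2)$ cannot occur for any other $(n_1,n_2)$. The signs you left unverified do work out for every $n$, which the $n=1$ check alone would not establish: applying \eqref{eq:Jup21} first gives $(-1)^{(n-1)n}=(-1)^{n(n+1)}=1$, and then \eqref{eq:Jdown12} with $a_1=1$ and $k=n$ gives $(-1)^n$ in both families, so with the $\QQ$-coefficients $n$ and $-1$ you obtain exactly $n(-1)^n$ and $(-1)^{n+1}$.
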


\begin{proof}
From \eqref{eq:Jup21} and \eqref{eq:Jdown12},
\beq\label{eq:U2U3greater_full_exchange}
	\J^{2^{n_2-i}1^{n_1}2^i}_{2^{n_2-j}1^{n_1}2^j} 
	=
	 \sum_{a=0}^{n_1\wedge(n_2-i)}
	\sum_{b=0}^{n_1\wedge j}
	\varepsilon_{a,b}^{n_1, n_2} \dc_a^{n_1,n_2-i}\dc_b^{n_1,j}
	\J^{12}_{21}(a,b,n_1,n_2)  , 
	\qquad \varepsilon_{a,b}^{n_1, n_2}\in \{-1, 1\}. 
\eeq
We can verify the following two special values: 
\beqq
	\varepsilon_{n-1,n}^{n, n} 
    = (-1)^n , 
	\qquad 
	\varepsilon_{n,n}^{n+1, n}=(-1)^n.
\eeqq

For every $n\ge 1$, 
by \eqref{eq:case3_simpleQ_less} and \eqref{eq:U2U3greater_full_exchange}, 
\beqq
	\QQ_{2}^{(n,n)}
	=(-1)^{n-1}
	\sum_{\substack{0\le i,j\le n \\i+j\ge n+1}}
	(-1)^{i+j}\binom ni\binom nj
	\sum_{a=0}^{n\wedge(n-i)}
	\sum_{b=0}^{n\wedge j}
	\varepsilon_{a,b}^{n, n} \dc_a^{n,n-i}\dc_b^{n,j}
	\J^{12}_{21}(a,b,n,n).
\eeqq
We separate out the term with $(i,j,a,b)=(1,n, n-1,n)$. 
Using $\binom{n}{k}\le 2^n$, $(n+1)^2\le 2^{2n}$, and \eqref{eq:dcest}, 
\beqq
	\left| \QQ_{2}^{(n,n)}
	- n(-1)^n(n!)^2\J^{1(12)^{n-1}2}_{(12)^n} \right|
	\le 
	2^{8n} \max_{\substack{0\le i,j\le n \\i+j\ge n+1}}
	\sum_{a=0}^{n\wedge(n-i)}
	\sum_{b=0}^{n\wedge j}
	a! b! 
	| \J^{12}_{21}(a,b,n,n)  | \mathbf{1}_{(i,j,a,b)\neq (1,n, n-1,n)}.
\eeqq
The right-hand side is bounded by $\mathcal R_{n,n}$. 

For every $n\ge 1$, by \eqref{eq:case3_simpleQ_greater} and \eqref{eq:U2U3greater_full_exchange}, 
\beqq
	\QQ_2^{(n+1,n)}
	= 
	- \sum_{\substack{0\le i,j\le n\\ i+j=n}}
	\binom{n}{i}\binom{n}{j}
	\sum_{a=0}^{(n+1)\wedge(n-i)}
	\sum_{b=0}^{(n+1)\wedge j}
	\varepsilon_{a,b}^{n+1, n} \dc_a^{n+1,n-i}\dc_b^{n+1,j}
	\J^{12}_{21}(a,b,n+1,n).
\eeqq
Separating out the term with $(i,j, a,b)=(0, n, n, n)$, and using $\binom{n}{k}\le 2^n$, $(n+1)^2\le 2^{2n}$, and \eqref{eq:dcest}, 
\beqq 
\begin{split}
	&\left| \QQ_2^{(n+1,n)}
	- (-1)^{n+1}((n+1)!)^2\J^{1(12)^n}_{(12)^n1} \right| \\
	&\qquad \le 
	2^{6(2n+1)} 
	\max_{\substack{0\le i,j\le n\\ i+j=n}}
	\sum_{a=0}^{(n+1)\wedge(n-i)}
	\sum_{b=0}^{(n+1)\wedge j}
	a! b!
	|\J^{12}_{21}(a,b,n+1,n) | \mathbf{1}_{(i,j, a,b)\neq (0, n, n, n)}. 
\end{split} 
\eeqq
The right-hand side is bounded by $\mathcal R_{n+1,n}$. 

For $(n_1,n_2)\notin\{(n,n),(n+1,n):n\in\N\}$, Corollaries~\ref{result:Qn1n2bd} and~\ref{result:Jintrearboundgen}, using the $S^{12}_{21}$ version, give
\beq\label{eq:U23tempo1}
    \bigl|\QQ_2^{(n_1,n_2)}\bigr|\le 2^{7(n_1+n_2)}\max_{\substack{0\le i,j\le n_2\\ i+j\ge2n_2-n_1+1}}\sum_{a=0}^{n_1\wedge(n_2-i)}\sum_{b=0}^{n_1\wedge j}a!b!\,|\J^{12}_{21}(a,b,n_1,n_2)|.
\eeq
We claim that $(a,b)=(n_1-1,n_2)$ does not occur in the sum. Indeed, if it did, then $b=n_2\le n_1\wedge j$ and $j\le n_2$, so $j=n_2$ and $n_2\le n_1$. The condition $i+j\ge2n_2-n_1+1$ would then imply $i\ge n_2-n_1+1$. On the other hand, $a=n_1-1\le n_2-i$, and hence
$i\le n_2-n_1+1$. 
Therefore, $i=n_2-n_1+1$. Since $i\ge0$, we have $n_1\le n_2+1$. Together with $n_2\le n_1$, this gives $n_1=n_2$ or $n_1=n_2+1$, contradicting the assumption on $(n_1,n_2)$. Thus, $(a,b)=(n_1-1,n_2)$ does not occur in the sum, and the right-hand side of \eqref{eq:U23tempo1} is bounded by $\mathcal R_{n_1,n_2}$.
\end{proof} 

%%%%%%%%%%%%%%%%%%%%%%%%%%%%%%%%%%%%%%%%%%%%%%%%%
\subsubsection{The leading contribution on the $U_2^</U_3^<$ boundary when $\mv>\lv$} 
\label{sec:U2U3leadinggreater}

The term $\mathcal M_n$ consists of two integrals. We evaluate them in the next two lemmas. 
Let
\beq \label{eq:I1}
	I^{n-1}_n:= \frac{1}{(2\pi\ii)^{2n-1}}
	\int_{<0}\dd\bu\int \dd\bv
	\frac{\prod_{i=1}^{n-1}e^{-u_i^3/3-\ww u_i^2+\rr u_i}}{\prod_{j=1}^ne^{-v_j^3/3-\ww v_j^2+\rr v_j}}
	\K_n((\bu,0)\mid\bv)^2 \frac{\prod_{j=1}^nv_j}{\prod_{i=1}^{n-1}u_i},	
\eeq
where $\bu=(u_1,\ldots,u_{n-1})$ and $\bv=(v_1,\ldots,v_n)$. Here, the $u_i$-contours are the usual left Airy contours lying to the left of $0$, and the $v_j$-contours are the usual right Airy contours. 
Let 
\beq \label{eq:I2}
	I^n_n:=\frac{1}{(2\pi\ii)^{2n}}
	\int_{<0}\dd\bu\int \dd\bv
	\frac{\prod_{i=1}^ne^{-u_i^3/3-\ww u_i^2+\rr u_i}}{\prod_{j=1}^ne^{-v_j^3/3-\ww v_j^2+\rr v_j}}
	\K_n(\bu\mid\bv)^2 \frac{\prod_{j=1}^nv_j}{\prod_{i=1}^{n}u_i}
\eeq 
where $\bu=(u_1,\ldots,u_{n})$ and $\bv=(v_1,\ldots,v_n)$, and the $u_i$-contours again lie to the left of $0$. 

\begin{lem}\label{result:U23Jledgre1}
For every $n\ge1$,
\beqq
	\lim_{L\to\infty}\frac{1}{\tP}\J^{1(12)^{n-1}2}_{(12)^n} = - I^{n-1}_n, 
	\qquad
	\lim_{L\to\infty}\frac{1}{\tP}\J^{1(12)^n}_{(12)^n1} = I^n_n. 
\eeqq
\end{lem}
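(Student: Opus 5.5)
We would follow the template of Lemma~\ref{result:U1leading} and Lemma~\ref{result:U12Flimit}: deform every contour to an admissible system through the relevant critical points, rescale locally, and identify the limit. In this regime $\ff_{1,L}$ is Gaussian with critical points $\rzo^-<\rzo^+=\pp$, and $\ff_{12,L}$ is Airy with double critical point $\rzot=\pp$. The function $\ff_{2,L}$ is modified Gaussian with $\rzt^-=\pp$ and $\mcH'(\pp)=0$.

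For $\J^{1(12)^{n-1}2}_{(12)^n}$ we deform the contours as follows:
\begin{itemize}
\item the single $\xi^1$ to $\cont^L_-(\rzo^-)$;
\item the $\xi^{12}_i$ to $\cont^L_-(\pp-2L^{-1/3})$;
\item the single $\xi^2$ to $\cont^L_-(\pp-L^{-1/3})$;
\item the $\eta^{12}_j$ to $\cont^L_+(\pp+L^{-1/3})$.
\end{itemize}
This respects the nesting $\xi^1$ inside $\xi^{12}$ inside $\xi^2$ and crosses no poles, by \eqref{eq:U2U3greater_critical_order}. For $\J^{1(12)^n}_{(12)^n1}$ we do the same for the $\xi$-contours and put the $\eta^1$-contour at $\cont^L_+(\pp+2L^{-1/2})$, outside the $\eta^{12}$-contours at $\cont^L_+(\pp+L^{-1/3})$. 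These two $\eta$-families never sit on opposite sides of a Cauchy determinant.

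Next we insert local variables. We set $\xi^1=\rzo^-+\hat uL^{-1/2}$, and $\xi^{12}_i=\pp+u_i/(\sdev L^{1/3})$, $\eta^{12}_j=\pp+v_j/(\sdev L^{1/3})$. The Airy asymptotics of Lemma~\ref{result:dbcplocal}, using $\Thetat'''(\pp)=-2\sdev^3$, $\mcH''(\pp)=2\sdev^2/(\mr c y^{2/3})$ and the sign $-\mr c\ww$, produce the weights $e^{-u^3/3-\ww u^2+\rr u}$. The $\xi^2$ variable (resp.\ the $\eta^1$ variable) lies at $\pp$ on the scale $L^{-1/2}$. It is not at a saddle of its own on the $L^{-1/3}$ scale, and so produces a Gaussian factor in a variable $u_0$ (resp.\ $\hat v$).

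Then we expand the three Cauchy determinants \eqref{eq:Pisigmaxi}. The key algebraic step is to show that, to leading order, $\nPi$ factorizes. In it $\xi^1-\eta$ factors freeze to $\rzo^--\pp$, producing $(\rzo^--\pp)^{-2}$. The variable $\xi^2$, located at $\pp+O(L^{-1/2})$, acts as an extra row with $u=0$. This yields $\K_n((\bu,0)\mid\bv)^2$ together with the Cauchy-like ratios $\prod v_j/\prod u_i$, which arise as in Lemma~\ref{result:U12Flimit} from factors $(\pp-\eta)/(\pp-\xi)$. The Gaussian integrals in $\hat u$ and in the $\pp$-localized variable then combine with $\ff_{1,L}(\rzo^-)/\ff_{1,L}(\pp)$ and the identity $\ff_{12,L}=\ff_{1,L}\ff_{2,L}$ to reproduce exactly the prefactor of $\tP$ in Lemma~\ref{lem:conditioning_event}. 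The powers $(\sdev L^{1/3})^{2n}$ cancel against the Jacobians. Note also that $\Thetao''(\pp)$ and $\Thetar''(\pp)$ are related on the boundary since $\Thetat''(\pp)=0$, so the two Gaussian variances match. Tracking signs gives $-I^{n-1}_n$ and $I^n_n$.

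The main obstacle is this bookkeeping at $\pp$, where three variables of different scales ($L^{-1/2}$ and $L^{-1/3}$) meet. We must verify that the $L^{-1/2}$-scale variable contributes exactly a $u=0$ entry and no singular factors, and that the $e^{CL^{1/3}}$ loss from the modified Gaussian $\ff_{2,L}$ does not occur because $\mcH'(\pp)=0$. Dominated convergence is justified by Lemmas~\ref{lem:asymptotics_f2}, \ref{lem:modifiedGaussian} and~\ref{lem:asymptotics_f2_case2}. Finally, the contour of each $u_i$ must stay to the left of $0$, since it came from $\pp-2L^{-1/3}$ on the left of the $\xi^2$ contour.
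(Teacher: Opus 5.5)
Your route is the paper's: the same contour-deformation and steepest-descent template, the same use of $\Thetar''(\pp)=-\Thetao''(\pp)$ to match the two Gaussian factors with the prefactor of $\tP$, and the same observation that $\mcH'(\pp)=0$ removes the $e^{CL^{1/3}}$ loss. There is, however, one concrete error, in $\J^{1(12)^{n-1}2}_{(12)^n}$.

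You put $\xi^2$ on $\cont_-^L(\pp-L^{-1/3})$ and then treat $\xi^2$ as $\pp+O(L^{-1/2})$, i.e.\ as the $u=0$ row. On that contour this is false. The leading phase of $\ff_{2,L}$ is $\Gtwo=\Thetar$, with $\Gtwo''(\pp)=-\Thetao''(\pp)>0$. So already at the point where your contour crosses the real axis, $|\ff_{2,L}(\pp-L^{-1/3})/\ff_{2,L}(\pp)|=e^{\frac12\Gtwo''(\pp)L^{1/3}+O(1)}$, and the integrand does not concentrate near $\pp$. Lemmas~\ref{lem:asymptotics_f2} and~\ref{lem:modifiedGaussian} only cover $\cont_-^L(\pp+bL^{-1/2})$ with $b$ in a bounded set, whereas here $b=-L^{1/6}$. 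They give no usable $L^1$ bound, so neither the local expansion nor dominated convergence is justified.

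The fix is to run the $\xi^2$-contour through $\pp$ itself, as the paper does: $\xi_i^{12}$ on $\cont_-^L(\pp-L^{-1/3})$ (your $\pp-2L^{-1/3}$ works equally well) and $\xi^2$ on $\cont_-^L(\pp)$. This still respects the nesting. In fact the relative order of the $\xi^{12}$- and $\xi^2$-contours is irrelevant, because $\xi_i^{12}-\xi^2$ occurs only in numerators of $\nPi^{1(12)^{n-1}2}_{(12)^n}$ (cf.\ Lemma~\ref{result:originalintexprss}(a)). The only $\xi^2$-poles are at $\xi^1$ and $\eta_j^{12}$, so you may equally slide your $\xi^2$-contour from $\pp-L^{-1/3}$ to $\pp$.

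With that correction the argument goes through. Two points you leave as bookkeeping deserve precision.

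First, in the first integral the factor $\prod_j v_j/\prod_i u_i$ does not come from explicit ratio factors as in Lemma~\ref{result:U12Flimit}. It comes from $\K_n(\bseta^{12}\mid\bsxi^{12},\xi^1)$, whose $\xi^1$-column freezes to the constant $1/(\pp-\rzo^-)$. You then need the identity that appending a row of ones to $[1/(u_i-v_j)]_{i\le n-1,\,j\le n}$ gives a determinant equal to $-\frac{\prod_j v_j}{\prod_i u_i}\K_n((\bu,0)\mid\bv)$. In the second integral it does come from the ratio factors in \eqref{eq:U2U3greater_PiB}, with $\eta^1\approx\pp$.

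Second, the final signs $-1$ and $+1$ come from three sources:
\begin{itemize}
\item the factor $(-1)^{n-1}$, resp.\ $(-1)^{n+1}$, in $\nPi$;
\item reversing the $n$ locally downward $v_j$-contours;
\item in the second integral only, the downward orientation of the $\eta^1$-contour at $\pp$ in the $\hat v$ Gaussian integral.
\end{itemize}
Your contours for the second integral, with $\eta^1$ on $\cont_+^L(\pp+2L^{-1/2})$, are fine.
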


\begin{proof} 
First, consider 
\beqq
	\J^{1(12)^{n-1}2}_{(12)^n}
	=
	\frac{1}{(2\pi\ii)^{2n+1}}
	\int\dd\xi^1 \dd \bsxi^{12} \dd\xi^2
	\int\dd\bseta^{12}\, 
	\nPi^{1(12)^{n-1}2}_{(12)^n}(\bsxi,\bseta)
	\FF^{1(12)^{n-1}2}_{(12)^n}(\bsxi,\bseta),
\eeqq
where, writing $\bsxi^{12}=(\xi_1^{12},\ldots,\xi_{n-1}^{12})$ and $\bseta^{12}=(\eta_1^{12},\ldots,\eta_n^{12})$, 
\beq\label{eq:U2U3greater_PiA}
	\nPi^{1(12)^{n-1}2}_{(12)^n}(\bsxi,\bseta)
	=\K_n(\bseta^{12}\mid\bsxi^{12},\xi^1)\K_1(\xi^1\mid\xi^2)\K_n(\bsxi^{12},\xi^2\mid\bseta^{12}),
\eeq
and
\beqq
	\FF^{1(12)^{n-1}2}_{(12)^n}(\bsxi,\bseta)
	=\frac{\ff_{1,L}(\xi^1)\ff_{2,L}(\xi^2)\prod_{i=1}^{n-1}\ff_{12,L}(\xi_i^{12})}{\prod_{j=1}^n\ff_{12,L}(\eta_j^{12})}.
\eeqq

By \eqref{eq:U2U3greater_critical_order}, $-1<\rzo^-<\rzot= \rzt^-= \pp<0$.
We deform the $\xi^1$-, $\xi_i^{12}$-, $\xi^2$-, and $\eta_j^{12}$-contours to
\beqq
	\cont^L_-(\rzo^-),\qquad
	\cont_-^L(\pp-L^{-1/3}),\qquad
	\cont_-^L(\pp),\qquad
	\cont_+^L(\pp+L^{-1/3}),
\eeqq
respectively. For all sufficiently large $L$, these contours form an admissible contour system, and the deformation crosses no poles. We apply the method of steepest descent, using Lemmas~\ref{lem:asymptotics_f2}, \ref{lem:modifiedGaussian}, and~\ref{lem:asymptotics_f2_case2}, and using the variables near the critical points given by 
\beq \label{eq:U2U3greater_local_variables_A}
	\xi^1=\rzo^-+\frac{\hat u_1}{L^{1/2}},
	\qquad
	\xi_i^{12}=\pp+\frac{u_i}{\sdev L^{1/3}},
	\qquad \xi^2=\pp+\frac{\hat u_2}{L^{1/2}},
	\qquad
	\eta_j^{12}=\pp+\frac{v_j}{\sdev L^{1/3}}.
\eeq

By Lemmas~\ref{lem:asymptotics_f1}, \ref{lem:modifiedGaussian}, and~\ref{result:dbcplocal},
\beqq
	\FF^{1(12)^{n-1}2}_{(12)^n}(\bsxi,\bseta)
	=\frac{\ff_{1,L}(\rzo^-)\ff_{2,L}(\pp)}{\ff_{12,L}(\pp)}
	e^{\Gone''(\rzo^-)\hat u_1^2/2 + \Gtwo''(\pp) \hat u_2^2/2}
	\frac{\prod_{i=1}^{n-1}e^{-u_i^3/3-\ww u_i^2+\rr u_i}}{\prod_{j=1}^ne^{-v_j^3/3-\ww v_j^2+\rr v_j}}(1+o(1)).
\eeqq
Note that 
\beqq
	\frac{\ff_{1,L}(\rzo^-)\ff_{2,L}(\pp)}{\ff_{12,L}(\pp)}=\frac{\ff_{1,L}(\rzo^-)}{\ff_{1,L}(\pp)}, 
	\qquad 
	\Gtwo''(\pp)= \Gonetwo''(\pp)-\Gone''(\pp)= -\Gone''(\pp)
\eeqq 
since $\ff_{12,L}=\ff_{1,L}\ff_{2,L}$, and $\pp$ is the double critical point of $\Gonetwo$. 

For complex vectors $\alpha=(\alpha_1,\ldots, \alpha_k)$ and $\beta=(\beta_1,\ldots, \beta_{k'})$, let
\beqq
	\mr C_{k,k'}(\alpha; \beta):=\begin{bmatrix} \frac{1}{\alpha_i - \beta_j} \end{bmatrix}_{1\le i\le k, 1\le j\le k'}
\eeqq
denote the rectangular Cauchy matrix. Under \eqref{eq:U2U3greater_local_variables_A}, 
\beqq
	\nPi^{1(12)^{n-1}2}_{(12)^n}(\bsxi,\bseta)
	=(1+o(1))
	\frac{(\sdev L^{1/3})^{2n-1}}{\rzo^--\pp}
	\det\begin{bmatrix}
		&\frac{1}{\pp-\rzo^-}\\
		\mr C_{n,n-1}(\bv;\bu)&\vdots\\
		&\frac{1}{\pp-\rzo^-}
	\end{bmatrix}
	\det\begin{bmatrix}
		\mr C_{n-1,n}(\bu;\bv)\\
		-\frac1{v_1}\ \cdots\ -\frac1{v_n}
	\end{bmatrix}.
\eeqq
The last determinant is $\K_n((\bu,0)\mid\bv)$. Transposing the first determinant and using $\frac{v_j}{u_i(u_i-v_j)}=\frac{1}{u_i-v_j}-\frac{1}{u_i}$, it is equal to 
\beqq
	\frac{(-1)^{n-1}}{\pp-\rzo^-}
	\det\begin{bmatrix}
		\mr C_{n-1,n}(\bu;\bv)\\
		1\ \cdots\ 1
	\end{bmatrix}
	=\frac{(-1)^n}{\pp-\rzo^-}\frac{\prod_{j=1}^nv_j}{\prod_{i=1}^{n-1}u_i}
	\det\begin{bmatrix}
		\mr C_{n-1,n}(\bu;\bv)\\
		-\frac1{v_1}\ \cdots\ -\frac1{v_n}
	\end{bmatrix}.
\eeqq
Therefore,
\beqq
	\nPi^{1(12)^{n-1}2}_{(12)^n}(\bsxi,\bseta)
	=(1+o(1))\frac{(-1)^{n-1}(\sdev L^{1/3})^{2n-1}}{(\pp-\rzo^-)^2}
	\K_n((\bu,0)\mid\bv)^2\frac{\prod_{j=1}^nv_j}{\prod_{i=1}^{n-1}u_i}.
\eeqq

Apply the method of steepest-descent, evaluating the Gaussian integrals in $\hat u_1$ and $\hat u_2$, reversing the orientations of the $v_j$-contours, and using Lemma~\ref{lem:conditioning_event}, 
we obtain the limit. 
Note that from the choice of the $\xi_i^{12}$- contours, the $u_i$-contours lie to the left of $0$. 

Now, we consider 
\beqq
	\J^{1(12)^n}_{(12)^n1}
	=\frac{1}{(2\pi\ii)^{2n+2}}
	\int\dd\xi^1 \dd \bsxi^{12} 
	\int 
    \dd \bseta^{12}\dd\eta^1
	\nPi^{1(12)^n}_{(12)^n1}(\bsxi,\bseta)
	\frac{\ff_{1,L}(\xi^1)\prod_{i=1}^n\ff_{12,L}(\xi_i^{12})}{\ff_{1,L}(\eta^1)\prod_{j=1}^n\ff_{12,L}(\eta_j^{12})},
\eeqq
where, writing $\bsxi^{12}=(\xi_1^{12},\ldots,\xi_n^{12})$ and $\bseta^{12}=(\eta_1^{12},\ldots,\eta_n^{12})$, 
\beq\label{eq:U2U3greater_PiB}
\begin{split}
	\nPi^{1(12)^n}_{(12)^n1}(\bsxi,\bseta)
	&=\K_{n+1}(\bseta^{12},\eta^1\mid\bsxi^{12},\xi^1)\K_1(\xi^1\mid\eta^1)\K_n(\bsxi^{12}\mid\bseta^{12})\\
	&=(-1)^{n+1}\frac{\K_n(\bsxi^{12}\mid\bseta^{12})^2}{(\xi^1-\eta^1)^2}
	\prod_{i=1}^n\frac{(\eta^1-\eta_i^{12})(\xi^1-\xi_i^{12})}{(\eta^1-\xi_i^{12})(\xi^1-\eta_i^{12})}.
\end{split}
\eeq

By \eqref{eq:U2U3greater_critical_order}, 
$-1<\rzo^-<\rzot =\rzo^+=\pp<0$, and we deform the $\xi^1$-, $\xi_i^{12}$-, $\eta^1$-, and $\eta_i^{12}$-contours to
\beqq
	\cont_-^L(\rzo^-),\qquad
	\cont_-^L(\pp-L^{-1/3}),\qquad
	\cont_+^L(\pp),\qquad
	\cont_+^L(\pp+L^{-1/3}),
\eeqq
respectively. For all sufficiently large $L$, these contours form an admissible contour system, and we can deform to them without crossing any poles. 
Near the critical points, we use the change of variables 
\beqq
	\xi^1=\rzo^-+\frac{\hat u}{L^{1/2}},
	\qquad
	\xi_i^{12}=\pp+\frac{u_i}{\sdev L^{1/3}},
	\qquad
	\eta^1=\pp+\frac{\hat v}{L^{1/2}},
	\qquad 
	\eta_i^{12}=\pp+\frac{v_i}{\sdev L^{1/3}}. 
\eeqq
By Lemmas~\ref{lem:asymptotics_f1} and~\ref{result:dbcplocal},
\beqq
	\frac{\ff_{1,L}(\xi^1)\prod_{i=1}^n\ff_{12,L}(\xi_i^{12})}{\ff_{1,L}(\eta^1)\prod_{j=1}^n\ff_{12,L}(\eta_j^{12})}
	=\frac{\ff_{1,L}(\rzo^-)}{\ff_{1,L}(\pp)} \frac{e^{\Gone''(\rzo^-)\hat u^2/2}}{e^{ \Gone''(\pp)\hat v^2/2}}
	\frac{\prod_{i=1}^ne^{-u_i^3/3-\ww u_i^2+\rr u_i}}{\prod_{j=1}^ne^{-v_j^3/3-\ww v_j^2+\rr v_j}}(1+o(1)).
\eeqq
On the same contours, \eqref{eq:U2U3greater_PiB} gives 
\beqq
	\nPi^{1(12)^n}_{(12)^n1}(\bsxi,\bseta)
	=(1+o(1))\frac{(-1)^{n+1}(\sdev L^{1/3})^{2n}}{(\pp-\rzo^-)^2}
	\K_n(\bu\mid\bv)^2 \frac{\prod_{j=1}^nv_j}{\prod_{i=1}^{n}u_i}.
\eeqq
By the steepest-descent analysis, we obtain, after reversing the orientations of the $v_i$-contours, the result. 
Note that again from the choice of the $\xi_i^{12}$- contours, the $u_i$-contours lie to the left of $0$. 
\end{proof}

\begin{lem}\label{result:U23greatersum}
For every $n\ge1$,
\beqq
\begin{split}
	I^{n}_n
	= - n I^{n-1}_n 
	+ \frac{1}{(2\pi\ii)^{2n}}
	\int_{>0}\dd\bu\int \dd\bv \, 
	\frac{\prod_{i=1}^ne^{-u_i^3/3-\ww u_i^2+\rr u_i}}{\prod_{i=1}^ne^{-v_i^3/3-\ww v_i^2+\rr v_i}}
	\K_n(\bu\mid\bv)^2 \frac{\prod_{j=1}^nv_j}{\prod_{i=1}^{n}u_i} , 
\end{split}
\eeqq
where $\bu=(u_1,\ldots,u_n)$ and $\bv=(v_1,\ldots,v_n)$, and the $u_i$-contours lie to the right of $0$. 
\end{lem}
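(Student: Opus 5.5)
The plan is to move the $u_i$-contours in $I^n_n$ from the left of $0$ to the right of $0$, one variable at a time, and track the residues picked up at $u_i=0$. The integrand of $I^n_n$ has, as a function of each $u_i$, a simple pole at $u_i=0$ from the factor $1/u_i$. It also has poles at $u_i=v_j$ from the Cauchy determinant, but the $v_j$-contours are right Airy contours lying to the right of every $u$-contour. We can keep the moved $u_i$-contour to the left of the $v$-contours while passing it across $0$, so only the pole at $0$ is crossed. We also check that no extra singularity arises at $u_i=u_k$: although $\K_n(\bu\mid\bv)^2$ contains the factor $\prod_{i<k}(u_i-u_k)^2$, it has no poles in $u$ other than at the $v_j$. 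Hence the ordering among the $u$-contours is irrelevant.

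Moving one variable, say $u_n$, to the right of $0$ gives
\[
\int_{<0}\dd u_n=\int_{>0}\dd u_n-2\pi\ii\,\mathrm{Res}_{u_n=0}.
\]
The residue of the integrand at $u_n=0$ equals the integrand with $u_n$ set to $0$ and the factor $1/u_n$ removed. The exponential factor becomes $e^{0}=1$, and the Cauchy determinant becomes $\K_n((u_1,\ldots,u_{n-1},0)\mid\bv)^2$. The remaining factors are $\prod_j v_j/\prod_{i\le n-1}u_i$, which is exactly the integrand of $I^{n-1}_n$ with its $2n-1$ integration variables. Accounting for the $(2\pi\ii)^{-1}$, the residue term therefore contributes $-I^{n-1}_n$, with the other $u_i$ still to the left of $0$.

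Next we move the remaining $u_i$, $i\le n-1$, one by one. The residue terms from a second crossing vanish: once a variable $u_k$ has been moved to the right of $0$ and another variable $u_i$ is later set to $0$, $\K_n$ acquires the row $(1/(0-v_j))_j$. This is harmless. The real issue is a double residue, where two $u$'s are both set to $0$; then two rows of the Cauchy determinant coincide and the determinant vanishes. So each step only produces a residue in which a single $u_i=0$ and the other variables sit on their current contours. More precisely, at step $k$ the residue term is the integrand with $u_k=0$, with $u_1,\ldots,u_{k-1}$ already to the right of $0$ and $u_{k+1},\ldots,u_n$ still to the left.

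To identify each such term with $-I^{n-1}_n$, we note that in it the surviving $u$-variables are split between the two sides of $0$. Applying the same crossing argument inside this residue integral, the further residues again require a second $u$ to equal $0$ and vanish because of the repeated row. The contours of the surviving variables can thus be moved freely across $0$, and the term equals $-I^{n-1}_n$ after relabeling, using the symmetry of the integrand in the $u$'s. Summing the $n$ steps gives
\[
I^n_n=-nI^{n-1}_n+(\text{all $u$-contours to the right of } 0),
\]
which is the claim. The main point to verify carefully is this vanishing of the residue at $0$ when another $u$-variable is already at $0$ (the coinciding-rows argument), since it is what makes the contour of each remaining variable insensitive to crossing $0$. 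Convergence of all the Airy integrals is unaffected by these contour shifts.
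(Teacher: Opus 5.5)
Your proposal is correct and is essentially the paper's argument: move each $u_i$-contour across the simple pole at $0$, collect $n$ single-residue terms, and identify each with $-I^{n-1}_n$ by symmetry in the $u_i$-variables. Your point that double residues vanish fills in a detail the paper leaves implicit: once $u_k=0$, the factor $(u_i-u_k)^2$ in $\K_n^2$ cancels $1/u_i$, so the residue integrand has no pole at $u_i=0$ and its other contours can sit on either side of $0$. One sentence is misworded, namely ``the residue terms from a second crossing vanish'': only double residues vanish, and each single crossing contributes $-I^{n-1}_n$, which is what your final count correctly uses.
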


\begin{proof}
The integrand of the integral $I^n_n$ has a simple pole at $u_i=0$ for each $i$. Moving each $u_i$-contour to the right across $0$ and using symmetry in the $u_i$-variables, the residue theorem implies the result. 
\end{proof}

\begin{cor}\label{result:U23leadingseriesh>l}
Let $\mathcal M_n:= n(-1)^n\J^{1(12)^{n-1}2}_{(12)^n}+(-1)^{n+1}\J^{1(12)^n}_{(12)^n1}$ as in \eqref{eq:QQ2expinremainMnU2U3h>l}. We have
\beqq
	\lim_{L\to\infty}\frac{1}{\tP}
	\sum_{n=1}^\infty\frac{\mathcal M_n}{(n!)^2} =\sum_{n=1}^\infty\frac{(-1)^{n+1}}{(n!)^2}\frac{1}{(2\pi\ii)^{2n}}
	\int_{>\ww}\dd\bu\int \dd\bv
	\prod_{i=1}^n\frac{e^{-u_i^3/3+(\rr+\ww^2)u_i}(v_i-\ww)}{e^{-v_i^3/3+(\rr+\ww^2)v_i}(u_i-\ww)}
	\K_n(\bu\mid\bv)^2.
\eeqq
The contours are the usual left and right Airy contours, oriented from bottom to top, 
and the $u_i$-contours lie to the right of $\ww$.
\end{cor}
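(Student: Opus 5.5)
The argument splits into two steps: first take the limit of each $\mathcal M_n/\tP$ and show that the limit may be exchanged with the sum over $n$; then rewrite the limiting series using Lemma~\ref{result:U23greatersum} and a shift of variables.

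\emph{Termwise limit and exchange.} By Lemma~\ref{result:U23Jledgre1},
\[
\lim_{L\to\infty}\frac{\mathcal M_n}{\tP}=-n(-1)^n I^{n-1}_n+(-1)^{n+1}I^n_n=(-1)^{n+1}\bigl(I^n_n+nI^{n-1}_n\bigr).
\]
To justify the exchange, we bound $|\mathcal M_n|/\tP$ uniformly in $L$. On the admissible contour systems used in the proof of Lemma~\ref{result:U23Jledgre1}, Lemma~\ref{result:Cdetest} and the $L^1$ bounds of Lemmas~\ref{lem:asymptotics_f2}, \ref{lem:modifiedGaussian} and~\ref{lem:asymptotics_f2_case2} give this control, as in \eqref{eq:U1Hadam}. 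The point to watch is that $\HH'(\pp)=0$, so $\mu=0$ and no $e^{CL^{1/3}}$ factor appears. The separation of the $\xi^{12}$- and $\eta^{12}$-contours is $L^{-1/3}$, which matches the Airy scale. The $\xi^1$/$\eta^1$ (Gaussian) pieces are separated at order one from the $12$-contours; the only exception is the pair $\xi^2$, $\xi^1$, which is also at distance of order one. We therefore get
\[
\frac{|\mathcal M_n|}{\tP}\le C^n n^{n}\cdot n,
\]
and since $C^n n^{n+1}/(n!)^2$ is summable, dominated convergence yields
\[
\lim_{L\to\infty}\frac{1}{\tP}\sum_{n\ge 1}\frac{\mathcal M_n}{(n!)^2}=\sum_{n\ge 1}\frac{(-1)^{n+1}}{(n!)^2}\bigl(I^n_n+nI^{n-1}_n\bigr).
\]

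\emph{Identification of the series.} Lemma~\ref{result:U23greatersum} states exactly that $I^n_n+nI^{n-1}_n$ equals the $2n$-fold integral with all $u_i$-contours to the right of $0$, integrand
\[
\K_n(\bu\mid\bv)^2\prod_i \frac{v_i}{u_i}\,\frac{e^{-u_i^3/3-\ww u_i^2+\rr u_i}}{e^{-v_i^3/3-\ww v_i^2+\rr v_i}}.
\]
Now substitute $u_i\mapsto u_i-\ww$ and $v_i\mapsto v_i-\ww$. The Cauchy determinant is translation invariant, and completing the cube gives
\[
-(u-\ww)^3/3-\ww(u-\ww)^2+\rr(u-\ww)=-u^3/3+(\rr+\ww^2)u+\text{const},
\]
where the constant $\tfrac{2}{3}\ww^3-\rr\ww$ cancels between numerator and denominator. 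The factor $v_i/u_i$ becomes $(v_i-\ww)/(u_i-\ww)$, and the condition that the $u_i$-contours lie to the right of $0$ becomes that they lie to the right of $\ww$. Finally, the shifted $u_i$-contours can be deformed back to left Airy contours (still to the right of $\ww$ and to the left of the $v$-contours) without crossing poles. This gives the claimed series.

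\emph{Main obstacle.} The delicate point is the uniform bound for $\J^{1(12)^{n-1}2}_{(12)^n}$. The factor $\K_1(\xi^1\mid\xi^2)$ pairs a Gaussian contour at $\rzo^-$ with the modified-Gaussian $\xi^2$-contour at $\pp$; one has to check that this pair is order-one separated, so the determinant stays bounded. One also has to check that the extra factor $n$ in $\mathcal M_n$ is harmless, which it is, since it is absorbed into $C^n$. Apart from this, the argument is routine.
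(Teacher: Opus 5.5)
Your route is the paper's: termwise limits from Lemma~\ref{result:U23Jledgre1}, the combination $\lim_{L\to\infty}\mathcal M_n/\tP=(-1)^{n+1}(I^n_n+nI^{n-1}_n)$ converted by Lemma~\ref{result:U23greatersum}, the shift by $\ww$, and dominated convergence. The identification part is correct; the constant from completing the cube is $-\tfrac23\ww^3-\rr\ww$, not $\tfrac23\ww^3-\rr\ww$, but it cancels either way.

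\textbf{The gap is in the dominating bound, which rests on a false separation claim.} In $\J^{1(12)^n}_{(12)^n1}$ the variable $\eta^1$ runs on $\cont_+^L(\pp)$. It must pass within $O(L^{-1/2})$ of $\rzo^+=\pp$, which is also the Airy point of $\ff_{12,L}$. So it is only at distance of order $L^{-1/3}$ from the $\xi^{12}_i$-contours $\cont_-^L(\pp-L^{-1/3})$, and $\eta^1$, $\xi^{12}_i$ sit on opposite sides of $\K_{n+1}(\bseta^{12},\eta^1\mid\bsxi^{12},\xi^1)$. Likewise $\xi^2\in\cont_-^L(\pp)$ is within order $L^{-1/3}$ of the $\eta^{12}_j$-contours.

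Suppose you bound each Cauchy determinant by Lemma~\ref{result:Cdetest} with its minimal separation, or by Hadamard's inequality row by row. You then get $\|\nPi\|_\infty\le C^nn^nL^{(2n+1)/3}$ for $\J^{1(12)^n}_{(12)^n1}$ and $C^nn^nL^{2n/3}$ for $\J^{1(12)^{n-1}2}_{(12)^n}$. Each is one factor $L^{1/3}$ above the true sizes $(\sdev L^{1/3})^{2n}$ and $(\sdev L^{1/3})^{2n-1}$ visible in the proof of Lemma~\ref{result:U23Jledgre1}. Combined with the $L^1$ bounds, this only yields $|\mathcal M_n|/\tP\le C^nn^{n+1}L^{1/3}$, which is not an $L$-independent dominating sequence. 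The factor $\K_1(\xi^1\mid\xi^2)$ you single out is harmless and is not where the difficulty lies.

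\textbf{The repair} is to apply Hadamard's inequality by columns in the two determinants that contain $\xi^1$, namely $\K_{n+1}(\bseta^{12},\eta^1\mid\bsxi^{12},\xi^1)$ and $\K_n(\bseta^{12}\mid\bsxi^{12},\xi^1)$.
\begin{itemize}
\item The $\xi^1$-column has $O(1)$ entries, since $\cont_-^L(\rzo^-)$ stays at order-one distance from all right contours near $\pp$.
\item Every $\xi^{12}$-column has Euclidean norm at most $C\sqrt{n+1}\,L^{1/3}$.
\end{itemize}
This gives $|\nPi^{1(12)^n}_{(12)^n1}|\le C^nn^nL^{2n/3}$ and $|\nPi^{1(12)^{n-1}2}_{(12)^n}|\le C^nn^nL^{(2n-1)/3}$. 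The remaining determinants $\K_n(\bsxi^{12}\mid\bseta^{12})$ and $\K_n(\bsxi^{12},\xi^2\mid\bseta^{12})$ are correctly handled by Lemma~\ref{result:Cdetest}.

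Now use the $L^1$ bounds: $L^{-1/2}$ for each of $\xi^1,\eta^1,\xi^2$, and $L^{-1/3}$ for each Airy variable. For $\xi^2$ this uses $\mcH'(\pp)=0$ in Lemma~\ref{lem:modifiedGaussian}, so no $e^{CL^{1/3}}$ factor appears, as you noted. Together with $\tP\asymp L^{-1}|\ff_{1,L}(\rzo^-)/\ff_{1,L}(\rzo^+)|$ from Lemma~\ref{lem:conditioning_event}, you get $|\mathcal M_n|/\tP\le C^nn^{n+1}$ uniformly for large $L$. Your dominated-convergence step then goes through.
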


\begin{proof}
The result follows from the last two lemmas after changing the variables $u_i\mapsto u_i-\ww$ and $v_i\mapsto v_i-\ww$. The interchange of the limit and the sum is justified by the same type of uniform bounds used in the preceding sections. 
\end{proof}

%%%%%%%%%%%%%%%%%%%%%%%%
\subsubsection{Remainder on the $U_2^</U_3^<$ boundary when $\mv>\lv$} 
\label{sec:U2U3remaingreater}

\begin{lem}\label{result:U23greaterremaineach}
There exist $C,c,L_0>0$ such that, for all $n_1,n_2\ge1$, $0\le a\le(n_1-1)\wedge n_2$, $0\le b\le n_1\wedge n_2$ satisfying $(a,b)\ne(n_1-1,n_2)$, and $L\ge L_0$,
\beq\label{eq:U23greater_each_remainder_bound}
	|\J^{12}_{21}(a,b, n_1,n_2)|
	\le
	e^{- \tDel L} C^{n_1+n_2}\sqrt{n_1!n_2!}\,(n_1+n_2-a-b)!e^{-cL}. 
\eeq
\end{lem}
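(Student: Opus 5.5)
We follow the template of Lemmas~\ref{result:JresultingU2<h>l} and~\ref{result:U12remaineach}. The integral $\J^{12}_{21}(a,b,n_1,n_2)$ has
\beqq
\type(1^{n_1-a}(12)^a2^{n_2-a})=(a,n_1-a,n_2-a),\qquad \type(2^{n_2-b}(12)^b1^{n_1-b})=(b,n_1-b,n_2-b).
\eeqq
When $y>1$, so that $\rzt^+<0$, we deform the $\xi^1_i$-, $\xi^{12}_i$-, $\xi^2_i$-, $\eta^1_i$-, $\eta^{12}_i$-, $\eta^2_i$-contours to
\beqq
\cont^L_-(\rzo^-),\quad \cont^L_-(\pp-L^{-1/3}),\quad \cont^L_-(\pp-2L^{-1/3}),\quad \cont^L_+(\pp),\quad \cont^L_+(\pp+L^{-1/3}),\quad \cont^L_+(\rzt^+).
\eeqq
By \eqref{eq:U2U3greater_critical_order}, this is an admissible system reached without crossing poles, and it is $c_*L^{-1/3}$-separated; the $\eta^1$- and $\eta^{12}$-contours never face each other in \eqref{eq:Caucdt}. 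When $y<1$, $1/\ff_{2,L}$ is analytic at $0$, so the integral vanishes unless $b=n_2$. When $y=1$, either that vanishing holds, or $\rzt^+$ is replaced by a circle around $0$ where $1/\ff_{2,L}$ is polynomially controlled. We then apply Proposition~\ref{result:generalasy}. Here $\ff_{12,L}$ is Airy, $\ff_{1,L}$ is Gaussian, and $\ff_{2,L}$ is modified Gaussian with $\mcH'(\pp)=0$ but $\mcH'(\rzt^+)\ne0$, so $\mu\le n_2-b$. This gives
\beqq
|\J^{12}_{21}(a,b,n_1,n_2)|\le C^{n_1+n_2}\sqrt{n_1!n_2!}\,(n_1+n_2-a-b)!\,L^{O(n_1+n_2)}e^{C(n_2-b)L^{1/3}}E_L(a,b),
\eeqq
where the powers of $L$ come from $d_L^{-(\alpha+\beta)}L^{-\alpha/2-\beta/3}$ and are at most $e^{C(n_1+n_2)\log L}$.

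Next we evaluate $E_L(a,b)$ using $\ff_{12,L}=\ff_{1,L}\ff_{2,L}$ and $\rzot=\rzo^+=\rzt^-=\pp$. As in Lemma~\ref{result:JresultingU2<h>l}, but now with $\tzt^-=\tzt^+$ so that the $\ff_{12,L}$ factors cancel at leading order,
\beqq
E_L(a,b)=\frac{|\ff_{1,L}(\tzo^-)|^{n_1-a}}{|\ff_{1,L}(\tzo^+)|^{n_1-a}}\,\frac{|\ff_{2,L}(\pp)|^{n_2-b}}{|\ff_{2,L}(\tzrb)|^{n_2-b}}.
\eeqq
This is bounded by $e^{C(n_1+n_2)L^{2/3}}e^{L\Phi_{a,b}}$ with
\beqq
\Phi_{a,b}=-(n_1-a)\tDel-(n_2-b)\tDelr,
\eeqq
which uses $\tDelt=0$ on this boundary. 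The $L^{2/3}$ terms appear only in $\ff_{2,L}$, and only at $\tzrb$, since $\mcH$ contributes with its value $\mcH(\pp)$ on both sides of the ratio at $\pp$. Hence the error is really $e^{C(n_2-b)L^{2/3}}$. Since $a\le n_1-1$, the $\tDel$ part gives $-\tDel-(n_1-a-1)\tDel$. Because $(a,b)\ne(n_1-1,n_2)$, either $a\le n_1-2$, giving an extra $-\tDel$, or $b\le n_2-1$, giving an extra $-(n_2-b)\tDelr\le-\tDelr$. In both cases we get $-\tDel-c_0$ plus nonpositive linear terms, and the $e^{C(n_2-b)L^{2/3}}$ factor is absorbed by $-(n_2-b)\tDelr L$. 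For $y\le1$ only the case $b=n_2$, $a\le n_1-2$ survives, and it is handled identically without $\tDelr$.

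The main obstacle is that the gain is only $e^{-cL}$, not $e^{-c(n_1+n_2)L}$. The polynomial factors $L^{O(n_1+n_2)}$ from $d_L$ and $L^{-\alpha/2}$ must therefore be absorbed, and when $a=n_1-1$, $b<n_2$ the only decay is $(n_2-b)\tDelr$. The key point is that when $a=n_1-1$ and $b$ is near $n_2$, the exponents $\alpha$ and $\beta$ are bounded by $O(n_2-b+n_1-a)$ up to the Airy count. Indeed, after dividing by $|\ff_{1,L}(\tzo^-)/\ff_{1,L}(\tzo^+)|\asymp e^{-\tDel L}$, the Airy variables produce no net power of $L$: $d_L^{-1}L^{-1/3}=O(1)$ per Airy variable. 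Each Gaussian variable costs $d_L^{-1}L^{-1/2}=O(L^{-1/6})$, which is favorable. So the only unfavorable factors come from the $e^{CL^{2/3}}$ and $e^{CL^{1/3}}$ terms, and these scale with $n_2-b$ and are beaten by $\tDelr L$. I would verify this power-counting carefully, particularly the $(12)$-variables in the Airy scaling, where $L^{-1/3}$ separation is exactly critical.
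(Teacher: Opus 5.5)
Your route is the paper's: the same deformation through $\rzo^-$, $\pp$, $\rzt^+$, Proposition~\ref{result:generalasy} with $d_L\asymp L^{-1/3}$, the same reduction of $E_L(a,b)$ to $e^{-(n_1-a)\tDel L-(n_2-b)\tDelr L}$ up to $e^{C(n_2-b)L^{2/3}}$, and vanishing for $y<1$ unless $b=n_2$. Your final power count is also right: $d_L^{-(\alpha+\beta)}L^{-\alpha/2-\beta/3}=c_*^{-(\alpha+\beta)}L^{-\alpha/6}\le C^{n_1+n_2}$. You should simply drop the crude $L^{O(n_1+n_2)}$ bound, since a single $e^{-cL}$ could not absorb it uniformly in $n_1+n_2$.

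\textbf{Gap in the case $y=1$.} When $N_{2,L}>N_{1,L}$, $1/\ff_{2,L}$ has a pole of order $N_{2,L}-N_{1,L}=O(L^{2/3})$ at $0$. So the terms with $b<n_2$ do not vanish, and your later claim that for $y\le 1$ only $b=n_2$ survives is false here. For $a=n_1-1$, $b<n_2$, the $\eta^2$-variables are the only source of extra decay. ``Polynomially controlled'' supplies none, because $\Thetar(z)=-\ac(x-1)\log(1+z)+(\mv-\lv)z$ is still an order-$L$ phase on any circle around $0$. What you need is a phase gap:
\begin{itemize}
\item Take the $\eta^2$-contours to be $|z|=|q|$ with $q\in(\pp,0)$ close to $0$.
\item Since $\Thetar'(0)=-(\mv-\lv)\pp>0$, for small $|q|$ the minimum of $\re\Thetar$ on this circle is attained at $z=q$.
\item Since $\Thetar$ is increasing on $(\pp,0)$, we get $\Thetar(q)-\Thetar(\pp)>0$.
\end{itemize}
This quantity replaces $\tDelr$ in your estimate, and the $e^{O(L^{2/3})}$ errors are absorbed as before.

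\textbf{A second, repairable error.} Your $\xi^2$-contour $\cont^L_-(\pp-2L^{-1/3})$ is not admissible in the sense of Definition~\ref{def:admissiblecontours}. For the modified Gaussian family $\ff_{2,L}$, the shift from $\pp$ must be $bL^{-1/2}$ with $b$ bounded, whereas here $b=-2L^{1/6}$. Because $\Thetar''(\pp)>0$, $|\ff_{2,L}|$ reaches about $e^{cL^{1/3}}|\ff_{2,L}(\pp)|$ on that contour. Hence the $n_2-a$ variables $\xi^2_i$ contribute an extra factor $e^{C(n_2-a)L^{1/3}}$, which your $\mu\le n_2-b$ does not account for.

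This factor is absorbable, since $n_2-a\le(n_2-b)+(n_1-a)$ and $(n_2-b)+(n_1-a-1)\ge 1$. The paper avoids it altogether by placing $\xi^{12}$ on $\cont^L_-(\pp-2L^{-1/3})$ and $\xi^2$ on $\cont^L_-(\pp-L^{-1/2})$. This is legitimate for two reasons. The relative nesting of the $\xi^{12}$- and $\xi^2$-contours is irrelevant by Lemma~\ref{result:originalintexprss}(a). And $\xi^2$ never faces $\eta^1$ across a Cauchy determinant, so it only needs $L^{-1/3}$-separation from the $\eta^{12}$-contour.
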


\begin{proof}
We have
\beqq
	\type(1^{n_1-a}(12)^a2^{n_2-a})=(a,n_1-a,n_2-a),
	\qquad
	\type(2^{n_2-b}(12)^b1^{n_1-b})=(b,n_1-b,n_2-b).
\eeqq

Suppose first that $y>1$. By \eqref{eq:U2U3greater_critical_order}, 
$-1<\rzo^-<\rzot=\rzo^+=\rzt^-=\pp<\rzt^+<0$. 
We deform the $\xi_i^1$-, $\xi_i^{12}$-, $\xi_i^2$-, $\eta_i^1$-, $\eta_i^{12}$-, and $\eta_i^2$-contours for the integral $\J^{12}_{21}(a,b,n_1,n_2)$ to to 
\beqq
	\cont^L_-(\rzo^-),
	\qquad
	\cont_-^L(\pp-2L^{-1/3}),
	\qquad
	\cont_-^L(\pp-L^{-1/2}),
	\qquad
	\cont_+^L(\pp+L^{-1/2}),
	\qquad
	\cont_+^L(\pp+2L^{-1/3}),
	\qquad
	\cont^L_+(\rzt^+),
\eeqq
respectively, which form an admissible contour system. We can deform to these contours without crossing poles. 
Although some contours are only of order $L^{-1/2}$ apart, those pairs do not appear on opposite sides of a Cauchy determinant in \eqref{eq:Caucdt}. Every pair relevant to Definition~\ref{def:admissiblecontours} is separated by at least $c_*L^{-1/3}$ for some $c_*>0$. 
Recall that the functions $\ff_{1,L}$, $\ff_{2,L}$, and $\ff_{12,L}$ are a Gaussian family, a modified Gaussian family, and an Airy family, respectively. Note that $\mcH'(\rzt^-)=\mcH'(\pp)=0$. 
Thus, Proposition~\ref{result:generalasy}, with 
\beqq
	d_L=c_*L^{-1/3},
	\qquad
	\alpha=2(n_1+n_2-a-b),
	\qquad
	\beta=a+b,
	\qquad
	\mu=  n_2-b
\eeqq
implies that 
\beqq
	|\J^{12}_{21}(a,b,n_1,n_2)|
	\le
	\frac{C_1^{n_1+n_2}e^{C_1(n_2-b)L^{1/3}}\sqrt{n_1!n_2!}\,(n_1+n_2-a-b)!}
	{L^{-(2n_1+2n_2-a-b)/3} L^{(n_1+n_2-a-b) + (a+b)/3}} 
	\,E_L(a,b),
\eeqq
where, using $\ff_{1,L}\ff_{2,L}= \ff_{12, L}$ and \eqref{eq:U2U3greatercprelation}, 
\beqq
	E_L(a,b)
	:=
	\frac{|\ff_{1,L}(\rzo^-)|^{n_1-a}|\ff_{12,L}(\pp)|^a|\ff_{2,L}(\pp)|^{n_2-a}}
	{|\ff_{1,L}(\pp)|^{n_1-b}|\ff_{12,L}(\pp)|^b|\ff_{2,L}(\rzt^+)|^{n_2-b}}
	=
	\frac{|\ff_{1,L}(\tzo^-)|^{n_1-a} |\ff_{2,L}(\tzra)|^{n_2-b}}{|\ff_{1,L}(\tzo^+)|^{n_1-a} |\ff_{2,L}(\tzrb)|^{n_2-b}}. 
\eeqq
Since the exponent of $\ff_{1,L}$ contains no terms of order $L^{2/3}$ or $L^{1/3}$, \eqref{eq:U2U3greater_f_expansions} gives
\beqq
	E_L(a,b)
	\le
	C_2^{n_1+n_2}e^{c_1(n_2-b)L^{2/3}}
	e^{-[(n_1-a)\tDel+(n_2-b)\tDelr]L}.
\eeqq
After increasing $c_1$ if necessary, to absorb the factor $e^{C_1(n_2-b)L^{1/3}}$ from the modified Gaussian estimate into the $L^{2/3}$-order factor above, 
\beq \label{eq:U23remd1}
	|\J^{12}_{21}(a,b,n_1,n_2)|
	\le
	e^{-\tDel L} C^{n_1+n_2}\sqrt{n_1!n_2!}\,(n_1+n_2-a-b)!
	e^{c_1(n_2-b)L^{2/3}- (n_2-b)\tDelr L} e^{-(n_1-a-1)\tDel L}.
\eeq
Since $(a,b)\ne(n_1-1,n_2)$, at least one of $n_1-a-1$ and $n_2-b$ is positive. This proves the result when $y>1$.

Suppose next that $y<1$. Then $1/\ff_{2,L}$ is analytic at $0$ for all sufficiently large $L$. 
Thus, if $n_2-b>0$, the integral vanishes by shrinking an innermost $\eta_i^2$-contour to $0$. If $b=n_2$, there are no $\eta_i^2$-integrals, and the case (c) extension of Proposition~\ref{result:generalasy} applies, and \eqref{eq:U23remd1} still holds with $b=n_2$. 
Since $(a,n_2)\ne(n_1-1,n_2)$, we have $n_1-a-1\ge1$, and the result follows.

It remains to consider $y=1$. If $N_{2,L}\le N_{1,L}$, the preceding argument applies. 
Suppose that $N_{2,L}>N_{1,L}$. 
Fix $q\in(\pp,0)$ sufficiently close to $0$, and deform the $\eta_i^2$-contours to 
$\con_+(q) = \{z: |z|=|q|\}$. 
Since $q\in(\pp,0)$, the monotonicity argument used for the right contours in Lemma~\ref{lem:asymptotics_f2} implies that there exists $C>0$ such that 
\beqq
	\left\|\frac{1}{\ff_{2,L}}\right\|_{L^1(\con_+(q))}
	\le
	\frac{C}{L^{1/2}|\ff_{2,L}(q)|}
\eeqq
for all sufficiently large $L$. Thus, the same argument implies the estimate \eqref{eq:U23remd1} where $\tDelr$ is changed to 
\beqq
	\tDelr(q):=\Thetar(q)-\Thetar(\pp). 
\eeqq
Since $\Thetar$ is an increasing function on $(\pp, 0)$, we find $\tDelr(q)>0$, and  \eqref{eq:U23greater_each_remainder_bound} holds when $y=1$.
\end{proof}

\begin{cor}\label{result:U23greater_remainder}
There exists $c>0$ such that
\beqq
	\frac{1}{\tP}
	\sum_{n_1,n_2\ge1}
	\frac{\mathcal R_{n_1,n_2}}{(n_1!n_2!)^2}
	=O(e^{-cL}).
\eeqq
\end{cor}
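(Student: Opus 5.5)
We sum the individual bound of Lemma~\ref{result:U23greaterremaineach} over the indices in \eqref{eq:U2U3Rn1n2h>l}, as in Corollary~\ref{prop:case1_remainder}. First, every index pair $(a,b)$ occurring in $\mathcal R_{n_1,n_2}$ satisfies the hypotheses of that lemma. We have $a\le n_2-i$, $b\le j$, and $i+j\ge 2n_2-n_1+1$, so $a\le n_2-i\le j+n_1-n_2-1\le n_1-1$, using $j\le n_2$; also $a\le n_2$. The indicator in \eqref{eq:U2U3Rn1n2h>l} removes $(a,b)=(n_1-1,n_2)$. Hence each summand obeys \eqref{eq:U23greater_each_remainder_bound}. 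Combining this with $a!b!(n_1+n_2-a-b)!\le 2^{n_1+n_2}n_1!n_2!$ from \eqref{eq:triplfactorial}, and noting that there are at most $(n_1+1)^2\le 4^{n_1+n_2}$ pairs $(a,b)$, we get
\[
\mathcal R_{n_1,n_2}\le e^{-\tDel L}e^{-cL}\,C^{n_1+n_2}(n_1!n_2!)^{3/2}.
\]

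Dividing by $(n_1!n_2!)^2$ leaves $C^{n_1+n_2}/\sqrt{n_1!n_2!}$. This is summable over $n_1,n_2\ge1$, so the total remainder is $O(e^{-(\tDel+c)L})$. Corollary~\ref{result:tPldp} gives $\tP\asymp L^{-1}e^{-\tDel L}$, so the ratio is $O(Le^{-cL})=O(e^{-c'L})$.

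The point needing care is that \eqref{eq:U23greater_each_remainder_bound} supplies only a gain $e^{-cL}$, not $e^{-c(n_1+n_2)L}$. So all growth in $n_1,n_2$ must be absorbed into $C^{n_1+n_2}$, and the constant $c$ must not depend on $n_1,n_2$. In the proof of Lemma~\ref{result:U23greaterremaineach}, the factor $e^{c_1(n_2-b)L^{2/3}-(n_2-b)\tDelr L}$ is at most $1$ for large $L$, uniformly in $n_2-b\ge0$. Likewise $e^{-(n_1-a-1)\tDel L}\le 1$. At least one of these supplies a factor $e^{-\min(\tDelr/2,\tDel)L}$, which gives $c$ independent of the indices, including in the $y<1$ and $y=1$ variants. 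The remaining $C^{n_1+n_2}$ and factorial growth is then handled by the summation above.
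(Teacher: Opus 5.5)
Your proposal is correct and follows the paper's proof. The paper likewise checks that $a\le n_2-i$, $i+j\ge 2n_2-n_1+1$ and $j\le n_2$ force $a\le n_1-1$, so Lemma~\ref{result:U23greaterremaineach} applies, and then concludes from \eqref{eq:triplfactorial} and Corollary~\ref{result:tPldp}. Your closing uniformity discussion only re-derives what that lemma already asserts, namely that $C$, $c$ and $L_0$ do not depend on $n_1,n_2,a,b$.
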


\begin{proof}
For the indices in the definition \eqref{eq:U2U3Rn1n2h>l} of $\mathcal R_{n_1,n_2}$, the inequalities $a\le n_2-i$, $i+j\ge2n_2-n_1+1$, and $j\le n_2$ imply $a\le n_1-1$. Hence, Lemma~\ref{result:U23greaterremaineach} applies. The result then follows from \eqref{eq:triplfactorial} and Corollary~\ref{result:tPldp}. 
\end{proof}

%%%%%%%%%%%%%%%%%%%%%%%%%%%%%%%%%%%%%%%%%%%%%%%%%
\subsection{The case $\mv<\lv$}
\label{sec:U2U3less}

Since $\mv<\lv$, we use the ordering in \eqref{eq:strategy_order_less}, where $(x_L, y_L)$ in \eqref{eq:xygen} is given by \eqref{eq:U2U3_shifted_parameters}, and 
$\tTo=\mv L+\sdev \rr L^{1/3}$ in \eqref{eq:Tgen}.
From \eqref{eq:strategy_phase_functions}, the functions in \eqref{eq:fGen} are
\beq\label{eq:U2U3less_f_expansions}
\begin{split}
	\ff_{1,L}(z)
	&=e^{L\Gone(z)-\mr c\ww y^{2/3}L^{2/3}\mcH(z)+\sdev\rr L^{1/3}z+E_{1,L}(z)},\\
	\ff_{2,L}(z)
	&=e^{L\Gtwo(z)+\mr c\ww y^{2/3}L^{2/3}\mcH(z)-\sdev\rr L^{1/3}z+E_{2,L}(z)},\\
	\ff_{12,L}(z)
	&=e^{L\Gonetwo(z)+E_{12,L}(z)},
\end{split}
\eeq
where 
\beqq
	(\mcG_1,\mcG_2,\mcG_{12}) = (\Thetat,-\Thetar,\Thetao), 
\eeqq
and $\mcH$ is given by \eqref{eq:HDefU2U3}. 
By \eqref{eq:strategy_cp} and Lemma~\ref{result:cpU2less}, since $y=x/\slope$, 
\beq \label{eq:U2U3lesscprelation}
	\rzo^\pm= \tzt^-=\tzt^+=:\tzt,  \qquad (\rzt^-, \rzt^+)=(\tzrb,\tzra) , \qquad \rzot^\pm=\tzo^\pm
\eeq
and
\beq\label{eq:U2U3less_critical_order}
	\rzt^-<\rzot^-<\rzo=\rzot^+=\rzt^+=\pp<0,\qquad  \rzo:=\rzo^-=\rzo^+, 
\eeq
where $\rzt^->-1$ if $x<1$, and $\rzt^-<-1$ if $x>1$. 
If $x=1$, the zero $\tzrb=-1$ of $\tpr$ cancels the factor $z+1$ in \eqref{eq:tGdertp}, and $\Gtwo$ has only the regular critical point $\rzt^+=\tzra=\pp$.

If $x>1$, then $M_{2,L}-M_{1,L}<0$ for all sufficiently large $L$, and hence $\ff_{2,L}$ is analytic at $-1$.
The functions $\ff_{1,L}$, $\ff_{2,L}$, and $\ff_{12,L}$ are an Airy family, a modified Gaussian family, and a Gaussian family, respectively.

%%%%%%%%%%%%%%
\subsubsection{Decomposition when $\mv<\lv$}

We isolate the terms that contribute at leading order to the series \eqref{eq:Q2Lses}. 
Those terms arise from specific residue contributions in $\QQ_{2}^{(n,1)}$. 
We suppress the dependence on $L$ in the following lemma. 

\begin{lem} \label{lem:U23decompless} 
For $n_1, n_2\ge 1$ and $0\le a, b\le n_1\wedge n_2$, let
\beqq 
	\J^{21}_{12}(a,b,n_1,n_2) =\J^{2^{n_2-a}(12)^a1^{n_1-a}}_{1^{n_1-b}(12)^b2^{n_2-b}}, 
\eeqq
as in Corollary~\ref{result:Jintrearboundgen}. 
Then,  
\beq\label{eq:U23lessRestint}
	\left| \QQ_{2}  
	- \sum_{n=1}^\infty\frac{\mathcal M_n}{(n!)^2} \right|
	\le \sum_{n_1,n_2\ge1}\frac{\mathcal R_{n_1,n_2}}{(n_1!n_2!)^2}, 
	\qquad
	\mathcal M_n:= n\J^{(12)1^{n-1}}_{1^n2}+(-1)^{n+1}\J^{(12)1^n}_{1^n(12)},
\eeq
where
\beqq
	\mathcal R_{n_1,n_2}
	:=
	2^{7(n_1+n_2)}
	\max_{\substack{0\le i,j\le n_2\\i+j\ge2n_2-n_1+1}}
	\sum_{a=0}^{n_1\wedge i}
	\sum_{b=0}^{n_1\wedge(n_2-j)}
	a!b!\,|\J^{21}_{12}(a,b,n_1,n_2)|
	\mathbf{1}_{(n_2,a)\ne(1,1)}.
\eeqq
\end{lem}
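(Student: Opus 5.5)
The plan is to mirror the proof of Lemma~\ref{eq:U23decompgreater}, replacing the rearrangements by \eqref{eq:Jup12} on the superscript and \eqref{eq:Jdown21} on the subscript, as prescribed by Table~\ref{tab:rearrangement}. Applied to $\J^{2^{n_2-i}1^{n_1}2^i}_{2^{n_2-j}1^{n_1}2^j}$, these give an exact expansion. Rearranging the superscript block $1^{n_1}2^i$ produces $2^{n_2-a}(12)^a1^{n_1-a}$ with $a\le n_1\wedge i$. Rearranging the subscript block $2^{n_2-j}1^{n_1}$ produces $1^{n_1-b}(12)^b2^{n_2-b}$ with $b\le n_1\wedge(n_2-j)$. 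The resulting expansion is
\beqq
	\J^{2^{n_2-i}1^{n_1}2^i}_{2^{n_2-j}1^{n_1}2^j}
	=\sum_{a=0}^{n_1\wedge i}\sum_{b=0}^{n_1\wedge(n_2-j)}
	\varepsilon^{n_1,n_2}_{a,b}\,\dc_a^{n_1,i}\dc_b^{n_1,n_2-j}\,\J^{21}_{12}(a,b,n_1,n_2),
	\qquad \varepsilon^{n_1,n_2}_{a,b}\in\{-1,1\}.
\eeqq
The signs come from the types: $(-1)^{a(b_1+1)}$ with $b_1=n_1$ for the superscript, and $(-1)^{b(a_1+1)}$ with $a_1=n_1-a$ for the subscript. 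I then substitute this into Lemma~\ref{lem:simpleQ} for every $(n_1,n_2)$.

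The terms with $(n_2,a)\ne(1,1)$ are bounded exactly as in Corollaries~\ref{result:Qn1n2bd} and~\ref{result:Jintrearboundgen} (the $S^{21}_{12}$ version), using \eqref{eq:dcest} and $\binom mk\le 2^m$. This gives at most $\mathcal R_{n_1,n_2}$ after adding the restricting indicator. So it remains to show that the terms with $n_2=1$ and $a=1$, summed over all $n_1\ge1$ and weighted by $1/(n_1!)^2$, reproduce $\sum_n\mathcal M_n/(n!)^2$ exactly.

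For $n_2=1$, the condition $a=1$ forces $i=1$. Then $b\le 1-j$, so either $j=0$ with $b\in\{0,1\}$, or $j=1$ with $b=0$. The $b=0$ integral is
\beqq
	\J^{(12)1^{n_1-1}}_{1^{n_1}2}.
\eeqq
The $b=1$ integral is
\beqq
	\J^{(12)1^{n_1-1}}_{1^{n_1-1}(12)},
\eeqq
which after the shift $n_1=n+1$ is the second integral $\J^{(12)1^{n}}_{1^{n}(12)}$ in $\mathcal M_{n}$.

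Next I compute the scalar weight multiplying each of these two integrals. For $n_1\ge2$ this comes from \eqref{eq:case3_simpleQ_greater} with $n_2=1$. The constraint there is $3-n_1\le i+j\le1$, so $(i,j)=(1,0)$ is allowed for all $n_1\ge2$. The pair $(1,1)$ is excluded, since $i+j=2>1$. For $n_1=1$ it comes from \eqref{eq:case3_simpleQ_less}, where $i+j\ge2$ forces $(i,j)=(1,1)$, hence $b=0$. The weight is then the product of the prefactor, $\binom{1}{i}\binom1j\binom{n_1-2}{1-i-j}$ (respectively its $n_1=1$ analogue), $\dc_1^{n_1,1}=n_1$, $\dc_b^{n_1,1-j}$, and $\varepsilon$.

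I expect the $b=0$ weight to be $n_1\,(n_1-1)!^2\cdot n_1\cdot(\ldots)$-type combinatorics that collapses to $n(n!)^2/(n!)^2\cdot n$. Concretely, the check is that
\beqq
	\frac{1}{(n_1!)^2}\times(\text{weight of }b=0)=\frac{n_1}{(n_1!)^2},
	\qquad
	\frac{1}{((n+1)!)^2}\times(\text{weight of }b=1)=\frac{(-1)^{n+1}}{(n!)^2}.
\eeqq
The second identity uses $\dc_1^{n+1,1}\dc_1^{n+1,1}=(n+1)^2$. If the raw binomial bookkeeping does not already match, the fix is the symmetry of $\J$ in identically labelled variables, which accounts for the factorial normalization. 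I will verify these identities by direct substitution, treating $n_1=1$ and $n_1=2$ separately because of the edge constraints.

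The main obstacle is exactly this sign and coefficient bookkeeping, together with confirming that no further $a=1$, $n_2=1$ terms survive beyond those in $\mathcal M_n$. If the coefficients do not match exactly, any leftover $a=1$ terms can instead be absorbed into an enlarged remainder. That would require extending the indicator in $\mathcal R_{n_1,n_2}$, so I would first recheck the signs $\varepsilon$ via \eqref{eq:Jup12} and \eqref{eq:Jdown21} with the correct type data before resorting to that.
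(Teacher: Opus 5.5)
Your proposal is correct and is essentially the paper's proof. The paper expands $\QQ_2^{(n,1)}$ via \eqref{eq:Jup12} and \eqref{eq:Jdown21} into four integrals. It keeps the two $a=1$ pieces as $\mathcal M_n$, after shifting $n\to n+1$ in the $b=1$ piece, and it bounds the $a=0$ pieces and all $n_2\ne1$ terms by $\mathcal R_{n_1,n_2}$ through the $S^{21}_{12}$ estimate, exactly as you outline.

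The coefficient check you defer goes through directly with your sign $\varepsilon_{a,b}=(-1)^{a(n_1+1)+b(n_1-a+1)}$. For $n_1\ge2$ and $(i,j)=(1,0)$, the weights are $(-1)^{n_1+1}n_1(-1)^{n_1+1}=n_1$ for $b=0$ and $(-1)^{n_1+1}n_1^2(-1)^{2n_1+1}=(-1)^{n_1}n_1^2$ for $b=1$. For $n_1=1$ the single term has weight $1$. So neither the symmetry argument nor an enlarged remainder is needed.
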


\begin{proof}
For $(n_1,n_2)=(1,1)$, by Lemma~\ref{lem:simpleQ} and \eqref{eq:Jup12},
\beqq
	\QQ_2^{(1,1)}=\J^{12}_{12}=\J^{(12)}_{12}+\J^{21}_{12}.
\eeqq
For $(n_1, n_2)=(n,1)$ with $n\ge 2$, by Lemma~\ref{lem:simpleQ}, and \eqref{eq:Jup12} and \eqref{eq:Jdown21}, 
\beqq
\begin{split}
	\QQ_2^{(n,1)}
	&=(-1)^{n+1}\left((n-2)\J^{21^n}_{21^n}+\J^{21^n}_{1^n2}+\J^{1^n2}_{21^n}\right) \\
	&=n\J^{(12)1^{n-1}}_{1^n2}
	+(-1)^nn^2\J^{(12)1^{n-1}}_{1^{n-1}(12)}
	+(-1)^{n+1}n\J^{21^n}_{1^n2}
	+n(n-1)\J^{21^n}_{1^{n-1}(12)}.
\end{split}
\eeqq
Thus, shifting $n$ to $n+1$ in $(-1)^nn^2\J^{(12)1^{n-1}}_{1^{n-1}(12)}$, we obtain, 
\beqq
	\sum_{n=1}^\infty\frac{\QQ_2^{(n,1)}}{(n!)^2}
	-\sum_{n=1}^\infty\frac{1}{(n!)^2}
	\left(n\J^{(12)1^{n-1}}_{1^n2}+(-1)^{n+1}\J^{(12)1^n}_{1^n(12)}\right)
	=\sum_{n=1}^\infty\frac{1}{(n!)^2}
	\left((-1)^{n+1}n\J^{21^n}_{1^n2}+n(n-1)\J^{21^n}_{1^{n-1}(12)}\right).
\eeqq
The last sum is bounded by $\sum_{n=1}^\infty \frac{\mathcal R_{n,1}}{(n!)^2}$. 

For $(n_1,n_2)\notin \{(n,1) : n\in \N\}$, Corollaries~\ref{result:Qn1n2bd} and~\ref{result:Jintrearboundgen}, using the $S^{21}_{12}$ version, give 
\beqq 
	\bigl|  \QQ_2^{(n_1,n_2)} \bigr|
	\le 
	2^{7(n_1+n_2)}
	\max_{\substack{0\le i,j\le n_2\\i+j\ge2n_2-n_1+1}}
	\sum_{a=0}^{n_1\wedge i}
	\sum_{b=0}^{n_1\wedge(n_2-j)}
	a!b!\,|\J^{21}_{12}(a,b,n_1,n_2)|. 
\eeqq
Since $n_2\neq 1$, the right-hand side is $\mathcal R_{n_1, n_2}$. Thus the result follows. 
\end{proof}

%%%%%%%%%%%%%%
\subsubsection{The leading contribution on the $U_2^</U_3^<$ boundary when $\mv<\lv$} 
\label{sec:U2U3leadingless}

\begin{lem}\label{result:U23lessJlim1}
For every $n\ge1$,
\beqq
	\lim_{L\to\infty}\frac{1}{\tP}\J^{(12)1^{n-1}}_{1^n2}=(-1)^{n+1} I^{n-1}_n, 
	\qquad 
	\lim_{L\to\infty}\frac{1}{\tP}\J^{(12)1^n}_{1^n(12)} = I^n_n,
\eeqq
where $I^{n-1}_n$ and $I^n_n$ are defined in \eqref{eq:I1} and \eqref{eq:I2}, respectively. 
\end{lem}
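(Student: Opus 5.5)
The plan is to repeat the proof of Lemma~\ref{result:U23Jledgre1} with the roles of the families exchanged. For $\mv<\lv$, $\ff_{1,L}$ is the Airy family with double critical point $\pp$. The family $\ff_{12,L}$ is Gaussian with critical points $\rzot^-=\tzo^-<\rzot^+=\pp$. The family $\ff_{2,L}$ is modified Gaussian with regular critical point $\rzt^+=\pp$, where $\mcH'(\pp)=0$. Both integrals in the statement have a single $(12)$-variable on each side that it occupies and otherwise only $1$-variables (plus a single $\eta^2$ in the first), so the Airy variables are now the $\xi^1_i,\eta^1_j$.

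For $\J^{(12)1^{n-1}}_{1^n2}$, write out $\nPi$ from \eqref{eq:Pisigmaxi} with types $(1,n-1,0)$ and $(0,n,1)$. It equals $\K_n(\bseta^1\mid\xi^{12},\bsxi^1)\,\K_1(\xi^1\text{-block}\ldots)$; concretely the middle determinant is $\K_{n}(\bsxi^1,\eta^2\mid\bseta^1)$ and the last is $\K_1(\xi^{12}\mid\eta^2)$. We deform the contours as follows: $\xi^{12}$ to $\cont^L_-(\tzo^-)$, the $\xi^1_i$ to $\cont^L_-(\pp-L^{-1/3})$, the $\eta^1_j$ to $\cont^L_+(\pp+L^{-1/3})$, and $\eta^2$ to $\cont^L_+(\pp)$. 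This respects the nesting, since $\xi^{12}$ is innermost on the left and $\eta^2$ is outermost on the right, and it crosses no poles by \eqref{eq:U2U3less_critical_order}. In the local variables $\xi^{12}=\tzo^-+\hat u L^{-1/2}$, $\eta^2=\pp+\hat vL^{-1/2}$, $\xi^1_i=\pp+u_i/(\sdev L^{1/3})$, $\eta^1_j=\pp+v_j/(\sdev L^{1/3})$, we use Lemmas~\ref{lem:asymptotics_f1}, \ref{lem:modifiedGaussian}, and~\ref{result:dbcplocal}. The phase ratio becomes $\ff_{12,L}(\tzo^-)/(\ff_{1,L}(\pp)\ff_{2,L}(\pp))=\ff_{12,L}(\tzo^-)/\ff_{12,L}(\pp)$, which is exactly $\ffz(\tzo^-)/\ffz(\tzo^+)$ since $\ff_{12,L}=\ffz$ up to bounded factors, together with Gaussian factors in $\hat u,\hat v$ of curvatures $\Thetao''(\tzo^-)$ and $-\Gtwo''(\pp)=\Thetar''(\pp)=\Thetat''(\pp)-\Thetao''(\pp)=-\Thetao''(\pp)$, using that $\pp$ is a double point of $\Thetat$. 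The Airy factors are $e^{-u^3/3-\ww u^2+\rr u}$. The sign of the $\ww u^2$ term comes from $-\mr c\ww y^{2/3}\mcH$ in $\ff_{1,L}$ and $\mcH''(\pp)=2\sdev^2/(\mr c y^{2/3})$, and it matches \eqref{eq:I1}. The Gaussian integrals then reproduce the constant of Lemma~\ref{lem:conditioning_event}.

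The Cauchy determinants are expanded as in the $\mv>\lv$ case. Every entry involving $\xi^{12}$ tends to $1/(\tzo^- -\pp)$. The entries pairing $\eta^2$ with an Airy variable scale like $\sdev L^{1/3}/(-u_i)$ or $\sdev L^{1/3}/v_j$, because $\eta^2$ sits at $\pp$ to leading $L^{-1/3}$ order. This yields the bordered determinants whose product is, via the identity $v_j/(u_i(u_i-v_j))=1/(u_i-v_j)-1/u_i$, equal to $\pm(\sdev L^{1/3})^{2n-1}(\pp-\tzo^-)^{-2}\K_n((\bu,0)\mid\bv)^2\prod v_j/\prod u_i$. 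The Jacobian $(\sdev L^{1/3})^{-(2n-1)}$ cancels the power of $L$. Since the $\xi^1$-contours sit left of $\eta^2$, we get $u_i$ to the left of $0$. After reversing the $v$-orientation we obtain $\pm I^{n-1}_n$.

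The second integral $\J^{(12)1^n}_{1^n(12)}$ is handled in the same way. Here $\eta^{12}$ goes on $\cont^L_+(\tzo^+)=\cont^L_+(\pp)$, placed inside the $\eta^1$ contours at $\pp+L^{-1/3}$, and $\nPi$ factors as in \eqref{eq:U2U3greater_PiB} with $(1,12)$ roles swapped. This gives $\prod_i(\cdots)\to\prod v_j/\prod u_i$ and hence $I^n_n$. Uniform $C^nn^n$ bounds from Lemma~\ref{result:Cdetest} and Lemmas~\ref{lem:asymptotics_f2}, \ref{lem:modifiedGaussian}, and~\ref{lem:asymptotics_f2_case2} justify dominated convergence for each fixed $n$.

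The main obstacle is sign bookkeeping: the $(-1)^{n+1}$ in the first limit must come out of the ordering of the rows and columns in the three Cauchy determinants and the orientation reversals. I would fix it by checking $n=1$ directly, where $\J^{(12)}_{12}$ is a three-fold integral, and then tracking how the sign changes from $n$ to $n+1$ under the row and column permutations.
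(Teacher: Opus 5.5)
Your route is the paper's: the same contours ($\cont^L_-(\tzo^-)$, $\cont^L_-(\pp-L^{-1/3})$, $\cont^L_+(\pp)$, $\cont^L_+(\pp+L^{-1/3})$), the same local scalings, the same use of $\mcH'(\pp)=0$ and $\Thetat''(\pp)=0$, and the same bordered-determinant identity.

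The gap is the sign, which is part of the claim. It is also exactly what later makes $\mathcal M_n$ combine, via Lemma~\ref{result:U23greatersum}, into the BBP Fredholm series. You only conclude $\pm I^{n-1}_n$ and defer the sign to an induction you do not carry out. Moreover, two of the entry limits you state have the wrong sign, and carried through they would give $(-1)^nI^{n-1}_n$:
\begin{itemize}
\item In $\K_n(\bseta^1\mid\xi^{12},\bsxi^1)$ the $\xi^{12}$-column tends to $1/(\pp-\tzo^-)$. Only $\K_1(\xi^{12}\mid\eta^2)$ tends to $1/(\tzo^--\pp)$.
\item In this integral $\eta^2$ is paired only with the $\eta^1_j$, and $1/(\eta^2-\eta^1_j)\approx-\sdev L^{1/3}/v_j$. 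This minus sign is what produces the bottom row $-1/v_j$ of $\K_n((\bu,0)\mid\bv)$.
\end{itemize}

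No induction is needed; the sign can be read off directly. After transposing the first determinant, two steps each cost $(-1)^{n-1}$, and these cancel: moving the row of ones (coming from the $\xi^{12}$-column) to the bottom, and rewriting $1/(v_j-u_i)=-1/(u_i-v_j)$. Your identity then gives $\det\begin{bmatrix}\mr C_{n-1,n}(\bu;\bv)\\ 1\ \cdots\ 1\end{bmatrix}=-\frac{\prod_jv_j}{\prod_iu_i}\K_n((\bu,0)\mid\bv)$. This $-1$ cancels the one from $\K_1(\xi^{12}\mid\eta^2)$, so
\[
\nPi^{(12)1^{n-1}}_{1^n2}(\bsxi,\bseta)=(1+o(1))\frac{(\sdev L^{1/3})^{2n-1}}{(\pp-\tzo^-)^2}\K_n((\bu,0)\mid\bv)^2\frac{\prod_{j=1}^nv_j}{\prod_{i=1}^{n-1}u_i}
\]
with a plus sign.

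All remaining signs come from orientations:
\begin{itemize}
\item Each $\cont^L_+$ runs downward through its base point, so reversing the $n$ Airy $v_j$-contours costs $(-1)^n$.
\item The upward $\hat u$-integral and the downward $\hat v$-integral give $\bigl(\ii\sqrt{2\pi/\Thetao''(\tzo^-)}\bigr)\bigl(-\ii\sqrt{2\pi/(-\Thetao''(\pp))}\bigr)$. Against $(2\pi\ii)^{-2}$, this is $-1$ times the positive constant in Lemma~\ref{lem:conditioning_event}.
\end{itemize}
Together these give $(-1)^{n+1}$.

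For the second integral, $\nPi^{(12)1^n}_{1^n(12)}$ carries $(-1)^{n+1}$, by the analogue of \eqref{eq:U2U3greater_PiB}. The same orientation factors $(-1)^n\cdot(-1)$ then give $+I^n_n$.

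Two minor points:
\begin{itemize}
\item $\ff_{12,L}=\ffz$ holds exactly in this ordering, not merely up to bounded factors.
\item $\cont^L_+(\pp)$ lies outside the $\eta^1$-contours, not inside. This is harmless, because $\eta^{12}$ and $\eta^1$ never appear on opposite sides of a Cauchy determinant.
\end{itemize}
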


\begin{proof}
The proof is similar to that of Lemma~\ref{result:U23Jledgre1}. 
We have 
\beqq
	\J^{(12)1^{n-1}}_{1^n2}
	=\frac{1}{(2\pi\ii)^{2n+1}}
	\int\dd\xi^{12} \, \dd \bsxi^{1}
	\int  \dd \bseta^{1}\,\dd\eta^2 \, 
	\nPi^{(12)1^{n-1}}_{1^n2}(\bsxi,\bseta)
%	\FF^{(12)1^{n-1}}_{1^n2}(\bsxi,\bseta)
	\frac{\ff_{12,L}(\xi^{12})\prod_{i=1}^{n-1}\ff_{1,L}(\xi_i^1)}{\ff_{2,L}(\eta^2)\prod_{j=1}^n\ff_{1,L}(\eta_j^1)},
\eeqq
and
\beqq
	\J^{(12)1^n}_{1^n(12)}
	=\frac{1}{(2\pi\ii)^{2n+2}}
	\int\dd\xi^{12} \,  \dd \bsxi^{1} 
	\int \dd \bseta^{1} \,\dd\eta^{12} \, 
	\nPi^{(12)1^n}_{1^n(12)}(\bsxi,\bseta)
%	\FF^{(12)1^n}_{1^n(12)}(\bsxi,\bseta). 
	\frac{\ff_{12,L}(\xi^{12})\prod_{i=1}^n\ff_{1,L}(\xi_i^1)}{\ff_{12,L}(\eta^{12})\prod_{j=1}^n\ff_{1,L}(\eta_j^1)} 
\eeqq
where $\bsxi^{1}=(\xi_1^{1},\ldots,\xi_{n-1}^{1})$ in the first integral and $\bsxi^{1}=(\xi_1^{1},\ldots,\xi_n^{1})$  in the second integral. For both integrals, $\bseta^{1}=(\eta_1^{1},\ldots,\eta_n^{1})$. 

For the first integral, we deform the $\xi^{12}$-, $\xi_i^1$-, $\eta^2$-, and $\eta_j^1$-contours to
\beqq
	\cont^L_-(\rzot^-),\qquad
	\cont_-^L(\pp-L^{-1/3}),\qquad
	\cont^L_+(\pp),\qquad
	\cont_+^L(\pp+L^{-1/3}),
\eeqq
respectively. For the second integral, we deform the $\xi^{12}$-, $\xi_i^1$-, $\eta^{12}$-, and $\eta_j^1$-contours to the same contours. They are an admissible contour system for each integral. Near the critical points, for the first integral we set 
\beqq
	\xi^{12}=\rzot^-+\frac{\hat u}{L^{1/2}},
	\qquad
	\xi_i^1=\pp+\frac{u_i}{\sdev L^{1/3}},
	\qquad 
	\eta^2=\pp+\frac{\hat v}{L^{1/2}},
	\qquad
	\eta_j^1=\pp+\frac{v_j}{\sdev L^{1/3}}.
\eeqq
For the second integral, we set $\eta^{12}=\pp+\frac{\hat v}{L^{1/2}}$ instead. Using Lemmas~\ref{lem:asymptotics_f1}, \ref{lem:modifiedGaussian}, and~\ref{result:dbcplocal}, and noting that $\mcH'(\pp)=0$, we obtain
\beqq
	\frac{\ff_{12,L}(\xi^{12})\prod_{i=1}^{n-1}\ff_{1,L}(\xi_i^1)}{\ff_{2,L}(\eta^2)\prod_{j=1}^n\ff_{1,L}(\eta_j^1)}
	=\frac{\ff_{12,L}(\rzot^-)}{\ff_{1,L}(\pp)\ff_{2,L}(\pp)}
	\frac{e^{\Gonetwo''(\rzot^-)\hat u^2/2}}{e^{ \Gtwo''(\pp)\hat v^2/2}}
	\frac{\prod_{i=1}^{n-1}e^{-u_i^3/3-\ww u_i^2+\rr u_i}}{\prod_{j=1}^ne^{-v_j^3/3-\ww v_j^2+\rr v_j}}(1+o(1)).
\eeqq
and
\beqq
	\frac{\ff_{12,L}(\xi^{12})\prod_{i=1}^n\ff_{1,L}(\xi_i^1)}{\ff_{12,L}(\eta^{12})\prod_{j=1}^n\ff_{1,L}(\eta_j^1)}
	=\frac{\ff_{12,L}(\rzot^-)}{\ff_{12,L}(\pp)}
	\frac{e^{\Gonetwo''(\rzot^-)\hat u^2/2}}{e^{ \Gonetwo''(\pp)\hat v^2/2}}
	\frac{\prod_{i=1}^ne^{-u_i^3/3-\ww u_i^2+\rr u_i}}{\prod_{j=1}^ne^{-v_j^3/3-\ww v_j^2+\rr v_j}}(1+o(1)).
\eeqq
Note that $\ff_{1,L}(\pp)\ff_{2,L}(\pp)=\ff_{12,L}(\pp)$, $\ff_{12,L}=\ffz$ in Lemma~\ref{lem:conditioning_event}, and 
$\Gtwo''(\pp) = \Gonetwo''(\pp)-\Gone''(\pp)= \Gonetwo''(\pp)$ since $\pp$ is a double critical point of $\Gone$. 
Also note that 
$\Gonetwo(\rzot^-) - \Gonetwo(\pp)= \Thetao(\tzo^-)- \Thetao(\tzo^+)$. 

We also have
\beqq
\begin{split}
	\nPi^{(12)1^{n-1}}_{1^n2}(\bsxi,\bseta)
	&=\K_n(\bseta^1\mid\xi^{12},\bsxi^1)\K_n(\bsxi^1,\eta^2\mid\bseta^1)\K_1(\xi^{12}\mid\eta^2) \\
	&=(1+o(1))\frac{(\sdev L^{1/3})^{2n-1}}{(\pp-\rzot^-)^2}
	\K_n((\bu,0)\mid\bv)^2\frac{\prod_{j=1}^nv_j}{\prod_{i=1}^{n-1}u_i}.
\end{split}
\eeqq
and
\beqq
\begin{split}
	\nPi^{(12)1^n}_{1^n(12)}(\bsxi,\bseta)
	&=\K_{n+1}(\eta^{12},\bseta^1\mid\xi^{12},\bsxi^1)\K_n(\bsxi^1\mid\bseta^1)\K_1(\xi^{12}\mid\eta^{12})\\
	&=(1+o(1))\frac{(-1)^{n+1}(\sdev L^{1/3})^{2n}}{(\pp-\rzot^-)^2}
	\K_n(\bu\mid\bv)^2 \frac{\prod_{j=1}^nv_j}{\prod_{i=1}^{n}u_i}. 
\end{split}
\eeqq
Hence, the result follows from the method of steepest-descent. 
\end{proof}

\begin{cor}\label{result:U23leadingserieshlessl}
Let $\mathcal M_n= n\J^{(12)1^{n-1}}_{1^n2}+(-1)^{n+1}\J^{(12)1^n}_{1^n(12)}$ as in \eqref{eq:U23lessRestint}.  
We have
\beqq
\begin{split}
	\lim_{L\to\infty}\frac{1}{\tP}
	\sum_{n=1}^\infty\frac{\mathcal M_n}{(n!)^2} 
	=\sum_{n=1}^\infty\frac{(-1)^{n+1}}{(n!)^2}\frac{1}{(2\pi\ii)^{2n}}
	\int_{>\ww}\dd\bu\int \dd\bv
	\prod_{i=1}^n
	\frac{e^{-u_i^3/3+(\rr+\ww^2)u_i}(v_i-\ww)}{e^{-v_i^3/3+(\rr+\ww^2)v_i}(u_i-\ww)}
	\K_n(\bu\mid\bv)^2.
\end{split}
\eeqq
The contours are the usual left and right Airy contours, oriented from bottom to top, and the $u_i$-contours lie to the right of $\ww$.
\end{cor}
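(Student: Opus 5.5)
The plan mirrors the proof of Corollary~\ref{result:U23leadingseriesh>l}. By Lemma~\ref{result:U23lessJlim1}, for each fixed $n$,
\beqq
	\lim_{L\to\infty}\frac{\mathcal M_n}{\tP}
	=(-1)^{n+1}\bigl(nI^{n-1}_n+I^n_n\bigr).
\eeqq
This follows from $\mathcal M_n=n\J^{(12)1^{n-1}}_{1^n2}+(-1)^{n+1}\J^{(12)1^n}_{1^n(12)}$ together with the two limits $(-1)^{n+1}I^{n-1}_n$ and $I^n_n$. Lemma~\ref{result:U23greatersum} then gives
\beqq
	nI^{n-1}_n+I^n_n=\frac{1}{(2\pi\ii)^{2n}}\int_{>0}\dd\bu\int\dd\bv\,\prod_{i=1}^n\frac{e^{-u_i^3/3-\ww u_i^2+\rr u_i}\,v_i}{e^{-v_i^3/3-\ww v_i^2+\rr v_i}\,u_i}\K_n(\bu\mid\bv)^2,
\eeqq
where the $u_i$-contours now lie to the right of $0$ and to the left of the $v$-contours. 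The pole-crossing residues exactly cancel the $nI^{n-1}_n$ term. This is the same combination that appears in the $\mv>\lv$ case, up to the overall sign $(-1)^{n+1}$, which is the sign that appears in the claimed series.

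Next I shift $u_i\mapsto u_i-\ww$ and $v_i\mapsto v_i-\ww$. Expanding gives
\beqq
	-(u-\ww)^3/3-\ww(u-\ww)^2+\rr(u-\ww)=-u^3/3+(\rr+\ww^2)u+\mathrm{const},
\eeqq
with the same constant for $v$. The constants cancel between the numerator and the denominator, and the Cauchy determinant is translation invariant. The factor $v_i/u_i$ becomes $(v_i-\ww)/(u_i-\ww)$, and the condition that the $u$-contours lie right of $0$ becomes right of $\ww$. This produces the termwise identity with the displayed right-hand side.

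The remaining step, and the only real obstacle, is interchanging the limit with the sum over $n$. The plan is to bound $|\J^{(12)1^{n-1}}_{1^n2}|/\tP$ and $|\J^{(12)1^n}_{1^n(12)}|/\tP$ by $C^nn^n$ uniformly in large $L$, as in \eqref{eq:U1Hadam}. To do so, I use the admissible contours from Lemma~\ref{result:U23lessJlim1}, which have $d_L\asymp L^{-1/3}$ separation. I apply Proposition~\ref{result:generalasy} with $\beta\approx2n$ Airy variables and two Gaussian or modified-Gaussian variables. In the $\mu$ count, the modified Gaussian $\ff_{2,L}$ appears only through $\eta^2$ at $\pp$, where $\mcH'(\pp)=0$, so $\mu=0$ and no factor $e^{CL^{1/3}}$ arises. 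The powers $d_L^{-(\alpha+\beta)}L^{-\alpha/2-\beta/3}$ balance against the normalization $\tP\asymp L^{-1}e^{-\tDel L}$. The prefactor exponentials reduce to $\ff_{12,L}(\tzo^-)/\ff_{12,L}(\tzo^+)$, which is comparable to $L\,\tP$, with the $L^{1/3}$ powers from the Airy variables cancelling against $d_L$.

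Then $C^nn^n/(n!)^2$ is summable, since $n^n\le e^nn!$. Dominated convergence therefore gives the limit of the series, and the proof is complete.
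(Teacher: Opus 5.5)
Your termwise argument is correct and matches the paper's. Lemma~\ref{result:U23lessJlim1} gives $\mathcal M_n/\tP\to(-1)^{n+1}(nI^{n-1}_n+I^n_n)$, Lemma~\ref{result:U23greatersum} turns this into the integral with $u$-contours to the right of $0$, and your shift by $\ww$ is computed correctly.

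The gap is in the domination step. You rightly single it out, but the power count you give for it does not hold. Proposition~\ref{result:generalasy} charges a factor $d_L^{-1}$ to every row of every Cauchy determinant. For $\J^{(12)1^{n-1}}_{1^n2}$ one has $\alpha=2$ (the variables $\xi^{12}$ and $\eta^2$) and $\beta=2n-1$. With $d_L=c_*L^{-1/3}$ this gives
\[
d_L^{-(\alpha+\beta)}L^{-\alpha/2-\beta/3}=c_*^{-(2n+1)}L^{(2n+1)/3-1-(2n-1)/3}=c_*^{-(2n+1)}L^{-1/3}.
\]
The Airy powers do cancel against $d_L$, as you say. The two Gaussian variables, however, contribute $d_L^{-2}L^{-1}\asymp L^{-1/3}$, whereas matching $\tP\asymp L^{-1}|\ffz(\tzo^-)/\ffz(\tzo^+)|$ requires $L^{-1}$. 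So the proposition only yields $|\J^{(12)1^{n-1}}_{1^n2}|/\tP\le C^nn^nL^{2/3}$. The same count with $\beta=2n$ gives the same $L^{2/3}$ loss for $\J^{(12)1^n}_{1^n(12)}$. This bound is not uniform in $L$, so dominated convergence does not follow. The same polynomial loss is harmless in the remainder estimates, where an extra factor $e^{-cL}$ absorbs it, but it cannot be absorbed for leading terms.

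What is missing is that $\xi^{12}$ is macroscopically separated from every variable it is paired with. You need to use this through Hadamard's inequality applied column by column, as in the proof of \eqref{eq:U1Hadam}, rather than through the uniform-$d_L$ bound of Proposition~\ref{result:generalasy}.

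The contour $\cont^L_-(\tzo^-)$ stays in $\{|z+1|\le\tzo^-+1\}$, while all right contours used here stay in $\{|z|\le|\pp|\}$. These two sets are at distance $\pp-\tzo^->0$. Hence the following pieces are all $O(1)$:
\begin{itemize}
\item the $\xi^{12}$-column of $\K_n(\bseta^1\mid\xi^{12},\bsxi^1)$, respectively of $\K_{n+1}(\eta^{12},\bseta^1\mid\xi^{12},\bsxi^1)$;
\item the factor $\K_1(\xi^{12}\mid\eta^2)$, respectively $\K_1(\xi^{12}\mid\eta^{12})$.
\end{itemize}
Only the remaining columns and rows are $O(L^{1/3})$. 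This gives $\|\nPi\|_\infty\le C^nn^nL^{(2n-1)/3}$ for the first integral and $C^nn^nL^{2n/3}$ for the second.

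The $L^1$ bounds are $C^nL^{-1-(2n-1)/3}$ and $C^nL^{-1-2n/3}$, respectively, times $|\ffz(\tzo^-)/\ffz(\tzo^+)|\asymp L\tP$. Multiplying gives $|\J|/\tP\le C^nn^n$ uniformly in large $L$, and then your dominated-convergence conclusion goes through. This is what the paper means by ``uniform bounds as in other regions.''
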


\begin{proof}
It follows from Lemmas~\ref{result:U23lessJlim1} and~\ref{result:U23greatersum}, as in Corollary~\ref{result:U23leadingseriesh>l}. The interchange of the limit and the sum is justified by uniform bounds as in other regions. 
\end{proof}

%%%%%%%%%%%%%%%%%%%%%%%%%%%%%%%%%%%%%%%%%%%%%%%%%
%%%%%%%%%%%%%%%%%%%%%%%%%%%%%%%%%%%%%%%%%%%%%%%%%%%%%%%%%%%%%%%%
\subsubsection{Remainder on the $U_2^</U_3^<$ boundary when $\mv<\lv$}

\begin{lem}\label{result:U23lessremaineach}
There exist $C,c,L_0>0$ such that, for all $n_1,n_2\ge1$, $0\le a,b\le n_1\wedge n_2$ satisfying $(n_2,a)\ne(1,1)$, and $L\ge L_0$,
\beqq
	|\J^{21}_{12}(a,b,n_1,n_2)| 
	\le e^{-\tDel L}  C^{n_1+n_2}\sqrt{n_1!n_2!}\,(n_1+n_2-a-b)!e^{-cL}.
\eeqq
\end{lem}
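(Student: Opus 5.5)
We follow the proof of Lemma~\ref{result:U23greaterremaineach}, with the roles of the indices adapted to the ordering \eqref{eq:U2U3less_critical_order}. We have $\type(2^{n_2-a}(12)^a1^{n_1-a})=(a,n_1-a,n_2-a)$ and $\type(1^{n_1-b}(12)^b2^{n_2-b})=(b,n_1-b,n_2-b)$.

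\textbf{Reduction when $x\ge1$.} First we use analyticity to discard terms. If $x>1$, then $\ff_{2,L}$ is analytic at $-1$. Hence $\J^{21}_{12}(a,b,n_1,n_2)=0$ whenever $n_2-a>0$: we shrink an innermost $\xi^2$-contour, which is innermost in the $\bsigma$-list $2^{n_2-a}\cdots$, to $-1$. So only $a=n_2$ survives, and then there are no $\xi^2$-variables. The case $x=1$ is handled as in the $y=1$ case of Lemma~\ref{result:U23greaterremaineach}. If $M_{2,L}\le M_{1,L}$, the same vanishing argument applies. Otherwise we place the $\xi^2$-contours on a circle $|z+1|=|q+1|$ with $q\in(-1,\rzot^-)$ close to $-1$. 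This produces a positive gap $-\Thetar(q)+\Thetar(\pp)$ replacing $\tDelr$, which is positive by monotonicity of $\Thetar$ on $(-1,\pp)$.

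\textbf{Contours and application of Proposition~\ref{result:generalasy}.} When $x<1$, we have $-1<\rzt^-<\rzot^-<\pp$. We deform the $\xi^2$-, $\xi^{12}$-, $\xi^1$-, $\eta^1$-, $\eta^{12}$-, and $\eta^2$-contours to
\[
\cont_-^L(\rzt^-),\qquad \cont_-^L(\rzot^-),\qquad \cont_-^L(\pp-L^{-1/3}),\qquad \cont_+^L(\pp+L^{-1/3}),\qquad \cont_+^L(\pp+2L^{-1/3}),\qquad \cont_+^L(\pp+2L^{-1/3}+L^{-1/2}),
\]
respectively. These respect the nesting ($2$ inside $12$ inside $1$ on the left; $1$ inside $12$ inside $2$ on the right), and no poles are crossed. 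Pairs only $L^{-1/2}$ apart, such as $\eta^{12}$ and $\eta^2$, never sit on opposite sides of a Cauchy determinant. So the system is $c_*L^{-1/3}$-separated.

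Here $\ff_{1,L}$ is Airy, $\ff_{12,L}$ is Gaussian and $\ff_{2,L}$ is modified Gaussian. Since $\mcH'(\pp)=0$, the factor $\mu$ counts only the $\xi^2$-variables at $\rzt^-$, so $\mu=n_2-a$. Proposition~\ref{result:generalasy} then bounds $|\J^{21}_{12}|$ by
\[
C^{n_1+n_2}e^{C(n_2-a)L^{1/3}}\sqrt{n_1!n_2!}\,(n_1+n_2-a-b)!\,(\text{powers of }L)\,E_L(a,b).
\]
Using $\ff_{12,L}=\ff_{1,L}\ff_{2,L}$, \eqref{eq:U2U3lesscprelation} and $\ff_{12,L}=\ffz$, we expect
\[
E_L(a,b)=\frac{|\ff_{12,L}(\tzo^-)|^{a}\,|\ff_{2,L}(\tzrb)|^{n_2-a}}{|\ff_{12,L}(\tzo^+)|^{a}\,|\ff_{2,L}(\tzra)|^{n_2-a}}
\le C^{n_1+n_2}e^{c_1(n_2-a)L^{2/3}}e^{-[a\tDel+(n_2-a)\tDelr]L},
\]
since the $\ff_1$-factors at the double critical point $\pp$ cancel in number: there are $n_1-a$ in the numerator against $(n_1-b)+(n_2-b)-(n_2-a)$ effectively. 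The exponent bookkeeping here is the step to verify carefully. The power $L^{-n_1/3}$ and the surplus $L$-powers from $d_L$ are harmless.

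\textbf{Extracting the extra decay (main obstacle).} We need $a\tDel+(n_2-a)\tDelr\ge\tDel+c$, with polynomially many extra $L^{2/3}$ terms dominated. If $n_2-a\ge1$, the $\tDelr$ term gives the extra gap, and it dominates $e^{c_1(n_2-a)L^{2/3}}$ linearly in $n_2-a$. If $a=n_2\ge2$, then $a\tDel\ge2\tDel$. The case $a=0$ needs $n_2\ge1$, which holds, and then we need $n_2\tDelr\ge\tDel+c$. This fails in general, and it is the delicate point. There I would instead argue that $a=0$ forces the $\ff_{12}$-free structure, so that the $\eta^1$-type factors contribute $\ff_{1}$ at $\pp$ versus $\ff_1$ at a shifted point, and recover $\tDel$ from the identity $\Thetao=\Thetat-\Thetar$, i.e.\ $-\Gtwo(\tzrb)+\Gtwo(\pp)$ combined with $\ff_1$ ratios. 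Concretely I would recheck $E_L$ with the $\ff_1$-exponents not cancelling when $a\ne b$. The excluded case $(n_2,a)=(1,1)$ is exactly $a\tDel=\tDel$ with no surplus, consistent with this plan. Summability of the resulting bounds then follows via \eqref{eq:triplfactorial}.
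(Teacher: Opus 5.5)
\textbf{The gap: the $a=0$ terms are not controlled.} Your formula for $E_L$ is right, and you correctly single out $a=0$ as the critical case. But the lemma depends on that case, and your plan leaves it unresolved.

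For $a=0$ the bound is $e^{-n_2\tDelr L}$ times $e^{O(n_2L^{2/3})}$. When $n_2=1$ this covers $\J^{21^{n}}_{1^{n}2}$ and $\J^{21^{n}}_{1^{n-1}(12)}$, which are exactly the remainder terms isolated in Lemma~\ref{lem:U23decompless}. These are $o(e^{-\tDel L})$ only if $\tDelr>\tDel$.

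Your fallback, rechecking $E_L$ ``with the $\ff_1$-exponents not cancelling when $a\ne b$'', cannot produce extra decay. Every $\ff_{1,L}$-evaluation, on both sides, sits at the double critical point $\pp$. After using $\ff_{12,L}=\ff_{1,L}\ff_{2,L}$ they cancel exactly, so $E_L$ is independent of $b$, as your own formula shows.

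The missing idea is that the inequality you believe ``fails in general'' is true in this regime; this is Lemma~\ref{lem:delG2>delG12U2U2h<l}. Using $\Thetao=\Thetat-\Thetar$,
\[
\tDelr-\tDel=\bigl[\Thetat(\tzo^-)-\Thetat(\pp)\bigr]+\bigl[\Thetar(\tzrb)-\Thetar(\tzo^-)\bigr].
\]
For $x<1$ we have $-1<\tzrb<\tzo^-<\pp$, and both brackets are positive:
\begin{itemize}
\item on the boundary $\Thetat'(z)=\mv(z-\pp)^2/(z(z+1))$, so $\Thetat$ decreases on $(-1,\pp)$;
\item $\Thetar'(z)=(\mv-\lv)(z-\tzrb)(z-\pp)/(z(z+1))$ with $\mv<\lv$, so $\Thetar$ decreases on $(\tzrb,\pp)$.
\end{itemize}
Then write $a\tDel+(n_2-a)\tDelr=\tDel+(n_2-1)\tDel+(n_2-a)(\tDelr-\tDel)$. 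The last term absorbs $c_1(n_2-a)L^{2/3}$ uniformly in $n_2-a$. The hypothesis $(n_2,a)\ne(1,1)$ is exactly $(n_2-1,n_2-a)\ne(0,0)$, which gives the extra $e^{-cL}$.

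The same issue recurs at $x=1$. A gap $\Thetar(q)-\Thetar(\pp)$ that is merely positive does not handle $a=0$, $n_2=1$. (This is the positive quantity, since $\Thetar$ decreases on $(-1,\pp)$; you wrote its negative.) You need the gap to exceed $\tDel$. That holds for $q$ close to $-1$, because $\Thetar(-1)-\Thetar(\pp)>\tDel$ by the same monotonicity argument.

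\textbf{Secondary slip: the right-hand contours.} For $\btau=1^{n_1-b}(12)^b2^{n_2-b}$, the $\eta^1$-contour must be innermost around $0$, so it crosses the negative axis closest to $0$, and the $\eta^2$-contour must be outermost. Your placements at $\pp+L^{-1/3}$, $\pp+2L^{-1/3}$, $\pp+2L^{-1/3}+L^{-1/2}$ put $\eta^2$ innermost. The deformation therefore crosses the poles at $\eta^1=\eta^2$ from the middle Cauchy determinant.

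In addition, $\ff_{12,L}$ and $\ff_{2,L}$ are Gaussian and modified Gaussian at $\pp$. Proposition~\ref{result:generalasy} requires their right contours at $\pp+bL^{-1/2}$ with $b$ bounded, not at distance of order $L^{-1/3}$ from $\pp$.

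Taking $\cont_+^L(\pp)$, $\cont_+^L(\pp+L^{-1/2})$, $\cont_+^L(\pp+L^{-1/3})$ for $\eta^2$, $\eta^{12}$, $\eta^1$ fixes both points. It also keeps every pair on opposite sides of a Cauchy determinant at distance of order at least $L^{-1/3}$.
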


\begin{proof}
We have
\beqq
	\type(2^{n_2-a}(12)^a1^{n_1-a})=(a,n_1-a,n_2-a),
	\qquad
	\type(1^{n_1-b}(12)^b2^{n_2-b})=(b,n_1-b,n_2-b).
\eeqq

Suppose first that $x<1$. By \eqref{eq:U2U3less_critical_order},
\beqq
	-1<\rzt^-<\rzot^-<\rzo=\rzot^+=\rzt^+=\pp<0. 
\eeqq
We deform the $\xi_i^2$-, $\xi_i^{12}$-, $\xi_i^1$-, $\eta_i^2$-, $\eta_i^{12}$-, and $\eta_i^1$-contours for the integral $\J^{21}_{12}(a,b,n_1,n_2)$ to to
\beqq
	\cont^L_-(\rzt^-),\qquad
	\cont^L_-(\rzot^-),\qquad
	\cont_-^L(\pp-L^{-1/3}),\qquad
	\cont^L_+(\pp),\qquad
	\cont^L_+(\pp+L^{-1/2}),\qquad
	\cont_+^L(\pp+L^{-1/3}),
\eeqq
respectively. 
They form an admissible contour system for the integral, and we can deform these contours. 
Although some contours are only of order $L^{-1/2}$ apart, those pairs do not appear on opposite sides of a Cauchy determinant in \eqref{eq:Caucdt}. Every pair relevant to Definition~\ref{def:admissiblecontours} is separated by at least $c_*L^{-1/3}$ for some $c_*>0$. 
Recall that $\ff_{1,L}$, $\ff_{2,L}$, and $\ff_{12,L}$ are an Airy family, a modified Gaussian family, and a Gaussian family, respectively. 
Note that $\HH'(\rzot^+)=\HH'(\pp)=0$. 
Thus, Proposition~\ref{result:generalasy}, with
\beqq
	d_L=c_*L^{-1/3},
	\qquad
	\alpha=2n_2,
	\qquad
	\beta=2n_1-a-b,
	\qquad 
	\mu = n_2-a, 
\eeqq
implies that 
\beqq
	|\J^{21}_{12}(a,b,n_1,n_2)|
	\le
	L^{-n_2/3}C_1^{n_1+n_2}e^{C_1(n_2-a)L^{1/3}}\sqrt{n_1!n_2!}\,(n_1+n_2-a-b)!\,E_L(a),
\eeqq
where, using $\ff_{1,L}=\ff_{12,L}/\ff_{2,L}$ and \eqref{eq:U2U3lesscprelation}, 
\beqq
	E_L(a)
	:=\frac{|\ff_{1,L}(\pp)|^{n_1-a}|\ff_{12,L}(\rzot^-)|^a|\ff_{2,L}(\rzt^-)|^{n_2-a}}
	{|\ff_{1,L}(\pp)|^{n_1-b}|\ff_{12,L}(\pp)|^b|\ff_{2,L}(\pp)|^{n_2-b}}
	=
	\frac{|\ff_{12,L}(\tzo^-)|^a |\ff_{2,L}(\tzrb)|^{n_2-a}}{|\ff_{12,L}(\tzo^+)|^a |\ff_{2,L}(\tzra)|^{n_2-a}} . 
\eeqq
Since the exponent of $\ff_{12,L}$ contains no terms of order $L^{2/3}$ or $L^{1/3}$, \eqref{eq:U2U3less_f_expansions} gives
\beqq
	E_L(a)\le C_2^{n_1+n_2}e^{c_1(n_2-a)L^{2/3}}e^{-a\tDel L-(n_2-a)\tDelr L} 
\eeqq
After increasing $c_1$ if necessary, and writing $a\tDel+ (n_2-a)\tDelr = \tDel + (n_2-1) \tDel + (n_2-a) (\tDelr-\tDel)$, 
\beq\label{eq:U23less_normalized_phase_bound}
	|\J^{21}_{12}(a,b,n_1,n_2)|
	\le e^{- \tDel L} C^{n_1+n_2}\sqrt{n_1!n_2!}\,(n_1+n_2-a-b)!
	e^{c_1(n_2-a)L^{2/3}-(n_2-a)(\tDelr-\tDel)L} e^{-(n_2-1)\tDel L} .
\eeq 
By Lemma~\ref{lem:delG2>delG12U2U2h<l} below, $\tDelr-\tDel>0$. 
Since $n_2-a\ge 0$, $n_2\ge 1$, and $(n_2-a, n_2-1)\neq (0,0)$, which follows from $(n_2,a)\neq (1,1)$, 
the result follows when $x<1$.

Suppose next that $x>1$. Then $\ff_{2,L}$ is analytic at $-1$ for all sufficiently large $L$. If $n_2-a>0$, the integral vanishes by shrinking an innermost $\xi_i^2$-contour to $-1$. If $a=n_2$, there are no $\xi_i^2$-variables, and the case (b) extension of Proposition~\ref{result:generalasy} applies, and 
\eqref{eq:U23less_normalized_phase_bound} still holds with $a=n_2$. 
Since $(n_2,a)\ne(1,1)$, we have $n_2\ge2$, and thus, the result follows when $x>1$. 

Suppose finally that $x=1$. By Lemma~\ref{lem:delG2>delG12U2U2h<l}, there exists $\delta>0$ such that $\tDelr^{(0)}>\tDel+3\delta$. Since $\Thetar$ extends analytically across $-1$, there is a neighborhood of $-1$ on which
\beq\label{eq:U23less_x1_phase_bound}
	\re\bigl(\Thetar(z)-\Thetar(\pp)\bigr)\ge\tDel+2\delta.
\eeq
Take the $\xi_i^2$-contours to be nested circles $\Sigma$ around $-1$ contained in this neighborhood, and use the same contours as above for the remaining variables. From \eqref{eq:U2U3less_f_expansions} and \eqref{eq:U23less_x1_phase_bound},
\beq\label{eq:U23less_x1_f2_bound}
	\|\ff_{2,L}\|_{L^1(\Sigma)}
	\le C_2e^{-(\tDel+\delta)L+c_1L^{2/3}}|\ff_{2,L}(\pp)|. 
\eeq
Repeating the proof of Proposition~\ref{result:generalasy}, using \eqref{eq:U23less_x1_f2_bound} for the $n_2-a$ left $\ff_{2,L}$-factors, gives
\beqq
	|\J^{21}_{12}(a,b,n_1,n_2)|
	\le e^{- \tDel L} L^{n_2/6}C^{n_1+n_2}\sqrt{n_1!n_2!}\,(n_1+n_2-a-b)!
	e^{c_1(n_2-a)L^{2/3}-\delta(n_2-a)L} e^{-(n_2-1)\tDel L}
\eeqq
Thus, the result follows for $x=1$. 
\end{proof}

\begin{lem}\label{lem:delG2>delG12U2U2h<l}
On the $U_2^</U_3^<$ boundary, when $\mv<\lv$, we have 
\beqq
	\tDelr>\tDel\quad\text{if $x<1$},
\eeqq
and 
\beqq
	\tDelr^{(0)}:=\Thetar(-1)-\Thetar(\pp)>\tDel\quad\text{if $x=1$}.
\eeqq
\end{lem}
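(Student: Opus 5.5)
The plan is to write $\tDelr-\tDel$ as an integral over a real interval and show that the integrand is positive. The whole argument stays on the real segment $(-1,0)$ and uses only the factorizations of $\tpo,\tpt,\tpr$ and the root ordering in Lemma~\ref{result:cpU2less}.

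Throughout we are on the $U_2^</U_3^<$ boundary with $\mv<\lv$. Lemma~\ref{result:cpU2less} gives the following facts.
\begin{itemize}
\item $\tzt^-=\tzt^+=\pp$, so $\tpt(z)=\mv(z-\pp)^2$.
\item $\tpo(z)=\lv(z-\tzo^-)(z-\pp)$.
\item $\tpr(z)=(\mv-\lv)(z-\pp)(z-\tzrb)$, with $\tzrb<\tzo^-<\pp$ when $x<1$ and $\tzrb=-1$ when $x=1$.
\end{itemize}
On $(-1,0)$ we have $z(z+1)<0$, so by \eqref{eq:tGdertp} the sign of $\Theta_i'$ is opposite to that of $\tp_i$.

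First take $x<1$, so $\tzrb\in(-1,\tzo^-)$. Using $\Thetar=\Thetat-\Thetao$ from \eqref{eq:tGide}, I will rewrite
\beqq
	\tDelr-\tDel
	=\bigl[\Thetat(\tzrb)-\Thetat(\pp)\bigr]-\bigl[\Thetao(\tzrb)-\Thetao(\tzo^-)\bigr].
\eeqq
Splitting $[\tzrb,\pp]=[\tzrb,\tzo^-]\cup[\tzo^-,\pp]$ and writing each bracket as an integral of $-\Theta_i'$, this becomes
\beqq
	\tDelr-\tDel=\int_{\tzrb}^{\tzo^-}\frac{\tpt(z)-\tpo(z)}{|z(z+1)|}\,\dd z+\int_{\tzo^-}^{\pp}\frac{\tpt(z)}{|z(z+1)|}\,\dd z
	=\int_{\tzrb}^{\tzo^-}\frac{\tpr(z)}{|z(z+1)|}\,\dd z+\int_{\tzo^-}^{\pp}\frac{\mv(z-\pp)^2}{|z(z+1)|}\,\dd z.
\eeqq
The second integral is strictly positive. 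For $z\in(\tzrb,\pp)$ we have $z-\pp<0$ and $z-\tzrb>0$. Together with $\mv-\lv<0$, this gives $\tpr(z)>0$, so the first integral is strictly positive as well. Hence $\tDelr>\tDel$.

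For $x=1$, the coefficient $\ac(x-1)$ vanishes, so $\Thetar$ is analytic at $-1$, and $\tpr(z)=(\mv-\lv)(z-\pp)(z+1)$. I will apply the same identity with $\tzrb$ replaced by a point $s\in(-1,\tzo^-)$ and then let $s\downarrow-1$. The identity itself only uses $\Thetar=\Thetat-\Thetao$ and the fundamental theorem of calculus, so it holds for every such $s$:
\beqq
	\Thetar(s)-\Thetar(\pp)-\tDel=\int_{s}^{\tzo^-}\frac{\tpr(z)}{|z(z+1)|}\,\dd z+\int_{\tzo^-}^{\pp}\frac{\tpt(z)}{|z(z+1)|}\,\dd z.
\eeqq
The integrand $\tpr(z)/|z(z+1)|=(\lv-\mv)(\pp-z)/|z|$ is bounded and positive near $-1$. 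The left side is continuous up to $s=-1$ because $\Thetar$ is analytic at $-1$. Letting $s\to-1$ therefore gives $\tDelr^{(0)}-\tDel>0$.

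The only delicate point is the $x=1$ case. The individual terms $\Thetat(-1)$ and $\Thetao(-1)$ are infinite, so the decomposition must be applied at $s>-1$ and passed to the limit, rather than evaluated at $-1$ directly. Once the integrand is seen to be bounded, this step is routine.
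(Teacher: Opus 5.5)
Your proof is correct and is essentially the paper's argument. Your two integrals are exactly $\Thetar(\tzrb)-\Thetar(\tzo^-)$ and $\Thetat(\tzo^-)-\Thetat(\pp)$, and the positivity of their integrands is the same monotonicity of $\Thetar$ on $(\tzrb,\pp)$ and of $\Thetat$ on $(-1,\pp)$ that the paper invokes. The paper reaches this split directly by substituting $\Thetao=\Thetat-\Thetar$ into $\tDel$, so for $x=1$ only the finite value $\Thetar(-1)$ ever appears; your limiting argument $s\downarrow-1$ is valid but not needed.
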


\begin{proof}
Let $x<1$. 
Since $\Thetao=\Thetat-\Thetar$, 
\beqq
	\tDelr-\tDel
	=\Thetat(\tzo^-)-\Thetat(\pp)+\Thetar(\tzrb)-\Thetar(\tzo^-).
\eeqq
By \eqref{eq:U2U3less_critical_order}, $-1<\tzrb<\tzo^-<\pp$. 
Since $\Thetat'(z)= \frac{\mv(z-\pp)^2}{z(z+1)}$ on the $U_2^</U_3^<$ boundary, $\Thetat$ is strictly decreasing on $(-1,\pp)$, and hence the first difference is positive. 
Since $\Thetar'(z)= \frac{(\mv-\lv)(z-\tzrb)(z-\pp)}{z(z+1)}$ and $\mv<\lv$, $\Thetar$ is strictly decreasing on $(\tzrb,\pp)$, 
and hence the second difference is also positive. 
The proof for $x=1$ is identical, using
\beqq
	\tDelr^{(0)}-\tDel
	=\Thetat(\tzo^-)-\Thetat(\pp)+\Thetar(-1)-\Thetar(\tzo^-).
\eeqq
\end{proof}

\begin{cor}\label{result:U2U3h<l_remainder}
There exists $c>0$ such that
\beqq
	\frac{1}{\tP}
	\sum_{n_1,n_2\ge1}\frac{\mathcal R_{n_1,n_2}}{(n_1!n_2!)^2} =O(e^{-cL}).
\eeqq
\end{cor}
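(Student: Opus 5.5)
The plan is to follow the proof of Corollary~\ref{result:U23greater_remainder} (and Corollary~\ref{prop:case1_remainder}), using Lemma~\ref{result:U23lessremaineach} as the single-integral input.

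First, confirm that every index in $\mathcal R_{n_1,n_2}$ from Lemma~\ref{lem:U23decompless} meets the hypotheses of Lemma~\ref{result:U23lessremaineach}. The ranges there are $0\le a\le n_1\wedge i\le n_1\wedge n_2$ and $0\le b\le n_1\wedge(n_2-j)\le n_1\wedge n_2$. The indicator $\mathbf 1_{(n_2,a)\ne(1,1)}$ removes exactly the excluded case. So each surviving $\J^{21}_{12}(a,b,n_1,n_2)$ is bounded by
\[
e^{-\tDel L}C^{n_1+n_2}\sqrt{n_1!n_2!}\,(n_1+n_2-a-b)!\,e^{-cL}.
\]

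Next, insert this bound into $\mathcal R_{n_1,n_2}$ and use \eqref{eq:triplfactorial}: $a!\,b!\,(n_1+n_2-a-b)!\le 2^{n_1+n_2}n_1!n_2!$. The double sum over $(a,b)$ has at most $(n_1+1)(n_2+1)\le 2^{n_1+n_2}$ terms, and the maximum over $(i,j)$ adds no factor. This gives
\[
\mathcal R_{n_1,n_2}\le e^{-\tDel L}e^{-cL}\,C'^{\,n_1+n_2}\,(n_1!n_2!)^{3/2}
\]
for some constant $C'$. Dividing by $(n_1!n_2!)^2$ leaves the summable series $\sum_{n_1,n_2\ge1}C'^{\,n_1+n_2}/\sqrt{n_1!n_2!}<\infty$. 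Hence
\[
\sum_{n_1,n_2\ge1}\frac{\mathcal R_{n_1,n_2}}{(n_1!n_2!)^2}=O\bigl(e^{-(\tDel+c)L}\bigr).
\]

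Finally, Corollary~\ref{result:tPldp} gives $\tP\asymp L^{-1}e^{-\tDel L}$, so the ratio is $O(Le^{-cL})=O(e^{-c'L})$.

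The only point needing care is that the constants $C,c,L_0$ in Lemma~\ref{result:U23lessremaineach} are uniform in $n_1,n_2,a,b$. That lemma asserts this, including in the separate cases $x<1$, $x=1$ and $x>1$. In the case $x=1$, the extra factor $L^{n_2/6}$ also has to be absorbed. It is dominated by $e^{-\delta(n_2-a)L}e^{-(n_2-1)\tDel L}$ together with the overall $e^{-cL}$, which the lemma has already done. I therefore expect no real obstacle beyond this bookkeeping.
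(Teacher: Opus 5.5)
Your proposal is correct and follows essentially the same route as the paper. The paper's proof cites Lemma~\ref{result:U23lessremaineach} for each surviving $\J^{21}_{12}(a,b,n_1,n_2)$ and uses \eqref{eq:triplfactorial} to reduce to a convergent series of the form $\sum C^{n_1+n_2}/\sqrt{n_1!n_2!}$. It then uses Corollary~\ref{result:tPldp} to compare with $\tP$, exactly as you do. Your write-up just spells out the index checks and the absorption of the $L^{n_2/6}$ factor, which the paper leaves implicit.
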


\begin{proof}
It follows from Lemma~\ref{result:U23lessremaineach}, \eqref{eq:triplfactorial}, and Corollary~\ref{result:tPldp}. 
\end{proof}

Noting the formula \eqref{eq:BBPFredholmexpnas} of $F_{\tn{BBP},\ww}(\rr+\ww^2)$, Corollaries~\ref{result:U23leadingseriesh>l},~\ref{result:U23greater_remainder},~\ref{result:U23leadingserieshlessl}, and~\ref{result:U2U3h<l_remainder} imply that 
\beq \label{eq:U23concls}
	\lim_{L\to\infty}
	\prob\left[
		\frac{\LPP\left((\ac xL,\bc yL)-\mr c\ww\mathbf v\,y^{2/3}L^{2/3}\right)-\mv L}{\sdev L^{1/3}}>\rr
		\,\bigg|\,
		\LPP(\ac L,\bc L)>\lv L
	\right]
	=1-F_{\tn{BBP},\ww}(\rr+\ww^2).
\eeq
Thus, Theorem~\ref{result:crossdist}\textnormal{(b)} is proved when $\mv\neq \lv$. 

It remains to consider the case $\mv(x,y)=\lv$. 
In this case, we need to choose \eqref{eq:strategy_order_greater} if $\rr\ge0$, and \eqref{eq:strategy_order_less} if $\rr<0$. 
The analysis in both cases parallels that of the cases $\mv>\lv$ and $\mv<\lv$, but there are some simplifications due to the fact that $y<1$ and $x>1$, which implies the analyticity of $1/\ff_{2,L}$ and $\ff_{2,L}$. 
The limit \eqref{eq:U23concls} holds. 
We skip the details.

%%%%%%%%%%%%%%%%%%%%%%%%%%%%%%%%%%%%%%%%%%%%%%%%
%%%%%%%%%%%%%%%%%%%%%%%%%%%%%%%%%%%%%%%%%%%%%%%%
\section{Conditioning on $\LPP(\ac L,\bc L) = \lv L$}
\label{sec:equalcond}

We briefly explain how the results of this paper extend to conditioning on
$\LPP(\ac L,\bc L)=\lv L$ in place of $\LPP(\ac L,\bc L)>\lv L$.

For an event $E$, we interpret
\beqq
    \prob(E \mid \LPP(M,N)=T)
    =
    \lim_{\epsilon\downarrow 0}
    \frac{
        \prob\bigl(E\cap\{\LPP(M,N)\in(T-\epsilon,T+\epsilon)\}\bigr)
    }{
        \prob\bigl(\LPP(M,N)\in(T-\epsilon,T+\epsilon)\bigr)
    }
    =
    \frac{
        \frac{\partial}{\partial T}
        \prob\bigl(E\cap\{\LPP(M,N)>T\}\bigr)
    }{
        \frac{\partial}{\partial T}
        \prob\bigl(\LPP(M,N)>T\bigr)
    }.
\eeqq
Recall the conventions and notation in Lemma \ref{lem:conditioning_event}. 
It was shown in \cite[Lemma~6.3]{Baik-Cordaro-Tripathi25} that 
\beq
\left.\frac{\partial}{\partial T}
\prob\left(\LPP(\lceil \ac L\rceil,\lceil \bc L\rceil)>T\right)\right|_{T=\lv L}
=
-\frac{\ffz(\tzo^-)}{\ffz(\tzo^+)}
\frac{1}{
    2\pi L(\tzo^+-\tzo^-)
    \sqrt{-\Thetao''(\tzo^-)\Thetao''(\tzo^+)}
}
\bigl(1+o(1)\bigr).
\eeq
Therefore,
\beq
\label{eq:lalbl=relto>}
\left.\frac{\partial}{\partial T}
\prob\left(\LPP(\lceil \ac L\rceil,\lceil \bc L\rceil)>T\right)\right|_{T=\lv L}
=
-(\tzo^+-\tzo^-)
\prob\left(\LPP(\lceil \ac L\rceil,\lceil \bc L\rceil)>\lv L\right)
\bigl(1+o(1)\bigr).
\eeq

For $\bfr=(r_1,\ldots,r_n)$ and $\bfs=(s_1,\ldots,s_n)$ in $\C^n$, define
\beqq
\rS(\bfr\mid\bfs)=\sum_{i=1}^n(r_i-s_i).
\eeqq
Differentiating the formulas in Proposition~\ref{prop:tail} when $m=2$, we have
\beqq
\begin{split}
\frac{\partial}{\partial T_1}
\prob\left(
    \LPP(M_1,N_1)>T_1,\LPP(M_2,N_2)>T_2
\right)
&=
\widehat\QQ_2(\bM,\bN,\bT), \\
\frac{\partial}{\partial T_2}
\prob\left(
    \LPP(M_1,N_1)>T_1,\LPP(M_2,N_2)>T_2
\right)
&=
\widetilde\QQ_2(\bM,\bN,\bT),
\end{split}
\eeqq
where the formulas for $\widehat\QQ_2$ and $\widetilde\QQ_2$ are the same as the formula for $\QQ_2$ in \eqref{def:cQQ}, except that $\nPi_{\bn}(\bsxi,\bseta)$ in \eqref{eq:Pi_n} is replaced by
\beqq
\begin{split}
\widehat\nPi_{\bn}(\bsxi,\bseta)
=
\nPi_{\bn}(\bsxi,\bseta)
\rS(\bsxi^1,\bseta^2\mid\bseta^1,\bsxi^2), \qquad 
\widetilde\nPi_{\bn}(\bsxi,\bseta)
=
\nPi_{\bn}(\bsxi,\bseta)
\rS(\bsxi^2\mid\bseta^2),
\end{split}
\eeqq
respectively.
It is easy to check that each leading contribution acquires an extra factor $-(\tzo^+-\tzo^-)$, which cancels exactly with the corresponding extra factor in \eqref{eq:lalbl=relto>}. Hence, the ratio, as in \eqref{eq:strategy_conditional_probability}, remains unchanged, and therefore the results remain unchanged. The additional $\rS$-factor does not affect the remainder estimates, since on the contour systems used in Sections~\ref{sec:case1}-\ref{sec:U2U3} it is bounded by $C(n_1+n_2)$, which is absorbed into the existing uniform bounds.

Hence, Theorems~\ref{result:CCLT} and~\ref{result:crossdist} remain valid when the conditioning event $\LPP(\ac L,\bc L)>\lv L$ is replaced by $\LPP(\ac L,\bc L)=\lv L$.

%%%%%%%%%%%%%%%%%%%%%%%%%%%%%%
%%%%%%%%%%%%%%%%%%%%%%%%%%%%%%

%%%%%%%%%%%%%%g%%%%%%%%%%%%%%%%%%%%%%%%%%%%%%%%%%%%%%%%%%%%%%%%%%%%%%%%%%
% Bibliography included directly; no external .bib file is required.

\end{document}